\newcommand{\R}{\mathbb{R}}
\newcommand{\NN}{\mathbb{N}}
\newcommand{\LL}{{\rm{L}}}
\newcommand{\CC}{{\rm{C}}}
\newcommand{\HH}{{\rm{H}}}
\newcommand{\WW}{{\rm{W}}}
\newcommand{\lt}{\left}
\newcommand{\rt}{\right}
\def\[{[\![}
\def\]{]\!]}
\newcommand{\dd}{{\rm{d}}}
\newcommand{\Id}{{\rm Id}}
\newcommand{\argmax}{{\rm{arg\,max}}}
\newcommand{\loc}{{\rm{loc}}}
\newcommand{\comp}{{\rm{c}}}
\newcommand{\Boule}{{\rm B}}
\newcommand{\Q}{{\rm Q}}
\newcommand{\dist}{{\rm dist}}
\def\Xint#1{\mathchoice
  {\XXint\displaystyle\textstyle{#1}}%
  {\XXint\textstyle\scriptstyle{#1}}%
  {\XXint\scriptstyle\scriptscriptstyle{#1}}%
  {\XXint\scriptscriptstyle\scriptscriptstyle{#1}}%
  \!\int}
\def\XXint#1#2#3{{\setbox0=\hbox{$#1{#2#3}{\int}$}
    \vcenter{\hbox{$#2#3$}}\kern-.5\wd0}}
\def\fint{\Xint-}
\renewcommand{\epsilon}{\varepsilon}
\renewcommand{\tilde}{\widetilde}
\newcommand{\langl}{\lt\langle}
\newcommand{\rangl}{\rt\rangle}
\newcommand{\Dom}{{\rm D}}
\newcommand{\tilDom}{\tilde{\rm D}}
\newcommand{\abar}{\bar{a}}
\newcommand{\taubar}{\bar{\tau}}
\newcommand{\rhobar}{\bar{\rho}}
\newcommand{\ubar}{\bar{u}}
\newcommand{\phitilde}{\tilde{\phi}^{\mathfrak{C}}}
\newcommand{\phiDtilde}{\tilde{\phi}^{\mathfrak{D}}}
\newcommand{\phiC}{\phi^{\mathfrak{C}}}
\newcommand{\reg}{{\rm reg}}
\newcommand{\rstar}{r_*}
\newcommand{\rmax}{R}
\newcommand{\Cstar}{C_*}
\newcommand{\radius}{r}
\newcommand{\phiD}{\phi^{\mathfrak{D}}}
\newcommand{\phiup}{\phi^{\mathfrak{D}, {\rm up}}}
\newcommand{\sigup}{\sigma^{\mathfrak{D}, {\rm up}}}
\newcommand{\phidown}{\phi^{\mathfrak{D}, {\rm down}}}
\newcommand{\sigdown}{\sigma^{\mathfrak{D}, {\rm down}}}
\newcommand{\etaup}{\eta_{\rm up}}
\newcommand{\etadown}{\eta_{\rm down}}
\newcommand{\etabulk}{\eta_{\rm bulk}}
\newcommand{\sigmaD}{\sigma^{\mathfrak{D}}}
\newcommand{\tisigmaD}{\tilde{\sigma}^{\mathfrak{D}}}
\newcommand{\Reps}{\mathcal{R}}
\newcommand{\Energ}{\mathcal{E}}
\newcommand{\Exc}{{\rm Exc}}
\newcommand{\ueps}{u_\epsilon}
\newcommand{\utieps}{\tilde{u}_\epsilon}
\newcommand{\et}{\;\;\text{and}\;\;}
\newcommand{\dans}{\;\;\text{in}\;\;}
\newcommand{\si}{\;\;\text{if}\;\;}
\newcommand{\pour}{\;\;\text{for}\;\;}
\newcommand{\pourtout}{\;\;\text{for all}\;\;}
\newcommand{\sur}{\;\;\text{on}\;\;}
\newcommand{\lhs}{l.~h.~s.\ }
\newcommand{\rhs}{r.~h.~s.\ }
\newtheorem{theorem}{Theorem}[section]
\newtheorem{corollary}[theorem]{Corollary}
\newtheorem{lemma}[theorem]{Lemma}
\newtheorem{proposition}[theorem]{Proposition}
\newtheorem*{proposition*}{Proposition}
\newtheorem*{theorem*}{Theorem}
\newtheorem*{lemma*}{Lemma}
\newtheorem*{corollary*}{Corollary}
\newtheoremstyle{TheoremNum}
{\topsep}{\topsep}              		
{\itshape}                      		
{}                              		
{\bfseries}                     		
{.}                             		
{ }                             		
{\thmname{#1}\thmnote{ \bfseries #3}}	
\theoremstyle{TheoremNum}
\theoremstyle{remark}
\newtheorem{remark}{Remark}
\title{Stochastic homogenization and geometric singularities : a study on corners}
\author[1,*]{Marc Josien}
\author[2]{Claudia Raithel}
\author[3]{Mathias Schäffner}
\affil[1]{CEA, DES, IRESNE, DEC, Cadarache, F-13108, Saint-Paul-Lez-Durance, France}
\affil[2]{TU Wien, Wiedner Hauptstrasse 8-10, 1040 Wien, Austria}
\affil[3]{TU Dortmund, Vogelpothsweg 87,44227 Dortmund, Germany}
\affil[*]{correspondence to : \texttt{marc.josien@cea.fr}}
\begin{document}

\maketitle

\begin{abstract}
In this contribution we are interested in the quantitative homogenization properties of linear elliptic equations with homogeneous Dirichlet boundary data in polygonal domains with corners.
To begin our study of this situation, we consider the setting of an angular sector in $2$ dimensions : Unlike in the whole-space, on such a sector there exist non-smooth harmonic functions (these depend on the angle of the sector).
Here, we construct extended homogenization correctors corresponding to these harmonic functions and prove growth estimates for these which are quasi-optimal, namely optimal up to a logarithmic loss.
Our construction of the \textit{corner correctors}  relies on a large-scale regularity theory for $a$-harmonic functions in the sector, which we also prove and which, as a by-product, yields a Liouville principle. 
We also propose a nonstandard 2-scale expansion, which is adapted to the sectoral domain and incorporates the \textit{corner correctors}.
Our final result is a quasi-optimal error estimate for this adapted 2-scale expansion.
\end{abstract}

\paragraph{Keywords :}
Elliptic Equation,
Stochastic Homogenization,
Corner

\paragraph{AMS subject classifications :}
35B27,
35B53,
35B65,
35J15

\newpage

\tableofcontents

\newpage
\section{Introduction}

This article investigates the interplay between microstructures and a nonsmooth macroscopic geometry from the point of view of theoretical stochastic homogenization.
The motivation comes from the need to tackle realistic objects made of heterogeneous materials (\textit{e.g.} a building made of concrete).
Indeed, these objects, manufactured or not, typically display non-smooth boundaries at the macroscopic scale
and can be made of different heterogeneous materials, or may have defects.
In all of these frameworks, the classical assumption in stochastic homogenization of an infinite stationary medium \cite{Varadhan_1979, JKO} breaks in a specific geometric region :
near the boundary, the interface or the defects.
As a consequence, in order to obtain estimates one has to go beyond the classical approach, not only at the microscopic scale but also at the macroscopic scale.
In particular, stationarity has to be replaced by a more flexible assumption.

Here, we consider the simple case of a linear elliptic equation that is posed in a corner-like domain (\textit{cf.} Figure~\ref{NiceCorner}) consisting of a heterogeneous material. 
This setting goes beyond the currently available theory of stochastic homogenization, in which equations are usually posed either in the whole space or in domains that are smooth at the macroscopic scale \cite{Allaire,Gloria_Neukamm_Otto_2019, Armstrong_book_2018}. 
Indeed, even in the setting of homogeneous materials, harmonic functions are known to display a singularity near the tip of the corner \cite{Dauge_Livre_1988} --this makes the elliptic regularity theory for domains with corners deviate from the classical theory on smooth domains. 
In the current contribution we show that this singular behavior which occurs at the tip of the corner in the setting of a homogeneous material has a natural analogue in heterogeneous materials. 
Towards this end, we unveil the role of \textit{corner correctors} that are related to the aforementioned singularities. 
Our main goal is to introduce, build, and precisely estimate the growth rate of these corner correctors. 
Equipped with them, we extend the well-known Lipschitz regularity theory of Avellaneda and Lin \cite{AvellanedaLin} to the situation of corner-like domains by providing a Taylor-like expansion.
As a corollary, we establish a Liouville principle.
Last, we propose a (nonstandard) 2-scale expansion that is well-adapted to the corner-like domain, for which we establish quasi-optimal error estimates.
This latter result we also illustrate through means of numerical experiments.

\begin{figure}[H]
\includegraphics[width=\textwidth]{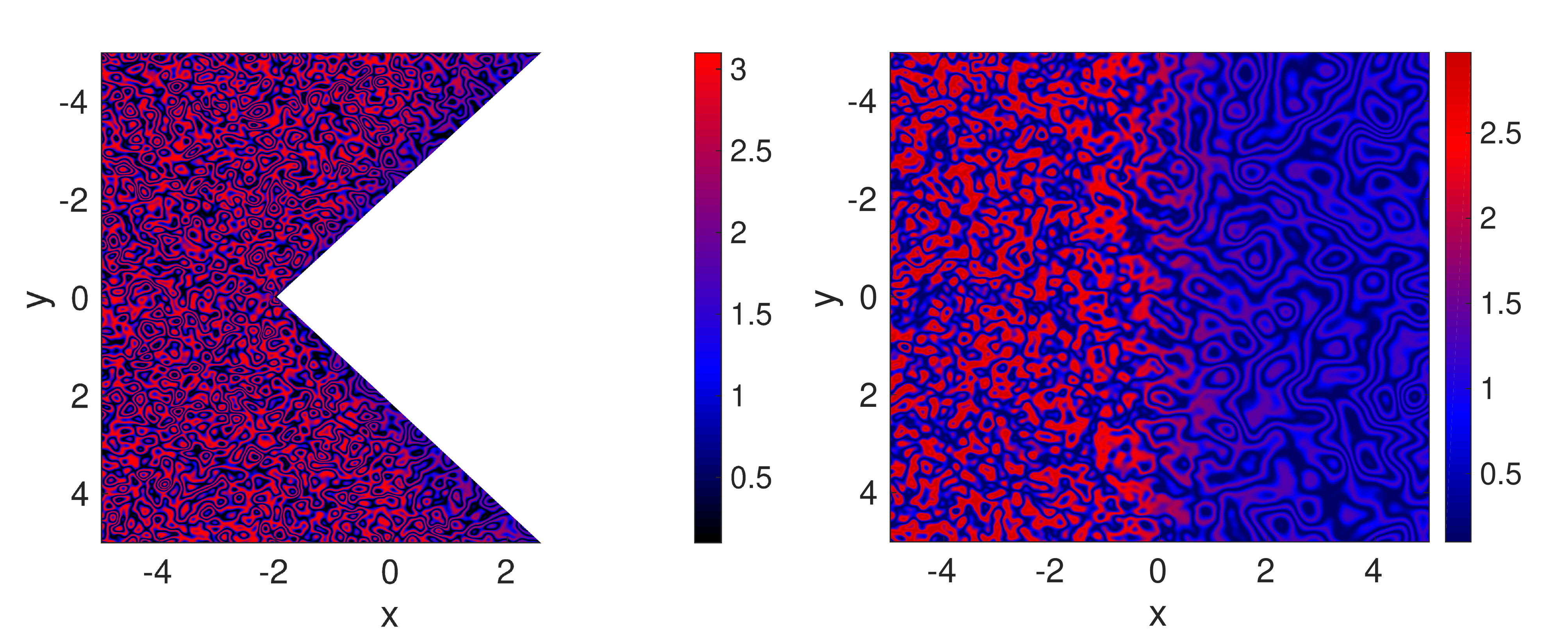}
\caption{On the left, a corner inside a heterogeneous material; on the right, a flat interface between two different heterogeneous materials (see \cite{JosienRaithel_2019}). The level of color represents the value of the (scalar) coefficient field.}\label{NiceCorner}
\end{figure}

\subsection{The problem under study}

The main purpose of this article is to find a method for obtaining fine estimates for the homogenization phenomenon of a linear elliptic equation set on a domain that is non-smooth at the macroscopic scale.
Towards this end, we choose to consider the simplest setting; namely, an infinite $2$-dimensional corner (see Figure \ref{Fig :Geom} below)
\begin{equation}\label{Def:Dom}
\begin{aligned}
\Dom :=\{x=r(\sin\theta,\cos\theta) \in \R^2, \theta \in (0,\omega), r > 0\} && \pour \omega \in (0, 2\pi],
\end{aligned}
\end{equation}
on which the following elliptic equation with Dirichlet boundary conditions is set\footnote{To keep notation light, we always omit the parentheses after the divergence operator $\nabla \cdot$ and, with a slight abuse of notation, denote $\nabla \cdot a g$ for $\nabla \cdot (a g)$.} :
\begin{equation}
\label{Divagrad-0-bis}
\lt\{
\begin{aligned}
&-\nabla \cdot a\lt(\frac{\cdot}{\epsilon} \rt) \nabla \ueps = \nabla \cdot f && \dans \Dom,
\\
&u=0 &&\sur \partial\Dom.
\end{aligned}
\rt.
\end{equation}
Notice that the domain $\Dom$ is not convex if $\omega > \pi$.
Here, $a$ is a heterogeneous elliptic coefficient field that represents physical characteristics of the material under consideration and $f$ is a compactly supported forcing term.
In \eqref{Divagrad-0-bis}, we explicitly write the ratio $\epsilon \ll 1$ between the small scale on which $a(\cdot/\epsilon)$ varies and the large scale, $1$, on which $f$ varies.
Rigorously speaking, \eqref{Divagrad-0-bis} also requires conditions at infinity, which may conveniently be fixed by imposing $\nabla \ueps \in \LL^2(\Dom)$ (or by assuming a growth rate of $\ueps$).
However simple this model is, it may accurately represent various physical phenomena in a domain made of a heterogeneous material displaying a corner.
Just to cite a few concrete examples : a thermal equilibrium in a sharp-edged artifact built by additive manufacturing, a mechanical equilibrium around a crack in a composite material (see, \textit{e.g.}, \cite{Hossain_2014} for fractures in heterogeneous media), electrostatics in a tetrahedral object made of metamaterials \cite{BBCCC16} --all problems which are of wide interest among physicists and engineers.
Moreover, there are incentives to develop a refined understanding of what happens near the tip of the corner : for example, in mechanics, fractures are often initiated by notches or cracks.

Even when $\Dom$ is a smooth bounded domain, equation \eqref{Divagrad-0-bis} is usually difficult to solve numerically.
Indeed, the microscopic scale on which the coefficient field $a$ oscillates is far smaller than the macroscopic scale, that is typically fixed by the scale on which the forcing term $f$ varies (in other cases, this may be fixed by the size of the domain $\Dom$ or the scale on which the boundary conditions vary).
In other words, seen from the macroscopic scale, the coefficient $a(\cdot/\epsilon)$ and therefore $\nabla \ueps$ oscillate at a high frequency.
Hence, numerical brute-force approaches are heavy and may even prove intractable.
This motivated the rise of the theory of homogenization :
Under suitable assumptions on the coefficient $a$ --including periodicity or ergodic stationarity-- it is possible to approximate the solution $\ueps$ of \eqref{Divagrad-0-bis} without solving frontally the equation.
Indeed, in these cases, there exists a constant (or homogeneous) coefficient $\abar$ independent of $f$ such that the solution $\ubar$ of
\begin{equation}
\label{Divagrad-1}
\lt\{
\begin{aligned}
&-\nabla \cdot \abar \nabla \ubar = \nabla \cdot f && \dans \Dom,
\\
&\ubar=0 &&\sur \partial\Dom,
\end{aligned}
\rt.
\end{equation}
well approximates the macroscopic behavior of $\ueps$.
Moreover, the oscillating function $\nabla \ueps$ can be approximated by means of the so-called 2-scale expansion, that we will discuss later on.
For the classical theory of homogenization, we refer to \cite{Allaire, JKO, Tartar}.

From a theoretical point of view, one of the major difficulties of \eqref{Divagrad-0-bis} is that it does not enjoy strong regularity estimates that are uniform in the limit $\epsilon \downarrow 0$.
By this,
we mean that,
in general, one cannot hope for estimates that are drastically better than the energy estimates provided by the Lax-Milgram theory --even though a slight gain in integrability is obtained by means of the Meyers estimate \cite{Meyers_Estim}.
One major finding of Avellaneda and Lin \cite{AvellanedaLin} is the following :
In the case where the domain $\Dom$ is smooth, if the coefficient $a$ is periodic, then the equation \eqref{Divagrad-0-bis} inherits some regularity properties from the constant-coefficient equation \eqref{Divagrad-1}; in particular, Lipschitz regularity.
This is a key ingredient in previous investigations of the fine regularity properties of \eqref{Divagrad-0-bis}; \textit{e.g.} to obtain estimates on the Green function \cite{KLSGreenNeumann}, $\LL^p$ estimates \cite{Shen}, or annealed estimates \cite{Marahrens_Otto}.
These results may then, in turn, be used to get error estimates on the approximation of $u_{\epsilon}$ via the 2-scale expansion.
Moreover, regularity properties are also inherently related to the kernel of the operator $-\nabla \cdot a \nabla$;
therefore, they naturally yield Liouville principles \cite{CRAS_AvellanedaLin}.
Extending the Lipschitz regularity result of Avellaneda and Lin to random settings was, for a long time, out of reach and has only recently been managed by Armstrong and Smart \cite{Armstrong} (see also \cite{Armstrong_book_2018})  and Gloria, Neukamm and Otto \cite{Gloria_Neukamm_Otto_2019}.
This strategy for obtaining error estimates has been used in various settings : see, \textit{e.g.}, \cite{KLSGreenNeumann, Shen} for the periodic setting, \cite{Bella_Giunti_Otto_2017, DuerinckxGO_2016} for the stochastic setting, \cite{Article_Homog} for the case of defects in a periodic medium, and \cite{Josien_InterfPer_2018, JosienRaithel_2019} for the case of a flat interface.

In this article, we adapt Avellaneda and Lin's Lipschitz regularity results to the case of a random coefficient $a$ with the equation \eqref{Divagrad-0-bis} set on the sectoral domain $\Dom$ defined by \eqref{Def:Dom};
as a corollary, we establish a Liouville principle and an error estimate for a nonstandard  2-scale expansion that is adapted to our framework.
This plan requires us to build \textit{corner correctors}, that differ from the usual homogenization correctors in various aspects (notably, they scale according to a non-integer exponent).
A somewhat circular difficulty that is inherent in our strategy is that well-behaved correctors are crucial for establishing regularity properties, which, in turn, are required in order to build correctors.
We overcome this issue by observing that for our ansatz, it is not necessary to use the full power of the usual large-scale regularity result in order to construct the corner corrector.
Instead, we use H-convergence to get a deterministic version of the large-scale regularity result and use a weak version of this in order to construct the corner correctors, and also to obtain quasi-optimal growth rates (in a spatial norm that is weaker than in our final result).
By analogy with the classical whole-space case, this amounts to using the weaker large-scale $\CC^{0,\alpha}$ estimate --which does not involve the classical correctors-- as opposed to the large-scale Lipschitz estimate.
We then consolidate the large-scale regularity result to get $\LL^\infty$-like annealed estimates, and derive the desired quasi-optimal estimates on the corner correctors (in the stronger spatial norm).
Lastly, these are used to get quasi-optimal error estimates on our (nonstandard) 2-scale expansion.
This strategy simplifies the approach used in  \cite{Fischer_Raithel_2017, Raithel_2017,JosienRaithel_2019}; in particular, we are able to avoid the iterative construction of the corner corrector on increasingly large scales.

\subsection{Interplay of microstructure and macroscopic geometries}

We now situate the current contribution within the broader perspective of bridging the gap between the macroscopic geometry of a given object and the microscopic structure of the underlying materials.

We first discuss the case of a smooth domain.
There, the interplay between a boundary that is smooth on the macroscopic level but with small-scale oscillations (either related to the boundary itself or caused by oscillating boundary conditions), has attracted much attention; notably, in the periodic setting, \textit{cf.} \cite{KenigPrange_2015, GerardVaretMasmoudi, ArmstrongKuusiMourratPrange}.
The stochastic setting remained unexplored for a long time, until the work of Fischer and the second author \cite{Fischer_Raithel_2017, Raithel_2017}, who proposed a way to build correctors adapted to Dirichlet or Neumann boundary conditions, but with suboptimal estimates.
In \cite{BellaFischerJosienRaithel}, we go further, showing quasi-optimal estimates as well as adapting recent results of stochastic homogenization to the case of a flat boundary (such as bounds on the fluctuations). 

The case of a flat interface between two different heterogeneous materials has been studied in the context of periodic homogenization by means of asymptotic expansion in \cite[Chap.\ 9 p.\ 312]{Bakhvalov_Panasenko}, but saw a recent renewal of interest initiated by \cite{BLLcpde}.
Solving an issue raised in \cite{BLLcpde}, the first author proposed in \cite{Josien_InterfPer_2018} a generalization of the notion of correctors and of the 2-scale expansion to the special case of the interface between two periodic heterogeneous materials.
Once suitable estimates were proved for such correctors, adaptations of the classical results followed (\textit{e.g.} Lipschitz regularity, quasi-optimal error estimates, an expansion of the heterogeneous Green function and estimates for such).
Taking advantage of the approach of \cite{Fischer_Raithel_2017}, we extended these results in \cite{JosienRaithel_2019} to heterogeneous media satisfying very general assumptions (including classical random media).
Very recently, inspired mainly by techniques from \cite{Shen_Boundary_2017} (see also \cite{Shen_Zhuge_2017, Armstrong_Shen_2016, Armstrong}), an alternative approach in the periodic setting with a $C^{k,\alpha}$ interface was proposed in \cite{Zhuge_2020}. 
Both \cite{JosienRaithel_2019} and \cite{Zhuge_2020} take advantage of $C^{1,\alpha}$ estimates for composite domains
--a problem first considered in \cite{Vogelius_2000} and then in \cite{LiNirenberg_2003}, in which (assuming a $C^{1,\beta}$ interface) $C^{1,\alpha}$ estimates were proved for $\alpha \leq \beta / (2 (1+\beta))$ (see also \cite{Neukamm_2019} for an extension of \cite{Vogelius_2000,LiNirenberg_2003} to nonlinear elliptic systems with applications to nonlinear elasticity).

To the best of our knowledge, the interplay between corners and quantitative homogenization is mainly uncharted, even in the framework of periodic homogenization.
However, we underline that this question has been studied in more qualitative or empirical ways; \textit{e.g.}, recently, the question of fracture propagation in heterogeneous media, which combines the problems of geometric singularities and heterogeneous materials, saw a renewal of interest in the mechanics community \cite{Hossain_2014, Schneider_2020_Fracture, lebihain_2020}.
 In the mathematical literature, we are only aware of a few articles dealing with very specific frameworks :
In \cite{Dauge_2006}, a corner made of a homogeneous material coated with a thin skin of another homogeneous material was studied and an asymptotic expansion for the solution was proposed.
In contrast to the present article, the only multi-scale feature of \cite{Dauge_2006} was related to the width of the skin coated on the corner.
However, as will be seen in Section \ref{Sec:GenDisc}, there are similarities between the expansion in \cite{Dauge_2006} and the adapted 2-scale expansion that we propose here.
There are also works concerning the Dirichlet problem in planar sectors with holes of size $\epsilon$ (possibly shrinking towards the corner), \textit{cf.}  \cite{DTV_2009, Costabel_2017} or \cite[Chapter 2]{MNP_2000}.

\subsection{General discussion of the corner correctors and the nonstandard 2-scale expansion}\label{Sec:GenDisc}

Before introducing our precise results and giving a detailed account of the strategies specific to this paper, we first proceed to a general discussion of the ideas used here and in \cite{Josien_InterfPer_2018,JosienRaithel_2019} to tackle situations where both the microscopic and the macroscopic structures of the material come into play.
In these cases, as previously discussed, classical approaches fail, and we are required to tailor nonstandard 2-scale expansions to the settings in question.
We hope that a more general exposition of this process is of interest to a practitioner, who might face situations different from the ones exposed here or in \cite{Josien_InterfPer_2018,JosienRaithel_2019}, but which may be treated with similar methods.
(In particular, we hope our approach may be used for mechanical fractures in heterogeneous materials, and it would also be interesting to investigate waves hitting a polyhedric obstacle made of a metamaterial \cite{BBCCC16}.)
For ease of the exposition, we purposely choose to be rather general in this section, the precise results being written in the sequel.

Let us first discuss the requirements on the coefficient $a$.
For simplicity, it is comfortable to assume that this coefficient field is actually defined on the whole-space $\R^d$ and then restricted to the corner domain $\Dom$.
Our main assumption is that the whole-space coefficient field $a$ enjoys two desirable properties from the point of view of (classical) homogenization.
First, $a$ is uniformly elliptic and bounded.
Second, $a(\cdot/\epsilon)$ H-converges to a constant matrix $\abar$ when $\epsilon\downarrow0$
(again, for simplicity, w.~l.~o.~g. we assume henceforth that $\bar{a}=\Id$), and this convergence is monitored by means of the control of the growth rate of the extended correctors\footnote{The \textit{extended correctors} are both the usual correctors and the flux-correctors, \textit{cf.} Assumption \ref{AssumpId} below.}.
These assumptions apply to most homogenization settings (see Section \ref{Sec:Assump} for examples).
Coming from this perspective, the underlying message of our results is as follows :
Quantitative homogenization results can be transferred from the whole-space to a corner domain.

The cornerstone of the approach pioneered by Avellaneda and Lin \cite{AvellanedaLin}, is the transfer of regularity properties from the homogeneous problem \eqref{Divagrad-1} to the heterogeneous problem \eqref{Divagrad-0-bis}.
Hence, emulating this method, the first step naturally requires us to understand the regularity properties of \eqref{Divagrad-1}.
In the easier case of $f=0$ near $0$,\footnote{The more general case of $f$ being nonzero near $0$ involves other singular functions that we disregard here. An interested reader may consult \cite[Th.\ (5.11)]{Dauge_2} for the Laplacian.} the regularity properties may be encapsulated in form of an expansion of the solution $\ubar$ into
\begin{equation}
\label{Decompose_ubar}
\ubar(x) = \ubar_{\reg}^N(x) + \sum_{n=1}^N \bar{\gamma}_n \taubar_n(x),
\end{equation}
where $\ubar_{\reg}^N$ is a regular contribution that is small near $0$, whereas the rightmost sum is a linear combination of singular $\abar$-harmonic functions $\taubar_n$; \textit{i.e.}, $\taubar_n$ solves
\begin{equation}\label{Def:taubar}
\begin{cases}
-\nabla \cdot \abar \nabla \taubar_n=0 & \dans \Dom,
\\
\taubar_n=0 & \sur \partial \Dom.
\end{cases}
\end{equation}
As will be seen in Section \ref{Sec:ClassRes}, these $\taubar_n$ (or rather, their derivatives) display a singularity at the tip of the corner and scale like $|x|^{\rhobar_n}$, where $\rhobar_n =\frac{n\pi}{\omega}$.
The larger $N \in \mathbb{N}$ is, the more regular $\ubar_{\reg}^N$ is.
Somehow, \eqref{Decompose_ubar} is analogous to a Taylor expansion for $\abar$-harmonic functions (both actually coincide when considering non-singular geometries, such as the bulk of a regular domain or a flat boundary).
As far as a Liouville principle goes : Any $\abar$-harmonic function in $\Dom$ with an algebraic growth rate at infinity is a linear combination of functions $\taubar_n$, for $n \in \{1,\cdots,N\}$, with $N$ being determined by the order of the growth rate.

Transitioning now to the heterogeneous problem, having in mind the expansion \eqref{Decompose_ubar}, we also expect such a decomposition to hold on the level of \eqref{Divagrad-0-bis}.
However, rather than involving $\abar$-harmonic functions, we shall make use of $a$-harmonic functions.
This gives rise to the corner correctors $\phiC_n$, which are (non-compact) small-scale perturbations of $\taubar_n$.
In particular, these turn the $\abar$-harmonic functions $\taubar_n$ into the $a$-harmonic functions $\taubar_n + \phiC_n$; namely, $\phiC_n$ solves
\begin{equation}\label{Def:psi}
\begin{cases}
-\nabla \cdot a( \nabla \phiC_n + \nabla \taubar_n)=0 & \dans \Dom,
\\
\taubar_n + \phiC_n=0 & \sur \partial \Dom.
\end{cases}
\end{equation}
Notice that \eqref{Def:psi} becomes the classical equation for the usual whole-space homogenization corrector, $\phi_i$, if we replace $\Dom$ by $\R^2$ and $\taubar_n$ by the coordinate map $x \mapsto x_i$.
In analogy with the homogeneous Liouville principle, we prove that the functions $\taubar_n + \phiC_n$ for $n \leq N$ span the space of $a$-harmonic functions in $\Dom$ with a prescribed growth rate (depending on~$N$).
Also, we may replace \eqref{Decompose_ubar} by the following decomposition of the solution $\ueps$ of \eqref{Divagrad-0-bis}, and establish that
\begin{equation}\label{Decompose_u}
\ueps(x) =  v(x) + \sum_{n=1}^N \gamma_n \lt(\taubar_n(x)+\epsilon^{\rhobar_n}\phiC_n\lt(\frac{x}{\epsilon}\rt)\rt),
\end{equation}
where $v$ is small near $0$.
The nonstandard terms of this 2-scale expansion, \textit{i.e.} those that scale like $\epsilon^{\rhobar_n}$ (typically $\rhobar_n \notin \mathbb{Z}$), were also observed within the specific context of \cite{Dauge_2006}.
These are incompatible with the classical asymptotic expansion (see, \textit{e.g.} \cite[Sec.\ 1.1]{Allaire}), in which one \textit{a priori} postulates that the solution $\ueps$ of \eqref{Divagrad-0-bis} may be expanded as
\begin{align}\label{Usual}
\ueps(x) = \sum_{n=0}^\infty \epsilon^n u_n\left(x,\frac{x}{\epsilon}\right),
\end{align}
and then successively recovers equations for the $u_n$.

If we wish for an approximation of $\ueps$ in the full domain $\Dom$ and not only near the tip of the corner,
we shall also take into consideration the oscillations in the bulk of the sectoral domain.
This leads to the following hybrid 2-scale expansion\footnote{In \eqref{2scale}, we make the choice $\gamma_n :=\bar{\gamma}_n$, for $\bar{\gamma}_n$ appearing in \eqref{Decompose_ubar}.} :
\begin{equation}\label{2scale}
\utieps^N(x) := \lt(1+\epsilon \phiD_i\lt(\frac{x}{\epsilon}\rt) \partial_i\rt) \ubar_{\reg}^N(x) + \sum_{n=1}^N \gamma_n \lt(\taubar_n(x)+\epsilon^{\rhobar_n}\phiC_n\lt(\frac{x}{\epsilon}\rt)\rt),
\end{equation}
where the usual Dirichlet correctors $\phiD_i$ also appear.
These  solve
\begin{equation}\label{Def:phiD}
\begin{cases}
-\nabla \cdot a \nabla ( \phiD_i + x_i)=0 & \dans \Dom,         \\
\phiD_i=0                         & \sur \partial \Dom.
\end{cases}
\end{equation}
Notice that in \eqref{2scale} we have used the Einstein summation convention for the indices $i$; 
we will do so throughout the rest of the article without further notice.
In \eqref{2scale}, the leftmost part $(1+\epsilon \phiD_i (\cdot /\epsilon) \partial_i ) \ubar_{\reg}^N$ is the classical 2-scale expansion, but only applied to the regular part $\ubar_{\reg}^N$ of $\ubar$, whereas the rightmost part $\sum_{n=1}^N \gamma_n \lt(\taubar_n+\epsilon^{\rhobar_n}\phiC_n\lt(\cdot/ \epsilon\rt)\rt)$ deals with the singularities induced by the corner.
We prove that $\nabla\utieps$ is an accurate approximation for $\nabla \ueps$ and also illustrate this through means of numerical simulations.
The 2-scale expansion \eqref{2scale} is an important motivation for precisely quantifying the growth rate of the corner correctors.

\begin{remark}[Comparison with \cite{Dauge_2006}]
Notice that \eqref{2scale} is close to \cite[(1.7)]{Dauge_2006}.
Indeed, the latter employs profiles denoted $\mathfrak{K}$ (see \cite[(P$\infty$)]{Dauge_2006}) which are nothing but our corrected singularities $\taubar_n + \phiC_n$. 
\end{remark}

\subsection{Summary}

In Section \ref{main_results} we first review in more detail some results from the theory of harmonic functions on sectoral domains.
Then, we give and discuss our assumptions, which requires us to also further discuss some aspects from the classical theory of homogenization.
Section \ref{Sec:Res} is devoted to the exposition of our main results : a quenched large-scale regularity theory (Theorem \ref{ThAL}) and associated Liouville principle (Corollary \ref{ThLiouville}), the existence of corner correctors that satisfy an annealed quasi-optimal pointwise growth rate (Theorem \ref{Th:OptiGR}), and, lastly, an $\LL^\infty$-like annealed error estimate for the 2-scale expansion given in \eqref{2scale} (Theorem \ref{Th:Error}) \footnote{See Section \ref{sec:quasi} for a discussion of \textit{quenched} vs. \textit{annealed} estimates.}.
We end Section \ref{main_results} with numerical simulations, which suggest that in our setting of the sectoral domain the adapted (nonstandard) 2-scale expansion \eqref{2scale} significantly out-performs the standard (with Dirichlet correctors) 2-scale expansion.
Section \ref{strategy} contains a detailed overview of the strategy that we use to obtain our main results.
In Section \ref{proof_lemma3.3} we prove Lemma \ref{Lem:heterog}, which is a purely deterministic version of Theorem \ref{ThAL}.
Section \ref{proof_prop_3.4} is devoted to the argument for Proposition \ref{Prop:Quench}, which, under deterministic assumptions on the extended whole-space corrector and the extended half-space correctors associated to the ``edges'' of the sectoral domain, constructs the corner correctors and gives the quasi-optimal growth rate, but in a weaker spatial norm, of Theorem \ref{Th:OptiGR}.
In Section \ref{main_thms} we use Lemma \ref{Lem:heterog} and Proposition \ref{Prop:Quench} in order to prove Theorems \ref{ThAL} and \ref{Th:OptiGR}, respectively.
Finally, Section \ref{error_est_sec} contains our argument for the quasi-optimal error estimate, Theorem \ref{Th:Error}, which, in particular, also requires the construction of \textit{corner flux correctors} in Lemma \ref{Lem:sig}.

\section{Main results}
\label{main_results}

\subsection{Classical theory for the homogenized problem}\label{Sec:ClassRes}

For simplicity, we introduce the following geometrical notations : $\Boule_R(x)$ is the ball of radius $R$ centered at $x$, $\Gamma :=\partial\Dom$, $\Dom_R(x) :=\Dom \cap \Boule_R(x)$, $\Gamma_R(x) :=\Gamma \cap \Boule_R(x)$ (which differs from $\partial \Dom_R(x)$), in which we omit the parameters $x$ if $x=0$ and $R$ if $R=1$.
(For the ambiguous case $x=0$ and $R=1$, we still write explicitly the ``$1$'' as in $\Dom_1 = \Dom_1(0)$.)
We define the smooth cut-off function
\begin{equation}\label{eta_rescale}
	\eta_{\Boule,R} :=\eta_{\Boule,1}(R\cdot)	
\end{equation}
where $\eta_{\Boule,1}\in \CC_\comp^\infty(\Boule_1,[0,1])$ satisfies $\eta_{\Boule,1}\equiv1$ in $\Boule_{1/2}$ and $|\nabla\eta_{\Boule,1}|\lesssim1$ (in Figure~\ref{Fig :Geom} the set $\{ x \in \Dom : \eta_{\Boule,R}(x) = 1\}$ is shown in violet).

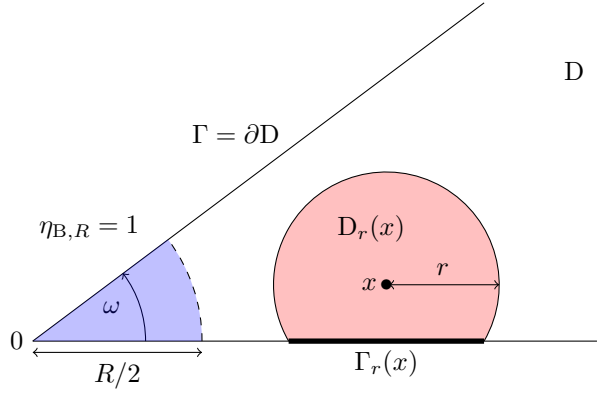
\begin{figure}[h]
\begin{center}
\begin{tikzpicture}[scale=1.5]
\coordinate (O) at (0,0);
\coordinate (A) at (5,0);
\coordinate (a) at (1,0);
\coordinate (B) at (4,3);
\coordinate (Oprime) at (1.895,0.635);
\coordinate (Aprime) at (4.9,0.635);
\coordinate (Bprime) at (4.295,2.442);
\coordinate (atilde) at (1.5,0);

\draw (O)--(A);
\draw (O)--(B);
\draw[->] (a) arc (0 :37 :1);
\draw (O) node [left] {$0$};
\draw (0.7,0.3) node {$\omega$};
\draw (4.8,2.4) node {${\rm D}$};
\draw (1.8,2) node[below] {$\Gamma=\partial {\rm D}$};
\fill[color=blue, opacity=0.25] (O)--(atilde) arc(0 :37 :1.5)--(O);

\draw[dashed] (atilde) arc(0 :37 :1.5);
\draw (0.5,1) node {$\eta_{\Boule,R}=1$};
\draw[<->] (0,-0.1)--(1.5,-0.1);
\draw (0.75,-0.3) node {$R/2$};

\fill[color = red, opacity = 0.25] (4,0) arc  (-30 :210 :1) -- cycle;
\draw (4,0) arc  (-30 :210 :1);

\draw ({4-sqrt(3)/2},0.5) node {$\bullet$};
\draw ({4-sqrt(3)/2},0.5) node[left] {$x$};
\draw[<->] ({4-sqrt(3)/2+0.025},0.5)--({5-sqrt(3)/2},0.5);
\draw ({4.5-sqrt(3)/2},0.5) node [above]{$r$};
\draw (3,1) node {$\Dom_r(x)$};
\draw[line width = 2] ({4-sqrt(3)},0)--(4,0);
\draw ({4-sqrt(3)/2},0) node[below] {$\Gamma_r(x)$};

\end{tikzpicture}
\end{center}
\caption{Geometrical setting}\label{Fig :Geom}
\end{figure}

Since the operator $\nabla \cdot \abar \nabla$ only depends on the symmetric part of $\abar$,
 thanks to a change of coordinate, we may w.~l.~o.~g.\ assume that $\abar=\Id$.
The associated family of $\abar$-harmonic functions $\taubar_n$ is defined in polar coordinates by
\begin{equation}\label{Def:taubar0}
\taubar_n(x) :=r^{\rhobar_n} \sin( \rhobar_n \theta)
\qquad \pour \rhobar_n :=\frac{n\pi}{\omega},
\end{equation}
whereby verifying \eqref{Def:taubar} is immediate.
These are associated to ``dual'' functions\footnote{See \cite[(2.1)]{Dauge_CRAS_1987} and \textit{erratum} \cite[p.\ 346]{Dauge_2}.}, that are $\abar$-harmonic in $\Dom \backslash \{0\}$
\begin{equation}\label{Def:taubarstar0}
\taubar^\star_n(x) :=-r^{-\rhobar_n} \sin( \rhobar_n \theta).
\end{equation}
The functions $\taubar_n$ and $\taubar^\star_n$ are homogeneous in the sense of
$\taubar_n(\lambda x) = \lambda^{\rhobar_n} \taubar_n(x)$ and $\taubar^\star_n(\lambda x) = \lambda^{-\rhobar_n} \taubar^\star_n(x)$.
Therefore, we obviously have
\begin{align}\label{taubar_Homog}
|\nabla^l \taubar_n(x)| \lesssim_\omega |x|^{\rhobar_n-l} \qquad \et \quad |\nabla^l \taubar^\star_n(x)| \lesssim_\omega |x|^{-\rhobar_n-l},
\end{align}
where the symbol $\lesssim_{\beta}$, for a tuple of constants $\beta$, reads ``$\leq C$ for a constant $C$ depending only on $\beta$''.

The functions $\taubar_n$ are the building blocks for decomposing the solution $\ubar$ of \eqref{Divagrad-1}.
The below theorem is a consequence of results from the literature \cite[Th.\ 2 \& Th.\ 3]{Dauge_CRAS_1987} and \cite[Th.\ (5.11)]{Dauge_2} :
\begin{theorem}[See \cite{Dauge_CRAS_1987, Dauge_2}]
\label{Th:homog}
Assume that $\ubar$ satisfies
\begin{equation}
\label{Divagradbar_1}
\begin{cases}
-\Delta \ubar = 0 &\qquad \dans \Dom_1,
\\
\ubar = 0 &\qquad \sur \Gamma_1.
\end{cases}
\end{equation}
Then, for any\footnote{We underline that we use the convention $0 \in \mathbb{N}$. In the case $N=0$, no function $\taubar_n$ is used in \eqref{Decompose_ubar}. \label{Note1}} $N \in \NN$, $\ubar$ can be decomposed as in \eqref{Decompose_ubar}, 
where $\bar{\gamma}_n$ is given by
\begin{equation}
\label{Formula :gamma}
\bar{\gamma}_n := \frac{1}{n \pi} \int_{\Dom_1} \Delta \lt(\eta_{\Boule,1} \ubar\rt) \taubar^\star_n,
\end{equation} 
and where $\ubar_{\reg}^N$ satisfies
\begin{equation}
\label{Num:001}
\lt(\fint_{\Dom_R} \lt|\ubar_{\reg}^N\rt|^2 \rt)^\frac{1}{2}\lesssim_{\omega,N} R^{\rhobar_{N+1}} \lt( \fint_{\Dom_1} |\ubar|^2 \rt)^{\frac{1}{2}} \qquad \text{provided} \quad R\leq \frac{1}{2}.
\end{equation}
\end{theorem}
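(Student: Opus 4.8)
The plan is to fully diagonalize the problem by a Fourier--sine expansion in the angular variable $\theta$, which reduces everything to an elementary one-dimensional ODE analysis together with Parseval's identity on circles; the formula \eqref{Formula :gamma} and the growth bound \eqref{Num:001} then come out almost for free. Throughout I understand \eqref{Divagradbar_1} in the weak sense, so that $\ubar\in\HH^1(\Dom_1)$ (which is in any case the setting in which \eqref{Divagradbar_1} arises, namely for the solution of \eqref{Divagrad-1}).

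First I would expand, for a.e.\ $r\in(0,1)$, the map $\theta\mapsto\ubar(r,\theta)$ in the orthonormal basis $\{\sqrt{2/\omega}\,\sin(\rhobar_n\theta)\}_{n\geq1}$ of $\LL^2(0,\omega)$ --- the Dirichlet eigenbasis of $-\partial_\theta^2$ --- and write $\ubar(r,\theta)=\sum_{n\geq1}c_n(r)\sin(\rhobar_n\theta)$. Differentiating the Fourier coefficients $c_n(r)=\frac2\omega\int_0^\omega\ubar(r,\theta)\sin(\rhobar_n\theta)\,\dd\theta$ under the integral, using that $\ubar$ is harmonic, and integrating by parts twice in $\theta$ (the boundary terms at $\theta\in\{0,\omega\}$ vanish since both $\ubar$ and $\sin(\rhobar_n\cdot)$ do there), shows that each $c_n$ solves the Euler equation $r^2c_n''+rc_n'-\rhobar_n^2c_n=0$ on $(0,1)$, hence $c_n(r)=A_nr^{\rhobar_n}+B_nr^{-\rhobar_n}$. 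The finite-energy hypothesis $\ubar\in\HH^1(\Dom_1)$ --- transferred to the radial profiles by Parseval, which gives $\int_0^{1/2}\big(|c_n'|^2+\rhobar_n^2r^{-2}|c_n|^2\big)\,r\,\dd r<\infty$ --- forces $B_n=0$ for every $n$, because $\int_0^{1/2}r^{-2\rhobar_n-1}\,\dd r=\infty$. Writing $\bar\gamma_n:=A_n$ and noting that $\ubar\in\LL^2(\Dom_1)$ yields $\sum_n|\bar\gamma_n|^2(\rhobar_n+1)^{-1}<\infty$ (again by Parseval in polar coordinates), hence $|\bar\gamma_n|\lesssim n^{1/2}$ and uniform convergence of the series on $\overline{\Dom_R}$ for every $R<1$, I obtain $\ubar=\sum_{n\geq1}\bar\gamma_n\taubar_n$ in $\Dom_1$; setting $\ubar_{\reg}^N:=\sum_{n>N}\bar\gamma_n\taubar_n$ gives the decomposition \eqref{Decompose_ubar}.

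Next I would identify these $\bar\gamma_n$ with the right-hand side of \eqref{Formula :gamma}. Since $\Delta(\eta_{\Boule,1}\ubar)$ is supported in the annular sector $\{1/2\leq|x|\leq1\}\cap\Dom$, one has $\int_{\Dom_1}\Delta(\eta_{\Boule,1}\ubar)\,\taubar^\star_n=\int_{\Dom_1\setminus\Dom_\delta}\Delta(\eta_{\Boule,1}\ubar)\,\taubar^\star_n$ for any $\delta<1/2$, so Green's second identity applies on the Lipschitz domain $\Dom_1\setminus\Dom_\delta$ (on which all quantities are smooth): the contribution on $\{|x|=1\}$ vanishes because $\eta_{\Boule,1}$ has compact support in $\Boule_1$, the contribution on $\Gamma$ vanishes because $\ubar$ and $\taubar^\star_n$ both vanish there, and $\Delta\taubar^\star_n=0$ away from the origin, so only the circle $\{|x|=\delta\}$ contributes. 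There $\eta_{\Boule,1}\equiv1$; plugging in the series for $\ubar$ and the explicit $\taubar^\star_n$ and using $\int_0^\omega\sin(\rhobar_m\theta)\sin(\rhobar_n\theta)\,\dd\theta=\frac\omega2\delta_{mn}$ collapses the angular sum to $m=n$, and the radial powers combine to the $\delta$-independent value $n\pi\bar\gamma_n$. This is precisely \eqref{Formula :gamma}.

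Finally, for \eqref{Num:001} I would pass to polar coordinates, use $|\Dom_R|=\frac12\omega R^2$, the orthogonality of the $\sin(\rhobar_n\cdot)$, and the uniform convergence from the second step to integrate term by term, obtaining, for $R\leq\frac12$,
\begin{equation*}
\fint_{\Dom_R}\big|\ubar_{\reg}^N\big|^2=\sum_{n>N}|\bar\gamma_n|^2\,\frac{R^{2\rhobar_n}}{2\rhobar_n+2}\qquad\text{and}\qquad\fint_{\Dom_1}|\ubar|^2=\sum_{n\geq1}|\bar\gamma_n|^2\,\frac1{2\rhobar_n+2}.
\end{equation*}
Since $R<1$ and $\rhobar_n\geq\rhobar_{N+1}$ for $n>N$, one has $R^{2\rhobar_n}\leq R^{2\rhobar_{N+1}}$, hence $\fint_{\Dom_R}|\ubar_{\reg}^N|^2\leq R^{2\rhobar_{N+1}}\fint_{\Dom_1}|\ubar|^2$, and taking square roots gives \eqref{Num:001} --- in fact with constant equal to $1$, independent of $\omega$ and $N$. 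The main obstacle I anticipate is the first step: making the separation of variables rigorous at $\HH^1$-regularity, and in particular transferring the energy bound to the radial profiles $c_n$ so as to exclude the singular modes $r^{-\rhobar_n}$. This exclusion is essential and is exactly where the corner geometry must be taken seriously, since for a nonconvex sector ($\omega>\pi$) the dual function $\taubar^\star_1$ itself lies in $\LL^2(\Dom_1)$, so mere square-integrability of $\ubar$ would not rule it out. Once the representation $\ubar=\sum_n\bar\gamma_n\taubar_n$ is established, the remaining two steps are short and routine.
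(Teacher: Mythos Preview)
The paper does not prove this theorem; it simply states that it ``is a consequence of results from the literature'' and cites \cite{Dauge_CRAS_1987,Dauge_2}. Your Fourier--sine expansion in the angular variable is precisely the classical route to this result in the special case of the Laplacian on a two-dimensional sector, and your argument is correct: the Euler ODE for the radial profiles, the exclusion of the singular modes $r^{-\rhobar_n}$ via the $\HH^1$ energy, the identification of $\bar\gamma_n$ through Green's second identity on $\Dom_1\setminus\Dom_\delta$, and the Parseval computation for \eqref{Num:001} all go through as you describe. Your observation that mere $\LL^2$-integrability would not exclude $\taubar_1^\star$ when $\omega>\pi$ is well taken; note however that $\taubar_1^\star$ is not distributionally harmonic at the origin, so even an $\LL^2$ formulation of \eqref{Divagradbar_1} would rule it out --- but your $\HH^1$ route is cleaner and matches how the theorem is actually invoked later (e.g.\ in the proof of Lemma~\ref{Lem:Hcv}, where $\ubar$ is the harmonic extension of $u\in\HH^1$). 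One minor point: to justify term-by-term evaluation of the boundary integral on $\{|x|=\delta\}$ you need uniform convergence of the differentiated series there, which follows from your growth bound $|\bar\gamma_n|\lesssim n^{1/2}$ together with $\sum_n n^{3/2}\delta^{\rhobar_n}<\infty$; this is implicit in what you wrote but worth making explicit.
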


Theorem \ref{Th:homog} still holds if we replace $\eta_{\Boule,1}$ 
by any function $\eta$ with compact support in $\Boule_1$, with $\eta =1$ in a neighborhood of $0$.
Relation \eqref{Formula :gamma} may also be written as
\begin{equation}\label{Formula :gamma3}
\bar{\gamma}_n=-\frac{1}{n \pi} \int_{\Dom}  \ubar\lt(2 \nabla \taubar^\star_n \cdot \nabla\eta_{\Boule,1} +\taubar^\star_n \Delta \eta_{\Boule,1}  \rt).
\end{equation}

\subsection{Our assumptions}\label{Sec:Assump}

In the sequel, we assume that the (random) coefficient field $a : \R^2 \rightarrow \R^{2 \times 2}$ is generated by an ensemble.
We denote both the ensemble and its associated expectation by $\langle \cdot \rangle$.
Below are the assumptions on the coefficients fields $a$ generated by $\langle \cdot\rangle$ :

\begin{enumerate}[label=\textbf{A.\arabic*}]
\item{
\label{Assump1}
$a$ is symmetric, uniformly $\lambda$-elliptic, and bounded. Namely, there holds
\begin{align}
\label{Ellipticite}
\xi \cdot a(x) \xi \geq \lambda |\xi|^2  \quad\et \quad \xi \cdot a^{-1}(x) \xi \geq |\xi|^2 \qquad \pourtout x, \xi \in \R^2.
\end{align}
}
\item{
\label{AssumpId}
$a$ admits the following decomposition :
\begin{align}\label{Decomposition}
a e_i&=\abar e_i- a \nabla \phi_i + \nabla \cdot \sigma_i \dans \R^2 \qquad \text{ that is }\qquad a_{ji} = \abar_{ji} - a_{jl}\partial_l \phi_i +  \partial_k \sigma_{ijk},
\end{align}
where $\abar=\Id$ is the homogenized matrix, the functions $\phi_i$ are the correctors, and the skew-symmetric fields $\sigma_i$ are the flux correctors.
The decomposition \eqref{Decomposition} is the central equation for defining the extended correctors $(\phi,\sigma)$. 
On the one hand, \eqref{Decomposition} immediately implies the familiar equation for the correctors
\begin{equation}\label{Eq :Corr}
\nabla \cdot a (\nabla \phi_i + e_i)=0 \dans \R^2.
\end{equation}
On the other hand, \eqref{Decomposition} determines the flux correctors only up to a gauge, which we prescribe by setting
\begin{equation}\label{Def:sigma*}
\sigma_{ijk} = \partial_k N_{ji} - \partial_j N_{ki} \pour \Delta N_{ji} = a_{jl}\lt( \delta_{li} + \partial_l \phi_i \rt) - \abar_{ji} \dans \R^2.
\end{equation}
\item{
\label{Assump2}
There exist exponents $\nu \in (0,1]$ and $\tilde{\nu} \geq 0$ such that the extended correctors\footnote{By convention, we denote by $\phi$ the first-order tensor $(\phi_i)_i$ and by $\sigma$ the third-order tensor $(\sigma_{ijk})_{i, j, k}$.} $(\phi,\sigma)$ are strictly sublinear with overwhelming probability in the sense of
\begin{equation}
\label{CorrSubDef} 
\begin{aligned}
&\displaystyle \sup_{x,y \in \R^2, |x-y| \leq r} \langl \lt|(\phi,\sigma) (x)- (\phi,\sigma) (y) \rt|^{p}  \rangl^{\frac{1}{p}}
\leq c_p r^{1-\nu} \ln^{\tilde{\nu}}(r+2)
\qquad \pourtout p \in [1,\infty) \et r > 0.
\end{aligned}
\end{equation}
}
}
\end{enumerate}

Notice that we may freely add constants to the extended correctors.
Therefore, for simplicity, we henceforth set
\begin{align}\label{Gauge :1}
\phi(0)=0 \et \sigma(0) = 0.
\end{align}
By Jensen's inequality, we may w.~l.~o.~g.\ assume that the constant $c_p$ in \eqref{CorrSubDef} is nondecreasing in $p \in [1,\infty)$. For simplicity, we denote
\begin{equation*}
\Xi :=\lt(\lambda,\omega,\nu,\tilde{\nu}, c\rt),
\end{equation*}
and we will not strive for the precise dependence in $\Xi$ of our estimates.
Notice that $c$ itself is a function (depending on $p$) and not a number; 
however, we use the notation $\lesssim_{\Xi,\beta}$, where $\beta$ is a tuple of constants, for designating $\lesssim_{\lambda,\omega,\nu,\tilde{\nu},c_p,\beta}$ in which the exponent $p$ itself depends on $\lambda$, $\omega$, $\nu$, $\tilde{\nu}$ and $\beta$.

\smallskip

These assumptions can be relaxed --see Section~\ref{Sec:Discuss} for a discussion on that matter.
However, we underline that Assumption \ref{Assump2} is central for quantifying the H-convergence of $a$ to $\abar$.
The somehow arbitrary form of the \rhs of \eqref{CorrSubDef} is dictated by the frameworks that we have in mind : some are deterministic (in which case the ensemble $\langl\cdot\rangl$ is a point mass) and others are random.
For example, our assumptions are suitable for
\begin{itemize}
\item periodic coefficient fields \cite{AvellanedaLin} (with $\nu=1$ and $\tilde{\nu}=0$), possibly perturbed by a defect \cite[Th.\ 3.1 \& Th.\ 4.1]{BLLcpde}  (with $\nu \in (0,1]$ depending on the integrability of the defect and $\tilde{\nu}=0$),
\item random coefficient fields generated by ensembles satisfying the spectral gap estimate \cite[Prop.\ 4.1]{JosienOtto_2019} or a finite-range-of-dependence assumption \cite[Th.\ 4.1 p.\ 124]{Armstrong_book_2018}, for $\nu = 1 \et \tilde{\nu} = \frac{1}{2}$ (in dimension $2$).
\end{itemize}
Notice that the behavior of the coefficient field $a$ near the boundary of the domain $\Dom$ is irrelevant --up to some logarithmic losses.
Hence, as in \cite[(1.7)]{JosienRaithel_2019}, we may also apply all our results to a coefficient field $\tilde{a}$ defined as follows :
\begin{equation*}
\tilde{a}(x)=
\lt\{
\begin{aligned}
&a(x) && \si \dist(x,\Gamma)>1,
\\
&b(x) && \si \dist(x,\Gamma) \leq 1,
\end{aligned}
\rt.
\end{equation*}
where $a$ satisfies Assumptions \ref{Assump1}, \ref{AssumpId}, \ref{Assump2}, but $b$ only satisfies Assumption \ref{Assump1}.
Indeed, we make use of results from \cite{JosienRaithel_2019}, the proofs of which always involve cut-off functions which vanishes along the boundary, which \textit{de facto} wipe out the influence of the extended whole-space correctors $(\phi,\sigma)$ near the boundary.

In order to apply the classical Schauder theory for obtaining pointwise estimates (see Section \ref{Sec:Discuss} for a precise discussion), we occasionally assume that $a$ is uniformly H\"older continuous with overwhelming probability in the sense of
\footnote{For simplicity, even though \eqref{CorrSubDef} and \eqref{A:holder} are of very different natures, we use the same constant $c_p$ to control them.}

\begin{equation}\label{A:holder}
\sup_{x \in \Dom}\langl \lt\| a \rt\|_{\CC^{0,\alpha}(\Boule(x))}^p \rangl^{\frac{1}{p}} \leq c_p.
\end{equation}

\subsection{Statement of the main results}\label{Sec:Res}

\subsubsection{Adaptation of the Lipschitz regularity theory to corners}

First, we construct the corner correctors $\phiC_n$ --recall that these solve \eqref{Def:psi}.
Equipped with these, we may then generalize the Lipschitz regularity theory of Avellaneda and Lin \cite{AvellanedaLin} to the case of corners.
\begin{theorem}\label{ThAL}
Let $N \in \mathbb{N}$ and $p \in [1, \infty)$.\footnote{\textit{Cf.} note \ref{Note1}.}
Under Assumptions \ref{Assump1}, \ref{AssumpId}, and \ref{Assump2}, there exists a random constant $\Cstar \geq 1$ satisfying
\begin{equation}
\label{Borne_CN-}
\langl |\Cstar|^p\rangl \lesssim_{\Xi,N,p} 1 \pourtout p \in [1,\infty),
\end{equation}
such that the following property holds :
If $u$ satisfies
\begin{equation}\label{Eq :u-aharm}
\lt\{
\begin{aligned}
&-\nabla \cdot a \nabla u = 0 && \dans \Dom_R,
\\
& u = 0 && \sur \Gamma_R,
\end{aligned}
\rt.
\end{equation}
for $R \geq 1$, then there exist coefficients $\gamma_1, \cdots, \gamma_N \in \R$ satisfying
\begin{equation}\label{Estim :gamma2}
\lt|\gamma_n\rt|\leq \Cstar R^{-\rhobar_n+1} \lt(\fint_{\Dom_{R}} |\nabla u|^2\rt)^{\frac{1}{2}} \qquad \pour n \in \{1,\dots,N\},
\end{equation} 
and such that, for all $1 \leq r \leq R$, we have 
\begin{align}\label{Num:301_L2-1}
\lt( \fint_{\Dom_r} \lt| \nabla u - \sum_{n=1}^N \gamma_n \nabla\lt( \taubar_n + \phiC_n\rt) \rt|^2 
\rt)^{\frac{1}{2}}
\leq \Cstar \lt(\frac{r}{R}\rt)^{\rhobar_{N+1}-1}
\lt(\fint_{\Dom_R} |\nabla u|^2\rt)^{\frac{1}{2}}.
\end{align}
\end{theorem}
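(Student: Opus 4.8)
The plan is to transfer the Campanato-type iteration that underlies the Avellaneda--Lin regularity theory from the whole-space (or flat-boundary) setting to the sectoral domain, using the corner correctors $\taubar_n+\phiC_n$ as the replacement for the polynomial ``harmonic tangent plane''. The deterministic ingredient is Lemma~\ref{Lem:heterog} (the H-convergence-based large-scale $\CC^{0,\alpha}$-type estimate), and the corner correctors together with their quasi-optimal growth rates come from Proposition~\ref{Prop:Quench} and Theorem~\ref{Th:OptiGR}; the random constant $\Cstar$ will be built out of the random constants appearing there, so that the moment bound \eqref{Borne_CN-} is inherited from the corresponding bounds on those objects.

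First I would set up the excess quantity
\[
\Exc(r):=\inf_{\gamma_1,\dots,\gamma_N\in\R}\lt(\fint_{\Dom_r}\lt|\nabla u-\sum_{n=1}^N\gamma_n\nabla(\taubar_n+\phiC_n)\rt|^2\rt)^{\frac12},
\]
and prove a one-step improvement of the form $\Exc(\theta R)\le C\theta^{\rhobar_{N+1}-1}\Exc(R)$ for a fixed small dyadic ratio $\theta\in(0,1)$ and all scales $R\ge 1$, provided $u$ is $a$-harmonic in $\Dom_R$ with zero boundary data on $\Gamma_R$. The mechanism is the usual one: approximate $u$ on $\Dom_R$ by the $\abar$-harmonic function $\ubar$ solving \eqref{Divagradbar_1} with the same boundary data (this is where the deterministic large-scale regularity / H-convergence estimate enters, to control $\|\nabla u-\nabla\ubar\|_{\LL^2(\Dom_{R})}$ by a negative power of $R$ times the energy, after subtracting off the leading corner profiles); apply the \emph{homogeneous} decomposition Theorem~\ref{Th:homog} to $\ubar$ to get $\ubar=\ubar_{\reg}^N+\sum_n\bar\gamma_n\taubar_n$ with the smallness \eqref{Num:001} of $\ubar_{\reg}^N$ on $\Dom_{\theta R}$; and then correct $\sum_n\bar\gamma_n\taubar_n$ into $\sum_n\bar\gamma_n(\taubar_n+\phiC_n)$, paying the corrector growth rate from Theorem~\ref{Th:OptiGR} as an error term. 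Collecting the three error contributions gives the gain factor $\theta^{\rhobar_{N+1}-1}$ plus lower-order terms that are absorbed by taking $\theta$ small and $R$ large.

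Next I would iterate this one-step estimate dyadically from scale $R$ down to scale $1$. A subtlety is that the near-optimal coefficients $\gamma_n$ depend on the scale; as in the flat-boundary arguments of \cite{JosienRaithel_2019}, one shows that the sequence of scale-dependent coefficients is Cauchy — the increments between consecutive dyadic scales are controlled geometrically by $\Exc$ at the coarser scale, using the nondegeneracy/homogeneity \eqref{taubar_Homog} of $\nabla\taubar_n$ on an annulus together with the sublinear (hence, at unit scale, controlled) size of $\nabla\phiC_n$ — and hence converges to limiting coefficients $\gamma_1,\dots,\gamma_N$. The telescoped bound then yields both the decay estimate \eqref{Num:301_L2-1} for all $1\le r\le R$ and, by summing the increments from scale $R$ all the way up (or rather estimating $\gamma_n$ at scale $R$ and adding the tail), the coefficient bound \eqref{Estim :gamma2} with the correct homogeneity exponent $R^{-\rhobar_n+1}$, after using Caccioppoli to pass from the energy $\fint_{\Dom_R}|\nabla u|^2$ to boundary-adapted quantities where needed.

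The main obstacle I anticipate is the interface between the \emph{deterministic} large-scale regularity (Lemma~\ref{Lem:heterog}), which is only a $\CC^{0,\alpha}$-type / weak statement, and the \emph{quasi-optimal, annealed} growth bounds on the corner correctors from Theorem~\ref{Th:OptiGR}: one must organize the one-step estimate so that the random constant is used only through $\phiC_n$ (and their $\LL^2$-averaged gradients on dyadic annuli), while the approximation $u\approx\ubar$ is handled purely deterministically, so that taking $p$-th moments and using \eqref{Borne_CN-}-type bounds on the constituent random constants (via a union bound / stationarity-free maximal argument over the countably many dyadic scales, with the logarithmic factor $\ln^{\tilde\nu}$ absorbed into the moment) produces a single random $\Cstar$ with all moments finite. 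Ensuring the logarithmic losses do not accumulate over the $O(\ln R)$ dyadic steps — they must be summable after the geometric gain $\theta^{\rhobar_{N+1}-1}$ is in place — is the delicate bookkeeping point, but it is exactly the place where the ``quasi-optimal (up to a log)'' nature of the hypotheses is used and is standard once the one-step estimate is clean.
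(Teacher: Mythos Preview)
Your mechanism is correct and is exactly what underlies Lemma~\ref{Lem:heterog}: the one-step improvement via H-convergence plus the homogeneous decomposition Theorem~\ref{Th:homog} plus corrector correction is Lemma~\ref{Lem:Hcv}, and the dyadic iteration with Cauchy coefficients is the proof of Lemma~\ref{Lem:heterog} in Section~\ref{proof_3.3}. However, the paper organizes the proof of Theorem~\ref{ThAL} quite differently, and this organizational choice dissolves precisely the difficulty you flag at the end.

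Rather than re-running a Campanato iteration with random constants at every step, the paper first converts the \emph{annealed} bounds on the extended correctors into a single \emph{quenched} bound with one random constant $\Cstar$ (Step~1, via a weighted $\ell^p$-type supremum argument). Then, by rescaling the coefficient field by a factor $\epsilon=\Cstar^{-q}$ for $q$ large, the rescaled correctors satisfy the deterministic hypothesis~\eqref{Num:40101} of Proposition~\ref{Prop:Quench} with constant~$1$; Proposition~\ref{Prop:Quench} then produces corner correctors with quenched growth bounds, and rescaling back yields random-constant bounds with all moments (Step~2). Finally, once a random minimal radius $\rstar$ satisfying~\eqref{Sublin**} is in hand, the already-proved deterministic Lemma~\ref{Lem:heterog} (specifically~\eqref{Num:119:lip}) is invoked directly (Step~3). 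Since the iteration in Lemma~\ref{Lem:heterog} is purely deterministic once~\eqref{Sublin**} holds, there is no accumulation of random constants or logarithmic losses over the $O(\ln R)$ scales; the randomness enters only through the single prefactor $(1+(\rstar/r)^{1+\rhobar_{N+1}})$. Your route would work but requires the bookkeeping you anticipate; the paper's ``quench first, then iterate deterministically'' structure avoids it entirely.

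One genuine issue: you invoke Theorem~\ref{Th:OptiGR} for the corner corrector growth rates, but that theorem is proved \emph{after} Theorem~\ref{ThAL} and in fact uses Step~2 of its proof. Only Proposition~\ref{Prop:Quench} (quenched, in the weaker averaged spatial norm) is available at this stage, and it is what the paper uses; fortunately you also cite it, and it suffices for verifying~\eqref{Sublin**}.
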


From Theorem \ref{ThAL}, we may actually obtain a local bound on any region of the domain $\Dom$.
Indeed, if we introduce a point $x \in \Dom_{R/2} \backslash \Dom_1$, a simple two-step procedure, relying first on Theorem \ref{ThAL}, replacing $r\rightsquigarrow 2|x|$, and then on the boundary Lipschitz regularity theory
\footnote{The boundary Lipschitz theory from \cite{Fischer_Raithel_2017} may be applied thanks to Assumptions \ref{Assump1}, \ref{AssumpId}, and \ref{Assump2}.} \cite[Th.\ 1 \& Th.\ 2]{Fischer_Raithel_2017}
 --in conjunction with a covering argument-- for jumping from $\Dom_{2|x|}$ to $\Dom_{1}(x)$, yields :
\begin{align}\label{Num:301_L2}
\lt( \fint_{\Dom_1(x)} \lt| \nabla u - \sum_{n=1}^N \gamma_n \nabla\lt( \taubar_n + \phiC_n\rt) \rt|^2 
\rt)^{\frac{1}{2}}
\leq \Cstar(x) \lt(\frac{|x|}{R}\rt)^{\rhobar_{N+1}-1}
\lt(\fint_{\Dom_R} |\nabla u|^2\rt)^{\frac{1}{2}},
\end{align}
for a random field $\Cstar(x)$ satisfying
\begin{equation}
\label{Borne_CN}
\langl |\Cstar(x)|^p\rangl \lesssim_{\Xi,N,p} 1 \qquad \pourtout x \in \Dom \et p \in [1,\infty).
\end{equation}

If, in addition, \eqref{A:holder} holds, then, thanks to classical Schauder theory \cite[Th. 5.19]{Giaquinta_2},  \eqref{Num:301_L2} can be upgraded to
\begin{align}\label{Num:301}
\lt| \nabla u(x) - \sum_{n=1}^N \gamma_n \nabla\lt( \taubar_n + \phiC_n\rt)(x) \rt|
\leq \Cstar(x) \lt(\frac{|x|}{R}\rt)^{\rhobar_{N+1}-1}
\lt(\fint_{\Dom_R(x)} |\nabla u|^2\rt)^{\frac{1}{2}}.
\end{align}

\begin{remark} 
We may prove that the coefficients $\gamma_n$ are close to the actual minimizers of the \lhs of \eqref{Num:301_L2-1}, \textit{cf.} Corollary~\ref{C:excessdecay}.
From this perspective, comparing the result of Theorem \ref{ThAL} to, \textit{e.g.}, \cite[Th. 2]{JosienRaithel_2019}, \cite[Th. 2]{Fischer_Raithel_2017}, or \cite[Th. 1]{Gloria_Neukamm_Otto_2019}, we notice that \eqref{Num:301_L2-1} can be recast as an excess-decay of a certain tilt-excess.
Defining the excess of $u$ on $\Dom_r$ as 
\begin{align}
\label{excess}
\Exc_N(u; r) :=\inf_{\gamma \in \R^N} \fint_{\Dom_r} \Big| \nabla u - \sum_{n=1}^N \gamma_n \nabla ( \taubar_n + \phiC_n)\Big|^2,
\end{align}
we see that squaring both sides of \eqref{Num:301_L2-1}, using that $\taubar_n + \phiC_n$ is $a$-harmonic, and taking an infimum over $\gamma \in \R^N$ yields that 
\begin{equation*}
\Exc_N(u;r) \leq \Cstar  \lt(\frac{r}{R}\rt)^{2(\rhobar_{N+1}-1)} \Exc_N(u;R).
\end{equation*}
\end{remark}

One important point of Theorem \ref{ThAL} is that the constants $\gamma_n$ are \textit{independent} of $r$ and $R$.
As will be seen in the proof, they only depend on $u$ in $\Dom_1$.
Thus, the coefficients $\gamma_n$ can be interpreted as a projection of $u$ on the singular functions $\taubar_n+\phiC_n$, for $n \in \{1,\dots,N\}$, from the scale $\rmax$ downwards.
Hence, as a key output of the above theorem, we may obtain a Liouville principle for decomposing functions that are $a$-harmonic in $\Dom$.\footnote{Rigorously speaking, we derive Corollary \ref{ThLiouville} from a deterministic version of Theorem \ref{ThAL}, \textit{cf.} Lemma \ref{Lem:heterog}.}

\begin{corollary}[Liouville principle]\label{ThLiouville}
Let $N \in \mathbb{N}$ and $\rho \in (\rhobar_{N},\rhobar_{N+1})$.
Under Assumptions \ref{Assump1}, \ref{AssumpId}, and \ref{Assump2},
if $u$ is a subalgebraic $a$-harmonic function in $\Dom$ in the sense of
\begin{equation}\label{Hypo :ThLiouville}
\begin{cases}
-\nabla \cdot a \nabla u=0 & \dans \Dom,
\\
u=0 & \sur \Gamma,
\end{cases}
\qquad \text{along with} \quad
\limsup_{R \uparrow \infty} R^{-\rho} \lt(\fint_{\Dom_R} |u|^2 \rt)^\frac{1}{2} < \infty,
\end{equation}
then, almost-surely, there exist coefficients $\gamma_1, \ldots, \gamma_N \in \R$ such that we may decompose $u$ as :
\begin{equation*}
u=\sum_{n=1}^N \gamma_n (\taubar_n+\phiC_n).
\end{equation*}
\end{corollary}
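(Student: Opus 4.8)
The plan is to deduce the Liouville principle from the excess-decay estimate \eqref{Num:301_L2-1} (in its deterministic incarnation, Lemma \ref{Lem:heterog}), by a standard ``blow-down'' argument. First I would fix a realization of $a$ for which the deterministic version of Theorem \ref{ThAL} holds with some finite constant $\Cstar$ (this is the full-measure set on which we work). Let $u$ be subalgebraic $a$-harmonic as in \eqref{Hypo :ThLiouville}. The growth bound controls $\fint_{\Dom_R}|u|^2$, but \eqref{Num:301_L2-1} is stated in terms of $\fint_{\Dom_R}|\nabla u|^2$; to bridge this I would invoke a Caccioppoli/interior-and-boundary energy estimate, $\fint_{\Dom_R}|\nabla u|^2 \lesssim R^{-2}\fint_{\Dom_{2R}}|u|^2$, valid since $u$ vanishes on $\Gamma$ (here one uses Assumption \ref{Assump1} and a cutoff that respects the Dirichlet condition). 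Combined with \eqref{Hypo :ThLiouville}, this yields
\begin{equation*}
\lt(\fint_{\Dom_R}|\nabla u|^2\rt)^{\frac12} \lesssim R^{\rho-1} \qquad \text{for all large } R.
\end{equation*}

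Next, I would apply Theorem \ref{ThAL} (deterministic form) on $\Dom_R$ for each large $R$, obtaining coefficients $\gamma_n^{(R)}$. The crucial point, emphasized right after the theorem, is that the $\gamma_n$ depend only on $u|_{\Dom_1}$ and \emph{not} on $R$; hence $\gamma_n^{(R)} =: \gamma_n$ is a single fixed vector. Plugging the energy bound into \eqref{Num:301_L2-1}, for fixed $r \geq 1$ and $R \to \infty$,
\begin{equation*}
\lt(\fint_{\Dom_r}\lt|\nabla u - \sum_{n=1}^N \gamma_n \nabla(\taubar_n+\phiC_n)\rt|^2\rt)^{\frac12} \leq \Cstar \lt(\frac{r}{R}\rt)^{\rhobar_{N+1}-1} \cdot C R^{\rho-1} = C\Cstar\, r^{\rhobar_{N+1}-1} R^{\rho-\rhobar_{N+1}}.
\end{equation*}
Since $\rho < \rhobar_{N+1}$, the exponent $\rho - \rhobar_{N+1}$ is negative, so the right-hand side tends to $0$ as $R \uparrow \infty$ while the left-hand side is independent of $R$. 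Therefore $\nabla u = \sum_{n=1}^N \gamma_n \nabla(\taubar_n+\phiC_n)$ on $\Dom_r$, and since $r$ was arbitrary, on all of $\Dom$. Integrating (using that $\Dom$ is connected and that both $u$ and $\sum_n \gamma_n(\taubar_n+\phiC_n)$ vanish on $\Gamma$, which pins the additive constant to zero) gives $u = \sum_{n=1}^N \gamma_n(\taubar_n+\phiC_n)$.

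Two points deserve care, and the second is the main obstacle. The minor one is ensuring that Theorem \ref{ThAL} is genuinely applicable with an $R$-independent constant: one must use the deterministic Lemma \ref{Lem:heterog} rather than the stochastic statement, so that ``almost surely'' makes sense — this is exactly the content of the footnote attached to the corollary. The more substantial point is verifying that the $\gamma_n$ produced by the theorem at scale $R$ really are the \emph{same} for all $R$; this is asserted in the remark following Theorem \ref{ThAL} (``the constants $\gamma_n$ are independent of $r$ and $R$ \dots they only depend on $u$ in $\Dom_1$''), and the Liouville argument hinges entirely on it. If instead one only knew that the $\gamma_n^{(R)}$ are \emph{bounded} uniformly in $R$, one would extract a subsequential limit and still conclude, but the clean statement in the excerpt makes this unnecessary. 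A final routine check is that $r \geq 1$ suffices: the estimate \eqref{Num:301_L2-1} is only claimed for $1 \leq r \leq R$, but letting $r$ range over $[1,\infty)$ and $R \to \infty$ exhausts $\Dom$, and $a$-harmonicity is a local property, so no information is lost near the tip.
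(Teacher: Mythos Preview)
Your blow-down strategy is exactly the paper's, and the Caccioppoli step is correct. The gap is precisely where you flag it: the $R$-independence of the $\gamma_n$. The remark after Theorem~\ref{ThAL} is not borne out by the construction in Lemma~\ref{Lem:heterog} --- there the $\gamma_n$ are built by an iteration that starts at scale $R$ and descends to $r_*$, so a priori they \emph{do} depend on $R$. Your fallback (``uniformly bounded, extract a subsequential limit'') is not available either: \eqref{Estim :gamma2} combined with the growth hypothesis only yields $|\gamma_n^{(R)}| \lesssim R^{\rho-\rhobar_n}$, which diverges for $n\le N$ since $\rho>\rhobar_N\ge\rhobar_n$.

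The paper closes the gap via Corollary~\ref{C:excessdecay}: it recasts Lemma~\ref{Lem:heterog} as decay of the tilt-excess $\Exc_N(u;r)$, shows $\Exc_N(u;r)=0$ for every $r\ge 1$ by sending $R\to\infty$ (so for each $r$ the \emph{optimal} coefficients $\gamma_n^r$ realize $\nabla u=\sum_n\gamma_n^r\nabla(\taubar_n+\phiC_n)$ on $\Dom_r$), and then invokes the quantitative comparison \eqref{eq:gammarR}, namely $|\gamma_n^r-\gamma_n^{R'}|\lesssim {R'}^{1-\rhobar_n}\Exc_N(u;R')^{1/2}=0$, to conclude that all the $\gamma_n^r$ coincide. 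Your argument can also be rescued more directly: once the left-hand side of \eqref{Num:301_L2-1} tends to $0$ for fixed $r$, the sequence $\sum_n\gamma_n^{(R)}\nabla(\taubar_n+\phiC_n)$ converges in $\LL^2(\Dom_r)$, and since the family $\{\nabla(\taubar_n+\phiC_n)\}_{n\le N}$ is linearly independent in $\LL^2(\Dom_r)$ for $r\ge r_*$ (this is essentially the computation around \eqref{est:diffgammarR:s1}, using $\LL^2$-orthogonality of the $\taubar_n$ and smallness of $\phiC_n$), the coefficients $\gamma_n^{(R)}$ themselves converge --- no a priori boundedness needed.
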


\subsubsection{Quasi-optimal growth rates for the corner correctors}
\label{sec:quasi}

Next, we derive growth rates on the corner and Dirichlet correctors that are optimal up to a logarithmic factor.

\begin{theorem}\label{Th:OptiGR}
Under Assumptions \ref{Assump1}, \ref{AssumpId}, and \ref{Assump2},
there exist corner correctors~$\phiC_n$ and Dirichlet correctors $\phiD$
(solving \eqref{Def:psi} and \eqref{Def:phiD}, respectively) satisfying
\begin{align}
\label{Opti_Decay}
&\langl\lt|\phiC_{n}(x)\rt|^p\rangl^{\frac{1}{p}}
\lesssim_{\Xi,p,n}
(|x|+1)^{\rhobar_{n}-\nu} \ln^{\tilde{\nu}'}(|x|+2)
&& \pourtout x \in \Dom \et n \in \NN\backslash\{0\},
\\
\label{Opti_Decay_Dir}
&\langl\lt|\phiD_i(x)\rt|^p\rangl^{\frac{1}{p}}
\lesssim_{\Xi,p}
(|x|+1)^{1-\nu} \ln^{\tilde{\nu}'}(|x|+2)
&& \pourtout x \in \Dom \et i \in \{1,2\},
\end{align}
for $p \in [1, \infty)$ and $\tilde{\nu}'\lesssim_{\tilde{\nu}} 1$.
\end{theorem}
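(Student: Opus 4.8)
The plan is to bootstrap from the weaker growth estimate already available --- namely, the quasi-optimal bound in the weaker spatial norm provided by Proposition \ref{Prop:Quench} (which constructs the $\phiC_n$ via H-convergence and a weak large-scale regularity statement) --- to the full pointwise annealed estimate \eqref{Opti_Decay}. First I would recall that $\phiC_n$ solves \eqref{Def:psi}, so that $w := \taubar_n + \phiC_n$ is $a$-harmonic in $\Dom$ with vanishing trace on $\Gamma$; by Theorem \ref{ThAL} (or rather its deterministic avatar, Lemma \ref{Lem:heterog}), $w$ satisfies an excess-decay estimate, and in particular the large-scale regularity gives control of $\fint_{\Dom_1(x)} |\nabla w|^2$ in terms of $\fint_{\Dom_R(x)} |\nabla w|^2$ for $R \sim |x|$. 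The scaling $|\nabla\taubar_n|\lesssim |x|^{\rhobar_n-1}$ from \eqref{taubar_Homog} then propagates to $\nabla \phiC_n$ up to the logarithmically-lossy fluctuation of the extended correctors $(\phi,\sigma)$. To pass from a gradient bound to the pointwise bound on $\phiC_n$ itself in \eqref{Opti_Decay}, I would integrate along a path from a point where the corrector is suitably normalised (using the gauge $\phi(0)=\sigma(0)=0$ and the analogous normalisation of $\phiC_n$), and then estimate the $L^p(\langl\cdot\rangl)$ norm of the resulting integral by Minkowski's integral inequality together with \eqref{CorrSubDef}; the exponent $\rhobar_n - \nu$ arises exactly as (scaling exponent of $\taubar_n$) minus (sublinearity gain $\nu$), and the logarithm $\tilde\nu'$ accumulates along the dyadic scales in the integration/telescoping.

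The technical heart is the passage from the weak-norm estimate of Proposition \ref{Prop:Quench} to the $\LL^\infty$-type annealed estimate. The mechanism I would use is the standard two-scale / Caccioppoli-plus-mean-value argument adapted to the sector: on each dyadic annulus $\Dom_{2^{k+1}}\setminus\Dom_{2^k}$ one writes $\phiC_n$ via its defining equation as a whole-space-type corrector problem with right-hand side $-\nabla\cdot a\nabla\taubar_n$, and uses the decomposition \eqref{Decomposition} of $ae_i$ to re-express $a\nabla\taubar_n$ in divergence form modulo the flux correctors; the key gain is that $\nabla\cdot\sigma$ is divergence-free, so the error that must be estimated involves only the \emph{increments} of $(\phi,\sigma)$ over a ball of size $\sim 2^k$, which \eqref{CorrSubDef} controls by $2^{k(1-\nu)}\ln^{\tilde\nu}$. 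One then sums the dyadic contributions, each of order $2^{k(\rhobar_n-1)}\cdot 2^{k(1-\nu)} = 2^{k(\rhobar_n-\nu)}$ times a log, and since $\rhobar_n - \nu$ can be either sign one takes care to telescope the geometric series in the correct direction (growing geometric series is dominated by its last term, giving $|x|^{\rhobar_n-\nu}$; if $\rhobar_n<\nu$ one still gets a bounded-plus-subalgebraic contribution absorbed into $(|x|+1)^{\rhobar_n-\nu}$). Near the tip ($|x|\lesssim 1$) and near the boundary $\Gamma$ the estimate follows from the boundary Lipschitz theory of \cite{Fischer_Raithel_2017} together with a covering argument, exactly as in the derivation of \eqref{Num:301_L2} from Theorem \ref{ThAL}, and in these regions the behaviour of $a$ near $\Gamma$ is irrelevant up to logarithmic losses (as noted after \eqref{A:holder}).

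The Dirichlet-corrector bound \eqref{Opti_Decay_Dir} is obtained by the same scheme but is simpler, since it is the $\taubar$-free ($\rhobar=1$) specialisation: $\phiD_i + x_i$ is $a$-harmonic in $\Dom$ with zero trace, one re-expresses $a\nabla x_i = ae_i$ via \eqref{Decomposition} in terms of $(\phi_i,\sigma_i)$, and the increments of these over scale $|x|$ give the $(|x|+1)^{1-\nu}\ln^{\tilde\nu'}$ rate; this is essentially the half-space/flat-boundary Dirichlet corrector estimate of \cite{Fischer_Raithel_2017, JosienRaithel_2019} applied on the sector, and in fact one may quote those results almost directly because their proofs use cut-offs vanishing on $\Gamma$. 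The main obstacle I anticipate is the bookkeeping of the dyadic sums across the scale $|x|\sim 1$ where the ``whole-space corrector'' regime meets the ``corner singularity'' regime, and ensuring the logarithmic exponent $\tilde\nu'$ genuinely depends only on $\tilde\nu$ (and not on $n$ or $p$ in an uncontrolled way) --- this requires that the number of dyadic scales that actually contribute is $O(\log|x|)$ rather than all of them, which is exactly what the geometric decay/growth of $2^{k(\rhobar_n-\nu)}$ guarantees away from the borderline case $\rhobar_n=\nu$, and in the borderline case one picks up one extra power of the logarithm, still $\lesssim_{\tilde\nu}1$.
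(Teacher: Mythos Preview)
Your high-level strategy --- start from Proposition~\ref{Prop:Quench}, localize, then integrate along a path --- matches the paper's, but two concrete gaps would block the argument as written.

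First, applying large-scale regularity directly to $w=\taubar_n+\phiC_n$ (your first paragraph) does \emph{not} produce the $\nu$ gain. Since $\fint_{\Dom_{|x|/2}(x)}|\nabla w|^2$ is dominated by $|\nabla\taubar_n|^2\sim |x|^{2(\rhobar_n-1)}$, the excess-decay you invoke only yields $\fint_{\Dom_1(x)}|\nabla\phiC_n|^2\lesssim |x|^{2(\rhobar_n-1)}$, hence $|\phiC_n|\lesssim |x|^{\rhobar_n}$ after integration --- one full power of $|x|^\nu$ short. The paper instead works with the remainder $\phitilde_n$ of the ansatz~\eqref{*ansatz}, which solves~\eqref{*Def:psitilde2} with right-hand side $h$ satisfying~\eqref{Borne:h}; it is this $h$ that already carries the rate $|x|^{\rhobar_n-1-\nu}$, and Corollary~\ref{C:LSL} (the flat-boundary annealed estimate, applied to $\phitilde_n$ on $\Dom_{|x|/2}(x)$, which does not touch the tip) transfers it to $\fint_{\Dom_1(x)}|\nabla\phitilde_n|^2$, cf.~\eqref{key1}. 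Your second paragraph gestures toward this mechanism via~\eqref{Decomposition}, but you use only the whole-space correctors $(\phi,\sigma)$; this forces the nontrivial boundary data $-\phi_i\partial_i\taubar_n$ on $\Gamma$ onto the remainder, and the resulting boundary layer spoils the $\LL^2$ estimate (see the remark following Lemma~\ref{*Lem:ansatzphi}). The paper's cure is to replace $\phi_i$ near each edge by the half-space correctors $\phiup_i,\phidown_i$ of Proposition~\ref{Prop:growhthscor}, which already vanish on the relevant edge; this is precisely what makes $h$ in~\eqref{*Def:h1h2} small all the way to~$\Gamma$.

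Second, path-integration of localized $\LL^2$ gradient bounds via Minkowski gives control only of the \emph{average} $\fint_{\Dom_1(x)}\phitilde_n$ (this is~\eqref{key2} in the paper), not of $\phiC_n(x)$ pointwise. The paper closes the gap by noting that $u:=\taubar_n+\phiC_n-\taubar_n(x)$ is $a$-harmonic on $\Dom_1(x)$ and applying the De~Giorgi--Nash--Moser $\LL^\infty$--$\LL^2$ bound, so that $|\phiC_n(x)|=|u(x)|\lesssim(\fint_{\Dom_1(x)}|u|^2)^{1/2}$; a triangle inequality on the ansatz~\eqref{*ansatz} together with~\eqref{taubar_Homog}, \eqref{Borne:HalfCorr} and~\eqref{key7} then yields~\eqref{Opti_Decay}. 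Without this step (or the extra H\"older assumption~\eqref{A:holder} plus Schauder), your argument does not reach a genuinely pointwise estimate.
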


The scaling $\rhobar_n-\nu$ of \eqref{Opti_Decay} is optimal in $|x|$.
Indeed, in the bulk of the corner domain, $\phiC_n$ is close to the correction of the
(classical, even without Dirichlet correctors) 2-scale expansion of $\taubar_{n}$, 
namely $\phi_i\partial_i\taubar_n$, which generically grows like $|x|^{\rhobar_n-\nu}$ by \eqref{CorrSubDef} and \eqref{taubar_Homog}. 
By contrast, the exponent $\tilde{\nu}'$ on $\ln|x|$  is not optimal;
since our proof cannot reach the optimal logarithmic exponents, 
we will not keep precise track of these. 
(In order to avoid technicalities, we may even unnecessarily increase this exponent in our estimates.)
However, a logarithmic loss with exponent at least $1$ is unavoidable in the case 
where there is a thin layer of fixed width in which the coefficient $a$ is replaced by another coefficient $\tilde{a}$ with no prescribed structure.
We refer to Section~\ref{Sec:Discuss} where such a case is mentioned and to \cite[Prop.\ 2.6]{JosienRaithel_2019} for a counter-example.

Let us comment on Theorem \ref{Th:OptiGR} in the simple case where the (moments of the) extended correctors are bounded\footnote{Namely, assume that $\nu=1$ and $\tilde{\nu}=0$ in \eqref{CorrSubDef}. 
See Section \ref{Sec:Assump} for examples.}.
Then there is a clear analogy between the scaling of the classical correctors $\phi_i$ in relation to the scaling of $x_i$ and the scaling of the corner correctors $\phiC_n$ with respect to the scaling of $\taubar_n$.
In both cases (up to some logarithmic factors), the correctors lose one order in $|x|$ in terms of growth at infinity with respect to the function that they correct.
Seen from the perspective of the 2-scale expansion \eqref{2scale}, 
this shows that the terms corresponding to the corner correctors scale like $\epsilon \ln^{\tilde{\nu}'}(\epsilon^{-1})$ (to be compared with $\epsilon$ in standard periodic homogenization).
As is usual, the (corner) correctors are necessary to approximate the gradient~$\nabla \ueps$ in strong norms (\textit{e.g.} $\LL^p$ for $p \in [1,\infty)$), but do not appear in the approximation of $\ueps$.

Notice that Theorem \ref{ThAL} and Corollary \ref{ThLiouville} are \textit{quenched} results in the terminology of \cite{Gloria_Neukamm_Otto_2019}.
In particular, they apply to a given realization of a coefficient field $a$, the extended correctors of which satisfy a deterministic analogue of estimate \eqref{CorrSubDef}, but with a random constant.
In contrast, Theorem \ref{Th:OptiGR} is an \textit{annealed} result, because \eqref{Opti_Decay} and \eqref{Opti_Decay_Dir} involve the $\LL^p_{\langle\cdot\rangle}$ moments.
Nevertheless, Theorem \ref{Th:OptiGR} can be turned into a quenched theorem by assuming that $\langl\cdot\rangl$ is a point mass.\\

\begin{remark}
Theorem \ref{Th:OptiGR} and Corollary \ref{ThLiouville} are not sufficient for obtaining the uniqueness of corner correctors~$\phiC_n$, 
for $n$ sufficiently large.
In particular, we may replace
\begin{align*}
\phiC_n \rightsquigarrow \phiC_n + \sum_{n'=1}^{n_0} \beta_{n'} \lt( \taubar_{n'} + \phiC_{n'}\rt),
\end{align*}
where $\rhobar_{n_0} \leq \rhobar_n - \nu$, and for coefficients $\beta_{n'} \in \R$, which also satisfies \eqref{Def:psi} and \eqref{Opti_Decay}.
Thus the coefficients $\gamma_n$ in Corollary \ref{ThLiouville} depend on the particular choice of corner correctors $\phiC_n$.
Practically, this implies that, for $n \geq 1 + \nu \frac{\omega}{\pi}$, the coefficients $\gamma_n$ in \eqref{2scale} may not be equal to the coefficients $\bar{\gamma}_n$ in \eqref{Decompose_ubar}.
\end{remark}

\begin{remark}[Dirichlet correctors and maximum principle]
If the extended correctors are bounded (in a quenched sense), 
we may directly appeal to the maximum principle (applied to the equation for the boundary correction) to get \eqref{Opti_Decay_Dir} with $\nu = 1$ and $\tilde{\nu}'=0$ ; 
however, such an approach is not tractable to \eqref{Opti_Decay}.
\end{remark}

\subsubsection{Error estimate for the nonstandard 2-scale expansion}

Our last result concerns the quality of the adapted 2-scale expansion \eqref{2scale}, which is quantified by means of an error estimate in an annealed $\LL^\infty$-like norm, \textit{cf.} \eqref{Num:7034} below.
Namely, we establish the following :
\begin{theorem}\label{Th:Error}
	Suppose that $\omega \in (0,2\pi)$.
	We place ourselves under the Assumptions \ref{Assump1}, \ref{AssumpId}, and \ref{Assump2}.
	\\
	Let $f \in \WW^{2,q}(\Dom)$, for $q > 2$, be supported in $\Dom_2 \backslash \Dom_{1}$ such that $\|f\|_{\WW^{2,q}(\Dom)} \leq 1$.
	Fix $\epsilon \leq 1/2$, and let $\ueps$ and $\ubar \in \HH^1(\Dom)$ solve
	\begin{equation}\label{Num:7001}
	\lt\{
	\begin{aligned}
	&-\nabla \cdot a\lt(\frac{\cdot}{\epsilon}\rt) \nabla \ueps = \nabla \cdot f = -\nabla \cdot \abar \nabla \ubar && \dans \Dom,
	\\
	&\ueps = 0 = \ubar && \sur \Gamma.
	\end{aligned}
	\rt.
	\end{equation}
	Let $\utieps^N$ be defined by \eqref{2scale} for $\gamma_n=\bar{\gamma}_n$, with $N>N_0$ and $N_0$ defined by
	\begin{equation}
	\label{Num:7029}
	N_0 :=\argmax\{N' \in \mathbb{N}, \rhobar_{N'}<\nu\}.
	\end{equation}
	Then, there exists $\tilde{\nu}'\lesssim_{\tilde{\nu}} 1$ such that for every $x\in \Dom_1 \backslash \Dom_{2\epsilon}$ and $p \in [1,\infty)$, there holds
	\begin{equation}\label{Num:7034}
	\langl \biggl(\fint_{\Dom_\epsilon(x)}| \nabla \utieps^N  - \nabla \ueps |^{2}\biggr)^\frac{p}2 \rangl^{\frac{1}{p}}
	\lesssim_{\Xi, q, p,N}\epsilon^\nu \ln^{\tilde{\nu}'}(\epsilon^{-1})|x|^{\rhobar_1-1}.
	\end{equation}
\end{theorem}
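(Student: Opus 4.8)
The plan is to follow the classical ``two-scale expansion error plus large-scale regularity'' scheme, adapted to the sectoral geometry. The key structural observation is that the corner part of $\utieps^N$ contributes \emph{nothing} to the residual: by the homogeneity $\taubar_n(x)=\epsilon^{\rhobar_n}\taubar_n(x/\epsilon)$ one has $\taubar_n(x)+\epsilon^{\rhobar_n}\phiC_n(x/\epsilon)=\epsilon^{\rhobar_n}(\taubar_n+\phiC_n)(x/\epsilon)$, and since $\taubar_n+\phiC_n$ is $a$-harmonic in all of $\Dom$ by \eqref{Def:psi} while $\Dom$ is scale-invariant, this function is exactly $a(\cdot/\epsilon)$-harmonic in $\Dom$. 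Therefore, putting $\ubar_{\reg}^N:=\ubar-\sum_n\bar{\gamma}_n\taubar_n$ globally (legitimate since the $\taubar_n$ are globally defined and $\abar$-harmonic, so $\nabla\cdot\abar\nabla\ubar=\nabla\cdot\abar\nabla\ubar_{\reg}^N$) and using $\gamma_n=\bar{\gamma}_n$, the error $w_\epsilon:=\ueps-\utieps^N$ solves the two-scale residual equation of $\ubar_{\reg}^N$ with Dirichlet correctors,
\begin{equation*}
-\nabla\cdot a(\tfrac{\cdot}{\epsilon})\nabla w_\epsilon=\nabla\cdot\Bigl(a(\tfrac{\cdot}{\epsilon})\nabla\bigl[(1+\epsilon\phiD_i(\tfrac{\cdot}{\epsilon})\partial_i)\ubar_{\reg}^N\bigr]-\abar\nabla\ubar_{\reg}^N\Bigr)\dans\Dom,\qquad w_\epsilon=0\sur\Gamma,
\end{equation*}
the boundary condition holding because $\phiD$, $\ubar_{\reg}^N$ and $\taubar_n+\phiC_n$ all vanish on $\partial\Dom$.

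I would then put this residual in divergence form using the corner flux correctors $\sigmaD$ of Lemma \ref{Lem:sig} --- skew-symmetric fields satisfying $a(e_i+\nabla\phiD_i)-\abar e_i=\nabla\cdot\sigmaD_i$ in $\Dom$ (they exist since the \lhs is divergence-free by \eqref{Def:phiD} and $\Dom$ is simply connected) and growing like $\phiD$. The usual algebraic rearrangement, in which the skew-symmetry of $\sigmaD$ annihilates the term carrying $\nabla\sigmaD$, recasts the right-hand side as $\nabla\cdot\tilde R_\epsilon$ with $|\tilde R_\epsilon(x)|\lesssim\epsilon(|\phiD|+|\sigmaD|)(x/\epsilon)\,|\nabla^2\ubar_{\reg}^N(x)|$. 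The factors are estimated both pointwise and in $\langl\cdot\rangl$-moments: by Theorem \ref{Th:homog} (applied with an exponent larger than $N$) and interior/flat-boundary Schauder estimates for the homogenized equation, $|\nabla^2\ubar_{\reg}^N(x)|\lesssim_{\Xi,N}|x|^{\rhobar_{N+1}-2}$ on $\Dom_1$ and $\lesssim 1$ on $\Dom_4$ near $\Supp f$ (here $\|f\|_{\WW^{2,q}}\le 1$ with $q>2$ gives $\nabla^2\ubar$ bounded near the forcing). Two arithmetic facts are essential: $\omega<2\pi$ together with $N>N_0\ge 0$ force $\rhobar_{N+1}>1$, so that $\nabla^2\ubar_{\reg}^N$ is square-integrable near the tip; and $N>N_0$ also gives $\rhobar_{N+1}>\nu$ (\textit{cf.} \eqref{Num:7029}). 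Together with \eqref{Opti_Decay_Dir} and its analogue for $\sigmaD$, this produces $\langl|\tilde R_\epsilon(x)|^p\rangl^{1/p}\lesssim_{\Xi,q,p,N}\epsilon^\nu\ln^{\tilde{\nu}'}(|x|/\epsilon+2)\,|x|^{\rhobar_{N+1}-1-\nu}$ for $\epsilon\le|x|\le4$, an exponent integrable near the tip, whence $\langl\|\tilde R_\epsilon\|_{\LL^2(\Dom_4)}^p\rangl^{1/p}\lesssim\epsilon^\nu\ln^{\tilde{\nu}'}(\epsilon^{-1})$ after a Minkowski interchange.

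The two remaining steps are the substance of the argument. \textbf{Step A} is the scale-one estimate $\langl(\fint_{\Dom_1}|\nabla w_\epsilon|^2)^{p/2}\rangl^{1/p}\lesssim\epsilon^\nu\ln^{\tilde{\nu}'}(\epsilon^{-1})$: split $\tilde R_\epsilon=\bar{\eta}\,\tilde R_\epsilon+(1-\bar{\eta})\tilde R_\epsilon$ with $\bar{\eta}$ a cut-off that is $1$ on $\Dom_2\supset\Supp f$ and supported in $\Dom_4$; the solution $w_\epsilon^{(1)}$, with square-integrable gradient, of $-\nabla\cdot a(\cdot/\epsilon)\nabla w_\epsilon^{(1)}=\nabla\cdot(\bar{\eta}\tilde R_\epsilon)$, $w_\epsilon^{(1)}=0$ on $\Gamma$, obeys $\langl\|\nabla w_\epsilon^{(1)}\|_{\LL^2(\Dom)}^p\rangl^{1/p}\lesssim\langl\|\tilde R_\epsilon\|_{\LL^2(\Dom_4)}^p\rangl^{1/p}$ by the energy estimate, while $w_\epsilon-w_\epsilon^{(1)}$ is $a(\cdot/\epsilon)$-harmonic in $\Dom_2$ and equals there $(\ueps-\ubar)-\epsilon\phiD_i(\cdot/\epsilon)\partial_i\ubar_{\reg}^N-\sum_n\bar{\gamma}_n\epsilon^{\rhobar_n}\phiC_n(\cdot/\epsilon)-w_\epsilon^{(1)}$; Caccioppoli on $\Dom_{3/2}$ then bounds $\|\nabla(w_\epsilon-w_\epsilon^{(1)})\|_{\LL^2(\Dom_1)}$ by the $\LL^2(\Dom_{3/2})$-norm of this explicit expression, which is $\lesssim\epsilon^\nu\ln^{\tilde{\nu}'}(\epsilon^{-1})$ in moments thanks to the $\LL^2$-type homogenization error $\|\ueps-\ubar\|_{\LL^2(\Dom_2)}$ (itself controlled by the energy estimate and the corrector bounds, \emph{including} \eqref{Opti_Decay} for $\phiC_1$ --- which is exactly what allows one to absorb the corner singularity of $\ubar$ that the plain two-scale expansion does not resolve) and to \eqref{Opti_Decay}--\eqref{Opti_Decay_Dir} for the explicit corrector terms. \textbf{Step B} localizes down to scale $\epsilon$: fix $x\in\Dom_1\setminus\Dom_{2\epsilon}$, set $\rho:=|x|$, rescale by $\epsilon$, and apply the large-scale regularity of Theorem \ref{ThAL} (in its inhomogeneous variant with right-hand side $\nabla\cdot\tilde R_\epsilon$) to pass from the macroscopic scale $1$ down to scale $\rho$ --- producing coefficients $|\gamma_n|\lesssim\Cstar(\fint_{\Dom_1}|\nabla w_\epsilon|^2)^{1/2}+(\text{scales }\rho\text{ to }1\text{ of }\tilde R_\epsilon)$ and a remainder $\lesssim\Cstar\rho^{\rhobar_{N+1}-1}(\fint_{\Dom_1}|\nabla w_\epsilon|^2)^{1/2}$ --- followed by the interior Avellaneda--Lin and flat-boundary Fischer--Raithel regularity theory (rescaled by $\rho$) to pass from scale $\rho$ to scale $\epsilon$ on $\Dom_\epsilon(x)\subset\Dom_{\rho/2}(x)$, picking up $\sup_{\epsilon\le s\le\rho}(\fint_{\Dom_s(x)}|\tilde R_\epsilon|^2)^{1/2}\lesssim\epsilon^\nu\rho^{\rhobar_{N+1}-1-\nu}\ln^{\tilde{\nu}'}(\epsilon^{-1})$. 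Since, for $\omega<2\pi$ and $N>N_0$, every exponent occurring here --- $\rhobar_n-1$ for $1\le n\le N$, $\rhobar_{N+1}-1$, and $\rhobar_{N+1}-1-\nu$ --- is $\ge\rhobar_1-1$, combining with Step A gives $(\fint_{\Dom_\epsilon(x)}|\nabla w_\epsilon|^2)^{1/2}\lesssim\Cstar(x)\,\epsilon^\nu\ln^{\tilde{\nu}'}(\epsilon^{-1})\,|x|^{\rhobar_1-1}$, and taking the $p$-th moment with \eqref{Borne_CN} and H\"older's inequality yields \eqref{Num:7034}.

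The main obstacle, beyond the construction of the corner flux correctors (Lemma \ref{Lem:sig}), is Step A: it must simultaneously handle the residual $\tilde R_\epsilon$ being non-compactly supported and growing away from $\Dom_1$, the corner singularity of $\ubar$ (which no plain two-scale expansion resolves, hence the decisive use of $\phiC_1$ and its sharp growth \eqref{Opti_Decay}), and the fact that only $\LL^2$-averaged --- not pointwise --- information on $w_\epsilon$ propagates down to scale $\epsilon$. This last point is exactly why \eqref{Num:7034} carries the factor $|x|^{\rhobar_1-1}$, the gradient scaling of the leading corner eigenfunction $\taubar_1+\phiC_1$, rather than the milder scaling of the residual: the discrepancy between $\bar{\gamma}_1$ and the true leading coefficient of $\ueps$ near the tip is itself of order $\epsilon^\nu\ln^{\tilde{\nu}'}(\epsilon^{-1})$, so $w_\epsilon$ genuinely contains a $\taubar_1+\phiC_1$-term of that size.
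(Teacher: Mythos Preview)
Your observation that $\taubar_n+\epsilon^{\rhobar_n}\phiC_n(\cdot/\epsilon)$ is \emph{exactly} $a_\epsilon$-harmonic on all of $\Dom$ is correct and elegant, and it does make the corner part of the residual vanish.  The gap is in Step~A.

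By setting $\ubar_{\reg}^N:=\ubar-\sum_n\bar\gamma_n\taubar_n$ globally, your $\utieps^N$ grows at infinity like $\epsilon^\nu|x|^{\rhobar_N-\nu}$ (from both $\phiD_{\epsilon,i}\partial_i\ubar_{\reg}^N$ and $\phiC_{\epsilon,n}$), so $\nabla\utieps^N\notin\LL^2(\Dom)$ and the plain energy estimate is unavailable.  Your residual $\tilde R_\epsilon=(a_\epsilon\phiD_{\epsilon,i}-\sigmaD_{\epsilon,i})\nabla\partial_i\ubar_{\reg}^N$ behaves like $\epsilon^\nu|x|^{\rhobar_N-1-\nu}$ at infinity, which is \emph{not} square-integrable precisely when $N>N_0$.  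Your workaround---cut off $\tilde R_\epsilon$ by $\bar\eta$, define $w_\epsilon^{(1)}$ by Lax--Milgram, and apply Caccioppoli to the $a_\epsilon$-harmonic remainder $w_\epsilon-w_\epsilon^{(1)}$ in $\Dom_2$---reduces the scale-one bound to $\|w_\epsilon-w_\epsilon^{(1)}\|_{\LL^2(\Dom_{3/2})}\lesssim\epsilon^\nu$, and you express this via $\|\ueps-\ubar\|_{\LL^2(\Dom_{3/2})}$.  But the claim that this $\LL^2$ homogenization error is $\lesssim\epsilon^\nu$ ``by the energy estimate and the corrector bounds'' is circular: obtaining a quantitative $\LL^2$ rate for $\ueps-\ubar$ on a corner domain already requires a two-scale expansion argument with a globally $\HH^1$ ansatz and an $\LL^2$ residual---exactly what you are trying to avoid.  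Without this input, $\|w_\epsilon-w_\epsilon^{(1)}\|_{\LL^2(\Dom_{3/2})}$ is only $O(1)$.

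The paper closes this gap by inserting a cut-off $\chi$ at scale~$1$ into the definition of $\utieps^N$ (see \eqref{2-scale-New}), which forces $\utieps^N-\ueps\in\HH^1(\Dom)$ and enables a direct energy estimate.  The price is an extra residual term $a_\epsilon\nabla\bigl((\chi-1)(\phiC_{\epsilon,n}-\phiD_{\epsilon,i}\partial_i\taubar_n)\bigr)$ supported outside $\Dom_{1/2}$; it decays like $|x|^{\rhobar_N-1-\nu}$, hence is $\LL^2$ only for $N\le N_0$.  This is why the paper first proves the global energy estimate \eqref{Num:7004} for $N=N_0$ (Step~1), and only then passes to $N>N_0$ by estimating $\utieps^{N,M}-\utieps^{N_0,M'}$ directly (Step~2).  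Your remark that $\rhobar_{N+1}>1$ for $\omega<2\pi$ and $N>N_0$, so that $\nabla^2\ubar_{\reg}^N\in\LL^2$ near the tip, is correct; the paper nevertheless introduces the $\chi_\epsilon$-trick because its Step~1 works at the level $N=N_0$, where $\rhobar_{N_0+1}$ can be below~$1$.
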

The \lhs of \eqref{Num:7034} may be understood as the $\LL^p_{\langl\cdot\rangl}$-moments of an $\LL^\infty$ norm \textit{down to scale $\epsilon$}.
However, we have employed $\LL^2$ estimates averaged on a domain of characteristic size $\epsilon$ on $\nabla \utieps^N  - \nabla \ueps$, instead of the pointwise estimates that would be expected for $\LL^\infty$ norms.
The reason is the following : we cannot reach the scales below $\epsilon$ because we have no extra regularity of the coefficient field $a$.
On the contrary, if we had assumed \eqref{A:holder}, we would have obtained the following estimate
\begin{align*}
\langl | \nabla \utieps^N(x)  - \nabla \ueps(x) |^p \rangl^{\frac{1}{p}}
\lesssim_{\Xi, q, p,N}\epsilon^\nu \ln^{\tilde{\nu}'}(\epsilon^{-1})|x|^{\rhobar_1-1} \pour |x|>\epsilon.
\end{align*}

Remark also that the \rhs of \eqref{Num:7034} blows up as $|x| \downarrow 0$ if $\omega > \pi$.
We do not know whether this rate is optimal or not for our choice of coefficients $\gamma_n := \bar{\gamma}_n$ in \eqref{2scale}, for $\bar{\gamma}_n$ defined in \eqref{Decompose_ubar}.
Indeed, the slightest deviation from the optimal choice of coefficients $\gamma_1$ in \eqref{2scale} comes with a term $\taubar_1+\phiC_1$ the gradient of which is singular in $0$ (with a rate $|x|^{\rhobar_1-1}$).
As a consequence, our result \eqref{Num:7034} might be optimal, but describing a non-optimal choice of $\gamma_n$.
Nevertheless, we underline that the choice $\gamma_n=\bar{\gamma}_n$ is reasonable and has the advantage of being easily tractable numerically -and indeed, it performs well, \textit{cf.} Section \ref{Sec:Num}.

\begin{remark}
	For a technical reason (in Lemma \ref{Lem:sig}), we cannot achieve the proof in the case $\omega = 2\pi$.
\end{remark}

\subsection{Numerical illustration}\label{Sec:Num}

For our numerical simulations, we define a domain $\tilDom$ (see Figure \ref{Fig:u})
that has a single corner at $0$, of angle $\omega=1.95 \pi$.
We work with a scalar coefficient field $a_\epsilon$ which is $\epsilon S\Q_1$-periodic, 
where $\epsilon \ll 1$, $Q:=[0,1]^2$, and $S$ is a rotation\footnote{Due to the rotation $S$, there is no obvious symmetry or alignment between the period of the coefficient field and the corner.}.
The matrix $a$ is chosen such that the homogenized matrix, $\bar{a}$, numerically satisfies $|\bar{a} -\rm{Id}| \leq 10^{-3}$.
We consider the following equation :
\begin{equation*}
\begin{cases}
-\nabla \cdot a\lt(\frac{\cdot}{\epsilon}\rt) \nabla u^\epsilon =f & \dans \tilDom,
\\
u^\epsilon=0 & \sur \partial \tilDom,
\end{cases}
\end{equation*}
where $f$ is a smooth function that vanishes near the corner (the third image in Figure \ref{Fig:u} shows $f$).
The main objective of our simulations is to indicate the superiority of the nonstandard 2-scale expansion introduced in~\eqref{2scale} over the classical (with Dirichlet correctors) 2-scale expansion in approximating $\nabla u^{\epsilon}$.

\begin{figure}[h]
	\begin{center}
		\includegraphics[width = \textwidth]{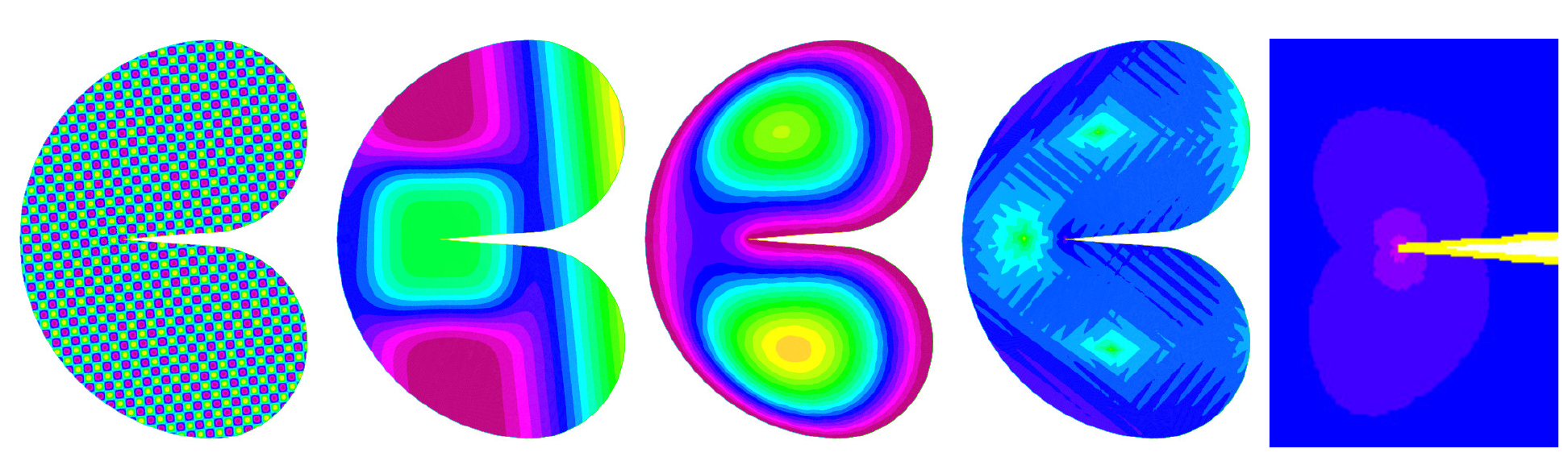}
	\end{center}
	\caption{
		From left to right : The coefficient field $a_\epsilon$ for $\epsilon=0.2$, the forcing term $f$, the solution $u^{\epsilon}$, $|\nabla u^{\epsilon}|$,
		and a detail of the latter.
		For the last image we have zoomed-in on the corner in order to highlight the singularity of $|\nabla u^{\epsilon}|$ at the corner.}
	\label{Fig:u}
\end{figure}

As a reference, we introduce the homogenized solution $\bar{u}$ to
\begin{equation*}
\begin{cases}
-\Delta \bar{u} =f & \dans \tilDom,
\\
\bar{u}=0 & \sur \partial \tilDom.
\end{cases}
\end{equation*}
As highlighted in Figure \ref{Fig:u} in the case of $\nabla u^{\epsilon}$, both $\nabla u^\epsilon$ and $\nabla \bar{u}$ display a singularity in the vicinity of the corner.
By Theorem \ref{Th:homog}, we have the following decomposition : $\ubar = \ubar_{\reg}^1 + \bar{\gamma}_1 \bar{\tau}_1$,
where $\ubar_{\reg}^1$ satisfies \eqref{Num:001} and represents the contribution of $\bar{u}$ which is more regular at the corner. 
\\
Next, we define the Dirichlet correctors $\phi_{\epsilon,i}^{\mathfrak{D},\tilDom}$ and the corner corrector $\phi_{1,\epsilon}^{\mathfrak{C},\tilDom}$ adapted to $\tilDom$ by
\begin{equation*}
\begin{cases}
-\nabla \cdot a\lt(\frac{\cdot}{\epsilon}\rt) \lt( \nabla \phi_{\epsilon,i}^{\mathfrak{D},\tilDom} + e_i\rt) =0 & \dans \tilDom,
\\
\phi_{\epsilon,i}^{\mathfrak{D},\tilDom}=0 & \sur \partial \tilDom,
\end{cases}
\qquad \et
\qquad 
\begin{cases}
-\nabla \cdot a\lt(\frac{\cdot}{\epsilon}\rt) \lt( \nabla \phi_{\epsilon,1}^{\mathfrak{C},\tilDom} + \nabla \bar{\tau}_1\rt) =0 & \dans \tilDom,
\\
\phi_{\epsilon,1}^{\mathfrak{C},\tilDom}=0 & \sur \partial \tilDom.
\end{cases}
\end{equation*}
In our simulations we use $\phi_{\epsilon,1}^{\mathfrak{C},\tilDom}$ as a proxy for the corner corrector $\epsilon^{\rhobar_1}\phiC_1(\frac{\cdot}{\epsilon})$ --which is itself out of reach numerically since it solves \eqref{Def:psi} on an infinite domain.

As already mentioned above, we compare the performance of the classical 2-scale expansion, \textit{i.e.}
\begin{equation}\label{naiveNum}
\tilde{u}^{\mathfrak{D},\epsilon} :=\ubar + \phi_{\epsilon,i}^{\mathfrak{D},\tilDom} \partial_i \ubar,
\end{equation}
in approximating $\nabla u^{\epsilon}$ to that of the hybrid 2-scale expansion proposed in \eqref{2scale}, \textit{i.e.}
\begin{equation}\label{2scaleNum}
\tilde{u}^{\mathfrak{C},\epsilon} :=\ubar_{\reg}^1 + \phi_{\epsilon,i}^{\mathfrak{D},\tilDom} \partial_i \ubar_{\reg}^1 + \bar{\gamma}_1\lt( \bar{\tau}_1 + \phi_{\epsilon,1}^{\mathfrak{C},\tilDom}\rt).
\end{equation}
For various values of $\epsilon$, we compare the relevant errors, $\Reps^{0,\epsilon} :=u^\epsilon - \tilde{u}^{\mathfrak{D},\epsilon}$ and $\Reps^{1,\epsilon} :=u^\epsilon - \tilde{u}^{\mathfrak{C},\epsilon}$, on shells by considering the energy norms.
In particular, for $i = 0,1$, we measure 
\begin{align*}
\Energ^{i,\epsilon}(R) :=\lt( \fint_{\Dom_R \backslash \Dom_{R/2}} |\nabla \Reps^{i,\epsilon}|^2 \rt)^{\frac{1}{2}} \pour R>0.
\end{align*}

These results are plotted in Figure \ref{Fig:Energ1}, where we track the ``gain'' that we obtain from using the hybrid 2-scale expansion, by showing $\Energ^{0,\epsilon}(R)/\Energ^{1,\epsilon}(R)$.
\begin{figure}[H]
	\begin{center}
		\includegraphics[width=\textwidth]{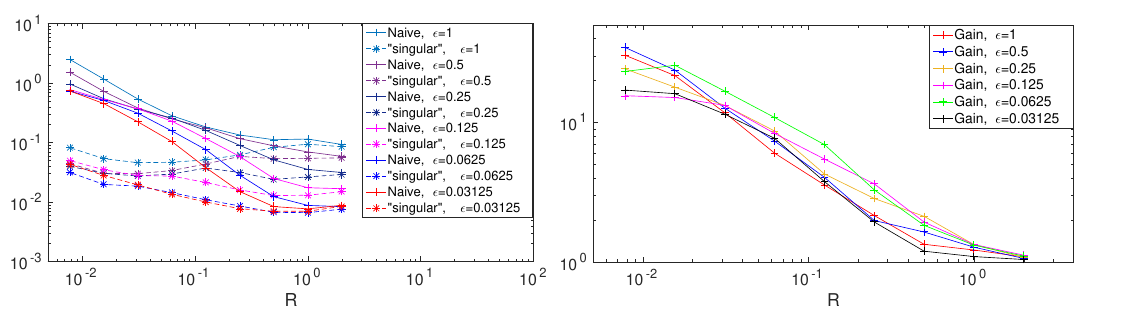}
	\end{center}
	\caption{
	On the left: $\Energ^{0,\epsilon}(R)$ and $\Energ^{1,\epsilon}(R)$ for various values of $\epsilon$.
	On the right: 
	Gain $\Energ^{0,\epsilon}(R)/\Energ^{1,\epsilon}(R)$ for various values of $\epsilon$.
}
	\label{Fig:Energ1}
\end{figure}

As can be seen above, in our simulations the hybrid 2-scale expansion always performed better than the classical 2-scale expansion.
Since the hybrid 2-scale expansion is designed to be particularly efficient near 0, it is natural that the advantage from using it gradually becomes more prominent as $R \downarrow 0$.
In the right graph of Figure \ref{Fig:Energ1} we see that a gain of precision of the 2-scale expansion larger than $10$ is obtained for radii $R \leq 0.02$.

Further numerical investigation shows that the gain scales like $R^{-0.6}$, which is not far from the expected value $R^{-\frac{\pi}{\omega}}$ since $\frac{\pi}{\omega} \simeq 0.51$.
The latter can be justified as follows :
On the one hand, remark that the 2-scale expansion \eqref{2scaleNum} only uses $N=N_0=1$ corner corrector and not the required $N>N_0$  (for $N_0$ defined by \eqref{Num:7029}). Hence, we may not directly use Theorem \ref{Th:Error}. However, a close inspection of the proof of Theorem \ref{Th:Error} (in particular Step 3) shows that we have to replace the \rhs of \eqref{Num:7034} by $\epsilon \ln^{\tilde{\nu}'}(\epsilon^{-1})|x|^{2(\rhobar_1-1)}$.
On the other hand, the gradient of the classical 2-scale expansion \eqref{naiveNum} suffers from a singularity $\phi_{\epsilon,i}^{\mathfrak{D},\tilDom} \partial_i \nabla\taubar_1$, which scales like $\epsilon |x|^{\rhobar_1-2}$.
Thus, neglecting the logarithms, we see that the expected gain scales like $|R|^{-\rhobar_1}$ as announced.

\subsection{Discussion of our results}\label{Sec:Discuss}

For the efficiency of the proofs and the simplicity of our results, we have used quite restrictive assumptions; 
however, these can be relaxed or even removed.

In Assumption \ref{AssumpId}, two hypotheses can be removed.
First, as already mentioned in Section \ref{Sec:ClassRes}, it is pure convenience to assume that $\abar=\Id$ : this can be removed by a simple change of coordinates (or equivalently, by doing so on the level of the singular functions $\taubar_n$).
Next, the symmetry of $a$ is by no means necessary.
However, if $a$ is not symmetric, it is necessary to also define transposed extended correctors $(\phi^*,\sigma^*)$ related to $a^*$ via \eqref{Decomposition}.
These transposed extended correctors would then also have to satisfy Assumption \ref{Assump2}.

For obtaining Theorem \ref{ThAL} and thus Corollary \ref{ThLiouville} for a given realization $a$, Assumption \ref{Assump2} can be relaxed.
Indeed, as in \cite{Fischer_Raithel_2017}, it is sufficient to assume that the extended correctors associated to the realization $a$ satisfy
\begin{equation}
\label{CorrSubDef-bis}
\begin{aligned}
&\displaystyle \sup_{r>0}  \lt(\fint_{\Boule_r} \lt|(\phi,\sigma) (x)- (\phi,\sigma) (y) \rt|^{2}\rt)^{1/2}
\leq C_* r^{1-\nu} \qquad \text{ for a given } \nu \in (0,1] \text{ and } C_*>0.
\end{aligned}
\end{equation}
We refer to Proposition \ref{Prop:Quench} below which goes in this direction.

In Theorem \ref{Th:OptiGR}, we only consider a first-order 2-scale expansion in the bulk of the domain.
However, considering a higher-order version of \eqref{2scale} might be interesting, hopefully retrieving a superior convergence rate.
(More precisely, we would employ classical higher-order correctors in order to approximate more precisely the regular part of $u$.)
In comparison, notice that the expansions of \cite{Costabel_2017} are provided up to any order.
Moreover, it is reasonable to think that, resorting to higher-order correctors, a more precise ansatz for $\phiC$ could be proposed.
However, such an approach could anyway not overcome the barrier in stochastic homogenization, which limits the control of the oscillation error to $O(\epsilon^{d/2})$ (see, \textit{e.g.}, \cite{DuerinckxOtto_2019}).

We believe that our method can be applied in more complex geometric situations as mentioned in Section \ref{Sec:GenDisc}; \textit{e.g.}, in higher dimensions or in situations involving interfaces as well as corners (see, \textit{e.g.} \cite[Chap.\ 5 \& 6]{Dauge_Livre_1988} and \cite{Costabel_2000}).
Since we employ tools that are also available for systems (our use of the maximum principle is a mere practical matter to get the pointwise \eqref{Opti_Decay}, by no means necessary), our results may also be applicable to systems with some adaptations.
Also, \cite{Kenig_Prange_2018} suggests that our results may still hold if we were to consider a domain $\Dom$ that looks like a corner on the macroscopic scale, but that oscillates on the small scale.

Last, we point out an intriguing technical detail :
In order to retrieve that the corner corrector $\phiC_1$ satisfies \eqref{Opti_Decay}, we need a separation of at least $\nu$ between the growth rate of the first $\abar$-harmonic function $\taubar_1$ and the decay rate of the first dual $\abar$-harmonic function $\taubar^\star_1$.
Hence, the assumption $\omega \leq 2\pi$ appears crucial to obtain \eqref{Opti_Decay} (see \eqref{Bizarre2} below).
This has practical consequences : In the case of ``corners'' displaying angles $\omega> 2 \pi$, we could only obtain deteriorated estimates on the corner corrector $\phiC_1$.
Even though this situation is less common than $\omega \leq 2\pi$, it is not unlikely (and may happen for a plate cut along a line, and bent so that one fold covers the other one).
Nevertheless, we do not know whether it is a mere technical restriction or a fundamental limitation.

\section{Strategy of proof}
\label{strategy}

\subsection{Ansatz for the corner correctors}\label{Sec:Better_Ansatz}

Before we present the precise ansatz for $\phiC_n$ let us briefly describe the idea : 
By standard calculations (using \eqref{Decomposition} and the skew-symmetry of the $\sigma_i$), 
the classical 2-scale expansion $\taubar_n+\phi_i\partial_i\taubar_n$ of the $\abar$-harmonic function $\taubar_n$ formally satisfies
\begin{align}
\label{equation_ansatz}
\nabla \cdot a\nabla (\taubar_n+\phi_i\partial_i\taubar_n)=\nabla \cdot (a\phi_i-\sigma_i)\nabla \partial_i\taubar_n=:\nabla\cdot \widehat h \dans \Dom,
\end{align}
where by \eqref{taubar_Homog} and \eqref{CorrSubDef} the growth  of $\widehat h$ is well-controlled.
This suggests an ansatz of the form
\begin{equation}\label{eq:ansatzsupernaiv}
 \phiC_n=\phi_i\partial_i\taubar_n+ \widehat\phi_n^{\mathfrak C},
\end{equation} 
where the correction $\widehat\phi_n^{\mathfrak C}$ satisfies an equation in $\Dom$ with \rhs $-\nabla \cdot \widehat h$ (determined by \eqref{equation_ansatz} and \eqref{Def:psi}).
Ansatz \eqref{eq:ansatzsupernaiv} has two drawbacks :
\begin{itemize}
\item $\widehat h$ is singular at the corner due to the blowup of $\nabla^2\taubar_n$,
\item $\widehat \phi_n^{\mathfrak C}$ satisfies the nontrivial boundary conditions $\widehat \phi_n^{\mathfrak C}=-\phi_i\partial_i\taubar_n$ on $\Gamma$.
\end{itemize}
To overcome the first problem, we simply introduce a cut-off at the tip of the corner. 
To address the second problem, we do not directly use the whole-space correctors $\phi_i$ but rather appeal to the half-space correctors associated to the ``upper'' boundary $\R_+(\cos\omega,\sin\omega)$ and to the ``lower'' boundary $\R_+ e_1$.\footnote{For consistency, we define all the correctors $\phiup_i$ and $\phidown_i$; however, in our arguments we only make use of $\phiup \cdot (\sin\omega,-\cos\omega)$ and $\phidown_2$, for the upper and the lower boundary respectively.
In the other directions, we are able to make use of the whole-space correctors $\phi_i$.}
These are defined as sublinear solutions to
\begin{equation}\label{Def:phidown}
\lt\{
\begin{aligned}
&-\nabla \cdot a \lt( \nabla \phiup_i + e_i \rt)=0 && \dans \mathbb{H}_\omega,
\\
&\phiup_i = 0 && \sur \partial\mathbb{H}_\omega,
\end{aligned}
\rt.
\qquad
\lt\{
\begin{aligned}
&-\nabla \cdot a \lt( \nabla \phidown_i + e_i \rt)=0 && \dans \R^2 \setminus \mathbb{H}_0,
\\
&\phidown_i = 0 && \sur \partial\mathbb{H}_0,
\end{aligned}
\rt.
\end{equation}
where $\mathbb{H}_\omega :=\{x \in \R^2, x\cdot(-\sin\omega,\cos\omega) \leq 0\}$.
Analogously to the whole-space case, flux-correctors, 
namely skew-symmetric $\R^{2 \times 2}$-valued fields $\sigup_i$ and $\sigdown_i$, can be defined through
\begin{equation} \label{Def:sigs}
a\nabla \phiup_i = (\abar-a) e_i + \nabla \cdot \sigup_i,
\qquad \et 
a\nabla \phidown_i = (\abar-a) e_i + \nabla \cdot \sigdown_i
\end{equation}
on the appropriate domains; \textit{i.e.}, on $\mathbb{H}_\omega$ for $\sigup_i$ and $\R^2 \setminus \mathbb{H}_0$ for $\sigdown_i$.
For use later on, we extend the half-space correctors and flux-correctors to the whole space by $0$. 
(In practice, they will be multiplied by a cut-off function so that their discontinuities due to the extension are irrelevant.)

The half-space correctors can be built and estimated by techniques similar to \cite[Th.\ 1.4]{JosienRaithel_2019} (see also \cite{BellaFischerJosienRaithel}) and we state without proof\footnote{The logarithm exponent $\tilde{\nu}+3$ of \eqref{Borne:HalfCorr} is suboptimal in most cases (it corresponds to the worst case $\nu=1$ for the flux correctors $\sigup$ and $\sigdown$).} : 
\begin{proposition}[Similar to Theorem~1.4 of \cite{JosienRaithel_2019}]\label{Prop:growhthscor}
Under Assumptions \ref{Assump1}, \ref{AssumpId}, and \ref{Assump2}, there exist extended half-space correctors $(\phiup,\sigup)$ and $(\phidown,\sigdown)$ that satisfy
\begin{align}\label{Borne:HalfCorr}
\begin{split}
& \langl \lt(\fint_{\Dom(x)}
 |\lt(\phiup,\phidown,\sigup,\sigdown \rt)|^2\rt)^{\frac{p}{2}} \rangl^{\frac{1}{p}}\\
& \qquad \lesssim_{\Xi,p}
(|x|+1)^{1-\nu} \ln^{\tilde{\nu} + 3}(|x|+2) \pourtout x \in \Dom \et  p \in [1,\infty).
\end{split}
\end{align}
\end{proposition}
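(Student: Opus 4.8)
The plan is to follow the same strategy used for the whole-space and flat-boundary correctors in \cite{JosienRaithel_2019} and \cite{BellaFischerJosienRaithel}, adapting it to the two half-spaces $\mathbb{H}_\omega$ and $\R^2 \setminus \mathbb{H}_0$. First I would construct the half-space correctors $\phiup_i$ and $\phidown_i$ by solving the corrector equation \eqref{Def:phidown} on a large ball $\Boule_R$ (intersected with the half-space) with homogeneous Dirichlet data and sending $R \to \infty$; the existence of a sublinear solution follows as in \cite[Th.\ 1.4]{JosienRaithel_2019}, using the Caccioppoli inequality together with the input that the whole-space extended correctors $(\phi,\sigma)$ are sublinear (Assumption \ref{Assump2}), which controls the boundary layer. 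The key point is that in the directions parallel to the boundary the half-space corrector can be compared directly to the whole-space corrector $\phi_i$, so only the transverse component genuinely requires the half-space construction — this is why only $\phiup \cdot(\sin\omega,-\cos\omega)$ and $\phidown_2$ actually enter the later arguments, as noted in the footnote.

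Next I would establish the growth estimate \eqref{Borne:HalfCorr}. The standard route is to write down the equation satisfied by the difference between the half-space corrector and a suitable truncation of the whole-space corrector, exploiting the decomposition \eqref{Def:sigs} and the skew-symmetry of the flux correctors to rewrite $a\nabla\phiup_i = (\abar - a)e_i + \nabla\cdot\sigup_i$; this lets one bound the $\LL^2$-average of $\phiup_i$ on $\Dom(x)$ in terms of averages of $(\phi,\sigma)$ on dyadic annuli, which by \eqref{CorrSubDef} grow like $(|x|+1)^{1-\nu}\ln^{\tilde\nu}(|x|+2)$. The flux correctors $\sigup_i,\sigdown_i$ are built via their defining Poisson-type equation (analogous to \eqref{Def:sigma*}) and estimated similarly, at the cost of extra logarithmic factors — this accounts for the suboptimal exponent $\tilde\nu + 3$ in the worst case $\nu = 1$. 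One then assembles the annealed $\LL^p_{\langl\cdot\rangl}$ bound by taking $p$-th moments and using that $c_p$ in \eqref{CorrSubDef} is nondecreasing in $p$, together with a Minkowski/triangle-inequality-in-$p$ argument over the dyadic decomposition.

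The main obstacle I expect is the interaction between the Dirichlet boundary condition on $\partial\mathbb{H}_\omega$ (resp.\ $\partial\mathbb{H}_0$) and the control of the boundary-layer contribution: one must show that imposing $\phiup_i = 0$ on the boundary does not destroy the sublinear growth, which requires a careful Caccioppoli-type estimate near the flat boundary that uses the vanishing of the test function there (as emphasized in the paper's remarks about cut-offs wiping out the influence of $(\phi,\sigma)$ near $\Gamma$). Since this is exactly the content of \cite[Th.\ 1.4]{JosienRaithel_2019} — stated there for a flat boundary and applicable here after rotating so that $\partial\mathbb{H}_\omega$ or $\partial\mathbb{H}_0$ becomes $\{x_2 = 0\}$ — the cleanest presentation is to invoke that result directly, as the proposition's name already signals, and merely remark that Assumptions \ref{Assump1}, \ref{AssumpId}, \ref{Assump2} supply exactly the hypotheses needed, with the logarithmic exponent traced through the flux-corrector construction to give $\tilde\nu + 3$.
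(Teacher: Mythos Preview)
Your proposal is correct and matches the paper's approach: the paper explicitly states this proposition \emph{without proof}, referring to \cite[Th.\ 1.4]{JosienRaithel_2019} and \cite{BellaFischerJosienRaithel}, and your outline --- culminating in the recommendation to simply invoke that cited result after rotating each half-space to $\{x_2=0\}$ --- is exactly what the authors intend. Your identification of the source of the suboptimal exponent $\tilde\nu+3$ (the flux-corrector construction in the worst case $\nu=1$) also agrees with the paper's own footnote.
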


Equipped with these extended half-space correctors, we are in a position to propose an efficient ansatz for the corner correctors.
Towards this aim, we introduce a partition of unity $\etaup + \etabulk + \etadown = 1$ in $\Dom$, such that all these functions, $\etaup$, $\etabulk$, and $\etadown$, are nonnegative and only depend on the angle $\theta$ in a smooth way, with $\etaup = 1$ for $\theta$ near $\omega$ and vanishing for $\omega - \theta > \omega/4$, $\etadown = 1$ for $\theta$ near $0$ and vanishing for $\theta > \omega/4$.

\begin{lemma}\label{*Lem:ansatzphi}
Then, using the following ansatz :
\begin{equation}
\label{*ansatz}
\phiC_n=(1-\eta_{\Boule,1})\lt( \etaup \phiup_i + \etabulk \phi_i  + \etadown \phidown_i \rt) \partial_i \taubar_n + \phitilde_n,
\end{equation}
the remainder $\phitilde_n$ satisfies
\begin{equation}
\label{*Def:psitilde2}
\lt\{
\begin{aligned}
-\nabla \cdot a \nabla \phitilde_{n} =~& \nabla \cdot h && \dans \Dom,
\\
\phitilde_{n}=~&0 && \sur \Gamma,
\end{aligned}
\rt.
\end{equation}
for
\begin{equation}
\label{*Def:h1h2}
\begin{aligned}
h :=\eta_{\Boule,1} (a-\abar) \nabla \taubar_n 
&+\lt(a\phiup_i- \sigup_i\rt)
\nabla \lt( (1-\eta_{\Boule,1}) \etaup  \partial_i \taubar_n \rt)
\\
&+\lt(a\phi_i- \sigma_i\rt)
\nabla \lt( (1-\eta_{\Boule,1}) \etabulk  \partial_i \taubar_n \rt)
\\
&+
\lt(a\phidown_i- \sigdown_i\rt)
\nabla \lt( (1-\eta_{\Boule,1})\etadown  \partial_i \taubar_n\rt).
\end{aligned}
\end{equation}
\end{lemma}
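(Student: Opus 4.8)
The assertion is a direct algebraic identity, so the plan is to substitute the ansatz \eqref{*ansatz} into \eqref{Def:psi} and reorganize using the corrector and flux-corrector relations. Abbreviate $\chi^{\mathrm{up}}:=(1-\eta_{\Boule,1})\etaup$, $\chi^{\mathrm{bulk}}:=(1-\eta_{\Boule,1})\etabulk$, $\chi^{\mathrm{down}}:=(1-\eta_{\Boule,1})\etadown$ (so that $\chi^{\mathrm{up}}+\chi^{\mathrm{bulk}}+\chi^{\mathrm{down}}=1-\eta_{\Boule,1}$), write $\phi^{\bullet}$ and $\sigma^{\bullet}$ for the (extended) correctors and flux correctors with the convention $\phi^{\mathrm{bulk}}=\phi$, $\sigma^{\mathrm{bulk}}=\sigma$, and set $\Phi:=\sum_{\bullet}\chi^{\bullet}\phi^{\bullet}_i\partial_i\taubar_n$, so that \eqref{*ansatz} reads $\phiC_n=\Phi+\phitilde_n$. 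Since $-\nabla\cdot a\nabla(\taubar_n+\phiC_n)=0$ by \eqref{Def:psi}, we have $-\nabla\cdot a\nabla\phitilde_n=\nabla\cdot a\nabla(\taubar_n+\Phi)$, so it suffices to prove $\nabla\cdot a\nabla(\taubar_n+\Phi)=\nabla\cdot h$ in $\Dom$ together with $\phitilde_n=0$ on $\Gamma$. The boundary condition is immediate: $\taubar_n=r^{\rhobar_n}\sin(\rhobar_n\theta)$ vanishes on $\Gamma=\{\theta=0\}\cup\{\theta=\omega\}$, hence $\phiC_n=0$ there by \eqref{Def:psi}; along $\{\theta=\omega\}$ the partition of unity degenerates to $\etaup\equiv1$, $\etabulk=\etadown=0$, so $\Phi=(1-\eta_{\Boule,1})\phiup_i\partial_i\taubar_n=0$ because $\phiup_i=0$ on $\partial\mathbb{H}_\omega\supset\{\theta=\omega\}$ by \eqref{Def:phidown}, and symmetrically $\Phi=0$ on $\{\theta=0\}$; thus $\phitilde_n=-\Phi=0$ on $\Gamma$.

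For the interior identity I would expand $a\nabla(\taubar_n+\Phi)$ by the product rule, grouping each scalar component $\phi^{\bullet}_i$ as one factor and $\chi^{\bullet}\partial_i\taubar_n$ as the other:
\begin{equation*}
a\nabla(\taubar_n+\Phi)=a\nabla\taubar_n+\sum_{\bullet}\Bigl(\chi^{\bullet}\,\partial_i\taubar_n\,a\nabla\phi^{\bullet}_i+\phi^{\bullet}_i\,a\nabla\bigl(\chi^{\bullet}\partial_i\taubar_n\bigr)\Bigr).
\end{equation*}
Then insert the flux-corrector relations: \eqref{Decomposition} yields $a\nabla\phi_i=(\abar-a)e_i+\nabla\cdot\sigma_i$ and \eqref{Def:sigs} the analogues for $\phiup,\phidown$; these are invoked only on the supports of the respective $\chi^{\bullet}$, which is legitimate provided the partition of unity is chosen so that $\Supp\chi^{\mathrm{up}}\subset\mathbb{H}_\omega$ and $\Supp\chi^{\mathrm{down}}\subset\R^2\setminus\mathbb{H}_0$ (this is why $\etaup,\etadown$ are supported within angular distance $\omega/4$ of the corresponding boundary rays), so that the discontinuities of the zero-extensions of the half-space correctors never fall inside a derivative. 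The $(\abar-a)e_i$ terms then sum, using $\sum_{\bullet}\chi^{\bullet}=1-\eta_{\Boule,1}$, to $(1-\eta_{\Boule,1})(\abar-a)\nabla\taubar_n$, which combined with $a\nabla\taubar_n$ gives $\abar\nabla\taubar_n+\eta_{\Boule,1}(a-\abar)\nabla\taubar_n$; since $\taubar_n$ is $\abar$-harmonic (\textit{cf.} \eqref{Def:taubar}), $\nabla\cdot\abar\nabla\taubar_n=0$ and only the first term of $h$ survives. For the $\nabla\cdot\sigma^{\bullet}_i$ terms I would use $\chi^{\bullet}\partial_i\taubar_n\,\nabla\cdot\sigma^{\bullet}_i=\nabla\cdot\bigl(\chi^{\bullet}\partial_i\taubar_n\,\sigma^{\bullet}_i\bigr)-\sigma^{\bullet}_i\nabla\bigl(\chi^{\bullet}\partial_i\taubar_n\bigr)$, noting that $\nabla\cdot\nabla\cdot(\psi\,\sigma^{\bullet}_i)=0$ by skew-symmetry of $\sigma^{\bullet}_i$; the leftover $-\sigma^{\bullet}_i\nabla(\chi^{\bullet}\partial_i\taubar_n)$ merges with $\phi^{\bullet}_i\,a\nabla(\chi^{\bullet}\partial_i\taubar_n)$ into $\bigl(a\phi^{\bullet}_i-\sigma^{\bullet}_i\bigr)\nabla(\chi^{\bullet}\partial_i\taubar_n)$. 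Spelling out $\bullet\in\{\mathrm{up},\mathrm{bulk},\mathrm{down}\}$ reproduces exactly the remaining three terms of \eqref{*Def:h1h2}, whence $\nabla\cdot a\nabla(\taubar_n+\Phi)=\nabla\cdot h$ and \eqref{*Def:psitilde2} follows.

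The computation itself is routine; the two points that genuinely need care are the ones just flagged. First, the zero-extension of the half-space correctors is harmless only because the cutoffs $\chi^{\mathrm{up}},\chi^{\mathrm{down}}$ are localized inside $\mathbb{H}_\omega$, $\R^2\setminus\mathbb{H}_0$, so that no derivative ever sees a jump — this is the content of the footnote and the reason for prescribing the angular supports of $\etaup,\etadown$. Second, $\nabla\taubar_n\sim|x|^{\rhobar_n-1}$ is singular at the tip, which is precisely why every corrector term carries the factor $1-\eta_{\Boule,1}$ vanishing near $0$; the one singular contribution that remains, $\eta_{\Boule,1}(a-\abar)\nabla\taubar_n$, is still locally square-integrable because $2(\rhobar_n-1)>-2$, so $h\in\LL^2_{\loc}(\Dom)$ and \eqref{*Def:psitilde2} is a well-posed (energy) problem for $\phitilde_n$, setting the stage for the growth estimates to follow.
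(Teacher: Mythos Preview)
Your proof is correct and follows essentially the same route as the paper: the paper first isolates the identity $-\nabla\cdot a\nabla(\phi^{\bullet}_i f_i)=\nabla\cdot(\sigma^{\bullet}_i-a\phi^{\bullet}_i)\nabla f_i+\nabla\cdot(a-\abar)f$ (its \eqref{Num:4003}) and then applies it three times with $f_i=(1-\eta_{\Boule,1})\eta^{\bullet}\partial_i\taubar_n$, whereas you unroll the same computation inline. Your explicit check of the boundary condition $\phitilde_n=0$ on $\Gamma$ and your remark on the angular supports of $\etaup,\etadown$ (so that the zero-extensions of the half-space correctors are never differentiated across their jump) make precise two points the paper leaves implicit.
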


The \rhs of \eqref{*Def:psitilde2}, \textit{i.e.} $h$ defined above, enjoys the appealing property that, by \eqref{Borne:HalfCorr}, \eqref{CorrSubDef}, and \eqref{taubar_Homog}, and since $|\nabla \etaup(x)|+|\nabla \etabulk(x)|+|\nabla \etadown(x)|\lesssim |x|^{-1}$,  it satisfies
\begin{equation}\label{Borne:h}
 \langl \lt(\fint_{\Dom(x)} |h|^2\rt)^{\frac{p}{2}} \rangl^{\frac{1}{p}}
\lesssim_{\Xi,p}
(|x|+1)^{\rhobar_n-1-\nu} \ln^{\tilde{\nu}'}(|x|+2)\qquad\mbox{for all $x \in \Dom$},
\end{equation}
for $\tilde{\nu}' = \tilde{\nu} + 3$.
Hence, up to some logarithmic losses, we may expect that $\nabla \phitilde(x)$ is bounded by $(|x|+1)^{\rhobar_n-1-\nu}$.
Integrating this yields Theorem \ref{Th:OptiGR}.

\begin{remark}
Instead of \eqref{*ansatz}, we may consider the following ansatz (as in \cite{JosienRaithel_2019,Josien_InterfPer_2018}) :
\begin{equation}
\label{ansatz}
\phiC_n=\eta \phi_i \partial_i\taubar_n + \phitilde_n,
\end{equation}
where $\eta$ is a smooth cut-off function that is equal to $0$ on the boundary $\Gamma$.
However, this would necessitate dealing with the layer $\partial \Dom$, which, as argued in  \cite[Sec.\ 2.3]{JosienRaithel_2019}, causes losses when using
the $\LL^2$-type estimates at the core of the approach of \cite{Fischer_Raithel_2017}.
In \cite{JosienRaithel_2019}, this difficulty is circumvented by appealing to bounds on the mixed derivatives of the Green function.
Such a strategy is feasible but is less straightforward, \textit{cf.} the Acknowledgment section.
\end{remark}

\begin{proof}[Proof of Lemma \ref{*Lem:ansatzphi}]
We first recall the following identity for an arbitrary vector field $f$ :
\begin{equation}
\label{Num:4003}
-\nabla \cdot a \nabla \lt(\phiup_i f_i\rt)
=\nabla \cdot \lt( \sigup_i- a\phiup_i\rt)
\nabla f_i + \nabla \cdot (a - \abar) f. 
\end{equation}
A similar identity holds replacing $\phiup \rightsquigarrow \phidown$ and $\sigup \rightsquigarrow\sigdown$, and also replacing $\phiup \rightsquigarrow \phi$ and $\sigup \rightsquigarrow\sigma$.

For the convenience of the reader, we recall the argument for \eqref{Num:4003}.
We begin with
\begin{align}\label{Num:4001}
-\nabla \cdot a \nabla \lt(\phiup_i f_i\rt)
=&
-\nabla \cdot a \phiup_i \nabla f_i
-\nabla \cdot f_i a \nabla \phiup_i.
\end{align}
By \eqref{Def:sigs}, the rightmost term of \eqref{Num:4001} reads
\begin{equation*}
-\nabla \cdot f_i a \nabla \phiup_i
\overset{\eqref{Def:sigs}}=
\nabla \cdot (a-\abar) f - \nabla \cdot (\nabla \cdot \sigup_i) f_i
=
\nabla \cdot (a-\abar) f + \nabla \cdot \sigup_i \nabla f_i,
\end{equation*}
where we have made use of the antisymmetry of $\sigup_i$.
Inserting this into \eqref{Num:4001} yields \eqref{Num:4003}.

Here comes the proof of \eqref{*Def:psitilde2}.
We compute
\begin{align*}
-\nabla \cdot a \nabla \phitilde_n 
\overset{\eqref{*ansatz}}{=}~&
-\nabla \cdot a \nabla \phiC_n
+
\nabla \cdot a \nabla \lt((1-\eta_{\Boule,1}) \etaup  \phiup_i \partial_i \taubar_n\rt)
+\nabla \cdot a \nabla \lt((1-\eta_{\Boule,1}) \etabulk \phi_i \partial_i \taubar_n\rt)
\\&
+
\nabla \cdot a \nabla \lt( (1-\eta_{\Boule,1})\etadown \phidown_i \partial_i \taubar_n  \rt)
\\
\overset{\eqref{Def:psi},\eqref{Num:4003}}{=}~&
\nabla \cdot a \nabla \taubar_n
+\nabla \cdot (\abar - a) (1- \eta_{\Boule,1}) \nabla \taubar_n
-
\nabla \cdot \lt( \sigup_i- a\phiup_i\rt)
\nabla \lt( (1-\eta_{\Boule,1}) \etaup  \partial_i \taubar_n \rt)
\\
&-
\nabla \cdot \lt( \sigma_i- a\phi_i\rt)
\nabla \lt( (1-\eta_{\Boule,1}) \etabulk  \partial_i \taubar_n \rt)
-
\nabla \cdot \lt( \sigdown_i- a\phidown_i\rt)
\nabla \lt( (1-\eta_{\Boule,1}) \etadown  \partial_i \taubar_n \rt).
\end{align*}
By \eqref{Def:taubar}, the first two \rhs terms combine to
\begin{align*}
\nabla \cdot a \nabla \taubar_n
+\nabla \cdot (\abar - a) (1- \eta_{\Boule,1}) \nabla \taubar_n
=
\nabla \cdot \eta_{\Boule,1} (a-\abar) \nabla \taubar_{n}.
\end{align*}
This yields \eqref{*Def:psitilde2} and \eqref{*Def:h1h2}.
\end{proof}

\subsection{Conditional regularity for $a$-harmonic functions at the corner}

As has become classical in homogenization following the work of Avellaneda and Lin \cite{AvellanedaLin} on large-scale Lipschitz estimates, the workhorse behind our results is a large-scale regularity theorem.
Assuming that the growth rate of the corner (and extended) correctors is well-controlled, it provides an algebraic decay at the corner for a suitable renormalization of an $a$-harmonic function (in the spirit of Theorem~\ref{ThAL}).
Notice that, unlike Theorem~\ref{ThAL}, this result is purely deterministic.

\begin{lemma}\label{Lem:heterog}
Assume that the coefficient field $a$ satisfies Assumptions \ref{Assump1} and \ref{AssumpId}.
Let $N \in \mathbb{N}$ and $\rho \in [\rhobar_{N},\rhobar_{N+1})$.
Then, for $0<\delta \ll_{\omega,\lambda,N,\rho} 1$, $C_0 \lesssim_{\omega,\lambda,N,\rho} 1$
and $C_0' \lesssim_{\omega,\lambda,N} 1$, the following property holds :
Let $\rstar \geq 1$.
Assume that the extended whole-space correctors $\lt(\phi,\sigma\rt)$ and, for $n \in \{1,\dots,N\}$, the corner correctors $\phiC_n$ satisfy the following estimate :
\begin{equation}
\label{Sublin**}
\sup_{r \geq \rstar}  \lt[ \frac{1}{r} \lt( \fint_{\Boule_r} \lt|(\phi,\sigma)\rt|^2  \rt)^{\frac{1}{2}}
+\sum_{n=1}^N\frac{1}{r^{\rhobar_n}} \lt( \fint_{\Dom_r} \lt|\phiC_n\rt|^2  \rt)^{\frac{1}{2}}\rt] 
\leq \delta.
\end{equation}
Let $\rmax \geq 1$.
If $u$ satisfies
\begin{equation}\label{Num:011}
\begin{cases}
-\nabla \cdot a \nabla u=0 & \dans \Dom_{\rmax},
\\
u=0 & \sur \Gamma_{\rmax},
\end{cases}
\end{equation}
then there exist coefficients $\gamma_1, \cdots, \gamma_N \in \R$ such that there holds
\begin{equation}\label{Num:119}
\lt(\fint_{\Dom_r} \lt| u - \sum_{n=1}^N \gamma_n \lt( \taubar_n + \phiC_n\rt) \rt|^2 \rt)^{\frac{1}{2}}
\leq C_0
\max\lt\{1, \lt(\frac{\rstar}{r}\rt)^{1+\rho}\rt\}
\lt(\frac{r}{\rmax} \rt)^\rho \lt(\fint_{\Dom_{\rmax}} |u|^2\rt)^{\frac{1}{2}},
\end{equation}
for all $r \in [1,\rmax]$, along with
\begin{equation}
\label{BorneGamma}
\lt|\gamma_n\rt| \leq C_0 \rmax^{-\rhobar_n} 
\lt(\fint_{\Dom_{\rmax}} \lt| u \rt|^2\rt)^{\frac{1}{2}}.
\end{equation}
In particular, in the case $N\geq1$ it holds that
\begin{equation}\label{Num:119:lip}
\lt(\fint_{\Dom_r} \lt| u - \sum_{n=1}^{N-1} \gamma_n \lt( \taubar_n + \phiC_n\rt) \rt|^2 \rt)^{\frac{1}{2}}
\leq C_0'
\max\lt\{1, \lt(\frac{\rstar}{r}\rt)^{1+\rhobar_N}\rt\}
\lt(\frac{r}{\rmax} \rt)^{\rhobar_N} \lt(\fint_{\Dom_{\rmax}} |u|^2\rt)^{\frac{1}{2}},
\end{equation}
for $r\geq1$.
\end{lemma}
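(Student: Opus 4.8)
Throughout, $\|g\|_{\underline{L}^2(\Omega)}:=(\fint_\Omega|g|^2)^{1/2}$. The plan is a Campanato-type excess-decay iteration, in the spirit of Avellaneda and Lin \cite{AvellanedaLin}. For $r\in[1,\rmax]$ introduce the excess
\[
E(r):=\inf_{\gamma\in\R^N}\lt\|u-\sum_{n=1}^N\gamma_n(\taubar_n+\phiC_n)\rt\|_{\underline{L}^2(\Dom_r)},
\]
pick a minimiser $\gamma^{(r)}$ and put $w^{(r)}:=u-\sum_n\gamma^{(r)}_n(\taubar_n+\phiC_n)$; by \eqref{Def:psi} each $\taubar_n+\phiC_n$ is $a$-harmonic and vanishes on $\Gamma$, so $w^{(r)}$ is again a solution of \eqref{Num:011}. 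The heart of the matter is a one-step improvement on dyadic scales: there are $\mu=\mu(\omega,\lambda,N,\rho)\in(0,\tfrac18)$ and $\kappa=\kappa(\omega,\lambda,N,\rho)<\mu^\rho$ with $E(\mu r)\le\kappa E(r)$ whenever $\mu r\ge\rstar$. Iterating from $\rmax$ downwards and interpolating over dyadic shells then gives $E(r)\le C_0(r/\rmax)^{\rho+\beta}\|u\|_{\underline{L}^2(\Dom_\rmax)}$ for $r\in[\rstar,\rmax]$, with a genuine exponent gain $\beta:=\ln(\mu^\rho/\kappa)/\ln(1/\mu)>0$.

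For the one-step improvement, fix $r$ with $\mu r\ge\rstar$, put $v:=w^{(r)}$, and let $\bar v$ be the $\abar$-harmonic function on $\Dom_{r/2}$ equal to $v$ on the circular part of $\partial\Dom_{r/2}$ and to $0$ on $\Gamma_{r/2}$. By Theorem~\ref{Th:homog} rescaled to $\Dom_{r/2}$, $\bar v=\bar v_{\reg}^N+\sum_{n=1}^N\bar c_n\taubar_n$ with $\|\bar v_{\reg}^N\|_{\underline{L}^2(\Dom_{\mu r})}\lesssim\mu^{\rhobar_{N+1}}\|\bar v\|_{\underline{L}^2(\Dom_{r/2})}$ and $|\bar c_n|\lesssim r^{-\rhobar_n}\|\bar v\|_{\underline{L}^2(\Dom_{r/2})}$, while $\|\bar v\|_{\underline{L}^2(\Dom_{r/2})}\lesssim\|v\|_{\underline{L}^2(\Dom_r)}$ by the Dirichlet minimality of $\bar v$ and the Caccioppoli and Poincar\'e inequalities for $v$. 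Since $E(\mu r)\le\|v-\sum_n\bar c_n(\taubar_n+\phiC_n)\|_{\underline{L}^2(\Dom_{\mu r})}$ and
\[
v-\sum_n\bar c_n(\taubar_n+\phiC_n)=\Big(v-\bar v-\sum_n\bar c_n\phiC_n\Big)+\bar v_{\reg}^N,
\]
it remains to bound the bracket, which is the two-scale homogenisation error of $v$ relative to $\bar v$, with the singular profiles $\sum_n\bar c_n\taubar_n$ corrected by the \emph{exact} corner correctors $\phiC_n$ rather than by $\phi_i\partial_i\taubar_n$. The standard energy estimate — the quantitative form of the H-convergence of $a$ to $\abar$ encoded by \eqref{Sublin**} — bounds this bracket by $C_\mu\delta\|v\|_{\underline{L}^2(\Dom_r)}$, so altogether $E(\mu r)\le C\mu^{\rhobar_{N+1}}E(r)+C_\mu\delta E(r)$. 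One first picks $\mu$ small with $C\mu^{\rhobar_{N+1}}\le\tfrac12\mu^\rho$ (possible since $\rho<\rhobar_{N+1}$) and then $\delta\ll_{\omega,\lambda,N,\rho}1$ with $C_\mu\delta\le\tfrac14\mu^\rho$, whence $\kappa\le\tfrac34\mu^\rho$.

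To replace the scale-dependent minimisers by fixed coefficients, set $\gamma_n:=\gamma^{(\rstar)}_n$. The $\taubar_n$ are pairwise orthogonal in $L^2(\Dom_r)$ by their distinct angular frequencies, see \eqref{Def:taubar0}, with $\|\taubar_n\|_{\underline{L}^2(\Dom_r)}\simeq_\omega r^{\rhobar_n}$, whereas $\|\phiC_n\|_{\underline{L}^2(\Dom_r)}\le\delta r^{\rhobar_n}$ by \eqref{Sublin**}; hence for $\delta$ small one has the quantitative independence $\|\sum_n\beta_n(\taubar_n+\phiC_n)\|_{\underline{L}^2(\Dom_r)}\gtrsim_\omega\max_n|\beta_n|r^{\rhobar_n}$. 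Comparing near-minimisers at scales $r$ and $2r$ on $\Dom_r$ yields $|\gamma^{(2r)}_n-\gamma^{(r)}_n|\lesssim r^{-\rhobar_n}(E(r)+E(2r))$, and telescoping over dyadic scales against $E(r')\lesssim(r'/\rmax)^{\rho+\beta}\|u\|_{\underline{L}^2(\Dom_\rmax)}$ bounds $|\gamma^{(r)}_n-\gamma_n|$ by $Cr^{\rho+\beta-\rhobar_n}\rmax^{-\rho-\beta}\|u\|_{\underline{L}^2(\Dom_\rmax)}$ — the gain $\beta>0$ is precisely what makes this geometric series converge, crucially also at the endpoint $\rho=\rhobar_N$ where one would otherwise lose a logarithm. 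Taking $r=\rmax$ gives $|\gamma_n|\lesssim\rmax^{-\rhobar_n}\|u\|_{\underline{L}^2(\Dom_\rmax)}$, i.e.\ \eqref{BorneGamma}; and since $u-\sum_n\gamma_n(\taubar_n+\phiC_n)=w^{(\rstar)}$, for $r\in[\rstar,\rmax]$,
\[
\|w^{(\rstar)}\|_{\underline{L}^2(\Dom_r)}\le E(r)+\sum_n|\gamma^{(r)}_n-\gamma_n|\,\|\taubar_n+\phiC_n\|_{\underline{L}^2(\Dom_r)}\lesssim(r/\rmax)^\rho\|u\|_{\underline{L}^2(\Dom_\rmax)},
\]
whereas for $r\in[1,\rstar)$ the inequality $\fint_{\Dom_r}|w^{(\rstar)}|^2\le(\rstar/r)^2\fint_{\Dom_{\rstar}}|w^{(\rstar)}|^2$ upgrades this to $(\rstar/r)^{1+\rho}(r/\rmax)^\rho\|u\|_{\underline{L}^2(\Dom_\rmax)}$; together these give \eqref{Num:119}. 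Finally \eqref{Num:119:lip} follows from \eqref{Num:119} taken with $\rho=\rhobar_N$ by discarding the $n=N$ term and bounding $|\gamma_N|\,\|\taubar_N+\phiC_N\|_{\underline{L}^2(\Dom_r)}$ via \eqref{BorneGamma}.

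The delicate point is the one-step homogenisation error on the sector. It is nonstandard for two reasons. First, the singular profiles $\taubar_n$ blow up in gradient at the tip, so one cuts them off there (as in the ansatz of Section~\ref{Sec:Better_Ansatz}); the resulting error is localised near $0$ and harmless because the $\phiC_n$ are strictly sublinear by \eqref{Sublin**}. Second — and this is why \eqref{Sublin**} need only involve $(\phi,\sigma)$ and the $\phiC_n$, and no half-space or Dirichlet correctors — the whole-space correctors $\phi_i$ do not vanish on the two edges of $\Dom$, so the two-scale expansion violates the Dirichlet condition on $\Gamma_{r/2}$; for the singular part this defect is absorbed since we use the exact $\phiC_n$ and the mismatch $\phi_i\partial_i\taubar_n-\phiC_n$ lives in a thin layer along the edges and costs only $O(\delta)$, while for the regular part $\bar v_{\reg}^N$ the whole correction, boundary layer included, comes multiplied by its already-tiny size $\mu^{\rhobar_{N+1}}$. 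Making these estimates rigorous — the cut-off at the tip, the edge layers, and the averaged $L^2$ two-scale bound on a sectoral domain — is the technical core of the argument.
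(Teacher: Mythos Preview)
Your proof is correct and follows the same Campanato-iteration strategy as the paper: a one-step improvement (Lemma~\ref{Lem:Hcv} there) combining Theorem~\ref{Th:homog} for the homogenised comparison function with an H-convergence estimate, iterated over dyadic scales. The differences are organisational. The paper accumulates coefficients additively --- $\gamma_n(m+1)=\gamma_n(m)+\tilde\gamma_n(m+1)$, with $\tilde\gamma_n$ read off from the explicit formula~\eqref{M:0030} at each step --- whereas you work with the excess $E(r)$, pick minimisers $\gamma^{(r)}$, and telescope their differences using the near-orthogonality of the $\taubar_n+\phiC_n$. Your exponent gain $\beta>0$ is a genuine refinement: it makes that telescoping converge geometrically even at the endpoint $\rho=\rhobar_N$, where the paper's sum $\sum_{m'}\theta^{(m'-1)(\rho-\rhobar_N)}$ in~\eqref{HR :2} degenerates to a logarithm --- a wrinkle the paper glosses over but never actually hits, since all later uses of~\eqref{BorneGamma} take $\rho$ strictly interior.

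Your final paragraph, however, over-engineers the one-step homogenisation error. The paper black-boxes it as an H-convergence estimate \cite[Prop.~2.1~(17)]{JosienOtto_2019}, giving $\|v-\bar v\|_{\underline{L}^2(\Dom_{r/2})}\le C_\delta\, r\|\nabla v\|_{\underline{L}^2(\Dom_{r/2})}$ with $C_\delta\to0$ as $\delta\downarrow0$, using only the whole-space $(\phi,\sigma)$; the remaining piece $\sum_n\bar c_n\phiC_n$ is then bounded separately and trivially via~\eqref{Sublin**} and the bound $|\bar c_n|\lesssim r^{-\rhobar_n}\|v\|$. No tip cut-off, no edge-layer analysis, no sectoral two-scale expansion is needed --- the point of measuring in $L^2$ rather than $H^1$ is precisely that boundary layers are invisible at this stage. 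Your sketched explicit two-scale route would also work and would yield the sharper linear rate $C_\mu\delta$ you state, but it is unnecessary for the lemma.
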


By the Caccioppoli estimate on the l.~h.~s.\ and the Poincaré inequality on the r.~h.~s., we may replace \eqref{Num:119} by
\begin{equation}\label{Num:119_bis}
\lt(\fint_{\Dom_r} \lt| \nabla u - \sum_{n=1}^N \gamma_n \lt( \nabla \taubar_n + \nabla \phiC_n\rt) \rt|^2 \rt)^{\frac{1}{2}}
\leq C_0
\max\lt\{1, \lt(\frac{\rstar}{r}\rt)^{1+\rho}\rt\}
\lt(\frac{r}{\rmax} \rt)^{\rho-1} \lt(\fint_{\Dom_{\rmax}} |\nabla u|^2\rt)^{\frac{1}{2}}.
\end{equation}

\subsection{Conditional quasi-optimal growth rates in weaker spatial norm} \label{Sec:QuenchRes}

Using Lemma \ref{Lem:heterog}, we can prove a deterministic version of Theorem \ref{Th:OptiGR} with estimates in a weaker spatial norm.

\begin{proposition} \label{Prop:Quench}
Let $a$ satisfy Assumption \ref{Assump1} and  \ref{AssumpId}.
We assume that the extended whole-space and half-space correctors (characterized above in Section \ref{Sec:Better_Ansatz}) are such that there exist exponents $\nu \in (0,1]$ and $\tilde{\nu} \geq 0$, with
\begin{equation}\label{Num:40101}
\lt( \fint_{\Dom_{r}} \lt|\phi,\phiup,\phidown,\sigma,\sigup,\sigdown \rt|^2 \rt)^{\frac{1}{2}}
\leq
r^{1-\nu} \ln^{\tilde{\nu}}(r+1)
\pourtout r \geq 1.
\end{equation}
Then, there exist an exponent $\tilde{\nu}' \lesssim_{\tilde{\nu}} 1$ and a constant $\Cstar \lesssim_{\omega,\lambda,\nu,\tilde{\nu},n} 1$,
such that, for any $n \in \mathbb{N} \backslash\{0\}$, there exists a corner corrector $\phiC_n$ that is decomposed as in~\eqref{*ansatz} and for which $\phitilde_{n}$ satisfies the following estimate :
\begin{equation}\label{Num:4011}
\lt( \fint_{\Dom_{2r} \backslash \Dom_{r}} \lt|\nabla \phitilde_n \rt|^2 \rt)^{\frac{1}{2}}
\leq
\Cstar r^{\rhobar_n-1-\nu} \ln^{\tilde{\nu}'}(r+1)
\pourtout r \geq 1.
\end{equation}
In particular, we also have
\begin{equation}\label{Num:4010}
\lt( \fint_{\Dom_{2r} \backslash \Dom_r} \lt|\phiC_n \rt|^2 \rt)^{\frac{1}{2}}
\leq
\Cstar r^{\rhobar_n-\nu} \ln^{\tilde{\nu}'}(r+1)
\pourtout r \geq 1.
\end{equation}
Similarly, for $j = 1, 2$, there exist Dirichlet correctors $\phiD_j$ that may be decomposed as in \eqref{*ansatz}, replacing $\partial_i\taubar_n$ by the Kronecker symbol $\delta_{ij}$ and $\phitilde_n$ by $\phiDtilde_j$, so that there holds
\begin{equation}\label{Num:4011-phiD}
\lt( \fint_{\Dom_{2r} \backslash \Dom_{r}} \lt|\nabla \phiDtilde_j \rt|^2 \rt)^{\frac{1}{2}}
\leq
\Cstar r^{-\nu} \ln^{\tilde{\nu}'}(r+1) 
\quad\text{ and }
\lt( \fint_{\Dom_{2r} \backslash \Dom_r} \lt|\phiD_j \rt|^2 \rt)^{\frac{1}{2}}
\leq
\Cstar r^{1-\nu} \ln^{\tilde{\nu}'}(r+1)
\qquad
\pourtout r \geq 1.
\end{equation}
\end{proposition}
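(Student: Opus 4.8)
The plan is to bootstrap the conditional large-scale regularity of Lemma~\ref{Lem:heterog} into the growth estimate \eqref{Num:4011} via an induction on the scale, using the ansatz of Lemma~\ref{*Lem:ansatzphi} to transfer the problem onto the remainder $\phitilde_n$. First I would fix $n$ and note that by \eqref{Num:40101} and \eqref{Borne:h}-type computations (which here take the deterministic form $\bigl(\fint_{\Dom(x)}|h|^2\bigr)^{1/2}\lesssim (|x|+1)^{\rhobar_n-1-\nu}\ln^{\tilde\nu'}(|x|+2)$), the source term $h$ in \eqref{*Def:psitilde2} has the desired growth. The key is then to produce $\phitilde_n$ as a solution of $-\nabla\cdot a\nabla\phitilde_n=\nabla\cdot h$ in $\Dom$ with $\phitilde_n=0$ on $\Gamma$ whose gradient inherits this growth. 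I would build $\phitilde_n$ by a limiting procedure: solve the problem on truncated domains $\Dom_{R_k}$ with $R_k\to\infty$ (with zero boundary data on all of $\partial\Dom_{R_k}$), obtain uniform estimates, and pass to the limit. The uniform estimate is where Lemma~\ref{Lem:heterog} enters: writing $\phitilde_n=w+z$ where $w$ is a particular ``near-field'' solution controlled directly by energy estimates against $h$ on $\Dom_{r}$ for moderate $r$, the difference $z$ is $a$-harmonic in $\Dom_{R_k}$ and Lemma~\ref{Lem:heterog} (in its gradient form \eqref{Num:119_bis}) gives an excess-decay from scale $R_k$ down to scale $1$, which upgrades the crude energy bound into the sharp algebraic rate $r^{\rhobar_n-1-\nu}$ on each dyadic annulus.

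The execution proceeds by a dyadic/iteration argument: assume inductively that $\bigl(\fint_{\Dom_{2r}\setminus\Dom_r}|\nabla\phitilde_n|^2\bigr)^{1/2}\leq C r^{\rhobar_n-1-\nu}\ln^{\tilde\nu'}(r+1)$ holds for all $r\le R$; to push it to $2R$, decompose $\phitilde_n$ on $\Dom_{2R}$ into the solution $w_R$ of the same equation with right-hand side $\chi_{\Dom_{2R}}h$ and zero boundary data (estimated by Meyers/energy estimates using the growth of $h$, losing only logarithms), plus the $a$-harmonic corrector $z_R$. For $z_R$, apply Lemma~\ref{Lem:heterog} with $\rho\in(\rhobar_N,\rhobar_{N+1})$ chosen so that $\rho-1$ slightly exceeds $\rhobar_n-1-\nu$ is \emph{false} in general --- rather, one applies it with $\rho$ just above $\rhobar_n-\nu$ when that lies below $\rhobar_{N+1}$, and in the regime $\rhobar_n-\nu\ge\rhobar_{N+1}$ one iterates over consecutive ``windows'' $[\rhobar_N,\rhobar_{N+1})$, at each step subtracting the spanning singular functions $\taubar_m+\phiC_m$ that have already been constructed and estimated for $m<n$. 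The hypothesis \eqref{Sublin**} of Lemma~\ref{Lem:heterog} is verified precisely from \eqref{Num:40101} (for $(\phi,\sigma)$) and from \eqref{Num:4010} for the lower-order $\phiC_m$, which is why the construction must be carried out inductively in $n$: the corner correctors of lower order feed into the regularity theory used to build the higher-order ones. Summing the geometric-type series over dyadic scales, using that each subtracted $\gamma_m(\taubar_m+\phiC_m)$ contributes at rate $r^{\rhobar_m-1}$ with $\gamma_m$ small by \eqref{BorneGamma}, closes the induction and yields \eqref{Num:4011}; then \eqref{Num:4010} follows by the Poincar\'e/Caccioppoli inequality on dyadic annuli together with the zero boundary condition, and the Dirichlet-corrector statements \eqref{Num:4011-phiD} are the special case $\rhobar_n\rightsquigarrow 1$, $\partial_i\taubar_n\rightsquigarrow\delta_{ij}$, with $N_0$ as in \eqref{Num:7029} governing how many singular functions must be peeled off.

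I expect the main obstacle to be the \emph{self-referential structure}: Lemma~\ref{Lem:heterog} requires control \eqref{Sublin**} on the very corner correctors $\phiC_1,\dots,\phiC_N$ one is trying to construct. The resolution is the induction on $n$ just described --- build $\phiC_1$ using only $(\phi,\sigma)$ (so Lemma~\ref{Lem:heterog} is applied with effectively $N=0$ windows below $\rhobar_1$, i.e.\ pure $\CC^{0,\alpha}$-type decay), then having \eqref{Num:4010} for $\phiC_1$ one may invoke Lemma~\ref{Lem:heterog} with one more singular function available, and so on --- but organizing this cleanly while simultaneously tracking the dyadic sums, the logarithmic losses (which accumulate additively in the exponent $\tilde\nu'$, hence the harmless but unavoidable $\tilde\nu'=\tilde\nu+3$), and the non-uniqueness of the $\gamma_m$ (as flagged in the Remark after Theorem~\ref{Th:OptiGR}) requires care. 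A secondary technical point is ensuring the truncated-domain solutions converge to a genuine solution on all of $\Dom$ with the claimed boundary behavior; this follows from the uniform dyadic estimates plus a diagonal extraction, using that the estimates control $\nabla\phitilde_n$ in $\LL^2_{\loc}$ uniformly and that the limit automatically solves \eqref{*Def:psitilde2}.
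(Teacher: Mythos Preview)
Your overall architecture --- induction on $n$, using Lemma~\ref{Lem:heterog} with the already-built lower-order correctors to feed \eqref{Sublin**}, and the ansatz of Lemma~\ref{*Lem:ansatzphi} to reduce to estimating $\phitilde_n$ --- matches the paper. But the core of your execution has a genuine gap, and differs from the paper in a way that matters.

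\textbf{The circular step.} Your induction-on-scale scheme (``assume the estimate for $r\le R$, push to $2R$'') does not close as written. In your decomposition $\phitilde_n=w_R+z_R$ with $w_R$ solving the equation with source $\chi_{\Dom_{2R}}h$, the remainder $z_R$ is indeed $a$-harmonic on $\Dom_{2R}$, but to estimate it on the annulus $\Dom_{2R}\setminus\Dom_R$ you need $(\fint_{\Dom_{2R}}|\nabla z_R|^2)^{1/2}$ as input to Lemma~\ref{Lem:heterog}. That quantity involves $\phitilde_n$ itself at scale $2R$, which is precisely what you are trying to bound. An energy estimate on $z_R$ using its source $(1-\chi_{\Dom_{2R}})h$ gives a bound dominated by the outer scale whenever $\rhobar_n>\nu$, so it does not help. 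The inductive hypothesis (control for $r\le R$) plays no role in estimating the annulus at scale $2R$.

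\textbf{What the paper does instead.} The paper does \emph{not} truncate the domain or induct on scale. It decomposes the \emph{source}: $h=\sum_{m\ge0}(\eta_{\Boule,r_m}-\eta_{\Boule,r_{m-1}})h$ on dyadic rings, solves each piece by Lax-Milgram on all of $\Dom$ to get $\phitilde_{n,m}$, and writes $\phitilde_n=\sum_m(\phitilde_{n,m}-\sum_{n'\le N}\gamma_{n,m,n'}(\taubar_{n'}+\phiC_{n'}))$, cf.~\eqref{Num:4033}. For each fixed annulus at scale $r$, one then splits the sum into far-field terms ($r_m>r$) and near-field terms ($r_m\le r$). The far-field terms are handled exactly as you suggest, by Lemma~\ref{Lem:heterog} applied to the $a$-harmonic $\phitilde_{n,m}$ on $\Dom_{r_{m-1}}$. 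The near-field terms, however, require a \emph{duality argument} that your proposal does not contain: one tests against an arbitrary $f$ supported in $\Dom_{2r}\setminus\Dom_r$, solves $-\nabla\cdot a\nabla v=\nabla\cdot f$, and applies Lemma~\ref{Lem:heterog} (with $N\rightsquigarrow0$) to $v$, which is $a$-harmonic on $\Dom_r$. This yields the decay $(\fint_{\Dom_{2r}\setminus\Dom_r}|\nabla\phitilde_{n,m}|^2)^{1/2}\lesssim (r_m/r)^{\tilde\rho}\cdot r_m^{\rhobar_n-1-\nu}$, which is what makes the sum over $m$ converge. This dualization is the missing mechanism in your sketch.

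\textbf{A secondary omission.} You do not address the borderline case $\omega=2\pi$, $\nu=1$, where one cannot choose $\tilde\rho<\rhobar_1=\tfrac12$ with $\tilde\rho+\rhobar_1\ge\nu$, cf.~\eqref{Bizarre2}. The paper handles this separately (Step~3 of the proof) via a two-pass bootstrap: first obtain a suboptimal bound on $\phiC_1$ (running the argument with $\nu\rightsquigarrow1/2$), use it to verify \eqref{Sublin**} with $N=1$, and then re-run the near/far-field estimates with the sharper input \eqref{Num:119:lip} to recover the optimal rate.
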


Proposition \ref{Prop:Quench} and Lemma \ref{Lem:heterog} immediately imply a Liouville principle, in form of the following deterministic version of Corollary \ref{ThLiouville}.
\begin{corollary}[Liouville principle]\label{ThLiouville-2}
	We place ourselves under the assumptions of Proposition \ref{Prop:Quench}.
	Let $N \in \mathbb{N}$ and $\rho \in (\rhobar_{N},\rhobar_{N+1})$.
	If $u$ is a subalgebraic $a$-harmonic function in $\Dom$ in the sense of \eqref{Hypo :ThLiouville}, then there exist coefficients $\gamma_1, \ldots, \gamma_N \in \R$ such that we may decompose $u$ as follows :
	\begin{equation*}
	u=\sum_{n=1}^N \gamma_n (\taubar_n+\phiC_n).
	\end{equation*}
\end{corollary}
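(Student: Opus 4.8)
The plan is to combine Proposition~\ref{Prop:Quench}, which furnishes corner correctors $\phiC_n$ satisfying the quasi-optimal (weak-norm) growth bound \eqref{Num:4010}, with Lemma~\ref{Lem:heterog}, which is precisely the conditional large-scale regularity statement whose hypothesis \eqref{Sublin**} is verified once such growth bounds are known. So the argument is essentially a bookkeeping matter: feed the output of the Proposition into the Lemma, and then let $\rmax \uparrow \infty$ in the resulting decay estimate.

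\medskip

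First, I would fix $N$ and $\rho \in (\rhobar_N, \rhobar_{N+1})$ as in the statement, and invoke Proposition~\ref{Prop:Quench} to obtain corner correctors $\phiC_1, \dots, \phiC_N$ satisfying, for every $n$,
\begin{equation*}
\lt( \fint_{\Dom_{2r} \backslash \Dom_r} \lt|\phiC_n \rt|^2 \rt)^{\frac{1}{2}}
\leq \Cstar\, r^{\rhobar_n-\nu} \ln^{\tilde{\nu}'}(r+1) \pourtout r \geq 1.
\end{equation*}
Summing this dyadically over shells from scale $1$ up to scale $r$ (a geometric sum, since $\rhobar_n - \nu < \rhobar_n$), and likewise using \eqref{Num:40101}/\eqref{CorrSubDef-bis} for the extended whole-space correctors $(\phi,\sigma)$, one checks that the bracketed quantity in \eqref{Sublin**} is $\lesssim r^{-\nu}\ln^{\tilde{\nu}'}(r+1)$, which tends to $0$ as $r \to \infty$. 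Hence for any prescribed $\delta$ there is an $\rstar = \rstar(\delta) \geq 1$, \emph{independent of $u$ and of $\rmax$}, for which \eqref{Sublin**} holds. This is the step that uses the assumption $\rho < \rhobar_{N+1}$ only implicitly, via $N$; the genuine content is that the correctors are strictly sublinear in the averaged sense.

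\medskip

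Second, with $\rstar$ now fixed, I would apply Lemma~\ref{Lem:heterog} on $\Dom_{\rmax}$ for each $\rmax \geq \rstar$. Since $u$ is $a$-harmonic in all of $\Dom$ it is in particular $a$-harmonic in $\Dom_{\rmax}$ with vanishing trace on $\Gamma_{\rmax}$, so \eqref{Num:011} holds. The Lemma produces coefficients $\gamma_1^{(\rmax)}, \dots, \gamma_N^{(\rmax)}$ with $|\gamma_n^{(\rmax)}| \leq C_0\, \rmax^{-\rhobar_n}(\fint_{\Dom_{\rmax}}|u|^2)^{1/2}$ and, for $1 \leq r \leq \rmax$,
\begin{equation*}
\lt(\fint_{\Dom_r} \lt| u - \sum_{n=1}^N \gamma_n^{(\rmax)} \lt( \taubar_n + \phiC_n\rt) \rt|^2 \rt)^{\frac{1}{2}}
\leq C_0 \max\lt\{1, \lt(\tfrac{\rstar}{r}\rt)^{1+\rho}\rt\} \lt(\tfrac{r}{\rmax} \rt)^\rho \lt(\fint_{\Dom_{\rmax}} |u|^2\rt)^{\frac{1}{2}}.
\end{equation*}
Now I use the subalgebraic growth hypothesis \eqref{Hypo :ThLiouville}: $(\fint_{\Dom_{\rmax}}|u|^2)^{1/2} \leq C |\rmax|^\rho$ for $\rmax$ large (absorbing the constant). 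Plugging this in, the right-hand side is bounded, for fixed $r$, by $C_0 \max\{1,(\rstar/r)^{1+\rho}\}\, r^\rho\, C \cdot \rmax^\rho / \rmax^\rho = C'(r)$, a quantity \emph{independent of $\rmax$} — wait, more carefully: $(r/\rmax)^\rho (\rmax)^\rho = r^\rho$, so the bound is in fact $O(r^\rho)$, independent of $\rmax$ entirely.

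\medskip

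Third, the finishing move. The coefficients satisfy $|\gamma_n^{(\rmax)}| \leq C_0 C\, \rmax^{-\rhobar_n + \rho}$, and since $\rho < \rhobar_{N+1}$ we have $\rho - \rhobar_n > 0$ only for $n \leq N$... actually $\rho > \rhobar_N \geq \rhobar_n$ for $n \leq N$, so a priori these bounds \emph{grow} in $\rmax$. This is where the real argument lies, and I expect it to be \textbf{the main obstacle}: one must show the $\gamma_n^{(\rmax)}$ converge (or are eventually constant) rather than merely estimate them. The clean way is to observe that the decomposition with a \emph{fixed} family $\{\gamma_n\}$ is forced. Concretely: take $\rmax_1 < \rmax_2$, subtract the two decompositions on a fixed ball $\Dom_r$, and conclude that $\sum_n (\gamma_n^{(\rmax_1)} - \gamma_n^{(\rmax_2)})(\taubar_n + \phiC_n)$ has $\LL^2(\Dom_r)$ norm bounded by $C_0'(r^\rho \rmax_1^{-\rho} \cdot \rmax_1^\rho + \dots)$ — one needs that this tends to $0$ as $\rmax_1 \to \infty$ uniformly, giving that the tail of corrected singular functions vanishes, hence (by linear independence of the $\taubar_n + \phiC_n$, which follows from their distinct homogeneity degrees $\rhobar_n$ together with the sublinearity of $\phiC_n$) that $\gamma_n^{(\rmax)}$ is independent of $\rmax$ for $\rmax$ large. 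Call the common value $\gamma_n$. Letting $\rmax \to \infty$ in the displayed inequality with these fixed $\gamma_n$ and $r$ fixed then forces
\begin{equation*}
\lt(\fint_{\Dom_r} \lt| u - \sum_{n=1}^N \gamma_n \lt( \taubar_n + \phiC_n\rt) \rt|^2 \rt)^{\frac{1}{2}} = 0 \pourtout r \geq 1,
\end{equation*}
whence $u = \sum_{n=1}^N \gamma_n(\taubar_n + \phiC_n)$ on all of $\Dom$, as claimed. The linear-independence / uniqueness-of-coefficients step and the $\rmax$-uniformity are the only places requiring care; everything else is substitution into the two quoted results.
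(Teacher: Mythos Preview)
Your overall strategy is right --- feed Proposition~\ref{Prop:Quench} into Lemma~\ref{Lem:heterog} and let $\rmax\uparrow\infty$ --- but there is a genuine gap in the exponent bookkeeping that makes the final step fail as written. You apply Lemma~\ref{Lem:heterog} with the \emph{same} exponent $\rho$ that controls the growth of $u$. As you yourself compute, this gives
\[
\lt(\fint_{\Dom_r}\Big|u-\sum_n\gamma_n^{(\rmax)}(\taubar_n+\phiC_n)\Big|^2\rt)^{1/2}
\lesssim (r/\rmax)^{\rho}\,\rmax^{\rho}=r^{\rho},
\]
which is merely bounded in $\rmax$, not tending to zero. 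Consequently your ``finishing move'' --- letting $\rmax\to\infty$ with the $\gamma_n$ fixed to force the left-hand side to vanish --- does not go through: the right-hand side simply stays at $r^{\rho}$. The same defect undermines your stabilization argument for the $\gamma_n^{(\rmax)}$: subtracting two decompositions at scales $\rmax_1<\rmax_2$ only yields a difference bounded by $2C'(r)$, not something small.

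The fix is exactly what the paper does: since $\rho<\rhobar_{N+1}$, pick any $\tilde\rho\in(\rho,\rhobar_{N+1})$ and apply Lemma~\ref{Lem:heterog} (or rather its excess-decay reformulation, Corollary~\ref{C:excessdecay}) with exponent $\tilde\rho$. Then the right-hand side becomes $\lesssim r^{\tilde\rho}\,\rmax^{\rho-\tilde\rho}\to 0$ as $\rmax\uparrow\infty$, forcing $\Exc_N(u;r)=0$ for every $r\geq 1$. The paper then disposes of the coefficient-stability issue cleanly via the separate estimate~\eqref{eq:gammarR}, which shows $|\gamma_n^r-\gamma_n^R|\lesssim R^{1-\rhobar_n}\Exc_N(u;R)^{1/2}=0$, so the $\gamma_n^r$ are constant for $r\geq\rstar$. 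Your linear-independence argument would also work once the exponent is corrected, but the excess-decay packaging avoids it entirely.
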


Once Lemma \ref{Lem:heterog}, Proposition \ref{Prop:Quench}, and Corollary \ref{ThLiouville-2} are established, Theorems \ref{ThAL} and \ref{Th:OptiGR} and Corollary \ref{ThLiouville} follow easily.

\subsection{Error estimate for the nonstandard 2-scale expansion}

In order to estimate the error of the hybrid 2-scale expansion, we need an analogue to the classical equation for the standard 2-scale expansion.
This requires us to build and estimate the flux-correctors $\sigmaD_i$ associated to the Dirichlet correctors $\phiD_i$ :
\begin{lemma}\label{Lem:sig}
	Assume that $\omega \in (0,2\pi)$.
	We place ourselves under the Assumptions \ref{Assump1}, \ref{AssumpId}, and \ref{Assump2}.
	Then there exists a Dirichlet flux-corrector on $\Dom$, which is a skew-symmetric tensor field $\sigmaD_i : \Dom \rightarrow \R^{2 \times 2}$, where $i \in \{1,2\}$, such that
	\begin{equation}\label{Def:sigD}
	\nabla \cdot \sigmaD_i= a(\nabla \phiD_i+e_i) - \abar e_i.
	\end{equation}
	This Dirichlet flux-corrector is decomposed as
	\begin{equation}\label{ansatz:sig}
	\sigmaD_i = (1-\eta_{\Boule,1})(\etaup \sigup_i + \etabulk \sigma_i+ \etadown\sigdown_i) + \tisigmaD_i,
	\end{equation}
	where $\etaup$, $\etabulk$, and $\etadown$ are cut-off functions as in Section \ref{Sec:Better_Ansatz}, and there exists $\tilde{\nu}'\lesssim_{\tilde{\nu}} 1$ such that, for any $p \in [1,\infty)$, there holds
	\begin{equation}\label{M:0005}
	\langl\lt( \fint_{\Dom(x)} \lt| \sigmaD_i \rt|^2 \rt)^{\frac{p}{2}} \rangl^{\frac{1}{p}} 
	\lesssim_{\Xi,p}
	(|x|+1)^{1-\nu} \ln^{\tilde{\nu}'}(|x|+2) \pourtout x \in \Dom.
	\end{equation}	
\end{lemma}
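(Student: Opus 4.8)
The starting point is that the flux $q_i:=a(\nabla\phiD_i+e_i)-\abar e_i$ is divergence-free in $\Dom$, by \eqref{Def:phiD} and the constancy of $\abar$; the whole construction then amounts to realising this divergence-free field as the divergence of a skew-symmetric tensor of the prescribed shape, by ``integrating'' it. First I would isolate the ``glued'' piece. Recall from Proposition~\ref{Prop:Quench}, in the form used in the proof of Theorem~\ref{Th:OptiGR}, that $\phiD_i=(1-\eta_{\Boule,1})(\etaup\phiup_i+\etabulk\phi_i+\etadown\phidown_i)+\phiDtilde_i$, with $\nabla\phiDtilde_i$ obeying the (annealed) growth bound behind \eqref{Num:4011-phiD}. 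Differentiating this identity, inserting it into $q_i$, and eliminating $\nabla\phiup_i$, $\nabla\phi_i$, $\nabla\phidown_i$ by means of \eqref{Decomposition} and \eqref{Def:sigs}, one checks that all the terms in which $\eta_{\Boule,1},\etaup,\etabulk,\etadown$ are locally constant combine into $\nabla\cdot\hat\sigma_i$, where $\hat\sigma_i:=(1-\eta_{\Boule,1})(\etaup\sigup_i+\etabulk\sigma_i+\etadown\sigdown_i)$ is precisely the glued field in \eqref{ansatz:sig}. The upshot is $q_i=\nabla\cdot\hat\sigma_i+g_i$, with
\begin{equation*}
\begin{aligned}
g_i={}&\eta_{\Boule,1}(a-\abar)e_i+a\nabla\phiDtilde_i+(1-\eta_{\Boule,1})\,a\bigl(\phiup_i\nabla\etaup+\phi_i\nabla\etabulk+\phidown_i\nabla\etadown\bigr)\\
&-(1-\eta_{\Boule,1})\bigl(\sigup_i\nabla\etaup+\sigma_i\nabla\etabulk+\sigdown_i\nabla\etadown\bigr)+R_i\,\nabla\eta_{\Boule,1},
\end{aligned}
\end{equation*}
where $R_i$ is a bounded combination of $\phiup_i,\phi_i,\phidown_i,\sigup_i,\sigma_i,\sigdown_i$ and $a$, and $\sigup_i\nabla\etaup$ denotes the contraction of $\sigup_i$ against $\nabla\etaup$. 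Note $g_i$ is still divergence-free: $\nabla\cdot g_i=\nabla\cdot q_i-\nabla\cdot(\nabla\cdot\hat\sigma_i)=0$, the last term vanishing by skew-symmetry of $\hat\sigma_i$.

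Second, I would estimate $g_i$. Since $\nabla\eta_{\Boule,1}$ is supported in $\Boule_1$ while $|\nabla\etaup|+|\nabla\etabulk|+|\nabla\etadown|\lesssim|x|^{-1}$ on fixed angular strips, the bounds \eqref{CorrSubDef} (with the gauge $(\phi,\sigma)(0)=0$), \eqref{Borne:HalfCorr}, and the moment version of \eqref{Num:4011-phiD} used in the proof of Theorem~\ref{Th:OptiGR}, together with a standard covering argument over dyadic annuli (the averaging $\fint$ making $\LL^p_{\langl\cdot\rangl}$-moments pass cleanly through convex combinations of unit-scale averages), yield
\begin{equation*}
\langl\Bigl(\fint_{\Dom(x)}|g_i|^2\Bigr)^{\frac p2}\rangl^{\frac1p}\lesssim_{\Xi,p}(|x|+1)^{-\nu}\ln^{\tilde{\nu}'}(|x|+2)\qquad\pourtout x\in\Dom,\ p\in[1,\infty),
\end{equation*}
for some $\tilde{\nu}'\lesssim_{\tilde{\nu}}1$; moreover near the tip one has the sharper deterministic statement $g_i\in\LL^2(\Dom_{1/2})$, since there $g_i=(a-\abar)e_i+a\nabla\phiD_i$ and $\phiD_i\in\HH^1(\Dom_1)$.

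Third comes the construction and estimate of $\tisigmaD_i$. Because $\Dom\subset\R^2$ is simply connected and $g_i$ is divergence-free, there is a scalar potential $G_i$, unique up to an additive constant which I fix by $\fint_{\Dom_1}G_i=0$, with $g_i=(-\partial_2 G_i,\partial_1 G_i)$; letting $\tisigmaD_i$ be the skew-symmetric tensor with nontrivial entry $(\tisigmaD_i)_{12}=-(\tisigmaD_i)_{21}=-G_i$ gives $\nabla\cdot\tisigmaD_i=g_i$, hence $\sigmaD_i:=\hat\sigma_i+\tisigmaD_i$ satisfies $\nabla\cdot\sigmaD_i=\nabla\cdot\hat\sigma_i+g_i=q_i$, which is \eqref{Def:sigD}, and by construction $\sigmaD_i$ has the form \eqref{ansatz:sig}. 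For the bound \eqref{M:0005}, the contribution of $\hat\sigma_i$ follows from \eqref{Borne:HalfCorr} and \eqref{CorrSubDef} (the cut-offs being bounded). For $G_i$ I would split $\Dom$ into $\Dom_1$ and the dyadic sector-annuli $A_k:=\Dom_{2^{k+1}}\setminus\Dom_{2^k}$: on each rescaled copy of $A_k$, a fixed bounded connected Lipschitz domain, the Poincaré inequality gives $\inf_c\fint_{A_k}|G_i-c|^2\lesssim 2^{2k}\fint_{A_k}|\nabla G_i|^2=2^{2k}\fint_{A_k}|g_i|^2$; telescoping the constants $\fint_{A_k}G_i$ across overlapping pairs $A_k\cup A_{k+1}$ and summing the geometric series (or, when $\nu=1$, a logarithmic series, which only increases $\tilde{\nu}'$ by one) against the bound on $g_i$ produces $\langl(\fint_{A_k}|G_i|^2)^{p/2}\rangl^{1/p}\lesssim(2^k+1)^{1-\nu}\ln^{\tilde{\nu}'}(2^k+2)$, while on $\Dom_1$ the $\LL^2(\Dom_{1/2})$-control of $g_i$ plus a Poincaré inequality bound $G_i$ by a constant. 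Converting these annular estimates into the pointwise averages $\fint_{\Dom(x)}$ by the usual covering, and adding the bound for $\hat\sigma_i$, gives \eqref{M:0005}.

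I expect the main obstacle to lie at the tip of the corner: one must check that $g_i$ — equivalently, the flux $q_i$ — is square-integrable near $0$ and that its potential does not blow up there, and this is where the strict hypothesis $\omega<2\pi$ is genuinely used, as opposed to the $\omega\le2\pi$ sufficient for Theorem~\ref{Th:OptiGR}. Intuitively, $\omega<2\pi$ (equivalently $\rhobar_1>\tfrac12$) is what keeps the sector-annuli $A_k$ Lipschitz with uniform Poincaré constants and the exponents $\pm\rhobar_n$ attached to the corner non-degenerate; at $\omega=2\pi$ the domain $\Dom$ degenerates into a slit plane, the annular pieces cease to be Lipschitz, and the potential estimate near the tip breaks down. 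The remaining steps — the algebra producing $g_i$, tracking the accumulated logarithmic exponents through the telescoping sums, and the moment/covering bookkeeping — are routine given \eqref{CorrSubDef}, \eqref{Borne:HalfCorr}, and Theorem~\ref{Th:OptiGR}.
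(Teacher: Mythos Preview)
Your computation of $g_i$ and the verification that it is divergence-free match the paper's first step exactly (the paper writes the same field as $a\nabla\phiDtilde_i+h_i$, \textit{cf.}~\eqref{defn_hi}--\eqref{equation_sigma_tilde}). From there, however, your construction of $\tisigmaD_i$ is genuinely different and, in this two-dimensional setting, more elementary. The paper first \emph{extends} $g_i$ to a divergence-free field on all of $\R^2$ by an explicit reflection across $\partial\Dom$ (Lemma~\ref{Lem:DivFree}), then solves $-\Delta N_{ij}=g_i\cdot e_j$ on $\R^2$ via \cite[Lemma~A.1]{JosienRaithel_2019}, and sets $\tisigmaD_{ijk}=\partial_jN_{ik}-\partial_kN_{ij}$; the local estimate on $\nabla\tisigmaD_i$ then comes from a Caccioppoli inequality combined with large-scale regularity for the Laplacian (Corollary~\ref{C:LSL} with $a=\Id$), followed by path-integration as in Step~2 of the proof of Theorem~\ref{Th:OptiGR}. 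You instead exploit simple connectedness of $\Dom$ to write $g_i=\nabla^\perp G_i$ for a scalar stream function, so that $|\nabla G_i|=|g_i|$ is already locally controlled and the growth of $G_i$ follows from Poincar\'e on dyadic sector-annuli plus a telescoping/path argument. This avoids both the extension lemma and the auxiliary Poisson problem; the paper's route, by contrast, is not specific to $d=2$ and would survive in higher dimensions.

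One point in your write-up is off: your explanation of why $\omega<2\pi$ is needed is not the right one. The sector-annuli $A_k$ remain, after rescaling, bi-Lipschitz equivalent (via polar coordinates) to a fixed rectangle for every $\omega\in(0,2\pi]$, so the Poincar\'e constants stay uniform even at $\omega=2\pi$; your argument therefore does \emph{not} use the strict inequality. In the paper the restriction enters only through the reflection factor $\alpha=\omega/(2\pi-\omega)$ in Lemma~\ref{Lem:DivFree}, which blows up at $\omega=2\pi$. In other words, your stream-function route appears to prove the lemma for the full range $\omega\in(0,2\pi]$, slightly improving on the stated hypothesis.
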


\begin{remark}[The case $\omega = 2\pi$] 
Notice that in Lemma \ref{Lem:sig} we have purposefully not included that case $\omega = 2 \pi$.
This is because in our construction of $\sigmaD_i$ we are required to extend the vector-field on the \rhs of \eqref{equation_sigma_tilde} in a divergence-free way (this is done in Lemma \ref{Lem:DivFree} in the Appendix) and our construction of the extensions fails in this case.

\end{remark}
For simplicity, in the sequel, we denote the rescaled quantities :
\begin{align*}
&a_\epsilon := a\lt(\frac{\cdot}{\epsilon}\rt), \quad
\phi_{\epsilon,i} := \epsilon \phi_i\lt(\frac{\cdot}{\epsilon}\rt),
\quad
\phiC_{\epsilon,n} := \epsilon^{\rhobar_n}\phiC_{n}\lt(\frac{\cdot}{\epsilon}\rt),
\quad
\phiD_{\epsilon,i} := \epsilon \phiD_i\lt(\frac{\cdot}{\epsilon}\rt),
\quad
\sigma_{\epsilon,i} := \epsilon \sigma_i\lt(\frac{\cdot}{\epsilon}\rt),
\quad \sigmaD_{\epsilon,i} := \epsilon\sigmaD_i\lt(\frac{\cdot}{\epsilon}\rt).
\end{align*}
We set $\chi:=\eta_{\Boule,1}$, and we extend \eqref{Decompose_ubar} and \eqref{2scale} from $\Dom_{1/2}$ to $\Dom$ as follows (thus redefining $\ubar_{\reg}^N$ and $\utieps^N$):
\begin{align}\label{Num:7003}
\ubar &= \ubar_{\reg}^N + \sum_{n=1}^N \bar{\gamma}_n \taubar_n \chi,
\\
\label{2-scale-New}
\utieps^N &:= (1+\phiD_{\epsilon,i}\partial_i) \ubar_{\reg}^N + \sum_{n=1}^N \bar{\gamma}_n (\taubar_n + \phiC_{\epsilon,n})\chi + \sum_{n=1}^N \bar{\gamma}_n \taubar_n \phiD_{\epsilon,i} \partial_i \chi.
\end{align}
(Notice that \eqref{Decompose_ubar} and \eqref{Num:7003} on the one hand, and \eqref{2scale} and \eqref{2-scale-New} on the other hand indeed coincide in $\Dom_{1/2}$.
However, the introduction of cut-off functions in \eqref{Num:7003} and \eqref{2-scale-New} counterbalances the growth at infinity of the functions $\taubar_n$.)

Then, we may express the error $\nabla \ueps-\nabla\utieps^N$ as the solution of an elliptic equation:

\begin{lemma}\label{Lem:2scale}
	Let $N \in \mathbb{N}$.
	Assume that $\ueps$ and $\bar{u}$ satisfy \eqref{Num:7001}, and define $\utieps^N$ and $\ubar_{\reg}^N$  by \eqref{2-scale-New} and \eqref{Num:7003}, respectively.
	Then, there holds
	\begin{equation}\label{E:1-0}
	-\nabla \cdot a_\epsilon (\nabla \utieps^{N}  - \nabla \ueps)
	=
	\nabla \cdot h^{N}_\epsilon,
	\end{equation}
	where
	\begin{equation}\label{E:1-bis-0}
	h^{N}_\epsilon :=\lt(\sigmaD_{\epsilon,i} - a_\epsilon \phiD_{\epsilon,i} \rt) \partial_i \nabla \bar{v}^{N}
	-\sum_{n=1}^N \bar{\gamma}_n a_\epsilon \nabla \lt((\chi-1)(\phiC_{\epsilon,n} - \phiD_{\epsilon,i} \partial_i \taubar_n)\rt),
	\end{equation}
	for
	\begin{equation}\label{Def:barv-0}
	\bar{v}^{N} :=\ubar - \sum_{n=1}^N \bar{\gamma}_n \taubar_n.
	\end{equation}	
\end{lemma}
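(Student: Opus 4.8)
The plan is to establish \eqref{E:1-0} by the classical 2-scale computation, once the cut-off-heavy expression \eqref{2-scale-New} has been rearranged into a form in which the contributions of $\phiD$, of $\phiC$, and of the cut-off $\chi$ are disentangled. First I would substitute $\ubar_{\reg}^N = \ubar - \sum_n \bar{\gamma}_n \taubar_n\chi$ (from \eqref{Num:7003}) and $\ubar = \bar{v}^N + \sum_n\bar{\gamma}_n\taubar_n$ (from \eqref{Def:barv-0}) into \eqref{2-scale-New}, expand
\begin{equation*}
\phiD_{\epsilon,i}\partial_i \ubar_{\reg}^N = \phiD_{\epsilon,i}\partial_i \bar{v}^N + \sum_{n=1}^N\bar{\gamma}_n(1-\chi)\phiD_{\epsilon,i}\partial_i\taubar_n - \sum_{n=1}^N\bar{\gamma}_n\taubar_n\phiD_{\epsilon,i}\partial_i\chi,
\end{equation*}
and use the two cancellations this produces: the term $-\sum_n\bar{\gamma}_n\taubar_n\phiD_{\epsilon,i}\partial_i\chi$ exactly cancels the last sum of \eqref{2-scale-New}, while the term $-\sum_n\bar{\gamma}_n\taubar_n\chi$ coming from $\ubar_{\reg}^N$ cancels the matching piece of $\sum_n\bar{\gamma}_n(\taubar_n+\phiC_{\epsilon,n})\chi$. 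After collecting terms and writing $1-\chi=-(\chi-1)$, this gives the key identity
\begin{equation*}
\utieps^N = \lt(1+\phiD_{\epsilon,i}\partial_i\rt)\bar{v}^N + \sum_{n=1}^N\bar{\gamma}_n\lt(\taubar_n+\phiC_{\epsilon,n}\rt) + \sum_{n=1}^N\bar{\gamma}_n(\chi-1)\lt(\phiC_{\epsilon,n}-\phiD_{\epsilon,i}\partial_i\taubar_n\rt).
\end{equation*}
This reorganization is the only bookkeeping-heavy point and the place where sign slips are easiest; I would cross-check it by verifying that it reduces to \eqref{2scale} on $\Dom_{1/2}$.

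Next I would record the two elementary ingredients needed to apply $-\nabla\cdot a_\epsilon\nabla$ term by term. From \eqref{Def:sigD} rescaled one has $a_\epsilon\nabla\phiD_{\epsilon,i}=\abar e_i - a_\epsilon e_i + \nabla\cdot\sigmaD_{\epsilon,i}$, and, using the skew-symmetry of $\sigmaD_i$ (so that $\nabla\cdot(\nabla\cdot\sigmaD_{\epsilon,i})=0$ and $\nabla\cdot\lt((\nabla\cdot\sigmaD_{\epsilon,i})\partial_i g\rt)=-\nabla\cdot\lt(\sigmaD_{\epsilon,i}\,\partial_i\nabla g\rt)$), the standard manipulation yields
\begin{equation*}
-\nabla\cdot a_\epsilon\nabla\lt((1+\phiD_{\epsilon,i}\partial_i)g\rt) = -\nabla\cdot\abar\nabla g + \nabla\cdot\lt((\sigmaD_{\epsilon,i}-a_\epsilon\phiD_{\epsilon,i})\partial_i\nabla g\rt)
\end{equation*}
for sufficiently regular $g$; I apply this with $g=\bar{v}^N$. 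Second, since $\taubar_n$ is homogeneous of degree $\rhobar_n$ and $\phiC_n$ solves \eqref{Def:psi}, the rescaled function $\taubar_n+\phiC_{\epsilon,n}$ is $a_\epsilon$-harmonic in $\Dom$, so the middle sum in the key identity is annihilated by $-\nabla\cdot a_\epsilon\nabla$.

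Applying $-\nabla\cdot a_\epsilon\nabla$ to the key identity and inserting these two facts produces exactly $-\nabla\cdot\abar\nabla\bar{v}^N+\nabla\cdot h^N_\epsilon$, with $h^N_\epsilon$ as in \eqref{E:1-bis-0} (the last sum there being literally $-\nabla\cdot a_\epsilon\nabla$ of the last sum of the key identity, with the divergence pulled out). It then remains to identify $-\nabla\cdot\abar\nabla\bar{v}^N$: by \eqref{Def:barv-0}, the $\abar$-harmonicity \eqref{Def:taubar} of $\taubar_n$, and the equation for $\ubar$ in \eqref{Num:7001}, one has $-\nabla\cdot\abar\nabla\bar{v}^N=-\nabla\cdot\abar\nabla\ubar=\nabla\cdot f=-\nabla\cdot a_\epsilon\nabla\ueps$ by \eqref{Num:7001} once more; subtracting gives \eqref{E:1-0}. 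The one point needing care is the regularity of $\bar{v}^N$ near the tip (it need not lie in $\HH^2$ there), so the identity, and in particular the term $\sigmaD_{\epsilon,i}\,\partial_i\nabla\bar{v}^N$, is to be understood in $\mathcal{D}'(\Dom)$, the second-order term being made sense of after one integration by parts using the skew-symmetry of $\sigmaD_i$; in the application in Section~\ref{error_est_sec} one anyway only tests $h^N_\epsilon$ on $\Dom_1\setminus\Dom_{2\epsilon}$, where $f\equiv0$ and $\bar{v}^N$ is smooth. Beyond this bookkeeping I foresee no genuine obstacle: the content is the standard 2-scale identity together with the cut-off rearrangement.
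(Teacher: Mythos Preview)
Your proof is correct and follows essentially the same route as the paper: the key algebraic rearrangement of $\utieps^N$ into $(1+\phiD_{\epsilon,i}\partial_i)\bar v^N$ plus the $a_\epsilon$-harmonic piece $\sum_n\bar\gamma_n(\taubar_n+\phiC_{\epsilon,n})$ plus the $(\chi-1)$-remainder is exactly what the paper obtains (it just arrives there by first applying the operator and then using harmonicity to replace $\chi$ by $\chi-1$, whereas you rearrange on the level of functions first). The only additional content in the paper's proof is that it simultaneously establishes a generalization with an extra near-corner cut-off $\chi_\epsilon$ and $M$ further singular functions, which is needed later in the error estimate; your argument covers the lemma as stated.
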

Next, we estimate the \rhs of \eqref{E:1-0}, $h^N_\epsilon$, in the annealed $\LL^\infty$-like norm analogous to that in \eqref{Num:7034}, \textit{cf.} Lemma \ref{Lem:Prelim}.
By post-processing the Lipschitz-like estimates of Theorem \ref{ThAL} to accommodate a \rhs, \textit{cf.} Section \ref{Sec:Linfty}, we may transfer these estimates on the level of $\nabla \utieps^{N}  - \nabla \ueps$ and establish Theorem \ref{Th:Error}.

\section{Conditional regularity at the corner: Argument for Lemma \ref{Lem:heterog}}
\label{proof_lemma3.3}

In this section we give the argument for Lemma \ref{Lem:heterog} : This requires the iterative use of Lemma \ref{Lem:Hcv}, which we state and then prove below. Once we have access to Lemma \ref{Lem:Hcv}, we can iterate it over various scales to obtain Lemma \ref{Lem:heterog}.

\subsection{Iteration Lemma}
\label{iterative_lemma}

\begin{lemma}[Iteration Lemma]\label{Lem:Hcv}
Let $N \in \mathbb{N}$ and $\rho< \rhobar_{N+1}$.
There exists a constant $C_{\omega,\lambda,N}$
and we may choose
$\theta \ll_{\omega,\lambda,N,\rho} 1$ and $\delta \ll_{\omega,\lambda,N,\rho} 1$ such that the following property holds : 
Let $\radius>0$ be given.
Assume that the extended corrector $(\phi,\sigma)$ and the corner correctors $\phiC_n$, for $n \in \{1,\dots,N\}$, satisfy
\begin{equation}\label{Sublin*}
\frac{1}{\radius} \lt( \fint_{\Boule_\radius} \lt|(\phi,\sigma)\rt|^2  \rt)^{\frac{1}{2}}+\sum_{n=1}^N \frac{1}{\radius^{\rhobar_n}} \lt( \fint_{\Dom_\radius} \lt|\phiC_n\rt|^2  \rt)^{\frac{1}{2}} \leq \delta.
\end{equation}
Then, for any solution $u$ to \eqref{Num:011} there exist coefficients $\gamma_n$ bounded as follows :
\begin{equation} 
\label{Estim :gamma}
\lt|\gamma_n\rt|\leq C_{\omega,\lambda,N} r^{-\rhobar_n} \lt(\fint_{\Dom_{\radius}} |u|^2\rt)^{\frac{1}{2}},
\end{equation}
and such that there holds :
\begin{equation}\label{Num:012_quad}
\lt(\fint_{\Dom_{\theta \radius}} \lt| u - \sum_{n=1}^{N} \gamma_n \lt(\taubar_n+ \phiC_n\rt) \rt|^2 \rt)^{\frac{1}{2}}
\leq
\theta^{\rho} \lt(\fint_{\Dom_{\radius}} |u|^2\rt)^{\frac{1}{2}}.
\end{equation}
\end{lemma}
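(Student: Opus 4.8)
The plan is to establish Lemma~\ref{Lem:Hcv} by the classical \emph{compactness} (harmonic approximation) argument of Avellaneda--Lin \cite{AvellanedaLin}, the only twist being that the limiting object will be resolved through the Dauge-type expansion of Theorem~\ref{Th:homog} rather than through a Taylor polynomial. First I would rescale by $x\rightsquigarrow x/\radius$ to reduce to $\radius=1$: this is routine, since the coefficient field $a(\radius\,\cdot)$ has extended corrector $\radius^{-1}(\phi,\sigma)(\radius\,\cdot)$ and corner correctors $\radius^{-\rhobar_n}\phiC_n(\radius\,\cdot)$ (because $\taubar_n(\radius x)=\radius^{\rhobar_n}\taubar_n(x)$), and the hypothesis \eqref{Sublin*} together with the two conclusions \eqref{Estim :gamma}, \eqref{Num:012_quad} are all invariant under this rescaling; by restriction we may also assume $u$ is $a$-harmonic on $\Dom_1$ with $u=0$ on $\Gamma_1$, normalized by $\fint_{\Dom_1}|u|^2=1$. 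I would then fix, once and for all, the constant $C_{\omega,\lambda,N}$ to be the one supplied by \eqref{Formula :gamma3} (bounding the Dauge coefficients of a harmonic function on $\Dom_{1/2}$ vanishing on $\Gamma_{1/2}$ by its $L^2$-average, together with the Caccioppoli constant), and \emph{afterwards} choose $\theta\le 1/4$ so small that $\barC_{\omega,N}\,\theta^{\rhobar_{N+1}}\le\tfrac12\theta^{\rho}$, where $\barC_{\omega,N}$ is the constant in \eqref{Num:001}; this is the only point where the strict inequality $\rho<\rhobar_{N+1}$ is used.

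With $\theta$ so fixed, I would argue by contradiction: if no admissible $\delta$ exists, there are coefficient fields $a^{(k)}$ (obeying \ref{Assump1}, \ref{AssumpId}) with extended correctors $(\phi^{(k)},\sigma^{(k)})$ and corner correctors $\phiC_{n}{}^{(k)}$ whose norms in \eqref{Sublin*} tend to $0$, and $a^{(k)}$-harmonic $u^{(k)}$ on $\Dom_1$ with $u^{(k)}=0$ on $\Gamma_1$, $\fint_{\Dom_1}|u^{(k)}|^2=1$, for which no $(\gamma_n)$ with $|\gamma_n|\le C_{\omega,\lambda,N}$ satisfies \eqref{Num:012_quad}. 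By the boundary Caccioppoli inequality and Rellich, $u^{(k)}\rightharpoonup u^{(\infty)}$ in $H^1(\Dom_{1/2})$ and strongly in $L^2(\Dom_{1/2})$ along a subsequence, so $u^{(\infty)}=0$ on $\Gamma_{1/2}$ and $\fint_{\Dom_{1/2}}|u^{(\infty)}|^2\lesssim1$; Caccioppoli applied to \eqref{Eq :Corr} bounds $\nabla\phi^{(k)}$ in $L^2(\Dom_{1/2})$, and since $(\phi^{(k)},\sigma^{(k)})\to0$ and $\phiC_n{}^{(k)}\to0$ in $L^2$ we get $\nabla\phi_i^{(k)}\rightharpoonup0$ and, by \eqref{Decomposition}, $a^{(k)}(\nabla\phi_i^{(k)}+e_i)=e_i+\nabla\!\cdot\sigma_i^{(k)}\rightharpoonup e_i$ weakly in $L^2(\Dom_{1/2})$. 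The heart of the argument --- and the step I expect to be the main obstacle --- is to identify $u^{(\infty)}$ as harmonic. For this I would apply the div--curl lemma twice on $\Dom_{1/2}$: once to the divergence-free $a^{(k)}\nabla u^{(k)}$ against the curl-free $\nabla(\phi_i^{(k)}+x_i)\rightharpoonup e_i$, and once to the curl-free $\nabla u^{(k)}\rightharpoonup\nabla u^{(\infty)}$ against the divergence-free $a^{(k)}(\nabla\phi_i^{(k)}+e_i)\rightharpoonup e_i$; since $a^{(k)}$ is symmetric the two scalar products coincide pointwise, so the weak-$L^2$ limit of $a^{(k)}\nabla u^{(k)}$ equals $\nabla u^{(\infty)}$, which is therefore divergence-free, i.e.\ $\Delta u^{(\infty)}=0$ in $\Dom_{1/2}$.

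Finally, Theorem~\ref{Th:homog} (rescaled from $\Dom_1$ to $\Dom_{1/2}$), applied to the harmonic $u^{(\infty)}$, yields $u^{(\infty)}=u^{(\infty),N}_{\reg}+\sum_{n=1}^N\bar\gamma_n\taubar_n$ with $|\bar\gamma_n|\le C_{\omega,\lambda,N}$ and $\bigl(\fint_{\Dom_\theta}|u^{(\infty),N}_{\reg}|^2\bigr)^{1/2}\le\barC_{\omega,N}\,\theta^{\rhobar_{N+1}}\le\tfrac12\theta^{\rho}$. Taking $\gamma_n:=\bar\gamma_n$ in \eqref{Num:012_quad} and writing $u^{(k)}-\sum_n\bar\gamma_n(\taubar_n+\phiC_n{}^{(k)})=(u^{(k)}-u^{(\infty)})+u^{(\infty),N}_{\reg}-\sum_n\bar\gamma_n\phiC_n{}^{(k)}$, the first and last terms tend to $0$ in $L^2(\Dom_\theta)$ by the strong $L^2$ convergence and the smallness of the $\phiC_n{}^{(k)}$, so for $k$ large the left-hand side of \eqref{Num:012_quad} is $\le\tfrac34\theta^{\rho}<\theta^{\rho}$ while $|\bar\gamma_n|\le C_{\omega,\lambda,N}$ --- contradicting the choice of the $u^{(k)}$. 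This would prove the lemma with the constant $C_{\omega,\lambda,N}$ independent of $\rho$, and with $\theta,\delta$ depending on $\omega,\lambda,N,\rho$ as asserted. The technical points to double-check in the write-up are that $\Dom_{1/2}$ is a bounded Lipschitz domain (so Rellich and the trace theorem apply), that $\nabla\!\cdot\sigma_i^{(k)}$ lies in $L^2(\Dom_{1/2})$ via \eqref{Decomposition} and converges to $0$ weakly there (being $L^2$-bounded and $H^{-1}$-null), and that the radii appearing in \eqref{Num:011}--\eqref{Sublin*} are correctly matched after the rescaling.
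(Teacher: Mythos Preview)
Your argument is correct and complete; the div--curl identification of the limit as harmonic and the subsequent use of Theorem~\ref{Th:homog} both go through as you describe, and your care about the order in which $C_{\omega,\lambda,N}$, $\theta$, and $\delta$ are fixed is exactly right.

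However, the paper proceeds differently. Rather than arguing by contradiction and compactness, it takes the \emph{direct} route: for the given $u$ on $\Dom_r$, it constructs the harmonic function $\bar u$ on $\Dom_{r/2}$ sharing the boundary values of $u$, applies Theorem~\ref{Th:homog} to $\bar u$ (this produces the explicit Dauge coefficients $\gamma_n$ via \eqref{Formula :gamma3}), and then invokes a quantitative H-convergence estimate (cited from \cite[Prop.~2.1~(17)]{JosienOtto_2019}) asserting that $\|u-\bar u\|_{L^2(\Dom_{r/2})}\le C_{\lambda,\omega,\delta}\,r\,\|\nabla u\|_{L^2(\Dom_{r/2})}$ with $C_{\lambda,\omega,\delta}\to0$ as $\delta\downarrow0$. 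The smallness of the corner correctors is then used exactly as you do, via a triangle inequality, to pass from $\taubar_n$ to $\taubar_n+\phiC_n$. So the two proofs share the same skeleton (harmonic approximation + Dauge expansion + corrector smallness) but organize it differently: the paper builds the harmonic approximant first and compares, whereas you extract it as a weak limit. What the paper's route buys is an explicit formula for $\gamma_n$ in terms of $\bar u$ (hence of $u$), which is convenient downstream; what your route buys is self-containment, since the cited H-convergence estimate in \cite{JosienOtto_2019} is itself proved by essentially the compactness argument you wrote out.
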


\begin{proof}[Proof of Lemma \ref{Lem:Hcv}]

The core of this proof is to make use of H-convergence in order to establish that
\begin{equation}\label{Num:012}
\lt(\fint_{\Dom_{\theta \radius}} \lt| u - \sum_{n=1}^{N} \gamma_n \taubar_n \rt|^2 \rt)^{\frac{1}{2}}
\leq
\frac{\theta^{\rho}}{2} \lt(\fint_{\Dom_{\radius}} |u|^2\rt)^{\frac{1}{2}},
\end{equation}
where, for $\ubar$ defined below, the coefficients $\gamma_n$ are given by
\begin{equation}\label{M:0030}
\gamma_n=-\frac{1}{n \pi} \int_{\Dom}  \ubar\lt(2 \nabla \taubar^\star_n \cdot \nabla\eta_{\Boule,\radius/2} +\taubar^\star_n \Delta \eta_{\Boule,\radius/2}  \rt).
\end{equation}

To begin, we establish \eqref{Num:012} in two steps : 
In Step 1, we show that the solution $\ubar$ of the homogeneous equation
\begin{equation}
\begin{cases}
-\Delta \ubar =0 &\dans \Dom_{\radius/2},
\\
\ubar =u &\sur \partial \Dom_{\radius/2},
\end{cases}
\end{equation}
satisfies
\begin{equation}
\label{Num:203}
\lt(\fint_{\Dom_{\theta \radius}} \lt| \ubar - \sum_{n=1}^{N} \gamma_n \taubar_n \rt|^2 \rt)^{\frac{1}{2}}
\lesssim_{\omega,N}
\theta^{\rhobar_{N+1}} \radius\lt(\fint_{\Dom_{\radius/2}} |\nabla u|^2\rt)^{\frac{1}{2}}.
\end{equation} 
Then, in Step 2, as a consequence of \eqref{Sublin*},  we justify by H-convergence that \eqref{Num:203} can be ``transferred'' to the heterogeneous coefficient operator, obtaining \eqref{Num:012}.
To finish, in Step 3, we show \eqref{Estim :gamma} and employ once more \eqref{Sublin*} to establish~\eqref{Num:012_quad}.

\paragraph{Step 1 : Estimate in the homogeneous setting.}
We first rescale $\ubar_\radius := \ubar(\radius \cdot /2)$, on which we apply Theorem~\ref{Th:homog}.
Therefore, for any $\theta \leq 1/4$ we have
\begin{equation}\label{Num:013}
\lt(\fint_{\Dom_{2\theta}} \lt| \ubar_\radius - \sum_{n=1}^{N} \gamma^\radius_n \taubar_n  \rt|^2 \rt)^{\frac{1}{2}}
\lesssim_{\omega,N}
\theta^{\rhobar_{N+1}} \lt(\fint_{\Dom_{1}} |\ubar_\radius|^2\rt)^{\frac{1}{2}}
\quad \text{with }\gamma^\radius_n \overset{\eqref{Formula :gamma3}}{=}
-\frac{1}{n\pi}\int_{\Dom_1}\ubar_\radius \lt(2 \nabla  \taubar^\star_n \cdot \nabla \eta_{\Boule,1/2} + \taubar^\star_n \Delta \eta_{\Boule,1/2}\rt). 
\end{equation}
By the change of variables $x \rightsquigarrow (2/\radius)x$ (recall \eqref{Def:taubar0} and \eqref{Def:taubarstar0}), we get  $\gamma^\radius_n = (\radius/2)^{\rhobar_n} \gamma_n$ for $\gamma_n$ defined by \eqref{M:0030}, and thus
\begin{equation}\label{Num:012_bis}
\lt(\fint_{\Dom_{\theta \radius}} \lt| \ubar - \sum_{n=1}^{N} \gamma_n \taubar_n \rt|^2 \rt)^{\frac{1}{2}}
\lesssim_{\omega,N}
\theta^{\rhobar_{N+1}} \lt(\fint_{\Dom_{\radius/2}} |\ubar|^2\rt)^{\frac{1}{2}}.
\end{equation}
Moreover, using successively the Poincaré inequality and the energy estimate on $u-\ubar$, we get
\begin{equation}\label{M:0031}
\lt(\fint_{\Dom_{\radius/2}} |\ubar|^2\rt)^{\frac{1}{2}}
\lesssim_{\omega}
\radius\lt(\fint_{\Dom_{\radius/2}} |\nabla \ubar|^2\rt)^{\frac{1}{2}}
\lesssim
\radius\lt(\fint_{\Dom_{\radius/2}} |\nabla u|^2\rt)^{\frac{1}{2}}.
\end{equation}
Thus, \eqref{Num:012_bis} upgrades to \eqref{Num:203}.

\paragraph{Step 2 : H-convergence.}
By an argument from  \cite[Prop. 2.1. (17)]{JosienOtto_2019} based on H-convergence and involving \eqref{Sublin*}, there exist constants $C_{\lambda,\omega,\delta}$ satisfying $C_{\lambda,\omega,\delta} \rightarrow0$
as  $\delta \downarrow 0$, such that
\begin{equation}\label{Num:201}
\lt(\fint_{\Dom_{\radius/2}} \lt| u - \ubar \rt|^2\rt)^{\frac{1}{2}} \leq C_{\lambda,\omega,\delta}\radius \lt(\fint_{\Dom_{\radius/2}} \lt|\nabla u\rt|^2\rt)^{\frac{1}{2}}.
\end{equation}

By a triangle inequality involving \eqref{Num:203} and \eqref{Num:201}, we obtain
\begin{equation*}
\lt(\fint_{\Dom_{\theta \radius}} \lt| u - \sum_{n=1}^{N} \gamma_n \taubar_n \rt|^2 \rt)^{\frac{1}{2}}
\lesssim_{\lambda,\omega,N}
\lt(\theta^{\rhobar_{N+1}}+ C_{\lambda,\omega,\delta}\theta^{-1}\rt)\radius \lt(\fint_{\Dom_{\radius/2}} \lt|\nabla u\rt|^2\rt)^{\frac{1}{2}}.
\end{equation*} 
Moreover, the Caccioppoli inequality yields
\begin{equation*}
\radius \lt(\fint_{\Dom_{\radius/2}} \lt|\nabla u\rt|^2\rt)^{\frac{1}{2}}
\lesssim_{\lambda,\omega} \lt(\fint_{\Dom_{\radius}} \lt|u\rt|^2\rt)^{\frac{1}{2}}.
\end{equation*}
As a consequence, we get
\begin{equation*}
\lt(\fint_{\Dom_{\theta \radius}} \lt| u - \sum_{n=1}^{N} \gamma_n \taubar_n \rt|^2 \rt)^{\frac{1}{2}}
\lesssim_{\lambda,\omega,N}
\lt(\theta^{\rhobar_{N+1}}+ C_{\lambda,\omega,\delta}\theta^{-1}\rt)
\lt(\fint_{\Dom_{\radius}} \lt|u\rt|^2\rt)^{\frac{1}{2}}.
\end{equation*}
Therefore, we may successively choose $\theta \ll_{\lambda,\omega,N,\rho} 1$ and $\delta  \ll_{\lambda,\omega,N,\rho} 1$ such that \eqref{Num:012} holds.

\paragraph{Step 3: Proof of \eqref{Estim :gamma} and \eqref{Num:012_quad}}
We get \eqref{Estim :gamma} as a direct corollary of the Cauchy-Schwarz inequality applied to \eqref{M:0030} (recalling \eqref{taubar_Homog}), using then \eqref{M:0031} and the Caccioppoli inequality :
\begin{align*}
|\gamma_n| \lesssim \radius^{-\rhobar_n-2} \lt(\int_{\Dom_{\radius/2}} |\ubar|^2\rt)^{\frac{1}{2}} \overset{\eqref{M:0031}}{\lesssim}
\radius^{-\rhobar_n-1}\lt(\fint_{\Dom_{\radius/2}} |\nabla u|^2\rt)^{\frac{1}{2}}
\lesssim
\radius^{-\rhobar_n}\lt(\fint_{\Dom_{\radius}} |u|^2\rt)^{\frac{1}{2}}.
\end{align*}

Then, combining \eqref{Sublin*} and \eqref{Estim :gamma}, we get
\begin{align*}
\lt(\fint_{\Dom_{\theta \radius}} \lt| \sum_{n=1}^{N} \gamma_n \phiC_n \rt|^2 \rt)^{\frac{1}{2}}
\lesssim
\sum_{n=1}^N \theta^{-1} \radius^{-\rhobar_n} \lt(\fint_{\Dom_{\radius}} |u|^2\rt)^{\frac{1}{2}}
\delta \radius^{\rhobar_n}
\lesssim \theta^{-1}\delta \lt(\fint_{\Dom_{\radius}} |u|^2\rt)^{\frac{1}{2}}.
\end{align*}
Up to choosing $\delta$ smaller than in the previous step, we deduce from above
\begin{equation}\label{Num:205}
\lt(\fint_{\Dom_{\theta \radius}} \lt| \sum_{n=1}^{N} \gamma_n \phiC_n \rt|^2 \rt)^{\frac{1}{2}}
\leq
\frac{\theta^\rho}{2}\lt(\fint_{\Dom_{\radius}} |u|^2\rt)^{\frac{1}{2}}.
\end{equation}
Finally, using the triangle inequality and appealing to \eqref{Num:012} and \eqref{Num:205} yields \eqref{Num:012_quad}.
\end{proof}

\subsection{Inductive argument: Proof of Lemma \ref{Lem:heterog}}
\label{proof_3.3}

\begin{proof}[Proof of Lemma \ref{Lem:heterog}]
We set $C_{\omega,\lambda, N}$, $\theta$ and $\delta$ as in Lemma \ref{Lem:Hcv}.
We split our proof into three steps : The inductive use of Lemma \ref{Lem:Hcv} is relegated to the first two steps. In these steps it is shown that, for any $m \geq 0$ such that $\theta^m R>\rstar$, we may define $\gamma_n(m) \in \R$ such that there hold
\begin{equation}\label{HR :1}
\lt(\fint_{\Dom_{\theta^m \rmax}} \lt| u - \sum_{n=1}^N \gamma_n(m) \lt( \taubar_n + \phiC_n\rt) \rt|^2 \rt)^{\frac{1}{2}}
\leq \theta^{m \rho}
\lt(\fint_{\Dom_{\rmax}} \lt| u \rt|^2 \rt)^{\frac{1}{2}},
\end{equation} 
and
\begin{equation}\label{HR :2}
\lt|\gamma_n(m)\rt| \leq C_{\omega,\lambda, N} \rmax^{-\rhobar_n} \lt(\sum_{m'=1}^m \theta^{(m'-1)(\rho-\rhobar_n)}\rt) \lt(\fint_{\Dom_{\rmax}} \lt| u \rt|^2 \rt)^{\frac{1}{2}}.
\end{equation}
In the last step, we iterate \eqref{HR :1} and \eqref{HR :2} in order to conclude our argument.

\paragraph{Step 1 : Initialization.}
For $m=0$, we may take $\gamma_n(0)=0$ by which both \eqref{HR :1} and \eqref{HR :2} become tautological.

\paragraph{Step 2 : Inductive Step.}
Assume that $m\geq 0$ is such that $\theta^{m} \rmax> \rstar$, and that \eqref{HR :1} and \eqref{HR :2} hold.
By assumption \eqref{Sublin**}, we may apply Lemma \ref{Lem:Hcv} to
\begin{equation}\label{Num:2007}
v :=u - \sum_{n=1}^N \gamma_n(m) \lt( \taubar_n + \phiC_n\rt),
\end{equation}
which satisfies \eqref{Num:011}, in the domain $\Dom_{\theta^m \rmax}$.
Hence, there exist $N$ coefficients $\tilde{\gamma}_n(m+1) \in \R$ satisfying
\begin{align}\label{Num:1001}
\lt|\tilde{\gamma}_n(m+1)\rt| \overset{\eqref{Estim :gamma}}{\leq}
C_{\omega,\lambda, N} (\theta^m\rmax)^{-\rhobar_n} \lt(\fint_{\Dom_{\theta^m\rmax}} |v|^2\rt)^{\frac{1}{2}}
\overset{\eqref{Num:2007}, \eqref{HR :1}}{\leq}
C_{\omega,\lambda, N} \theta^{m (\rho-\rhobar_n)} \rmax^{-\rhobar_n}
\lt(\fint_{\Dom_{\rmax}} \lt| u \rt|^2 \rt)^{\frac{1}{2}},
\end{align}
and such that
\begin{align}\label{Num:1002}
\lt(\fint_{\Dom_{\theta^{m+1} \rmax}} \lt| v - \sum_{n=1}^N \tilde{\gamma}_n(m+1) \lt( \taubar_n + \phiC_n\rt) \rt|^2 \rt)^{\frac{1}{2}}
\overset{\eqref{Num:012_quad}}{\leq}
\theta^\rho
\lt(\fint_{\Dom_{\theta^{m} \rmax}} \lt| v\rt|^2 \rt)^{\frac{1}{2}}.
\end{align}
Defining $\gamma_n(m+1) :=\gamma_n(m) + \tilde{\gamma}_n(m+1)$, recalling the definition of $v$ and inserting \eqref{HR :1} into the r.~h.~s.\ of \eqref{Num:1002}, we immediately obtain that $u$ satisfies \eqref{HR :1} for $m$ replaced by $m+1$.
Moreover, appealing to the triangle inequality, using \eqref{HR :2} and \eqref{Num:1001} yields
\begin{align*}
\lt|\gamma_n(m+1)\rt| 
\leq & \lt|\gamma_n(m)\rt| +  \lt|\tilde{\gamma}_n(m+1)\rt|
\leq 
C_{\omega,\lambda, N} \rmax^{-\rhobar_n} \lt(\sum_{m'=1}^{m+1} \theta^{(m'-1)(\rho-\rhobar_n)}\rt)
\lt(\fint_{\Dom_{\rmax}} \lt| u \rt|^2 \rt)^{\frac{1}{2}}.
\end{align*}
This establishes \eqref{HR :2} for $m$ replaced by $m+1$ and concludes the inductive step.

\paragraph{Step 3 : Conclusion.}
By induction, for any $m \geq0$ such that $\theta^{m} \rmax> \rstar$, we have built a sequence $\lt(\gamma_n(m)\rt)_{n \in \{1, \ldots N \}}$, such that \eqref{HR :1} and \eqref{HR :2} are satisfied.
We let $M \in \mathbb{N}$ be such that  $\theta^{M} \rmax> \rstar \geq \theta^{M+1} \rmax$ and define $\gamma_n :=\gamma_n(M)$.
By \eqref{HR :2}, we get \eqref{BorneGamma}.

Next, we pick $r \in [1,\rmax]$, and choose $m\leq M$ maximal such that $r < \theta^m R$.
Therefore, we get by enlarging the integration domain
\begin{align}\label{Num:2010}
\lt(\fint_{\Dom_r} \lt| u - \sum_{n=1}^N \gamma_n \lt( \taubar_n + \phiC_n\rt) \rt|^2 \rt)^{\frac{1}{2}}
\leq
\theta^{-1}\max \lt\{1, \frac{\rstar}{r}\rt\}
\lt(\fint_{\Dom_{\theta^m \rmax}} \lt| u - \sum_{n=1}^N \gamma_n \lt( \taubar_n + \phiC_n\rt) \rt|^2 \rt)^{\frac{1}{2}}.
\end{align}
By a triangle inequality, we immediately estimate the above \rhs as follows
\begin{equation}
\label{Num:2008}
\begin{aligned}
\lt(\fint_{\Dom_{\theta^m \rmax}} \lt| u - \sum_{n=1}^N \gamma_n \lt( \taubar_n + \phiC_n\rt) \rt|^2 \rt)^{\frac{1}{2}}
\leq~&
\lt(\fint_{\Dom_{\theta^m \rmax}} \lt| u - \sum_{n=1}^N \gamma_n(m) \lt( \taubar_n + \phiC_n\rt) \rt|^2 \rt)^{\frac{1}{2}}
\\
&+
\sum_{n=1}^N |\gamma_n - \gamma_n(m)|\lt(\fint_{\Dom_{\theta^m \rmax}}  \lt| \taubar_n + \phiC_n \rt|^2 \rt)^{\frac{1}{2}}.
\end{aligned}
\end{equation}
Recalling from Step 2 that $\gamma_n = \sum_{m=1}^{M} \tilde{\gamma}_n(m)$, we easily estimate the second \rhs as follows
\begin{equation*}
\begin{aligned}
\sum_{n=1}^N |\gamma_n - \gamma_n(m)|\lt(\fint_{\Dom_{\theta^m \rmax}}  \lt| \taubar_n + \phiC_n \rt|^2 \rt)^{\frac{1}{2}}
&\overset{\eqref{taubar_Homog},\eqref{Sublin**}}{\lesssim}
\sum_{n=1}^N \lt(\sum_{m'=m+1}^{M} |\tilde{\gamma}_n(m')|\rt)\lt(\theta^m \rmax\rt)^{\rhobar_n}
\\
&\overset{\eqref{Num:1001}}{\lesssim}
\sum_{n=1}^N  \theta^{m \rhobar_n} \lt(\sum_{m'=m+1}^{M} \theta^{m' (\rho-\rhobar_n)}\rt)
\lt(\fint_{\Dom_{\rmax}} \lt| u \rt|^2 \rt)^{\frac{1}{2}}
\\
&\lesssim \theta^{m\rho} \lt(\fint_{\Dom_{\rmax}} \lt| u \rt|^2 \rt)^{\frac{1}{2}}.
\end{aligned}
\end{equation*}
Inserting this as well as \eqref{HR :1} into \eqref{Num:2008}, which we use in turn in \eqref{Num:2010}, and recalling that $\theta^m \leq \theta^{-1} \max\{\rstar,r\}/\rmax$ yields \eqref{Num:119} in form of
\begin{equation}\label{Num:2009}
\begin{aligned}
\lt(\fint_{\Dom_r} \lt| u - \sum_{n=1}^N \gamma_n \lt( \taubar_n + \phiC_n\rt) \rt|^2 \rt)^{\frac{1}{2}}
&\lesssim
\max \lt\{1, \frac{\rstar}{r} \rt\}
\theta^{m\rho} \lt(\fint_{\Dom_{\rmax}} \lt| u \rt|^2 \rt)^{\frac{1}{2}}
\\
&\lesssim
\lt(\max \lt\{1, \frac{\rstar}{r}\rt\}\rt)^{1+\rho}
\lt(\frac{r}{\rmax} \rt)^\rho\lt(\fint_{\Dom_{\rmax}} \lt| u \rt|^2 \rt)^{\frac{1}{2}}.
\end{aligned}
\end{equation}

Finally, the remaining estimate \eqref{Num:119:lip} for $N\geq1$ follows from \eqref{Num:119}, \eqref{BorneGamma}, and the triangle inequality. Indeed, appealing to \eqref{Num:119} (with $\rho=\frac12(\rhobar_N+\rhobar_{N+1})\geq\rhobar_N$), \eqref{BorneGamma} together with \eqref{Sublin**} and $\fint_{D_r}|\taubar_N|^2=r^{2\rhobar_N}\fint_{D_1}|\taubar_N|^2$, we obtain for $r_*\leq r<R$
\begin{align*}
\lt(\fint_{\Dom_r} \lt| u - \sum_{n=1}^{N-1} \gamma_n \lt( \taubar_n + \phiC_n\rt) \rt|^2 \rt)^{\frac{1}{2}}
\leq&\lt(\fint_{\Dom_r} \lt| u - \sum_{n=1}^{N} \gamma_n \lt( \taubar_n + \phiC_n\rt) \rt|^2 \rt)^{\frac{1}{2}}+|\gamma_N|\lt(\fint_{\Dom_r} | \taubar_N + \phiC_N|^2 \rt)^{\frac{1}{2}}\\
\leq&C_0\lt(\lt(\frac{r}{\rmax} \rt)^{\frac12(\rhobar_N+\rhobar_{N+1})}+\lt(\frac{r}{\rmax} \rt)^{\rhobar_N}\lt(\lt(\fint_{D_1}|\taubar_N|^2\rt)^\frac12+\delta\rt)\rt) \lt(\fint_{\Dom_{\rmax}} |u|^2\rt)^{\frac{1}{2}},
\end{align*}
which implies \eqref{Num:119:lip} (for $r_*\leq r<R$, but the general claim easily follows).

\end{proof}

\section{Conditional quasi-optimal growth rates in weaker spatial norm: Arguments for Proposition \ref{Prop:Quench} and Corollary \ref{ThLiouville-2}}
\label{proof_prop_3.4}

\subsection{Conditional growth rates on the corner correctors: Argument for Proposition \ref{Prop:Quench}}\label{Sec:GeomSetting}

The difficulty that we encounter when building the corner corrector $\phiC_n$ via the ansatz \eqref{*ansatz} is that the remainder $\phitilde_{n}$ satisfies equation \eqref{*Def:psitilde2}, which is set on the whole unbounded domain $\Dom$ with a \rhs $h$ that is, in general, not in $\LL^2(\Dom)$.
Therefore, the Lax-Milgram theorem cannot be directly used.
To circumvent this issue, we appeal to a dyadic argument, in which we employ large-scale regularity results.

Departing
 from \eqref{*ansatz}, we split the \rhs on dyadic rings.
That is, we define $\phitilde_{n,m}$ as the Lax-Milgram solution to
\begin{equation}
\label{Def:psitilde2 :m}
\lt\{
\begin{aligned}
-\nabla \cdot a \nabla \phitilde_{n,m} =~& \nabla \cdot \lt( \eta_{\Boule,r_{m}} - \eta_{\Boule,r_{m-1}}\rt) h  && \dans \Dom,
\\
\phitilde_{n,m}=~&0 && \sur \Gamma,
\end{aligned}
\rt.
\end{equation}
for $r_m :=2^mr_0$ if $m \geq 0$ and $r_{m}=0$ if $m<0$, where $r_0$ will be fixed precisely afterwards.
This leads to the following decomposition of $\phitilde_{n}$ :
\begin{equation}\label{Num:4033}
\phitilde_{n} =  \sum_{m=0}^\infty \lt(\phitilde_{n,m} - \sum_{n'=1}^{N} \gamma_{n,m,n'} \lt(\taubar_{n'}+\phiC_{n'}\rt)\rt),
\end{equation}
where $N<n$ and the coefficients $\gamma_{n,m,n'}$ will be determined afterwards and correspond to a renormalization of $\phitilde_{n,m}$.
The addition of $a$-harmonic functions $\taubar_{n'}+\phiC_{n'}$ of lesser growth is not seen on the level of \eqref{*Def:psitilde2}, but is necessary in order to control the growth of $\phiC_{n,m}$.
In \eqref{Num:4033}, the index $N$ is chosen as being maximal such that all the renormalizing terms $\taubar_{n'}+\phiC_{n'}$ display growth rates $\rhobar_{n'}$ that are smaller than the desired $\rhobar_n-\nu$ that we expect for $\phiC_{n}$.
In other words, it is defined through
\begin{equation}\label{Def:N}
N :=\min\{n' \in \mathbb{N},\rhobar_{n}-\rhobar_{n'+1}-\nu < 0  \} < n.
\end{equation}

The aim is to establish that the series \eqref{Num:4033} converges absolutely.
To achieve this convergence, we use Lemma \ref{Lem:heterog} in order to finely estimate the behavior of the renormalized version of each $\nabla\phitilde_{n,m}$ in a suitable weighted $\LL^2$ norm.
Notice that, unless $n$ is small (depending on $\nu$ and $\omega$), in which case $N=0$, our strategy for estimating $\phiC_n$ requires us to assume that there already exist corner correctors for lesser indices $n'\leq N$, the growth of which is controlled.
Hence, we establish Proposition \ref{Prop:Quench} by induction on the index $n$.
The inductive argument, which is the core of the proof, is isolated in Lemma \ref{Lem:Iter2} below due to its technicality.

\begin{lemma}\label{Lem:Iter2}
We place ourselves under the assumptions of Proposition \ref{Prop:Quench}.
Let $n \in \NN \backslash\{0\}$, $m \in \mathbb{N}$ and define $N \in \mathbb{N}$ by~\eqref{Def:N}.
Next, we pick  $\tilde{\rho}\in (0,\rhobar_1)$, and $\rho \in (\rhobar_N,\rhobar_{N+1})$ sufficiently large so that
\begin{equation}\label{Contrainte :rho}
\rhobar_{n}-\rho-\nu < 0.
\end{equation}

Assume that there exist corner correctors $\phiC_{n'}$ for $n' \leq N$ such that \eqref{Sublin**} is satisfied for a given $\rstar \geq 2$.
Then, defining $r_m := 2^m\rstar$, there exist coefficients $\gamma_{n,m,n'}$ for $n' \in \{1,\dots,N\}$ such that the following renormalized version of $\nabla \phitilde_{n,m}$, for $\phitilde_{n,m}$ defined by \eqref{Def:psitilde2 :m}, satisfies the far-field estimate :
\begin{equation}\label{NearField}
\lt(\fint_{\Dom_r} \lt|\nabla \phitilde_{n,m} - \sum_{n'=1}^{N} \gamma_{n,m,n'} \nabla \lt( \taubar_{n'} + \phiC_{n'}\rt) \rt|^2 \rt)^{\frac{1}{2}}
\lesssim_{\omega, \lambda, n,\nu,\tilde{\nu},\rho}
r_m^{\rhobar_n-\rho-\nu}  \ln^{\tilde{\nu}}(r_m) r^{\rho-1} \pourtout r \in [r_0,r_{m}],
\end{equation}
and the near-field estimate :
\begin{equation}\label{FarField}
\lt(\fint_{\Dom_{2r} \backslash \Dom_{r}} \lt|\nabla \phitilde_{n,m} - \sum_{n'=1}^{N} \gamma_{n,m,n'} \nabla \lt( \taubar_{n'} + \phiC_{n'}\rt) \rt|^2 \rt)^{\frac{1}{2}}
\lesssim_{\omega,\lambda, n,\nu,\tilde{\nu},\tilde{\rho}} 
\begin{cases}
r_m^{\rhobar_{n}+\tilde{\rho}-\nu} \ln^{\tilde{\nu}}(r_m) r^{-\tilde{\rho}-1} & \si N=0,
\\
r_m^{\rhobar_n-\rhobar_{N}-\nu} \ln^{\tilde{\nu}}(r_m) r^{\rhobar_N-1} & \si N\geq 1,
\end{cases}
\end{equation}
for all $r \geq r_m$.
\end{lemma}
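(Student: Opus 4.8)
The plan is to treat the two estimates separately and, in each case, to reduce matters to the deterministic corner regularity of Lemma~\ref{Lem:heterog}. First I would record the only two facts about $\phitilde_{n,m}$ that come from its right-hand side. The source $g_m:=(\eta_{\Boule,r_m}-\eta_{\Boule,r_{m-1}})h$ in \eqref{Def:psitilde2 :m} is supported in $\Dom_{r_m}$ (and, for $m\geq1$, in the shell $\Dom_{r_m}\setminus\Dom_{r_m/4}$), so from the explicit form \eqref{*Def:h1h2} of $h$, the corrector bound \eqref{Num:40101}, \eqref{taubar_Homog}, and a dyadic summation over $\Dom_{r_m}$ one gets $\|g_m\|_{\LL^2(\Dom)}\lesssim_{\omega,\lambda,n,\nu,\tilde{\nu}}r_m^{\rhobar_n-\nu}\ln^{\tilde{\nu}}(r_m)$ (for $m=0$ one moreover uses that the corner term $\eta_{\Boule,1}(a-\abar)\nabla\taubar_n$ of $h$ is square-integrable near $0$). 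The energy estimate for \eqref{Def:psitilde2 :m} then gives $\|\nabla\phitilde_{n,m}\|_{\LL^2(\Dom)}\lesssim r_m^{\rhobar_n-\nu}\ln^{\tilde{\nu}}(r_m)$, and, since $\phitilde_{n,m}$ vanishes on $\Gamma$, the Poincaré inequality upgrades this to $\|\phitilde_{n,m}\|_{\LL^2(\Dom_{r_m})}\lesssim r_m^{\rhobar_n-\nu+1}\ln^{\tilde{\nu}}(r_m)$.

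For the near-field estimate \eqref{NearField} I would use that, for $m\geq1$, $\phitilde_{n,m}$ is $a$-harmonic in $\Dom_{r_m/4}$ with zero data on $\Gamma_{r_m/4}$, and apply Lemma~\ref{Lem:heterog} in its gradient form \eqref{Num:119_bis} with $\rmax\sim r_m$, with the $N$ prescribed in the statement, and with $\rho\in(\rhobar_N,\rhobar_{N+1})$ chosen so that \eqref{Contrainte :rho} holds. This produces the coefficients $\gamma_{n,m,n'}$; and because $r\geq r_0=\rstar$ makes the factor $\max\{1,(\rstar/r)^{1+\rho}\}$ equal to $1$, it yields a bound of order $(r/r_m)^{\rho-1}(\fint_{\Dom_{r_m/4}}|\nabla\phitilde_{n,m}|^2)^{1/2}\lesssim r^{\rho-1}r_m^{\rhobar_n-\rho-\nu}\ln^{\tilde{\nu}}(r_m)$, using the energy bound above and $|\Dom_{r_m/4}|\sim r_m^2$; this is exactly \eqref{NearField} for $r$ up to $\sim r_m$, and the remaining range $r\sim r_m$ as well as the small values of $m$ are covered directly by the energy bound. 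Along the way, the companion estimate \eqref{BorneGamma} of Lemma~\ref{Lem:heterog} together with the $\LL^2$-bound on $\phitilde_{n,m}$ gives $|\gamma_{n,m,n'}|\lesssim r_m^{\rhobar_n-\nu-\rhobar_{n'}}\ln^{\tilde{\nu}}(r_m)$, which I will need below. The case $m=0$ is degenerate ($r_{m-1}=0$, so \eqref{NearField} only concerns $r=r_0$); there I would set $\gamma_{n,0,n'}:=0$ and conclude from the energy estimate, using the explicit structure of $h$ near the corner.

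The far-field estimate \eqref{FarField} is the crux, and I would obtain it by duality. Fix $r\geq r_m$ and a test field $G$ supported in $\Dom_{2r}\setminus\Dom_r$ with $\|G\|_{\LL^2}\leq1$, and let $v\in\HH^1_0(\Dom)$ solve $-\nabla\cdot a\nabla v=\nabla\cdot G$ in $\Dom$ (the adjoint operator coincides with $-\nabla\cdot a\nabla$ since $a$ is symmetric). Testing the two equations against each other gives the identity $\int_\Dom G\cdot\nabla\phitilde_{n,m}=\int_\Dom g_m\cdot\nabla v=\int_{\Dom_{r_m}}g_m\cdot\nabla v$, the last step because $g_m$ is supported in $\Dom_{r_m}$. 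Now $v$ is $a$-harmonic in $\Dom_r$ with zero data on $\Gamma_r$, so Lemma~\ref{Lem:heterog} applied with $N=0$, $\rmax=r$, and $\rho=\tilde{\rho}\in(0,\rhobar_1)$ — again the $\max\{1,\cdot\}$-factor being $1$ — controls $v$ near the corner: $(\fint_{\Dom_{r_m}}|\nabla v|^2)^{1/2}\lesssim(r_m/r)^{\tilde{\rho}-1}(\fint_{\Dom_r}|\nabla v|^2)^{1/2}\lesssim r_m^{\tilde{\rho}-1}r^{-\tilde{\rho}}$, using $\|\nabla v\|_{\LL^2(\Dom)}\leq\|G\|_{\LL^2}\leq1$ and $|\Dom_r|\sim r^2$. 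Hence $\|\nabla v\|_{\LL^2(\Dom_{r_m})}\lesssim r_m^{\tilde{\rho}}r^{-\tilde{\rho}}$, and Cauchy--Schwarz with $\|g_m\|_{\LL^2(\Dom_{r_m})}\lesssim r_m^{\rhobar_n-\nu}\ln^{\tilde{\nu}}(r_m)$, then taking the supremum over $G$ and dividing by $|\Dom_{2r}\setminus\Dom_r|^{1/2}\sim r$, gives $(\fint_{\Dom_{2r}\setminus\Dom_r}|\nabla\phitilde_{n,m}|^2)^{1/2}\lesssim r_m^{\rhobar_n+\tilde{\rho}-\nu}\ln^{\tilde{\nu}}(r_m)r^{-\tilde{\rho}-1}$, which is precisely \eqref{FarField} when $N=0$. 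For $N\geq1$ I would combine this with $(\fint_{\Dom_{2r}\setminus\Dom_r}|\nabla(\taubar_{n'}+\phiC_{n'})|^2)^{1/2}\lesssim r^{\rhobar_{n'}-1}$ (from \eqref{taubar_Homog}, the boundary Caccioppoli inequality for \eqref{Def:psi}, and \eqref{Sublin**}) and with the coefficient bounds $|\gamma_{n,m,n'}|\lesssim r_m^{\rhobar_n-\nu-\rhobar_{n'}}\ln^{\tilde{\nu}}(r_m)$: for $r\geq r_m$ the index $n'=N$ dominates $\sum_{n'}|\gamma_{n,m,n'}|r^{\rhobar_{n'}-1}\lesssim r_m^{\rhobar_n-\rhobar_N-\nu}\ln^{\tilde{\nu}}(r_m)r^{\rhobar_N-1}$, while the $\nabla\phitilde_{n,m}$-contribution is at most $(r_m/r)^{\tilde{\rho}+\rhobar_N}$ times this, so the triangle inequality yields \eqref{FarField}.

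The main obstacle is the far-field step: one has to recognise that the exterior decay of $\phitilde_{n,m}$, which the source bound does not capture, can be extracted \emph{purely} from the interior corner regularity of Lemma~\ref{Lem:heterog} applied to the adjoint profile $v$, whose smallness at the tip of the corner is exactly what controls $\int g_m\cdot\nabla v$. Once this duality is set up, the remaining work is bookkeeping; the only genuinely fiddly routine points are keeping track of the logarithmic exponents (which we do not try to optimise) and the degenerate block $m=0$, where the near-corner piece of $h$ has to be handled by hand rather than through the dyadic bound on $\|g_m\|_{\LL^2}$.
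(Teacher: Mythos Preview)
Your proposal is correct and follows essentially the same route as the paper: the energy bound \eqref{Num:4021}, then Lemma~\ref{Lem:heterog} applied to the $a$-harmonic $\phitilde_{n,m}$ inside $\Dom_{r_m/4}$ to produce the coefficients $\gamma_{n,m,n'}$ and the estimate \eqref{NearField}, the transition range $r\sim r_m$ handled by the energy bound together with the coefficient estimate \eqref{Num:4022}, and finally the duality argument (Lemma~\ref{Lem:heterog} with $N\rightsquigarrow0$, $\rho\rightsquigarrow\tilde{\rho}$ applied to the adjoint profile) for \eqref{FarField}, with the renormalization terms added back via the triangle inequality when $N\geq1$. The only cosmetic differences are that the paper sets $\gamma_{n,m,n'}=0$ for both $m=0$ and $m=1$ (not just $m=0$), and that the paper's labels \texttt{NearField}/\texttt{FarField} are swapped relative to the names ``far-field''/``near-field'' used in the text---your naming follows the labels rather than the text, which causes no mathematical issue.
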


We remark that, following \cite{JosienRaithel_2019, Fischer_Raithel_2017, Raithel_2017}, in Lemma \ref{Lem:Iter2}, we have split the various contributions $\phitilde_{n,m}$ into near-field and far-field terms depending on $m$.
As we will see, the far-field contributions are handled directly with Lemma \ref{Lem:heterog}, whereas the near-field contributions are estimated via a dualized version of this argument.

In estimates \eqref{NearField} and \eqref{FarField}, the exponent $\rho = \rhobar_{N+1}$ cannot be reached.
However, assuming that we have already built and estimated the corrector $\phiC_1$, we can actually achieve the following improvement, which will prove useful
in the proof of Proposition \ref{Prop:Quench} for the case $\omega=2\pi$ and $\nu =1$ :
Under the assumptions of Lemma \ref{Lem:Iter2}, suppose that there exists a corner corrector $\phiC_{1}$ such that \eqref{Sublin**} is satisfied with $N=1$ for a given $\rstar \geq 2$.
Then, defining $r_m := 2^m\rstar$, there holds
\begin{equation}\label{NearField:lip}
\lt(\fint_{\Dom_r} |\nabla \phitilde_{1,m}|^2  \rt)^{\frac{1}{2}}
\lesssim_{\omega, \lambda,\nu,\tilde{\nu}}
r_m^{-\nu}  \ln^{\tilde{\nu}}(r_m) r^{\rhobar_1-1} \pourtout r \in [r_0,r_{m}],
\end{equation}
and
\begin{equation}\label{FarField:lip}
\lt(\fint_{\Dom_{2r} \backslash \Dom_{r}} |\nabla \phitilde_{1,m} |^2 \rt)^{\frac{1}{2}}
\lesssim_{\omega,\lambda,\nu,\tilde{\nu}} r_m^{2\rhobar_{1}-\nu} \ln^{\tilde{\nu}}(r_m) r^{-\rhobar_1-1}\quad\mbox{for all $r\geq r_m$.}
\end{equation}
These two above estimates are established in the proof of Lemma \ref{Lem:Iter2} below.

Equipped with Lemma \ref{Lem:Iter2}, we are in a position to establish Proposition \ref{Prop:Quench}.

\begin{proof}[Proof of Proposition \ref{Prop:Quench}]
The argument for obtaining \eqref{Num:4011-phiD} is exactly the same as for \eqref{Num:4011} and \eqref{Num:4010}.
The only thing to do is to replace $\taubar_n$ by the coordinate function $x_i$.
Therefore, in the sequel, we only show \eqref{Num:4011} and \eqref{Num:4010}.
In this proof, the constant $\tilde{\nu}'\lesssim_{\tilde{\nu}} 1$ may change from line to line.
	
We establish Proposition \ref{Prop:Quench} via induction on $n \in \NN\backslash\{0\}$, invoking at each step Lemma \ref{Lem:Iter2} in order to establish the absolute convergence of the gradient of the series \eqref{Num:4033}.

We assume first that $\omega < 2\pi$ or $\nu<1$ :
this has the beneficial consequence that we may choose $\tilde{\rho} < \rhobar_1$, so that
\begin{equation}\label{Bizarre2}
\tilde{\rho} + \rhobar_n \geq \nu \qquad \pourtout n \geq 1.
\end{equation}
(Indeed $\omega=2\pi$ corresponds to the case $\rhobar_1 = 1/2$; thus, in the case that $\nu<1$, we have $\nu-\rhobar_1<1/2$.)
Inequality \eqref{Bizarre2} appears crucial when dealing with the near-field contributions of the renormalization of $\nabla \phitilde_{n}$, \textit{cf.} Step 2 below.
We discuss the more delicate case $\omega=2\pi$ and $\nu=1$ at the end of the proof (in Step 3).
In that case, two stages are required : First, we establish a suboptimal growth rate on $\phiC_1$, which is then, in turn, used to obtain the desired quasi-optimal growth rate on $\phiC_1$.

\paragraph{Step 1 : From \eqref{Num:4011} to \eqref{Num:4010}.}
As a preliminary step, we explain how to get \eqref{Num:4010} from \eqref{Num:4011}.
Let $r \geq 1$.
By the Poincaré inequality into which we insert \eqref{Num:4011}, we have
\begin{equation}\label{Num:4040}
\lt( \fint_{\Dom_{2r} \backslash \Dom_{r}} \lt|\phitilde_n \rt|^2 \rt)^{\frac{1}{2}}
\leq \Cstar r^{\rhobar_n-\nu} \ln^{\tilde{\nu}'}(r+1).
\end{equation}
Then, recalling estimates \eqref{Num:40101} and \eqref{taubar_Homog}, we get
\begin{equation}\label{Num:4041}
\lt( \fint_{\Dom_{2r} \backslash \Dom_{r}}
\lt|(1-\eta_{\Boule,1})\lt( \etaup \phiup_i + \etabulk \phi_i + \etadown\phidown_i \rt) \partial_i \taubar_n\rt|^2\rt)^{\frac{1}{2}}
\lesssim r^{\rhobar_n-\nu} \ln^{\tilde{\nu}'}(r+1).
\end{equation}
Inserting \eqref{Num:4040} and \eqref{Num:4041} into the triangle inequality
applied to \eqref{*ansatz} yields \eqref{Num:4010}.

\paragraph{Step 2 : Induction.}
Set $n \in \mathbb{N} \backslash \{0\}$, and define $N$ by \eqref{Def:N} accordingly.
We assume that, for all $n' < n$, \eqref{Num:4011} holds replacing $n \rightsquigarrow n'$ (if $n = 1$, this is trivially true, since the statement is empty), and we establish that \eqref{Num:4011} holds for $n$.

First, we fix the constants $\gamma_{n,m,n'}$ in \eqref{Num:4033}.
By the inductive hypothesis, \eqref{Num:4011} and thus \eqref{Num:4010} hold for $n \rightsquigarrow n' \leq N$, since $N \leq n-1$.
Moreover, by the Poincare and Caccioppoli inequality for equation \eqref{Def:psi}, we also have
\begin{equation*}
\lt(\int_{D_1}\lt|\phiC_{n'} \rt|^2 \rt)^{\frac{1}{2}}\lesssim \lt(\int_{D_1}\lt|\nabla \phiC_{n'} \rt|^2 \rt)^{\frac{1}{2}}\lesssim \lt(\int_{D_2\setminus D_1}\lt|\phiC_{n'} \rt|^2 \rt)^{\frac{1}{2}}+ \lt(\int_{D_2}\lt|\nabla \taubar_{n'} \rt|^2 \rt)^{\frac{1}{2}}\stackrel{\eqref{Num:4011}}{\lesssim} \Cstar+1.
\end{equation*}
Hence, there is a constant $\Cstar$ so that
\begin{equation*}
\lt( \fint_{\Dom_{2^k}} \lt|\phiC_{n'} \rt|^2 \rt)^{\frac{1}{2}}
\stackrel{\eqref{Num:4010}}{\lesssim} 2^{-k} \Cstar\sum_{k'=0}^{k-1} 2^{k'(1+\rhobar_{n'}-\nu)} \ln^{\tilde{\nu}'}(2^{k'+1})
\lesssim \lt(\Cstar 2^{-k \nu} (k+1)^{\tilde{\nu}'}\rt) 2^{k\rhobar_{n'}}.
\end{equation*}
Since $\Cstar 2^{-k \nu} (k+1)^{\tilde{\nu}'}$ tends to $0$ when $k\uparrow \infty$, there exists a radius $r_0=\rstar \lesssim_{\Cstar,n,\tilde{\nu},\nu} 1$ such that \eqref{Sublin**} holds for $n \rightsquigarrow n'$, for any $n' \leq N$.
(Hereinafter, we neglect the dependence in $\rstar$ of our estimates.)
Hence we may apply Lemma \ref{Lem:Iter2} and associate coefficients $\gamma_{n,m,n'}$ for $n' \leq N$ to any function $\phitilde_{n,m}$ (which are defined by \eqref{Def:psitilde2 :m} with $h$ given in \eqref{*Def:h1h2}).

Let $k \in \mathbb{N}$.
We estimate separately the far-field and near-field contributions of the renormalization of $\phitilde_{n,m}$ on $\Dom_{2^{k+1}\rstar} \backslash \Dom_{2^k\rstar}$;
they are dealt with using \eqref{NearField} and \eqref{FarField}, respectively.
All the far-field contributions sum up to
\begin{align*}
\sum_{m=k+1}^\infty
\lt(\fint_{\Dom_{2^{k+1}\rstar}}
\lt|\nabla\phitilde_{n,m} - \sum_{n'=1}^N \gamma_{n,m,n'}\nabla \lt( \taubar_{n'} + \phiC_{n'}\rt)\rt|^2
\rt)^{\frac{1}{2}}
\overset{\eqref{NearField}}{\lesssim}~&
\sum_{m=k+1}^\infty  2^{m(\rhobar_n-\rho-\nu)}
(m+1)^{\tilde{\nu}} 2^{k(\rho-1)}
\\
\overset{\eqref{Contrainte :rho}}{\lesssim}~&
2^{k(\rhobar_n-\rho-\nu)} (k+1)^{\tilde{\nu}} 2^{k(\rho-1)}
\\
\lesssim~~&2^{k(\rhobar_n - 1 -\nu)} (k+1)^{\tilde{\nu}}.
\end{align*}
If $N=0$, all the near-field contributions sum up to\footnote{Here, we are suboptimal by one logarithmic term, but this logarithmic term will show up in the case $\omega=2\pi$ and $\nu=1$.} (recalling \eqref{Bizarre2})
\begin{align*}
\sum_{m=0}^k
\lt(\fint_{\Dom_{2^{k+1}\rstar} \backslash \Dom_{2^k\rstar}}
\lt|\nabla\phitilde_{n,m} \rt|^2
\rt)^{\frac{1}{2}}
\overset{\eqref{FarField}}{\lesssim}~&
\sum_{m=0}^k
2^{m(\rhobar_n+\tilde{\rho}-\nu)}(m+1)^{\tilde{\nu}} 2^{-k(\tilde{\rho}+1)}
\\
\overset{\eqref{Bizarre2}}{\lesssim}~& 2^{k(\rhobar_n+\tilde{\rho}-\nu)}(k+1)^{1+\tilde{\nu}} 2^{-k(\tilde{\rho}+1)}
\\
\lesssim~~&  2^{k(\rhobar_n-\nu-1)}(k+1)^{\tilde{\nu}+1}
\end{align*}
whereas, if $N\geq 1$, we have
\begin{align*}
\sum_{m=0}^k
\lt(\fint_{\Dom_{2^{k+1}\rstar} \backslash \Dom_{2^k\rstar}}
\lt|\nabla\phitilde_{n,m} - \sum_{n'=1}^N \gamma_{n,m,n'}\nabla \lt( \taubar_{n'} + \phiC_{n'}\rt)\rt|^2
\rt)^{\frac{1}{2}}
\overset{\eqref{FarField}}{\lesssim}~&
\sum_{m=0}^k
2^{m(\rhobar_n-\rhobar_N-\nu)} (m+1)^{\tilde{\nu}}
2^{k(\rhobar_N-1)}
\\
\overset{\eqref{Def:N}}{\lesssim}~&
2^{k(\rhobar_n-\rhobar_N-\nu)} (k+1)^{\tilde{\nu}+1}
2^{k(\rhobar_N-1)}
\\
\lesssim~~& 
2^{k(\rhobar_n-\nu-1)} (k+1)^{\tilde{\nu}+1} .
\end{align*}
In any case, by the triangle inequality, we obtain the absolute convergence of the gradient of the \rhs of \eqref{Num:4033} on any bounded subdomain of $\Dom$ not containing a neighborhood of $0$, as well as the following estimate :
\begin{equation}\label{Num:4032}
\lt(\fint_{\Dom_{2^{k+1}\rstar} \backslash \Dom_{2^k\rstar}}
|\nabla\phitilde_{n}|^2
\rt)^{\frac{1}{2}}
\lesssim 
2^{k(\rhobar_n-\nu-1)} (k+1)^{\tilde{\nu}+1}.
\end{equation}
As a consequence, this establishes \eqref{Num:4011} for all $r \geq \rstar$.
Moreover, \eqref{Num:4011} can be proved for $r \in [1,\rstar]$ by the same reasoning as above, but using only the near-field estimate (at the price of a constant depending algebraically in $\rstar$).
This concludes the induction step.	

\paragraph{Step 3 : The special case $\omega=2\pi$ and $\nu=1$.}
Notice that the only case that cannot be treated as in Step 2 is the case $n=1$, because we cannot choose $\tilde{\rho}<\rhobar_1$ such that \eqref{Bizarre2} holds for $n=1$.
However, by the proof above, we have already dealt with the case $\omega=2\pi$ and $\nu=1/2$.
Hence, we have established \eqref{Num:4011} and thus \eqref{Num:4010}, replacing $\nu \rightsquigarrow 1/2$.

As a consequence, we may find a radius $\rstar=:r_0$ such that
\eqref{Sublin**} holds for $N=1$.
Hence, we can appeal to \eqref{NearField:lip} and \eqref{FarField:lip} which in this case (that is $\rhobar_1=\frac12$ and $\nu=1$) read
\begin{align}\label{NearField-1}
\lt(\fint_{\Dom_r} \lt|\nabla \phitilde_{1,m}\rt|^2 \rt)^{\frac{1}{2}}
\lesssim_{\omega,\lambda,N,\tilde{\nu}}~&
r_m^{-1}  \ln^{\tilde{\nu}}(r_m) r^{-\frac{1}{2}}&&\pourtout r \in [r_0,r_{m}],
\\
\label{FarField-1}
\lt(\fint_{\Dom_{2r} \backslash \Dom_{r}} \lt|\nabla \phitilde_{1,m} \rt|^2 \rt)^{\frac{1}{2}}
\lesssim_{\omega,\lambda,N,\tilde{\nu}}~& \ln^{\tilde{\nu}}(r_m) r^{-\frac{3}{2}} && \pourtout r \geq r_m.
\end{align}
Thus, we can execute the same argument as in Step 2 and get that $\phitilde_{1}$ satisfies \eqref{Num:4011}.
\end{proof}

\begin{proof}[Proof of Lemma \ref{Lem:Iter2}]
First, we show that
\begin{equation}\label{Num:4021}
\lt(\int_{\Dom} \lt|\nabla \phitilde_{n,m} \rt|^2\rt)^{\frac{1}{2}} 
\lesssim
r_m^{\rhobar_n-\nu} \ln^{\tilde{\nu}}(r_m).
\end{equation}	
The latter is a consequence of the energy estimate applied to \eqref{Def:psitilde2 :m}, combined with
\begin{equation}\label{Num:4025}
\lt(\int_{\Dom} \lt|\lt( \eta_{\Boule,r_{m}} - \eta_{\Boule,r_{m-1}}\rt) h \rt|^2\rt)^{\frac{1}{2}}
\lesssim
r_m^{\rhobar_n-\nu} \ln^{\tilde{\nu}}(r_m),
\end{equation}
which itself derives from inserting \eqref{Num:40101} and \eqref{taubar_Homog} into \eqref{*Def:h1h2}.

The proof of  \eqref{NearField} and  \eqref{FarField} is divided into three steps : 	
In Step 1, we appeal to Lemma \ref{Lem:heterog} to renormalize $\phitilde_{n,m}$ and establish the far-field estimate \eqref{NearField} for $r \in [r_0, r_{m-2}]$.
In Step 2, we extend \eqref{NearField} to the whole range $r \in [\max\{r_0,r_{m-2}\}, r_m]$.
In Step 3, we make use of a dualization argument and prove the near-field estimate \eqref{FarField}.

Finally, in Step~4 we gather the needed adjustments to obtain the estimates \eqref{NearField:lip} and \eqref{FarField:lip}.

\paragraph{Step 1 : Far-field estimate for  $r \in [r_0, r_{m-2}]$.} 
We may assume that $m \geq 2$, since otherwise the statement is empty (since $r_{m-2} < r_0$).
By Lemma \ref{Lem:heterog}, there exist coefficients $(\gamma_{n,m,n'})_{n' \in \{ 1, \ldots, N \}}$ such that for any $r \in [r_0,r_{m-2}]$, we obtain \eqref{NearField} in form of
\begin{align*}
\lt(\fint_{\Dom_r} \lt| \nabla \phitilde_{n,m} - \sum_{n'=1}^N \gamma_{n,m,n'} \nabla \lt(\taubar_{n'} + \phiC_{n'}\rt) \rt|^2 \rt)^{\frac{1}{2}}
\overset{\eqref{Num:119_bis}}{\lesssim}~&
\lt(\frac{r}{r_{m-2}} \rt)^{\rho-1} \lt(\fint_{\Dom_{r_{m-1}}} |\nabla \phitilde_{n,m}|^2\rt)^{\frac{1}{2}}
\\
\overset{\eqref{Num:4021}}{\lesssim}~& r_m^{\rhobar_n-\rho-\nu}
\ln^{\tilde{\nu}}(r_m) r^{\rho-1}.
\end{align*}
Moreover, taking into account \eqref{BorneGamma} and appealing to the Poincaré inequality as well as \eqref{Num:4021} yields
\begin{equation}\label{Num:4022}
\lt|\gamma_{n,m,n'}\rt| \overset{\eqref{BorneGamma}}{\lesssim} r_{m}^{-\rhobar_{n'}} 
\lt(\fint_{\Dom_{r_{m-1}}} \lt| \phitilde_{n,m} \rt|^2\rt)^{\frac{1}{2}}
\lesssim
r_{m}^{1-\rhobar_{n'}} 
\lt(\fint_{\Dom_{r_{m-1}}} \lt| \nabla \phitilde_{n,m} \rt|^2\rt)^{\frac{1}{2}}
\overset{\eqref{Num:4021}}{\lesssim}
r_m^{\rhobar_n-\rhobar_{n'}-\nu} \ln^{\tilde{\nu}}r_m.
\end{equation}

\paragraph{Step 2 :  Far-field estimate for  $r \in [\max\{r_0,r_{m-2}\}, r_m]$.}
We shall establish that, for any  $m\geq 2$ it holds that
\begin{equation}\label{Num:4023}
\lt(\fint_{\Dom_r} \lt| \sum_{n'=1}^N \gamma_{n,m,n'} \nabla \lt(\taubar_{n'} + \phiC_{n'}\rt) \rt|^2 \rt)^{\frac{1}{2}}
\lesssim r_m^{\rhobar_n-\rhobar_{N}-\nu} \ln^{\tilde{\nu}}(r_m) r^{\rhobar_N-1}.
\end{equation}
Inserting the above \eqref{Num:4023} and \eqref{Num:4021} into the triangle inequality yields the desired \eqref{NearField}.
Notice that in the case $m \leq 1$, \eqref{Num:4021} is sufficient for obtaining \eqref{NearField} and $\gamma_{n,m,n'} =0$.

We obtain \eqref{Num:4023} by inserting \eqref{Num:4022}, a Caccioppoli inequality, and, finally, \eqref{taubar_Homog} and \eqref{Sublin**}
together with the assumption $1\lesssim \frac{r}{r_m}$
into the triangle inequality as follows :
\begin{equation*}
\begin{aligned}
\lt(\fint_{\Dom_r} \lt| \sum_{n'=1}^N \gamma_{n,m,n'} \nabla \lt(\taubar_{n'} + \phiC_{n'}\rt) \rt|^2 \rt)^{\frac{1}{2}}
\overset{\eqref{Num:4022}}{\lesssim}~~&
\sum_{n'=1}^N
r_m^{\rhobar_n-\rhobar_{n'}-\nu} \ln^{\tilde{\nu}}(r_m) r^{-1}\lt(\fint_{\Dom_{2r}} \lt|\taubar_{n'} + \phiC_{n'}\rt|^2 \rt)^{\frac{1}{2}}
\\
\overset{\eqref{taubar_Homog},\eqref{Sublin**}}{\lesssim}~&
\sum_{n'=1}^N
r_m^{\rhobar_n-\rhobar_{n'}-\nu} \ln^{\tilde{\nu}}(r_m) r^{\rhobar_{n'}-1}
\lesssim r_m^{\rhobar_n-\rhobar_{N}-\nu} \ln^{\tilde{\nu}}(r_m) r^{\rhobar_N-1}.
\end{aligned}
\end{equation*}

\paragraph{Step 3 : Near-field estimate for $r \geq r_m$.}
We claim that, for $r \geq r_m$,
\begin{equation}\label{Num:4024}
\lt(\fint_{\Dom_{2r} \backslash \Dom_r} \lt| \nabla \phitilde_{n,m} \rt|^2 \rt)^{\frac{1}{2}}
\lesssim
r_m^{\rhobar_{n}+\tilde{\rho}-\nu} \ln^{\tilde{\nu}}(r_m) r^{-\tilde{\rho}-1}.
\end{equation}
Inserting \eqref{Num:4024} and \eqref{Num:4023} into the triangle inequality yields the desired estimate \eqref{FarField} (recall that, if $N=0$, there is no renormalization term).

Here comes the argument for \eqref{Num:4024}.
We proceed by duality and pick a vector field $f \in \LL^2(\Dom)$ with support in $\Dom_{2r} \backslash \Dom_{r}$.
We define $v$ as the Lax-Milgram solution to
\begin{equation*}
-\nabla \cdot a \nabla v = \nabla \cdot f \dans \Dom,
\qquad \text{with} \quad
v=0 \sur \Gamma.
\end{equation*}
Thus, since $a$ is symmetric thanks to Assumption \ref{Assump1}, we may rewrite
\begin{align*}
\int_{\Dom} \nabla \phitilde_{n,m} \cdot f
=
-\int_{\Dom} \nabla \phitilde_{n,m} \cdot a \nabla v 
\overset{\eqref{Def:psitilde2 :m}}{=}
\int_{\Dom} \lt( \eta_{\Boule,r_{m}} - \eta_{\Boule,r_{m-1}}\rt) h \cdot \nabla v.
\end{align*}
Since $v$ is a-harmonic inside $\Dom_{r}$, we may appeal to Lemma \ref{Lem:heterog} (with $N \rightsquigarrow 0$, $\rmax \rightsquigarrow r$, $r \rightsquigarrow r_m$, and $\rho \rightsquigarrow \tilde{\rho} $) to the effect of
\begin{align}\label{est:L51:s3}
\lt(\int_{\Dom_{r_m}} |\nabla v|^2 \rt)^{\frac{1}{2}}
\overset{\eqref{Num:119_bis}}{\lesssim} r_m^{\tilde{\rho}} r^{-\tilde{\rho}}
\lt(\int_{\Dom_{r}} |\nabla v|^2 \rt)^{\frac{1}{2}}.
\end{align}
Moreover, appealing to the energy estimate, we get
\begin{align*}
\int_{\Dom_{r}} |\nabla v|^2 \lesssim \int_{\Dom_{2r} \backslash \Dom_r} |f|^2.
\end{align*}
Therefore, a duality argument produces \eqref{Num:4024} through
\begin{equation}\label{Num:4026}
\lt(\int_{\Dom_{2r} \backslash \Dom_r} \lt|\nabla\phitilde_{n,m}\rt|^2\rt)^{\frac{1}{2}}
\lesssim
r_m^{\tilde{\rho}} r^{-\tilde{\rho}} \lt(\int_{\Dom_{r_m}} \lt|\lt( \eta_{\Boule,r_{m}} - \eta_{\Boule,r_{m-1}}\rt)h\rt|^2\rt)^{\frac{1}{2}}
\overset{\eqref{Num:4025}}{\lesssim}
r_m^{\rhobar_n+\tilde{\rho}-\nu} \ln^{\tilde{\nu}}(r_m) r^{-\tilde{\rho}}.
\end{equation}

\paragraph{Step 4 : Estimates \eqref{NearField:lip} and \eqref{FarField:lip}.}
We start with the argument for \eqref{NearField:lip}.
Since we assume that there exists a corner corrector $\phiC_{1}$ such that \eqref{Sublin**} is satisfied with $N=1$, we can apply estimate \eqref{Num:119:lip} (with $N=1$) on $\phitilde_{1,m}$ in $\Dom_{r_{m-1}}$ and, with applications of the Caccioppoli and Poincar\'e inequalities, obtain
\begin{align*}
\lt(\fint_{\Dom_r} \lt| \nabla \phitilde_{1,m}  \rt|^2 \rt)^{\frac{1}{2}}\lesssim~&
\lt(\frac{r}{r_{m-2}} \rt)^{\rhobar_1-1} \lt(\fint_{\Dom_{r_{m-1}}} |\nabla \phitilde_{1,m}|^2\rt)^{\frac{1}{2}}
\overset{\eqref{Num:4021}}{\lesssim}~ r_m^{-\nu}
\ln^{\tilde{\nu}}(r_m) r^{\rhobar_1-1},
\end{align*}
for any $r \in [r_0,r_{m-2}]$.
The proof of \eqref{FarField:lip} follows by the same argument as for \eqref{Num:4024} with the only change being that we use \eqref{Num:119:lip} (with $N=1$).
The effect of this is that instead of \eqref{est:L51:s3}, we have 
\begin{align*}
\lt(\int_{\Dom_{r_m}} |\nabla v|^2 \rt)^{\frac{1}{2}}
{\lesssim} r_m^{\rhobar_1} r^{-\rhobar_1}
\lt(\int_{\Dom_{r}} |\nabla v|^2 \rt)^{\frac{1}{2}}
\end{align*}
which leads to the desired estimate \eqref{FarField:lip}.
\end{proof}

\subsection{Conditional Liouville principle : Argument for Corollary \ref{ThLiouville-2}}\label{Sec:ProofLiouville}

The Liouville principle is a straightforward consequence of the following reformulation of Lemma~\ref{Lem:heterog} as an excess decay statement :

\begin{corollary}[Excess decay]\label{C:excessdecay}
Assume that the coefficient field $a$ satisfies Assumptions \ref{Assump1} and \ref{AssumpId}.
Let $N \in \mathbb{N}$ and $\rho \in (\rhobar_{N},\rhobar_{N+1})$.
Then, for $\delta \ll_{\omega,\lambda,N,\rho} 1$ and $C_0 \lesssim_{\omega,\lambda,N,\rho} 1$, the following property holds :
Let $\rstar \geq 1$.
Assume that the extended correctors $\lt(\phi,\sigma\rt)$ and the corner correctors $\phiC_n$, for $n \in \{1,\dots,N\}$, satisfy \eqref{Sublin**}.
Let $\rmax \geq 1$ and $u$ satisfy \eqref{Num:011}. Then, the tilt-excess defined by \eqref{excess} satisfies:
 \begin{equation}\label{est:Exc}
 \Exc_N(u;r)
\leq C_0
\max\lt\{1, \lt(\frac{\rstar}{r}\rt)^{2(1+\rho)}\rt\}
\lt(\frac{r}{\rmax} \rt)^{2\rho-2}  \Exc_N(u;R)\qquad\forall r\in[1,R].
 \end{equation}
Furthermore, the infimum in \eqref{excess} is actually attained and we denote by $\gamma^r\in\R^N$ an optimal choice (for the radius $r$).
Then, for $r\in[r_*,R]$ and $n \in \{ 1, \ldots, N\}$, it holds that
 \begin{equation}\label{eq:gammarR}
  R^{\overline \rho_n-1}|\gamma_n^r-\gamma_n^R|\lesssim_{\omega,\lambda,N} \lt(\Exc_N(u,R)\rt)^{\frac{1}{2}}.
 \end{equation}
\end{corollary}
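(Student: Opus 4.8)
The plan is to deduce both assertions from Lemma~\ref{Lem:heterog}. Write $w_n := \taubar_n + \phiC_n$, so that by \eqref{Def:psi} each $w_n$ is $a$-harmonic in $\Dom$, vanishes on $\Gamma$, and has $\nabla w_n \in \LL^2(\Dom_r)$ for every $r>0$; consequently $\Exc_N(u;R)$ and its minimizers depend only on $u|_{\Dom_R}$. Attainment is immediate: the map $\gamma\mapsto\fint_{\Dom_r}|\nabla u-\sum_n\gamma_n\nabla w_n|^2$ is the squared $\LL^2(\Dom_r)$-distance from $\nabla u$ to the finite-dimensional span of $\nabla w_1,\dots,\nabla w_N$, hence is minimized at the orthogonal projection. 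The heart of the proof — and the step I expect to be the main obstacle — is a coercivity estimate
\[
\sum_{n=1}^N \mu_n^2\, r^{2\rhobar_n-2}\ \lesssim_{\omega,N}\ \fint_{\Dom_r}\Big|\sum_{n=1}^N\mu_n\nabla w_n\Big|^2\qquad\text{for }r\ge\rstar,\ \mu\in\R^N,
\]
valid once $\delta\ll_{\omega,N}1$, which in particular makes $\gamma^r$ unique for $r\ge\rstar$. I would prove it by rescaling to $\Dom_1$: with $\tilde\mu_n:=\mu_n r^{\rhobar_n-1}$ and $W:=\sum_n\tilde\mu_n\big(\taubar_n+r^{-\rhobar_n}\phiC_n(r\cdot)\big)$ one has $\fint_{\Dom_1}|\nabla W|^2=\fint_{\Dom_r}|\sum_n\mu_n\nabla w_n|^2$, while \eqref{Sublin**} gives $\fint_{\Dom_1}\big|W-\sum_n\tilde\mu_n\taubar_n\big|^2\le N\delta^2|\tilde\mu|^2$; since the $\taubar_n$ have distinct homogeneities they are linearly independent in $\LL^2(\Dom_1)$, so $\fint_{\Dom_1}|W|^2\gtrsim_{\omega,N}|\tilde\mu|^2$ for $\delta$ small, and Poincaré on $\Dom_1$ (using $W=0$ on $\Gamma_1$) converts this into $|\tilde\mu|^2\lesssim_{\omega,N}\fint_{\Dom_1}|\nabla W|^2$, which is the claim. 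The delicate point is precisely the uniformity in $r\ge\rstar$, which is exactly what \eqref{Sublin**} buys us.

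For the excess decay \eqref{est:Exc}, I would let $\gamma^R$ be a minimizer for $\Exc_N(u;R)$ and set $v:=u-\sum_n\gamma_n^R w_n$, which is $a$-harmonic in $\Dom_R$ and vanishes on $\Gamma_R$, hence satisfies \eqref{Num:011}. Applying Lemma~\ref{Lem:heterog} to $v$ on $\Dom_R$ in the gradient form \eqref{Num:119_bis} yields $\tilde\gamma\in\R^N$ with
\[
\Big(\fint_{\Dom_r}\Big|\nabla v-\sum_n\tilde\gamma_n\nabla w_n\Big|^2\Big)^{1/2}\le C_0\max\{1,(\rstar/r)^{1+\rho}\}\Big(\frac{r}{R}\Big)^{\rho-1}\Big(\fint_{\Dom_R}|\nabla v|^2\Big)^{1/2}.
\]
The left side is $\ge\Exc_N(u;r)^{1/2}$ because $\gamma^R+\tilde\gamma$ is admissible in \eqref{excess} at scale $r$, and $\fint_{\Dom_R}|\nabla v|^2=\Exc_N(u;R)$ by the choice of $\gamma^R$, so squaring gives \eqref{est:Exc} for all $r\in[1,R]$. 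The same application, together with \eqref{BorneGamma} and the Poincaré inequality for $v$ (which vanishes on $\Gamma_R$), also yields the auxiliary bound $R^{\rhobar_n-1}|\tilde\gamma_n|\lesssim_\omega C_0\,\Exc_N(u;R)^{1/2}$.

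Finally, for \eqref{eq:gammarR}, I would fix $r\in[\rstar,R]$ and put $E_r:=C_0(r/R)^{\rho-1}\Exc_N(u;R)^{1/2}$ (the maximum above equals $1$ here). By the previous paragraph $\gamma^R+\tilde\gamma$ realizes a value $\le E_r^2$ in \eqref{excess} at scale $r$, and $\gamma^r$ realizes $\Exc_N(u;r)\le E_r^2$; hence, by the triangle inequality in $\LL^2(\Dom_r)$,
\[
\Big(\fint_{\Dom_r}\Big|\sum_n(\gamma_n^r-\gamma_n^R-\tilde\gamma_n)\nabla w_n\Big|^2\Big)^{1/2}\le 2E_r.
\]
Inserting this into the coercivity estimate gives $r^{\rhobar_n-1}|\gamma_n^r-\gamma_n^R-\tilde\gamma_n|\lesssim_{\omega,N}E_r$, and multiplying by $(R/r)^{\rhobar_n-1}$ and using $\rho>\rhobar_N\ge\rhobar_n$ together with $r\le R$ yields $R^{\rhobar_n-1}|\gamma_n^r-\gamma_n^R-\tilde\gamma_n|\lesssim_{\omega,N}C_0\,\Exc_N(u;R)^{1/2}$. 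Combining with the auxiliary bound on $\tilde\gamma_n$ via the triangle inequality proves \eqref{eq:gammarR}.
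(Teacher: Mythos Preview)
Your proof is correct. The argument for \eqref{est:Exc} is identical to the paper's: apply Lemma~\ref{Lem:heterog} in the form \eqref{Num:119_bis} to $v=u-\sum_n\gamma_n^R w_n$.

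For \eqref{eq:gammarR} your route differs from the paper's. The paper first proves the estimate only for neighboring scales $r<R'\le 2r$, using the $\LL^2(\Dom_r)$-orthogonality of the $\taubar_n$ (rather than your gradient-level coercivity) together with \eqref{Sublin**} to absorb the $\phiC_n$ contribution, and then smuggles in $u$ and applies Poincar\'e; the general case is obtained by a dyadic telescoping over the chain $r,2r,4r,\dots,R'$, summing a geometric series thanks to $\rho>\rhobar_n$. Your approach instead establishes the coercivity $\sum_n\mu_n^2 r^{2\rhobar_n-2}\lesssim\fint_{\Dom_r}|\sum_n\mu_n\nabla w_n|^2$ uniformly in $r\ge\rstar$ and then exploits the auxiliary coefficients $\tilde\gamma$ and the bound \eqref{BorneGamma} coming out of Lemma~\ref{Lem:heterog} to compare $\gamma^r$ with $\gamma^R$ in a single step. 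What this buys you is directness: the iteration over scales is already hidden inside the proof of Lemma~\ref{Lem:heterog}, so you need not repeat it. What the paper's version buys is a slightly stronger intermediate statement (their \eqref{est:diffgammarR} holds for all $r_*\le r<R'\le R$, not only $R'=R$) and it uses only the excess-decay consequence \eqref{est:Exc} of Lemma~\ref{Lem:heterog}, not \eqref{BorneGamma}. Both arguments rest on the same core observation, namely that for $\delta$ small the family $(w_n)_n$ inherits the quantitative linear independence of $(\taubar_n)_n$.
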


\begin{proof}[Proof of Corollary~\ref{C:excessdecay}]
Estimate \eqref{est:Exc} is a direct consequence of Lemma~\ref{Lem:heterog} applied to the solution $u-\sum_{n=1}^N\gamma_n^R(\taubar_n+\phiC_n)$ of \eqref{Num:011}, \textit{cf.}~\eqref{Num:119_bis}.
Hence it is left to establish the estimate \eqref{eq:gammarR}.\\

Instead of \eqref{eq:gammarR}, we prove the more general
\begin{equation}\label{est:diffgammarR}
	\sum_{n=1}^N {R'}^{2\rhobar_n-2}|\gamma_n^r-\gamma_n^{R'}|^2
	\lesssim_{\omega,N,\lambda}  \Exc_N(u;R')  \pourtout  r^*\leq r<R'\leq R. 
\end{equation}
We first consider the case $r<R'\leq 2r$.
Using $r^{2\rhobar_n}\fint_{D_1}|\taubar_n|^2=\fint_{D_r}|\taubar_n|^2$ and the $L^2(D_r)$-orthogonality of the functions $\taubar_n$,
we obtain
     \begin{align}\label{est:diffgammarR:s1}
     \sum_{n=1}^N {r}^{2\rhobar_n}(\gamma_n^r-\gamma_n^{R'})^2
     &\lesssim_{\omega,N} \sum_{n=1}^N \fint_{D_r}|(\gamma_n^r-\gamma_n^{R'})\taubar_n|^2
     = \fint_{D_r}\biggl|\sum_{n=1}^N(\gamma_n^r-\gamma_n^{R'})\taubar_n\biggr|^2 \notag
     \\
     &\lesssim  \fint_{D_r}\biggl|\sum_{n=1}^N(\gamma_n^r-\gamma_n^{R'})(\taubar_n+\phiC_n)\biggr|^2
     +
     \fint_{D_r}\biggl|\sum_{n=1}^N(\gamma_n^r-\gamma_n^{R'})\phiC_n\biggr|^2.
     \end{align}
     A combination of the Cauchy-Schwarz inequality and \eqref{Sublin**} yields
     \begin{equation*}
     \fint_{D_r}\biggl|\sum_{n=1}^N(\gamma_n^r-\gamma_n^{R'})\phiC_n\biggr|^2
     \leq 
    \left( \sum_{n=1}^N {r}^{2\rhobar_n}(\gamma_n^r-\gamma_n^{R'})^2\right)\left(
     \sum_{n=1}^Nr^{-2\rhobar_n}\fint_{D_r}|\phiC_n|^2\right)
     \lesssim \delta^2 \sum_{n=1}^Nr^{2\rhobar_n}(\gamma_n^r-\gamma_n^{R'})^2.
     \end{equation*}
     Hence, the second term on the \rhs in \eqref{est:diffgammarR:s1} can  be absorbed into the \lhs provided $\delta \ll_{\omega,N} 1$. 
	Thus, it remains to estimate the first term :
We smuggle in $u$ and obtain with help of the Poincar\'e inequality, using as well $r \geq R'/2$, the relation
     \begin{align*}
      \fint_{D_r}\biggl|\sum_{n=1}^N(\gamma_n^r-\gamma_n^{R'})(\taubar_n+\phiC_n)\biggr|^2
      \lesssim r^2\Exc_N(u;r)+{R'}^2\Exc_N(u;R')
      \stackrel{\eqref{est:Exc}}{\lesssim} {R'}^2\Exc_N(u;R').
     \end{align*}
     This proves the claimed estimate \eqref{est:diffgammarR} in the case $r<R'\leq 2r$. 
     
	The general case follows by a standard dyadic argument,
	similar to Step 3 of the proof of Lemma \ref{Lem:heterog} :
	Consider $r^*\leq r<R'\leq R$ and fix
	$M\in \mathbb N$ such that $2^{-(M+1)}R'<r\leq 2^{-M}R'$.
	Then, we have for all $n\in\{1,\dots,N\}$
     \begin{align*}
     &(2^{-M}R')^{\rhobar_n-1}|\gamma_n^r-\gamma_n^{2^{-M}R'}|\lesssim \lt(\Exc_N(u;2^{-M}R')\rt)^{\frac{1}{2}},
      \\
      & (2^{-m}R')^{\rhobar_n-1}|\gamma_n^{2^{-(m+1)}R'}-\gamma_n^{2^{-m}R'}|\lesssim  \lt(\Exc_N(u;2^{-m}R')\rt)^{\frac{1}{2}},
     \end{align*}
     for $m \leq M$.
Hence, for any $n\in\{1,\dots,N\}$, \eqref{est:Exc} with $\rho=\frac12(\rho_N+\rho_{N+1})$ yields
     \begin{align*}
	R'^{\rhobar_n-1}|\gamma_n^r-\gamma_n^{R'}|
	\lesssim&\sum_{m=0}^M 2^{(\rhobar_n-1) m} \lt(\Exc_N(u;2^{-m}R')\rt)^{\frac{1}{2}}
	\overset{\eqref{est:Exc}}{\lesssim} \sum_{m=0}^M 2^{(\rhobar_n-\rho) m} \lt(\Exc_N(u;R')\rt)^{\frac{1}{2}}
	\lesssim \lt(\Exc_N(u;R')\rt)^{\frac{1}{2}},
     \end{align*}
which finishes the proof.

\end{proof}

\begin{proof}[Proof of Corollary \ref{ThLiouville-2}]
Let $\tilde{\rho} \in (\rho,\rhobar_{N+1})$ and $1 \leq r \leq R/2$. 
By Proposition \ref{Prop:Quench}, there exists a radius $\rstar\geq 1$ above which \eqref{Sublin**} holds, so that we may apply Corrollary \ref{C:excessdecay} to $u$, which is $a$-harmonic in $\Dom_{R}$.
Hence, by the Caccioppoli inequality and \eqref{Hypo :ThLiouville}, we get
\begin{align*}
\Exc_N(u;r)\leq C_0\max\lt\{1, \lt(\frac{\rstar}{r}\rt)^{2+2\tilde{\rho}}\rt\}
\lt(\frac{r}{R}\rt)^{2\tilde{\rho}-2}\fint_{\Dom_R} |\nabla u|^2\lesssim C_0\max\lt\{1, \lt(\frac{\rstar}{r}\rt)^{2+2\tilde{\rho}}\rt\}
\frac{r^{2\tilde{\rho}-2}}{R^{2\tilde\rho}}\fint_{\Dom_{2R}} | u|^2\overset{\eqref{Hypo :ThLiouville}}{\underset{R\uparrow\infty}{\to}}0
\end{align*}
and thus $\Exc_N(u;r)=0$ and $u=\sum_{n=1}^N\gamma_n^r(\taubar_n+\phiC_n)$ on $D_r$ for any $r\geq1$.
Moreover, it follows from \eqref{eq:gammarR} that the coefficients $\gamma_n^r$ do not depend on $r$ for $r\geq r_*$ (since $|\gamma_n^r-\gamma_n^{R}|\lesssim R^{-(\rhobar_n-1)}\lt(\Exc_N(u,R)\rt)^{\frac{1}{2}} = 0$ for all $r_*\leq r<R$). This concludes the proof.
\end{proof}

\section{Large-scale Lipschitz regularity and quasi-optimal corner corrector estimates : Proofs of Theorems \ref{ThAL} and \ref{Th:OptiGR} }

\label{main_thms}

This section contains the proofs of Theorems \ref{ThAL} and \ref{Th:OptiGR}.
As previously discussed in Section \ref{strategy}, the plan is to convert the conditional results Lemma \ref{Lem:heterog} and Proposition \ref{Prop:Quench} into Theorems \ref{ThAL} and \ref{Th:OptiGR}, respectively,
where Proposition \ref{Prop:Quench} must also be localized.
In the course of the proof of Theorem \ref{Th:OptiGR}, we will make use of an $\LL^\infty$-like annealed estimate, which we derive as a consequence of Theorem \ref{ThAL}.
In both proofs, the constant $\tilde{\nu}'\lesssim_{\tilde{\nu}} 1$ may change from line to line.

\subsection{Large-scale regularity at the corner: Proof of Theorem \ref{ThAL}}

\begin{proof}[Proof of Theorem \ref{ThAL}]	
	The proof of Theorem \ref{ThAL} is divided in three steps :
	In Step 1, we establish
	\begin{align}\label{Borne:HalfCorr_Q}
	\lt(\fint_{\Dom_r}
	\lt|\lt(\phi,\phiup,\phidown,\sigma,\sigup,\sigdown \rt)\rt|^2\rt)^{\frac{1}{2}}
	\leq \Cstar
	r^{1-\nu} \ln^{\tilde{\nu}'}(r+1)
	\quad \pourtout r \geq 1,
	\end{align}
	where the random constant $\Cstar \geq 2$ satisfies \eqref{Borne_CN-}.
	The proof relies on a classical manipulation, based on estimates in weighted $\ell^p$ spaces.
	In Step 2, using Proposition \ref{Prop:Quench}, we show that  there exist corner correctors $\phiC_n$, for $n \in \mathbb{N} \setminus \{ 0 \}$, and Dirichlet correctors $\phiD_j$, for $j =1,2$,
	that satisfy \eqref{Num:4011}, \eqref{Num:4010}, and \eqref{Num:4011-phiD}, where $\tilde{\nu}'\lesssim_{\tilde{\nu}} 1$ and $\Cstar \geq 2$ satisfies \eqref{Borne_CN-} for $N \rightsquigarrow n$.
	Last, in Step 3, we prove \eqref{Num:301_L2-1}.
	
	\paragraph{Step 1: Argument for \eqref{Borne:HalfCorr_Q}.}
	Since the proof of \eqref{Borne:HalfCorr_Q} is similar for $\phi, \phiup, \phidown, \sigma, \sigup$, and $\sigdown$, which satisfy \eqref{CorrSubDef} or \eqref{Borne:HalfCorr}, we only provide it for $\phi$ (for $\tilde{\nu}':=\tilde{\nu}+2$).
	Appealing to a dyadic argument and the triangle inequality, we obtain \eqref{Borne:HalfCorr_Q} in form of
	\begin{align*}
	\langl  \lt(\sup_{r \geq 1}
	\frac{1}{r^{1-\nu} \ln^{\tilde{\nu}+2}(r+1) }
	\lt(\fint_{\Dom_r}
	\lt|\phi\rt|^2\rt)^{\frac{1}{2}}
	\rt)^p 
	\rangl
	\lesssim~&
	\sum_{k=1}^\infty
	\langl \lt(
	\frac{1}{2^{k(1-\nu)} k^{\tilde{\nu}+2}}
	\lt(\fint_{\Dom_{2^k}}
	\lt|\phi\rt|^2\rt)^{\frac{1}{2}}
	\rt)^p 
	\rangl
	\overset{\eqref{CorrSubDef}}{\lesssim}
	\sum_{k=1}^\infty \frac{1}{k^{2p}} \lesssim 1.
	\end{align*}
	
	\paragraph{Step 2: Argument for \eqref{Num:4011}, \eqref{Num:4010}, and \eqref{Num:4011-phiD} with $\Cstar \geq 2$ satisfying \eqref{Borne_CN-}.}
	Since \eqref{Borne:HalfCorr_Q} holds, if we replace the coefficient field by a rescaled version $a\rightsquigarrow a_\epsilon :=a(\cdot/\epsilon)$, then the extended correctors (either half-space or not) are rescaled according to the rule $\phi \rightsquigarrow \phi_\epsilon=\epsilon \phi(\cdot/\epsilon)$.
	Thus, \eqref{Borne:HalfCorr_Q} is turned into
	\begin{equation}\label{Borne:HalfCorr_Q-eps}
	\begin{aligned}
	\lt(\fint_{\Dom_r}
	\lt|\lt(\phi_\epsilon, \phiup_\epsilon, \phidown_\epsilon, \sigma_\epsilon, \sigup_\epsilon, \sigdown_\epsilon \rt)\rt|^2\rt)^{\frac{1}{2}}
	&\leq \Cstar
	\epsilon^\nu r^{1-\nu} \ln^{\tilde{\nu}'}(r/\epsilon)
	\\
	&\leq \Cstar
	\epsilon^\nu \ln^{\tilde{\nu}'}(\epsilon^{-1}) r^{1-\nu} \ln^{\tilde{\nu}'}(r),
	\text{ provided } \ln(\epsilon^{-1}), \ln(r) \geq 2.	
	\end{aligned}
	\end{equation}
	Hence (as $\Cstar \geq 2$) we may choose $q \gg 1$ such that $\epsilon := (\Cstar)^{-q}$ satisfies
	\begin{align*}
	\Cstar
	\epsilon^\nu \ln^{\tilde{\nu}'}(\epsilon^{-1})
	= q^{\tilde{\nu}'}\Cstar^{1-q\nu} \ln^{\tilde{\nu}'}(\Cstar) < 1.
	\end{align*}
For such a choice of $q$ and $\epsilon$, $\lt(\phi_\epsilon, \phiup_\epsilon, \phidown_\epsilon, \sigma_\epsilon, \sigup_\epsilon, \sigdown_\epsilon \rt)$ satisfy \eqref{Num:40101}, where we replace $\tilde{\nu} \rightsquigarrow \tilde{\nu}'$ (since $\Cstar$ is finite almost surely, we have $\epsilon>0$ almost surely).
	As a consequence, we may apply Proposition~\ref{Prop:Quench} to the rescaled coefficient field $a_\epsilon$, which therefore possesses corner correctors $\phiC_{\epsilon,n}=\epsilon^{\rhobar_n} \phiC_n(\cdot/\epsilon)$ and Dirichlet correctors $\phiD_\epsilon =\epsilon \phiD(\cdot/\epsilon)$ that satisfy estimates \eqref{Num:4011}, \eqref{Num:4010} and \eqref{Num:4011-phiD} for some $\tilde{\nu}'$ and $\Cstar\rightsquigarrow\tilde{\Cstar} \lesssim_{\omega,\lambda,\nu,\tilde{\nu},n} 1$.
	Rescaling back these estimates yields  \eqref{Num:4011}, \eqref{Num:4010}, and \eqref{Num:4011-phiD} with $\tilde{\Cstar} \lesssim_{\omega,\lambda,\nu,\tilde{\nu},n} \Cstar^\frac2\nu$ with $\Cstar$ from \eqref{Borne:HalfCorr_Q}.
	Since $\Cstar$ satisfies \eqref{Borne_CN-} the random constant $\tilde{\Cstar}$ also satisfies \eqref{Borne_CN-}.
	
	\paragraph{Step 3: Argument for \eqref{Num:301_L2-1}.}
	By \eqref{Borne:HalfCorr_Q}, and \eqref{Num:4011}, and \eqref{Num:4010}, (using the Poincaré and Caccioppoli estimates for estimating $\lt( \fint_{\Dom_1} \lt|\phiC_n\rt|^2  \rt)^{\frac{1}{2}}$),
we may select a radius $\rstar \geq 1$ satisfying \eqref{Borne_CN-} for $\Cstar \rightsquigarrow \rstar$ such that \eqref{Sublin**} is satisfied, so that we may apply Lemma~\ref{Lem:heterog}.
	This provides constants $\gamma_1,\cdots,\gamma_{N}$ such that,
	\begin{equation}\label{Num:209-1}
	\begin{aligned}
	\lt(\fint_{\Dom_{r}} \lt| \nabla u - \sum_{n=1}^{N} \gamma_n \nabla\lt( \taubar_n +  \phiC_n\rt) \rt|^2 \rt)^{\frac{1}{2}}
	& \overset{\eqref{Num:119:lip}}{\lesssim_{\omega,\lambda,N,\rho}}
	\lt(1+\lt(\frac{\rstar}{r}\rt)^{1+\rhobar_{N+1}}\rt)\lt(\frac{r}{R}\rt)^{\rhobar_{N+1}-1}\lt(\fint_{\Dom_{R}} |\nabla u|^2\rt)^{\frac{1}{2}}. 
	\end{aligned}
	\end{equation}
\end{proof}

\subsection{$\LL^\infty$-like annealed estimates}\label{Sec:Linfty}

We first state a direct consequence of \cite[Th.\ 1 \& Th.\ 2]{Fischer_Raithel_2017}:
\begin{corollary}\label{C:LSL}
	Let $x$ and $R$ be such that $2\leq R \leq |x|/2$. 
	Under Assumptions \ref{Assump1}, \ref{AssumpId}, and \ref{Assump2}, if $u$ satisfies
	\begin{equation}\label{Eq :u-aharmLSLC}
	-\nabla \cdot a \nabla u = \nabla \cdot h \dans \Dom_R(x) \qquad \et u = 0 \sur \Gamma_R(x),
	\end{equation}
	then there holds
	\begin{align}\label{M:0002}
	\langl \lt( \fint_{\Dom_{1}(x)} \lt| \nabla u  \rt|^2 
	\rt)^{\frac{p}{2}} \rangl^{\frac{1}{p}}
	\lesssim_{\Xi, p, p'}
	\langl  \lt(\fint_{\Dom_R(x)} |\nabla u|^2\rt)^{\frac{p'}{2}} \rangl^{\frac{1}{p'}}+ \ln(R) \sup_{x' \in \Dom_R(x)} \langl\lt(\fint_{\Dom_1(x')}|h|^2\rt)^{\frac{p'}{2}} \rangl^{\frac{1}{p'}},
	\end{align}
	for $p \in [1, \infty)$ and $p'>p$.
\end{corollary}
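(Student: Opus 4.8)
The plan is to obtain \eqref{M:0002} by a short geometric localization followed by a direct appeal to the boundary large-scale Lipschitz regularity of Fischer and the second author \cite{Fischer_Raithel_2017}, together with the standard device for upgrading such a quenched estimate to an annealed one. First I would observe that the hypothesis $2 \leq R \leq |x|/2$ forces $\dist(x,0) \geq 2R$, so $\Boule_R(x)$ avoids the tip of the corner; consequently $\Gamma_R(x)$ is either empty -- in which case one is in the purely interior regime, covered by the classical large-scale Lipschitz estimate (see \cite{Gloria_Neukamm_Otto_2019, Armstrong}) -- or it is a flat segment lying on one of the two edges of $\partial\Dom$, which is exactly the half-space Dirichlet geometry treated in \cite{Fischer_Raithel_2017}. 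Since Assumptions \ref{Assump1}, \ref{AssumpId}, \ref{Assump2} place us in the setting of \cite{Fischer_Raithel_2017}, Theorems 1 and 2 there apply to the equation \eqref{Eq :u-aharmLSLC} on $\Dom_R(x)$.

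Next I would recall the form of the large-scale Lipschitz estimate with a right-hand side: it is quenched and involves a random minimal radius $\rstar(x)$ such that, for $\rstar(x) \leq r \leq R$, schematically
\begin{equation*}
\lt(\fint_{\Dom_1(x)} |\nabla u|^2 \rt)^{\frac{1}{2}}
\lesssim_\Xi \lt(\fint_{\Dom_R(x)} |\nabla u|^2 \rt)^{\frac{1}{2}}
+ \sum_{\rstar(x) \leq 2^k \leq R} \lt(\fint_{\Dom_{2^k}(x)} |h|^2 \rt)^{\frac{1}{2}}.
\end{equation*}
Covering $\Dom_{2^k}(x)$ by $\lesssim 2^{2k}$ unit balls centred in $\Dom_R(x)$ and using $2^{2k}|\Boule_1| \lesssim |\Dom_{2^k}(x)|$, each dyadic term is controlled by $\sup_{x' \in \Dom_R(x)} (\fint_{\Dom_1(x')} |h|^2)^{1/2}$; since $R \geq 2$ there are at most $\lesssim \ln R$ such terms, which accounts for the factor $\ln(R)$ in \eqref{M:0002} -- the familiar endpoint loss of the Lipschitz estimate when the inhomogeneity is measured only in an $\LL^\infty$-type norm rather than a Hölder one.

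To pass to the annealed bound I would split according to whether $\rstar(x) \leq R$ or not. On $\{\rstar(x) \leq R\}$ the display holds with deterministic constants, so one takes $\LL^p_{\langl\cdot\rangl}$-moments, moves the finite dyadic sum out by Minkowski's inequality, bounds $\langl(\fint_{\Dom_{2^k}(x)}|h|^2)^{p/2}\rangl^{1/p}$ by $\sup_{x'\in\Dom_R(x)}\langl(\fint_{\Dom_1(x')}|h|^2)^{p/2}\rangl^{1/p}$ via the same covering, and finally raises the exponent $p$ to $p'$ by Jensen's inequality in $\langl\cdot\rangl$. On the complementary event $\{\rstar(x) > R\}$ one uses only the crude deterministic bound $(\fint_{\Dom_1(x)}|\nabla u|^2)^{1/2} \lesssim R\bigl[(\fint_{\Dom_R(x)}|\nabla u|^2)^{1/2} + \sup_{x'}(\fint_{\Dom_1(x')}|h|^2)^{1/2}\bigr]$, which follows from the Caccioppoli and Poincaré inequalities on $\Dom_2(x) \subset \Dom_R(x)$ (applied to $u$ minus a constant), together with Hölder's inequality in probability and the tail bound $\langl\mathbf{1}_{\rstar(x) > R}\rangl \lesssim_q R^{-q}$, valid for every $q$ and uniformly in $x$ by Assumption \ref{Assump2}; this last is the dyadic-plus-union-bound estimate already carried out in Step 1 of the proof of Theorem \ref{ThAL}. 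Choosing $q$ large enough to beat the polynomial factors in $R$ absorbs this contribution into the right-hand side of \eqref{M:0002}, and it is precisely here that one is forced to accept the loss $p' > p$.

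The only real difficulty is bookkeeping rather than conceptual: one must invoke \cite{Fischer_Raithel_2017} in the version that admits a right-hand side $\nabla\cdot h$ and tracks its contribution scale by scale, and one must be careful in the quenched-to-annealed step so that the supremum over $x' \in \Dom_R(x)$ ends up outside the expectation exactly as in \eqref{M:0002}. Both are routine within this circle of ideas, so I would keep the proof short and refer to \cite{Fischer_Raithel_2017} for the core estimate.
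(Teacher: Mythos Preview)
Your approach is sound and correctly identifies the key input (the half-space large-scale Lipschitz theory of \cite{Fischer_Raithel_2017}, applicable because $R\le|x|/2$ keeps $\Dom_R(x)$ away from the tip), but the packaging differs from the paper's. The paper does not spell out a proof of this corollary; it declares it a direct consequence of \cite[Th.~1 \& 2]{Fischer_Raithel_2017} and states that the argument is the same as for Corollary~\ref{Cor:Annealed}. That argument decomposes $h$ dyadically around the base point, solving a separate global Lax--Milgram problem $-\nabla\cdot a\nabla w_k=\nabla\cdot\bigl((\eta_{\Boule,2^{k+1}}-\eta_{\Boule,2^k})h\bigr)$ for each $k$; every $w_k$ is $a$-harmonic on the inner region, so the quenched Lipschitz estimate (with random constant $\Cstar(x)$ having all moments) applies to it, and one passes to annealed moments by a single H\"older inequality in probability separating $\Cstar$ from the data --- this is precisely where the loss $p'>p$ enters. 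Summing the $O(\ln R)$ pieces produces the logarithm.

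Your route --- invoke the quenched Lipschitz estimate \emph{with} right-hand side once and then split on $\{\rstar(x)\le R\}$ --- is a legitimate alternative, but there is a small gap as written: on your ``good'' event the constants are \emph{not} deterministic, because the quenched estimate brings you only down to scale $\rstar(x)$, and bridging from $\rstar(x)$ to scale $1$ costs a further random factor $\rstar(x)$ (in $d=2$). Absorbing that factor forces you into H\"older in probability anyway, at which point the event-splitting becomes redundant; the cleanest fix is to drop the split and apply H\"older directly to the product $\Cstar(x)\cdot(\text{data})$, which is exactly the paper's mechanism. A minor point: your ``crude deterministic bound'' on the bad event follows simply from enlarging the integration domain, $\fint_{\Dom_1(x)}|\nabla u|^2\le R^2\fint_{\Dom_R(x)}|\nabla u|^2$; no Caccioppoli or Poincar\'e is needed there.
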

Notice that, in Corollary \ref{C:LSL}, the geometrical situation involves only flat boundaries and not the tip of the corner.
A similar result can be derived from Theorem \ref{ThAL} :
\begin{corollary}\label{Cor:Annealed}
	Let $r \geq 2$ and $K \in \mathbb{N}\backslash\{0\}$.
	Under Assumptions \ref{Assump1}, \ref{AssumpId}, and \ref{Assump2}, if $u$ satisfies 
	\begin{equation}\label{M:0011}
	-\nabla \cdot a \nabla u = \nabla \cdot h\dans \Dom_{2^Kr}  \qquad \et u = 0 \sur \Gamma_{2^Kr},
	\end{equation}
	there holds
	\begin{equation}\label{M:0012}
	\begin{aligned}
	\langl \lt( \fint_{\Dom_r} |\nabla u|^2 \rt)^{\frac{p}{2}} \rangl^{\frac{1}{p}}
	\lesssim_{\Xi,p,p'} &
	\langl \lt( \fint_{\Dom_{r}} |h|^2 \rt)^{\frac{p'}{2}} \rangl^{\frac{1}{p'}}
	+
	\sum_{k=1}^{K-1} 2^{k(1-\rhobar_1)} 
	\langl \lt( \fint_{\Dom_{2^{k+1}r} \backslash \Dom_{2^{k}r}} |h|^2 \rt)^{\frac{p'}{2}} \rangl^{\frac{1}{p'}}
	\\
	&+
	2^{K(1-\rhobar_1)}
	\langl \lt( \fint_{\Dom_{2^Kr}} |\nabla u|^2 \rt)^{\frac{p'}{2}} \rangl^{\frac{1}{p'}} \quad \pourtout p \in [1, \infty) \et p'>p.
	\end{aligned}
	\end{equation}
\end{corollary}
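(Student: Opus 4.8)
The plan is to decompose $h$ dyadically around the tip of the corner and apply Theorem~\ref{ThAL} (post-processed to accommodate a right-hand side) on each annulus separately, exploiting the fact that the influence of a source localized at scale $2^kr$ decays like $2^{-k(\rhobar_1-1)}$ as one moves to scale $r$. More precisely, write $h = h_{\rm in} + \sum_{k=1}^{K-1} h_k + h_{\rm out}$, where $h_{\rm in}$ is supported in $\Dom_{2r}$, each $h_k$ is supported in $\Dom_{2^{k+1}r}\setminus\Dom_{2^{k-1}r}$ (using a dyadic partition of unity depending only on $|x|$), and $h_{\rm out}$ is supported outside $\Dom_{2^{K-1}r}$; by linearity $u = u_{\rm in} + \sum_k u_k + u_{\rm out}$ where each piece solves the corresponding localized equation in $\Dom_{2^Kr}$ with zero boundary data on $\Gamma_{2^Kr}$. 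For $u_{\rm out}$, the forcing is supported near the outer scale, so one simply bounds $\fint_{\Dom_r}|\nabla u_{\rm out}|^2$ by $\fint_{\Dom_{2^Kr}}|\nabla u_{\rm out}|^2\lesssim \fint_{\Dom_{2^Kr}}(|h_{\rm out}|^2 + |\nabla u|^2)$ via the energy estimate, which produces the last line of \eqref{M:0012} (the contribution of $h_{\rm out}$ itself being absorbed since it is part of $h$ on the outermost shell; more simply, one keeps only the $|\nabla u|^2$ term since $h_{\rm out}$ is a piece of $h$).

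The heart of the argument is the estimate for each $u_k$ and for $u_{\rm in}$. Since $u_k$ is $a$-harmonic in $\Dom_{2^{k-1}r}$ (its forcing being supported outside), we may apply the large-scale regularity of Theorem~\ref{ThAL} with $N=1$ (so that $\taubar_1 + \phiC_1$ is the only corrected singular function) on $\Dom_{2^{k-1}r}$, going down from scale $2^{k-1}r$ to scale $r$. This gives
\begin{equation*}
\lt(\fint_{\Dom_r}|\nabla u_k|^2\rt)^{\frac12} \lesssim \Cstar\lt(\frac{r}{2^{k-1}r}\rt)^{\rhobar_1-1}\lt(\fint_{\Dom_{2^{k-1}r}}|\nabla u_k|^2\rt)^{\frac12} + |\gamma_1^{(k)}|\lt(\fint_{\Dom_r}|\nabla(\taubar_1+\phiC_1)|^2\rt)^{\frac12},
\end{equation*}
and since $\rhobar_1 - 1 < 0$ the prefactor is $2^{-k(\rhobar_1-1)} = 2^{k(1-\rhobar_1)}$ with the \emph{opposite} sign — wait, one must be careful: going \emph{down} in scale with an exponent $\rhobar_{N+1}-1 = \rhobar_2-1$ would be the naive reading, but here we want the \emph{single} singular function kept, so we use the $N=1$ version of \eqref{Num:301_L2-1} which controls $\nabla u - \gamma_1\nabla(\taubar_1+\phiC_1)$ with rate $(r/R)^{\rhobar_2-1}$, together with the bound $|\gamma_1|\leq \Cstar R^{-\rhobar_1+1}(\fint_{\Dom_R}|\nabla u|^2)^{1/2}$ from \eqref{Estim :gamma2}. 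Combined with $\fint_{\Dom_r}|\nabla(\taubar_1+\phiC_1)|^2 \lesssim r^{2(\rhobar_1-1)}$ (from \eqref{taubar_Homog} and \eqref{Num:4010}/Caccioppoli), the $\gamma_1$ term contributes $\Cstar (R/r)^{-(\rhobar_1-1)} = \Cstar(r/R)^{\rhobar_1-1}$ — but $\rhobar_1 - 1 < 0$ so this is $(R/r)^{1-\rhobar_1}$, which for $R = 2^{k-1}r$ gives exactly the growing factor $2^{k(1-\rhobar_1)}$ appearing in \eqref{M:0012}. The energy estimate $\fint_{\Dom_{2^{k-1}r}}|\nabla u_k|^2 \lesssim \fint_{\Dom_{2^{k+1}r}\setminus\Dom_{2^{k-1}r}}|h_k|^2$ then closes the bound for $u_k$, and similarly (with no rescaling needed) for $u_{\rm in}$, yielding the first sum and the first term of \eqref{M:0012}.

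Finally one passes from the quenched random constant $\Cstar$ in Theorem~\ref{ThAL} to the annealed $\LL^p_{\langl\cdot\rangl}$ bounds in \eqref{M:0012}: since $\langl|\Cstar|^{p''}\rangl\lesssim 1$ for all $p''$, Hölder's inequality with exponents chosen between $p$ and $p'$ lets one trade the factor $\Cstar$ against an arbitrarily small increase in the stochastic integrability exponent, which is exactly the $p'>p$ in the statement. I would also need to note that the number of shells is $K \lesssim \log(2^Kr)$, but since the geometric decay $2^{-k(\rhobar_1-1)}$ (in the far-field direction) converges, no logarithmic loss arises for $u_k$; the shells are summed with the explicit weights $2^{k(1-\rhobar_1)}$ kept on the right-hand side. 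The main obstacle I anticipate is the bookkeeping in the post-processing of Theorem~\ref{ThAL} to include a right-hand side $h$ — one must carefully split off the piece of $h$ supported near each scale, verify that $u_k$ is genuinely $a$-harmonic on the relevant sub-sector, and track that the $N=1$ choice (rather than a larger $N$) is what produces the sharp $2^{k(1-\rhobar_1)}$ weight rather than a worse power; the annealed/quenched conversion via Hölder is routine by comparison.
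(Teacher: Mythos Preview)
Your overall strategy---dyadic localisation of $h$, large-scale regularity on each piece, then quenched-to-annealed via H\"older---matches the paper. Two issues are worth naming.

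First, a simplification: the paper applies Theorem~\ref{ThAL} with $N=0$, not $N=1$. With $N=0$, estimate~\eqref{Num:301_L2-1} reads directly
\[
\lt(\fint_{\Dom_r}|\nabla w_k|^2\rt)^{1/2}\leq\Cstar\lt(\frac{r}{R}\rt)^{\rhobar_1-1}\lt(\fint_{\Dom_R}|\nabla w_k|^2\rt)^{1/2},
\]
producing the factor $2^{k(1-\rhobar_1)}$ in one step. Your $N=1$ route---subtract $\gamma_1(\taubar_1+\phiC_1)$ and then add it back using \eqref{Estim :gamma2}---does land on the same rate, but the detour (and the mid-proof ``wait'' passages) is avoidable.

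Second, and this is an actual gap: your handling of the outer piece does not stand as written. You cannot ``simply bound $\fint_{\Dom_r}|\nabla u_{\rm out}|^2$ by $\fint_{\Dom_{2^Kr}}|\nabla u_{\rm out}|^2$''---enlarging the averaging domain goes the wrong way without regularity input, and the prefactor $2^{K(1-\rhobar_1)}$ in the last line of \eqref{M:0012} does not emerge from an energy estimate. Relatedly, the decomposition $u=u_{\rm in}+\sum_k u_k+u_{\rm out}$ is not well-posed: if the pieces solve the localised equations in $\Dom_{2^Kr}$ with data only on $\Gamma_{2^Kr}$ they are underdetermined, while if they are Lax--Milgram solutions on all of $\Dom$ their sum is not $u$ (which is given only in $\Dom_{2^Kr}$). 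The paper's fix is to extend $h$ by zero, define each $w_k$ on the full sector $\Dom$, set $v=\sum_{k} w_k$, and observe that $u-v$ is $a$-harmonic in $\Dom_{2^Kr}$. Theorem~\ref{ThAL} (again with $N=0$) applied to this $a$-harmonic remainder is what produces the factor $2^{K(1-\rhobar_1)}$; the triangle inequality together with the energy estimate for $v$ then brings in $(\fint_{\Dom_{2^Kr}}|\nabla u|^2)^{1/2}$ on the right-hand side.
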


Both Corollaries \ref{C:LSL} and \ref{Cor:Annealed} are obtained from quenched regularity results by similar proofs, which are based on a dyadic argument.
Hence, we only establish Corollary \ref{Cor:Annealed}, which is a genuine new result (compared to Corollary \ref{C:LSL}).

\begin{proof}[Proof of Corollary \ref{Cor:Annealed}]
	We extend $h$ by $0$ outside $\Dom_{2^Kr}$ and appeal to a dyadic argument.
	For this, we introduce $v = \sum_{k=0}^{K-1} w_k$, where, for all $k\in\{0,\dots,K-1\}$, $w_k$ satisfies $w_k=0$ on $\Gamma$ and
	\begin{align}
	\label{M:0020}
	&-\nabla \cdot a \nabla w_0 = \nabla \cdot \lt(\eta_{\Boule,r} h\rt) \dans \Dom,
	\\
	\label{M:0021}
	&-\nabla \cdot a \nabla w_k = \nabla \cdot \lt((\eta_{\Boule,2^{k+1}r} -\eta_{\Boule,2^{k}r}) h\rt) \dans \Dom, \qquad \pourtout k\in\{1,\dots,K-1\}.
	\end{align}
	Notice that there holds
	\begin{equation}\label{M:0022}
	-\nabla \cdot a \nabla (u-v) = 0 \dans \Dom_{2^Kr}  \qquad \et u-v = 0 \sur \Gamma_{2^Kr}.
	\end{equation}
	
	\paragraph{Step 1: Estimates on $w_k$.}
	By taking the $\LL^p_{\langl\cdot\rangl}$ moment of the energy estimate applied to \eqref{M:0020}, we get
	\begin{equation}\label{M:0023}
	\langl \lt( \fint_{\Dom_r} |\nabla w_0|^2 \rt)^{\frac{p}{2}} \rangl^{\frac{1}{p}}
	\lesssim
	\langl \lt( \fint_{\Dom_r} |h|^2 \rt)^{\frac{p}{2}} \rangl^{\frac{1}{p}}.
	\end{equation}
	Similarly, for $k\in\{1,\dots,K-1\}$, we get
	\begin{equation}\label{M:0024}
	\langl \lt( \int_{\Dom} |\nabla w_k|^2 \rt)^{\frac{p}{2}} \rangl^{\frac{1}{p}}
	\lesssim
	\langl \lt( \int_{\Dom_{2^{k+1}r} \backslash \Dom_{2^{k-1}r}} |h|^2 \rt)^{\frac{p}{2}} \rangl^{\frac{1}{p}}.
	\end{equation}
	
	Next, we may apply Theorem \ref{ThAL} to $w_k$, which is $a$-harmonic inside $\Dom_{2^{k-1}r}$.
	Notice that using $N=0$ in \eqref{Num:301_L2-1} yields
	\begin{align*}
	\lt(\fint_{\Dom_r} |\nabla w_k|^2\rt)^{\frac{1}{2}}
	&\lesssim
	C_* 2^{k(1-\rhobar_1)}
	\lt(\fint_{\Dom_{2^k r}} |\nabla w_k|^2\rt)^{\frac{1}{2}}.
	\end{align*}
	Hence, taking the $\LL^p_{\langl\cdot\rangl}$ moment, using the Hölder inequality, and then \eqref{M:0024} (replacing $p$ by $p'>p$) and \eqref{Borne_CN-}, we obtain
	\begin{align}\label{M:0026}
	\langl \lt(\fint_{\Dom_r} |\nabla w_k|^2\rt)^{\frac{p}{2}}\rangl^{\frac{1}{p}}
	\lesssim
	2^{k(1-\rhobar_1)}
	\langl \lt( \fint_{\Dom_{2^{k+1}r} \backslash \Dom_{2^{k-1}r}} |h|^2 \rt)^{\frac{p'}{2}} \rangl^{\frac{1}{p'}}.
	\end{align}
	
	\paragraph{Step 2: Estimate on $u$.}
	Since $u-v$ is $a$-harmonic inside $\Dom_{2^Kr}$ (\textit{cf.} \eqref{M:0022}), we may apply the same reasoning as in Step 1, getting from Theorem \ref{ThAL} that
	\begin{align*}
	\langl \lt(\fint_{\Dom_r} |\nabla (u-v)|^2\rt)^{\frac{p}{2}}\rangl^{\frac{1}{p}}
	\lesssim
	2^{K(1-\rhobar_1)} \langl \lt(\fint_{\Dom_{2^Kr}} |\nabla (u-v)|^2\rt)^{\frac{p'}{2}}\rangl^{\frac{1}{p'}}.
	\end{align*}
	Using the triangle inequality (and the energy estimate for $v$), the above estimate transforms into
	\begin{equation}\label{M:0025}
	\langl \lt(\fint_{\Dom_r} |\nabla (u-v)|^2\rt)^{\frac{p}{2}}\rangl^{\frac{1}{p}}
	\lesssim
	2^{K(1-\rhobar_1)} \langl \lt(\fint_{\Dom_{2^Kr}} |\nabla u|^2\rt)^{\frac{p'}{2}}\rangl^{\frac{1}{p'}}
	+
	2^{K(1-\rhobar_1)}
	\langl \lt(\fint_{\Dom_{2^Kr}} |h|^2\rt)^{\frac{p'}{2}}\rangl^{\frac{1}{p'}}.	
	\end{equation}
	Hence, using once more the triangle inequality, into which we insert \eqref{M:0023}, \eqref{M:0026}, and \eqref{M:0025}, we obtain
	\begin{align*}
	\langl \lt(\fint_{\Dom_r} |\nabla u|^2\rt)^{\frac{p}{2}}\rangl^{\frac{1}{p}}
	\lesssim~&
	\sum_{k=1}^{K-1} 2^{k(1-\rhobar_1)}
	\langl \lt( \fint_{\Dom_{2^{k+1}r} \backslash \Dom_{2^{k-1}r}} |h|^2 \rt)^{\frac{p'}{2}} \rangl^{\frac{1}{p'}}
	+
	\langl \lt( \fint_{\Dom_r} |h|^2 \rt)^{\frac{p'}{2}} \rangl^{\frac{1}{p'}}
	\\
	&+2^{K(1-\rhobar_1)} \langl \lt(\fint_{\Dom_{2^Kr}} |\nabla u|^2\rt)^{\frac{p'}{2}}\rangl^{\frac{1}{p'}}
	+
	2^{K(1-\rhobar_1)}
	\langl \lt(\fint_{\Dom_{2^Kr}} |h|^2\rt)^{\frac{p'}{2}}\rangl^{\frac{1}{p'}}.
	\end{align*}
	By observing that the fourth term on the \rhs may be absorbed by the first two, we retrieve \eqref{M:0012}.
\end{proof}

\subsection{Local annealed estimates on the corner correctors: Proof of Theorem \ref{Th:OptiGR}}

\begin{proof}[Proof of Theorem \ref{Th:OptiGR}]
As in the proof of Proposition~\ref{Prop:Quench}, the proof of \eqref{Opti_Decay} can be readily adapted to establish \eqref{Opti_Decay_Dir}.
Therefore, we only prove the former.
	
Let $x \in \Dom$.
By Step 2 of the proof of Theorem \ref{ThAL}, we already have that $\nabla \phitilde_n$ satisfies \eqref{Num:4011}, where $\Cstar$ satisfies \eqref{Borne_CN-}.

Localizing this estimate by means of Corollary \ref{C:LSL}, we establish
\begin{equation}\label{key1}
\langl
\lt( 
\fint_{\Dom(x)} \lt|\nabla \phitilde_n \rt|^2 
\rt)^{\frac{p}{2}}
\rangl^{\frac{1}{p}}
\lesssim_{\Xi,p,n} (|x|+1)^{\rhobar_n-1-\nu} \ln^{\tilde{\nu}'}(|x|+2).
\end{equation}
Then, integrating this estimate along a path and appealing to the Minkowski inequality, we get that
\begin{equation}\label{key2}
\langl
\lt| \fint_{\Dom(x)} \phitilde_n \rt|^p
\rangl^{\frac{1}{p}}
\lesssim_{\Xi,p,n} (|x|+1)^{\rhobar_n-\nu} \ln^{\tilde{\nu}'}(|x|+2).
\end{equation}
Last, recalling the decomposition \eqref{*ansatz} and invoking the maximum principle,
we finally obtain the desired \eqref{Opti_Decay}.
In our proof below, the random field $\Cstar(x)$ may change from line to line, but is always assumed to satisfy \eqref{Borne_CN}.

\paragraph{Step 1 : Argument for \eqref{key1}.}
W.~l.~o.~g., we may assume that $|x| \geq 8$.
Recall that $\phitilde_n$ satisfies \eqref{*Def:psitilde2}, the r.~h.~s.\ of which is controlled in a very strong norm (almost $\LL^\infty$).
We apply Corollary \ref{C:LSL} for $u \rightsquigarrow \phitilde_n$, $R \rightsquigarrow |x|/2$, and then recall \eqref{Num:4011} and \eqref{Borne:h}, obtaining \eqref{key2} as follows:
\begin{align*}
\langl \lt( \fint_{\Dom_{1}(x)} \lt| \nabla \phitilde_n  \rt|^2 
\rt)^{\frac{p}{2}} \rangl^{\frac{1}{p}}
\lesssim~~~&
\langl  \lt(\fint_{\Dom_{|x|/2}(x)} |\nabla \phitilde_n |^2\rt)^{\frac{p'}{2}} \rangl^{\frac{1}{p'}}+ \ln(|x|) \sup_{x' \in \Dom_{|x|/2}(x)} \langl\lt(\fint_{\Dom_1(x')}|h|^2\rt)^{\frac{p'}{2}} \rangl^{\frac{1}{p'}}
\\
\overset{\eqref{Num:4011},\eqref{Borne:h}}{\lesssim}& |x|^{\rhobar_n-1-\nu} \ln^{\tilde{\nu}'}(|x|).
\end{align*}

\paragraph{Step 2 : Argument for \eqref{key2}.}
First, by the Poincaré inequality (since $\phiC_n=0$ on the boundary $\Gamma$), for any $x \in \Dom$ such that $\dist(x,\Gamma) \leq 1$, we have that
\begin{equation*}
\langl\lt|\int_{\Dom(x)} \phiC_n\rt|^p\rangl^{\frac{1}{p}}
\lesssim \langl\lt(\int_{\Dom(x)} |\nabla \phiC_n|^2\rt)^{\frac{p}{2}}\rangl^{\frac{1}{p}}
\overset{\eqref{key1}}{\lesssim}_{\Xi,p,n} (|x|+1)^{\rhobar_n - 1 - \nu} \ln^{\tilde{\nu}'}(|x|+2).
\end{equation*}
Next, for any $x \in \Dom$ with $\dist(x,\Gamma) > 1$,
let $x'$ be such that $|x'| = |x|$ and $\dist(x',\Gamma)=1$,
and we denote by $\mathcal{A}$ the arc from $x$ to $x'$.
Hence, we have
\begin{align*}
\lt|\int_{\Boule(x)} \phiC - \int_{\Boule(x')} \phiC\rt|
=
\lt|\int_{\mathcal{A}} \lt(\int_{\Boule(z)} \nabla \phiC\rt) \cdot \dd z\rt|.
\end{align*}
Taking the $\LL^p_{\langl\cdot\rangl}$-moment for $p\geq 2$ and appealing to the Cauchy-Schwarz estimate and the Minkowski inequality yields
\begin{align*}
\langl\lt|\int_{\Boule(x)} \phiC - \int_{\Boule(x')} \phiC\rt|^p \rangl^{\frac{1}{p}}
\lesssim
\int_{\mathcal{A}} \langl
\lt(\int_{\Boule(z)} |\nabla \phiC|^2\rt)^{\frac{p}{2}}\rangl^{\frac{1}{p}} |\dd z|.
\end{align*}
Last, inserting \eqref{key1} into the above estimate, we easily get \eqref{key2}.

\paragraph{Step 3 : Conclusion.}
By the Poincaré-Wirtinger estimate, invoking \eqref{key1} and \eqref{key2}, we have
\begin{equation}\label{key7}
\langl
\lt( \fint_{\Dom(x)} |\phitilde_n|^2 \rt)^{\frac{p}{2}}
\rangl^{\frac{1}{p}}
\lesssim_{\Xi,p,n} (|x|+1)^{\rhobar_n-\nu} \ln^{\tilde{\nu}'}(|x|+2).
\end{equation}
Next, notice that the function $u(\cdot) :=\taubar_n(\cdot)+\phiC_{n}(\cdot) -\taubar_n(x)$
is $a$-harmonic in $\Dom(x)$.
Hence, we may apply the De Giorgi-Nash-Moser theorem \cite[Th.\ 8.25 p.\ 202]{GT} to $u$ in the form $|\phiC_{n}(x)|=|u(x)|\lesssim_{\lambda,\omega} (\fint_{\Dom(x)}|u|^2)^\frac12$.
Appealing to the triangle inequality on the decomposition \eqref{*ansatz},
into which we insert \eqref{taubar_Homog}, \eqref{Borne:HalfCorr}, and \eqref{key7}, we finally obtain \eqref{Opti_Decay}.
\end{proof}

\section{Error estimate for the nonstandard $2$-scale expansion: Argument for Theorem \ref{Th:Error}}

\label{error_est_sec}

\subsection{Construction of the corner flux-correctors: Proof of Lemma \ref{Lem:sig}}

\begin{proof}[Proof of Lemma \ref{Lem:sig}]
In this proof, the constant $\tilde{\nu}'\lesssim_{\tilde{\nu}} 1$ may change from line to line.
Let $i = 1,2$. This proof proceeds in three steps : 
In Step 1, we construct $\tisigmaD_i$; this relies on the extension of a given divergence-free field, that we establish in Step 2.
In Step 3, we establish
\begin{equation}\label{E:nablasigD-2}
\langl  \Big| \fint_{\Dom(x)} |\nabla \tisigmaD_i |^2 \Big|^{\frac{p}{2}} \rangl^{\frac{1}{p}}
\lesssim  (|x|+1)^{-\nu} \ln^{\tilde{\nu}'}(|x|+2) \qquad \pourtout p \in [1,\infty) \et x \in \Dom.
\end{equation}
Last, in Step 4, we prove \eqref{M:0005}.

\paragraph{Step 1 : Construction of $\tisigmaD_i$.}
We begin by constructing the skew-symmetric correction $\tisigmaD_i$ in \eqref{ansatz:sig}.
From \eqref{Def:sigD} we see that $\tisigmaD_i$ must satisfy
\begin{align}
\label{equation_sigma_tilde_1}
\nabla \cdot \tisigmaD_i = a (\nabla \phiD_i + e_i) - \abar e_i - \nabla \cdot \lt( (1 -\eta_{\Boule,1} ) \lt( \etaup \sigup_i + \etabulk \sigma_i + \etadown \sigdown_i\rt) \rt) \dans \Dom.
\end{align}
Using the ansatz for $\phiD_i$ given by \eqref{*ansatz} with $\bar{\tau}_n$ replaced by $x_i$, \textit{i.e.} 
\begin{align*}
\phiD_i=(1-\eta_{\Boule,1})\lt( \etaup \phiup_i + \etabulk \phi_i + \etadown \phidown_i \rt) + \phiDtilde_i,
\end{align*}
the equations \eqref{Def:sigs} satisfied by $\sigup_i$ and $\sigdown_i$, and, similarly to \eqref{*Def:h1h2}, the notation
\begin{equation}
\label{defn_hi}
\begin{aligned}
h_i := \eta_{\Boule,1} (a-\abar) e_i
&+\lt(a\phiup_i- \sigup_i\rt)
\nabla \lt( (1-\eta_{\Boule,1}) \etaup   \rt) 
+ \lt(a\phi_i - \sigma_i \rt) \nabla \lt( (1-\eta_{\Boule,1}) \etabulk \rt) 
\\
&+\lt(a\phidown_i- \sigdown_i\rt)
\nabla \lt( (1-\eta_{\Boule,1})\etadown \rt),
\end{aligned}
\end{equation} 
the \rhs of \eqref{equation_sigma_tilde_1} becomes 
\begin{align}
\label{rewrite_rhs}
\begin{split}
&a \nabla \phiDtilde_i + a \phiup_i \nabla( (1 -\eta_{\Boule,1} ) \etaup) + a \phi_i \nabla( (1 -\eta_{\Boule,1} ) \etabulk) + a \phidown_i \nabla( (1 -\eta_{\Boule,1} ) \etadown) 
\\
&+ (1 -\eta_{\Boule,1} )\lt( \etaup a \nabla \phiup_i + \etabulk a \nabla \phi_i + \etadown a \nabla \phidown_i \rt) + (a - \abar) e_i
\\ 
&- \sigup_i \nabla( (1 -\eta_{\Boule,1} ) \etaup) 
- \sigma_i \nabla( (1 -\eta_{\Boule,1} ) \etabulk)
- \sigdown_i \nabla( (1 -\eta_{\Boule,1} )\etadown)
\\
&-(1-\eta_{\Boule,1}) \lt(\etaup \lt(a\nabla\phiup_i + (a-\abar)e_i\rt)
+\etabulk \lt(a\nabla\phi_i + (a-\abar)e_i\rt)
+\etadown \lt(a\nabla\phidown_i + (a-\abar)e_i\rt)\rt)
\\
&= a \nabla \phiDtilde_i + h_i.
\end{split}
\end{align}
Combining \eqref{equation_sigma_tilde_1} and \eqref{rewrite_rhs}, we obtain that $\tisigmaD_i$ is required to solve 
\begin{align}
\label{equation_sigma_tilde}
\nabla \cdot \tisigmaD_i =a \nabla \phiDtilde_i + h_i \dans \Dom.
\end{align}
By adapting \eqref{*Def:psitilde2} to $\phiDtilde_i$, we immediately obtain that the \rhs of \eqref{equation_sigma_tilde} is divergence-free.
To construct a solution of \eqref{equation_sigma_tilde} we emulate previous constructions of adapted vector potentials, \textit{e.g.} in \cite{Fischer_Raithel_2017} or \cite{JosienRaithel_2019}, and for now take for granted the existence of an extension $g_i$ of $a\nabla \phiDtilde_i + h_i$ to $\R^2$ that is divergence-free and satisfies 
\begin{align}
\label{growth_g}
 \langl \lt( \fint_{B_1(x)} \lt| g_i \rt|^2 \rt)^{\frac{p}{2}} \rangl^{\frac{1}{p}}
\lesssim (|x|+1)^{-\nu} \ln^{\tilde{\nu}'}( |x| +2) \qquad \pourtout x \in \R^2 \et p \in [1, \infty).
\end{align}

Then, for $j=1,2$, we seek strictly subquadratic solutions to 
\begin{align}
\label{def_Nji}
-\Delta N_{ij}  = g_i \cdot e_j \dans \R^2
\end{align} 
and take the ansatz
\begin{align}
\label{sigma_correction_ansatz}
\tisigmaD_{ijk} = \partial_j N_{ik} - \partial_k N_{ij}.
\end{align}
 Taking the divergence of the ansatz for $\tisigmaD_{ijk}$, we see that 
\begin{align*}
\partial_k \tisigmaD_{ijk} 
= \partial_k\partial_j N_{ik}  - \partial_k\partial_k N_{ij} = g_i \cdot e_j \dans \R^2,
\end{align*}
where we have used \eqref{def_Nji}, and the Liouville principle for harmonic functions and that $g$ is divergence-free to deduce that $\partial_k N_{ik}$ is a constant.

To complete our construction of $\tisigmaD_{i}$ it remains to obtain strictly subquadratic solutions of the equations \eqref{def_Nji} and to show the existence of the extension $g$. To accomplish the former task we notice that because $g$ is divergence-free, \eqref{def_Nji} can be re-written in divergence-form as
\begin{align}
\label{def_Nji_2}
-\Delta N_{ij}  = \nabla \cdot \lt(x \cdot e_j g_i\rt) \dans \R^2.
\end{align} 
Using a covering argument and \eqref{growth_g} we see that the relation
\begin{align}
\label{growth_g_2}
\langl \lt( \fint_{\Boule_{r}} \lt| x \cdot e_j g_i \rt|^2 \rt)^{\frac{p}{2}} \rangl^{\frac{1}{p}}
\lesssim
\sup_{x' \in \Boule_r}
\langl \lt( \fint_{\Boule_{1}(x')} \lt| x \cdot e_j g_i \rt|^2 \rt)^{\frac{p}{2}} \rangl^{\frac{1}{p}}
\overset{\eqref{growth_g}}{\lesssim} r^{1-\nu} \ln^{\tilde{\nu}'}(r+1)
\end{align}
holds for $r\geq 1$ and $p\in [1, \infty)$.

By \cite[Lemma A.1]{JosienRaithel_2019}, there exists almost-surely a distributional solution $N_{ij} \in H^{1}_{\loc}(\R^2)$ of \eqref{def_Nji_2} satisfying 
\begin{align}
\label{subquadratic}
\limsup_{r \uparrow \infty} r^{-1} \lt( \fint_{\Boule_{r}} \lt|\nabla N_{ij} - \fint_{\Boule_r} \nabla N_{ij}  \rt|^2 \, \dd x \rt)^{\frac{1}{2}} =0.
\end{align}
Furthermore, it also follows from \cite[Lemma A.1]{JosienRaithel_2019}, using \eqref{growth_g_2}, that this solution satisfies 
\begin{align}
\label{estimate_Nji_1}
\langl \lt( \fint_{\Boule_r} \lt|\nabla N_{ij} - \fint_{\Boule_r} \nabla N_{ij}  \rt|^2 \, \dd x \rt)^{\frac{p}{2}} \rangl^{\frac{1}{p}} \lesssim_{\nu} r^{1-\nu} \ln^{\tilde{\nu}'}(r+1) \pourtout r\geq 1.
\end{align}

\paragraph{Step 2 : Divergence-free extension of $a\nabla \phiDtilde_i + h_i$.}
By Lemma \ref{Lem:DivFree} of the Appendix,
we may construct the extension $g_i$ of $a\nabla \phiDtilde_i + h_i$  to $\R^2$ that has been used in Step 1.

We then check that \eqref{growth_g} is satisfied. 
Let $x \in \Dom$ and $p \in [1,\infty)$.
By \eqref{defn_hi}, using Proposition \ref{Prop:growhthscor}, we see that the analogue of \eqref{Borne:h} is
\begin{align}
\label{analogue_1}
\langl \lt(\fint_{\Dom(x)} |h_i|^2\rt)^{\frac{p}{2}} \rangl^{\frac{1}{p}}
\lesssim
(|x|+1)^{-\nu} \ln^{\tilde{\nu}'}(|x|+2).
\end{align}
Furthermore, using the argument in Step 1 of the proof of Theorem \ref{Th:OptiGR} and \eqref{Num:4011-phiD}, we see that the analogue of \eqref{key1} is 
\begin{align}
\label{analogue_2}
\langl \lt(\fint_{\Dom(x)} |\nabla \phiDtilde_i |^2\rt)^{\frac{p}{2}} \rangl^{\frac{1}{p}}
\lesssim
(|x|+1)^{-\nu} \ln^{\tilde{\nu}'}(|x|+2).
\end{align}
Thus, obtaining \eqref{growth_g} is a simple matter of combining \eqref{analogue_1} and \eqref{analogue_2} with the definition of $\bar{H}$ in Lemma \ref{Lem:DivFree}. 

\paragraph{Step 3: Argument for \eqref{E:nablasigD-2}.}

W.~l.~o.~g., we assume that $|x|\geq 8$.
By the ansatz \eqref{sigma_correction_ansatz}, the Caccioppoli estimate, and then
\eqref{estimate_Nji_1} and \eqref{growth_g} combined with a covering argument in which we use the Minkowski inequality, we obtain that 
\begin{align*}
\langl \Bigg( \fint_{\Boule_{\frac{|x|}{4}}(x)} |\nabla \tisigmaD_i|^2 \Bigg)^{\frac{p}{2}} \rangl^{\frac{1}{p}} & \lesssim \langl \Bigg( \fint_{\Boule_{\frac{|x|}{4}}(x)} |\nabla^2 N_i |^2 \Bigg)^{\frac{p}{2}} \rangl^{\frac{1}{p}}\\
&  \lesssim \frac{1}{|x|}  \langl \Bigg( \fint_{\Boule_{\frac{|x|}{2}}(x)} |\nabla N_i - \fint_{\Boule_{\frac{|x|}{2}}(x)} \nabla N_i |^2 \Bigg)^{\frac{p}{2}} \rangl^{\frac{1}{p}}  +  \langl \Bigg( \fint_{\Boule_{\frac{|x|}{2}}(x)} |g_i |^2 \Bigg)^{\frac{p}{2}} \rangl^{\frac{1}{p}}\\
&\lesssim |x|^{-\nu} \ln^{\tilde{\nu}'}|x|.
\end{align*}
To finish the argument for \eqref{E:nablasigD-2} we resort to the following estimate:
\begin{align}
\label{MVP_sigma}
\begin{split}
\langl \Bigg( \fint_{\Boule(x)} |\nabla  \tisigmaD_i |^2 \Bigg)^{\frac{p}{2}} \rangl^{\frac{1}{p}} & \lesssim \langl \Bigg(\fint_{\Boule_{\frac{|x|}{4}}(x)} |\nabla  \tisigmaD_i |^2  \Bigg)^{\frac{p'}{2}} \rangl^{\frac{1}{p'}} +  |x|^{-\nu} \ln^{\tilde{\nu}'}|x| \qquad \pourtout p' > p,
\end{split}
\end{align}
which is proved below.
Indeed, the conjunction of the two above estimates gives \eqref{E:nablasigD-2}.

Here comes the proof of \eqref{MVP_sigma}.
By applying the Laplacian to \eqref{sigma_correction_ansatz} and using \eqref{def_Nji}, we obtain
\begin{align*}
	-\Delta \tisigmaD_{ijk} = - \partial_j \Delta N_{ik} + \partial_k \Delta N_{ij} = \partial_j (g_i e_k) - \partial_k (g_i e_j).
\end{align*}
Thus, we may apply Corollary \ref{C:LSL}, 
in the easier case where $a=\Id$ and where the sets $\Dom_r$ are replaced by balls $\Boule_r$ (furthermore, no boundary condition needs to be considered).
This yields \eqref{MVP_sigma} through
\begin{align}
\label{M:0040}
\langl \lt( \fint_{\Boule(x)} |\nabla  \tisigmaD_i |^2 \rt)^{\frac{p}{2}} \rangl^{\frac{1}{p}} 
& \lesssim \langl \lt(\fint_{\Boule_{\frac{|x|}{4}}(x)} |\nabla  \tisigmaD_i |^2  \rt)^{\frac{p'}{2}} \rangl^{\frac{1}{p'}}
+\ln\left( |x| \right)
\sup_{x' \in \Boule_{\frac{|x|}{2}}(x)}
\langl \lt( \fint_{\Boule(x')} |g_i|^2 \rt)^{\frac{p'}{2}} \rangl^{\frac{1}{p'}},
\end{align}
into which we insert \eqref{growth_g}.

\paragraph{Step 4: Argument for \eqref{M:0005}.}
Let $x \in \Dom$.
We integrate
\begin{align*}
\lt| \fint_{\Boule(x)} \tisigmaD_i -  \fint_{\Boule} \tisigmaD_i \rt|
\leq
|x|\lt| \int_{0}^{1} \lt(\fint_{\Boule(tx)} \nabla \tisigmaD_i\rt)  \dd t \rt|.
\end{align*}
Hence, by the Minkowski inequality and \eqref{E:nablasigD-2}, we get
\begin{align*}
\langl\lt| \fint_{\Boule(x)} \tisigmaD_i -  \fint_{\Boule} \tisigmaD_i \rt|^{p} \rangl^{\frac{1}{p}}
\leq &
|x|\int_{0}^{1} 
\langl \lt(\fint_{\Boule(tx)} |\nabla \tisigmaD_i|^2\rt)^{\frac{p}{2}} \rangl^{\frac{1}{p}} \dd t
\\
\lesssim&
(|x|+1)^{1-\nu} \ln^{\tilde{\nu}'}(|x|+2) \int_0^1 t^{-\nu} \dd t
\lesssim (|x|+1)^{1-\nu} \ln^{\tilde{\nu}'}(|x|+2).
\end{align*}
Using the Poincaré inequality as well as \eqref{E:nablasigD-2}, this implies
\begin{align*}
	\langl\lt( \fint_{\Boule(x)} \lt|\tisigmaD_i -  \fint_{\Boule} \tisigmaD_i\rt|^2 \rt)^{\frac{p}{2}} \rangl^{\frac{1}{p}}
	\lesssim (|x|+1)^{1-\nu} \ln^{\tilde{\nu}'}(|x| +2).
\end{align*}
Up to substracting a constant in the definition of $\tisigmaD_{i}$, we finally get \eqref{M:0005}
by appealing to the decomposition \eqref{ansatz:sig},
into which we insert \eqref{CorrSubDef} and \eqref{Borne:HalfCorr}.
\end{proof}

\subsection{Equation satisfied by the nonstandard 2-scale expansion \eqref{2-scale-New}: Proof of Lemma \ref{Lem:2scale}}

\begin{proof}[Proof of Lemma \ref{Lem:2scale}]
	For use later on, in order to avoid singularities at the tip of the sector (the small scales),
we show a generalization of \eqref{E:1-0}.
	In particular, as is seen in \eqref{2-scale-New-bis} below,
we resort to the use of additional singular functions $\taubar_n$ with a very sharp cut-off near the corner.
	
	\paragraph{Step 1 : Generalizing \eqref{E:1-0}.}
	We choose $M \in \NN$, $\chi:=\eta_{\Boule,1}$, and the cut-off function $\chi_\epsilon := \eta_{\Boule,\epsilon}$, and we generalize \eqref{2-scale-New} by the following 2-scale expansion :
	\begin{equation}\label{2-scale-New-bis}
	\utieps^{N,M} := (1+\phiD_{\epsilon,i}\partial_i) \ubar_{\reg}^{N,M} + \sum_{n=1}^N \bar{\gamma}_n (\taubar_n + \phiC_{\epsilon,n})\chi + \sum_{n=1}^N \bar{\gamma}_n \taubar_n \phiD_{\epsilon,i} \partial_i \chi
	+ \sum_{n=N+1}^{N+M} \bar{\gamma}_n \taubar_n  \chi_\epsilon,
	\end{equation}
	with
	\begin{equation}\label{Num:7003-bis}
	\ubar_{\reg}^{N,M} := \ubar - \sum_{n=1}^N \bar{\gamma}_n \taubar_n \chi - \sum_{n=N+1}^{N+M} \bar{\gamma}_n \taubar_n  \chi_\epsilon.
	\end{equation}
	We shall establish that
	\begin{equation}\label{E:1}
	-\nabla \cdot a_\epsilon (\nabla \utieps^{N,M}  - \nabla \ueps)
	=
	\nabla \cdot h^{N,M}_\epsilon \quad \dans \Dom, \qquad \text{with} \quad \utieps^{N,M} = \ueps = 0 \sur \Gamma,
	\end{equation}
	where
	\begin{equation}\label{E:1-bis}
	h^{N,M}_\epsilon :=\lt(\sigmaD_{\epsilon,i} - a \phiD_{\epsilon,i} \rt) \partial_i \nabla \bar{v}^{N,M}
	-\sum_{n=1}^N \bar{\gamma}_n a_\epsilon \nabla \lt((\chi-1)(\phiC_{\epsilon,n} - \phiD_{\epsilon,i} \partial_i \taubar_n)\rt)
	-\sum_{n=N+1}^{N+M} \bar{\gamma}_n \lt( a_\epsilon- \abar \rt) \nabla \lt(\chi_\epsilon \taubar_n \rt),
	\end{equation}
	for	
	\begin{equation}\label{Def:barv}
	\bar{v}^{N,M} :=\ubar - \sum_{n=1}^N \bar{\gamma}_n \taubar_n -\sum_{n=N+1}^{N+M} \bar{\gamma}_n \taubar_n  \chi_\epsilon  =\ubar_{\reg}^{N,M} + \sum_{n=1}^N \bar{\gamma}_n \taubar_n (\chi-1).
	\end{equation}
	Clearly, taking $M=0$ in the above identities establishes Lemma \ref{Lem:2scale}.

	\paragraph{Step 2 : Proof of \eqref{E:1}.}
	We compute
	\begin{equation}\label{Num:9001}
	\begin{aligned}
	-\nabla \cdot a_\epsilon (\nabla \utieps^{N,M}  - \nabla \ueps)
	\overset{\eqref{Num:7001}}{=}~& 
	-\nabla \cdot \lt( a_\epsilon \nabla \utieps^{N,M}  - \abar \nabla \ubar \rt)
	\\
	\overset{\eqref{2-scale-New-bis}, \eqref{Num:7003-bis}}{=}~&
	- \nabla \cdot \lt( a_\epsilon \nabla  \lt( (1+\phiD_{\epsilon,i}\partial_i) \ubar_{\reg}^{N,M} \rt)  - \abar \nabla \ubar_{\reg}^{N,M} \rt)
	\\
	&-\sum_{n=1}^N \bar{\gamma}_n  \nabla \cdot \lt( a_\epsilon \nabla \lt( (\taubar_n + \phiC_{\epsilon,n})\chi + \taubar_n \phiD_{\epsilon,i} \partial_i \chi \rt) - \abar \nabla \lt( \taubar_n \chi \rt) \rt)
	\\
	&- \sum_{n=N+1}^{N+M} \bar{\gamma}_n \nabla \cdot (a_\epsilon - \abar) \nabla (\chi_\epsilon \taubar_n).
	\end{aligned}
	\end{equation}
	But, since $\taubar_n$ and $\taubar_n + \phiC_{\epsilon,n}$ are $\abar$- and $a_\epsilon$-harmonic, respectively,
the summands in the second term of the \rhs read
	\begin{align*}
	&\bar{\gamma}_n\nabla \cdot \lt( a_\epsilon \nabla \lt( (\taubar_n + \phiC_{\epsilon,n})\chi + \taubar_n \phiD_{\epsilon,i} \partial_i \chi \rt) - \abar \nabla (\taubar_n \chi) \rt)
	\\
	&\qquad \qquad 
	=
	\bar{\gamma}_n \nabla \cdot \lt( a_\epsilon \nabla \lt( (\taubar_n + \phiC_{\epsilon,n})(\chi-1) + \taubar_n \phiD_{\epsilon,i} \partial_i (\chi-1) \rt) - \abar \nabla (\taubar_n (\chi-1)) \rt).
	\end{align*}
	Moreover
	\begin{align*}
	(\taubar_n + \phiC_{\epsilon,n})(\chi-1) + \taubar_n \phiD_{\epsilon,i} \partial_i (\chi-1)
	=(1+\phiD_{\epsilon,i}\partial_i)(\taubar_n (\chi-1))
	+(\phiC_{\epsilon,n} - \phiD_{\epsilon,i} \partial_i \taubar_n)(\chi-1).
	\end{align*}
	Thus, recalling \eqref{Def:barv} we may reformulate \eqref{Num:9001} as 
	\begin{equation*}
	\begin{aligned}
	-\nabla \cdot a_\epsilon (\nabla \utieps^{N,M}  - \nabla \ueps)
	=~&
	- \nabla \cdot \lt( a_\epsilon \nabla  \lt( (1+\phiD_{\epsilon,i}\partial_i) \bar{v}^{N,M} \rt)  - \abar \nabla \bar{v}^{N,M} \rt)
	\\
	&-\sum_{n=1}^N \bar{\gamma}_n \nabla \cdot a_\epsilon \nabla \lt( (\phiC_{\epsilon,n} - \phiD_{\epsilon,i} \partial_i \taubar_n)(\chi-1) \rt)
	\\
	&- \sum_{n=N+1}^{N+M} \bar{\gamma}_n \nabla \cdot (a_\epsilon - \abar) \nabla (\chi_\epsilon \taubar_n).
	\end{aligned}
	\end{equation*}
	By \eqref{Num:4003} (replacing $\phiup \rightsquigarrow \phiD$ and $\sigup \rightsquigarrow\sigmaD$), the first term on the \rhs can be expressed as
	\begin{align*}
	-\nabla \cdot \lt( a_\epsilon \nabla \lt( (1+\phiD_{\epsilon,i}\partial_i) \bar{v}^{N,M} \rt) - \abar \nabla \bar{v}^{N,M} \rt)
	=-\nabla \cdot \lt(a_\epsilon \phiD_{\epsilon,i} - \sigmaD_{\epsilon,i} \rt) \partial_i \nabla \bar{v}^{N,M},
	\end{align*}
	so that we get \eqref{E:1} along with \eqref{E:1-bis}.
\end{proof}

\subsection{Preliminary estimates}

\begin{lemma}\label{Lem:Prelim}
	Under the assumptions of Theorem \ref{Th:Error}, pick $M \in \mathbb{N}$ sufficiently large so that
	\begin{equation}\label{Def:M}
	\rhobar_{M+1} \geq 2.
	\end{equation}	
	Then, there exists an exponent $\tilde{\nu}'\lesssim_{\tilde{\nu}} 1$ such that $h^{N,M}_\epsilon$ defined by \eqref{E:1-bis} satisfies
	\begin{align}\label{M:0003}
	\langl\lt( \fint_{\Dom_\epsilon(x)} |h^{N,M}_\epsilon|^2 \rt)^{\frac{p}{2}}\rangl^{\frac{1}{p}}
	\lesssim_{\Xi, N, M, p}
	\lt\{
	\begin{aligned}
	&\epsilon^\nu \ln^{\tilde{\nu}'}(\epsilon^{-1}|x|) |x|^{\rhobar_{N}-1-\nu} && \si x \in \Dom \backslash \Dom_{1},
	\\
	&\epsilon^\nu \ln^{\tilde{\nu}'}(\epsilon^{-1})  |x|^{\rhobar_{N+1}-1-\nu} && \si x \in \Dom_{1} \backslash \Dom_{4\epsilon},
	\\
	&\epsilon^{\rhobar_{N+1}-1}  && \si x \in \Dom_{4\epsilon},
	\end{aligned}
	\rt.
	\end{align}
	for $p \in [1,\infty)$.
\end{lemma}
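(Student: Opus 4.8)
The plan is to estimate $h^{N,M}_\epsilon$ term by term in the three regimes separately, since each piece of the right-hand side \eqref{E:1-bis} has a different structure. Recall that
\[
h^{N,M}_\epsilon =\lt(\sigmaD_{\epsilon,i} - a_\epsilon \phiD_{\epsilon,i} \rt) \partial_i \nabla \bar{v}^{N,M}
-\sum_{n=1}^N \bar{\gamma}_n a_\epsilon \nabla \lt((\chi-1)(\phiC_{\epsilon,n} - \phiD_{\epsilon,i} \partial_i \taubar_n)\rt)
-\sum_{n=N+1}^{N+M} \bar{\gamma}_n \lt( a_\epsilon- \abar \rt) \nabla \lt(\chi_\epsilon \taubar_n \rt).
\]
First I would record the ingredients: the rescaled versions of \eqref{Opti_Decay}, \eqref{Opti_Decay_Dir}, \eqref{M:0005}, and Proposition \ref{Prop:growhthscor} give annealed bounds for $\sigmaD_{\epsilon,i}$, $\phiD_{\epsilon,i}$, $\phiC_{\epsilon,n}$ on $\Dom_\epsilon(x)$ of order $(\epsilon+|x|)^{1-\nu}\ln^{\tilde\nu'}$ etc., which scale like $\epsilon^\nu |x|^{1-\nu}\ln^{\tilde\nu'}(\epsilon^{-1}|x|)$ (and the corner correctors like $\epsilon^\nu|x|^{\rhobar_n-\nu}\ln^{\tilde\nu'}$ after rescaling, using $\phiC_{\epsilon,n}=\epsilon^{\rhobar_n}\phiC_n(\cdot/\epsilon)$). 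I would also note that by Theorem \ref{Th:homog} applied to $\ubar$ (which is $\abar$-harmonic near $0$ since $f$ is supported in $\Dom_2\setminus\Dom_1$), we have $\bar v^{N,M}=\ubar_{\reg}^{N,M}+\sum_{n\le N}\bar\gamma_n\taubar_n(\chi-1)$, and from \eqref{Num:001} together with $|\nabla^2\taubar_n|\lesssim|x|^{\rhobar_n-2}$ we get the crucial pointwise bound $|\nabla^2\bar v^{N,M}(x)|\lesssim |x|^{\rhobar_{N+1}-2}$ for $|x|\le 1/2$, while far from the corner $|\nabla^2\bar v^{N,M}|$ is bounded by $\|f\|_{\WW^{2,q}}$-type quantities (and is supported, together with $\nabla(\chi-1)$-terms, in $\Dom_2\setminus\Dom_{1/2}$). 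Here the power $\rhobar_N$ rather than $\rhobar_{N+1}$ in the first line of \eqref{M:0003} comes from the $(\chi-1)\taubar_n$ contributions to $\bar v^{N,M}$, whose second gradient scales like $|x|^{\rhobar_N-2}$ for $|x|\gtrsim 1$ (since $\chi-1$ has gradient supported on the annulus $\Dom_1\setminus\Dom_{1/2}$, but $\sum_{n\le N}\bar\gamma_n\taubar_n$ itself is nonzero for $|x|>1$, contributing $|\nabla^2\taubar_N|\sim|x|^{\rhobar_N-2}$).

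Next I would treat the first term $(\sigmaD_{\epsilon,i}-a_\epsilon\phiD_{\epsilon,i})\partial_i\nabla\bar v^{N,M}$. For $x\in\Dom\setminus\Dom_1$, I bound the annealed $\LL^2$-average of $\sigmaD_{\epsilon,i}-a_\epsilon\phiD_{\epsilon,i}$ on $\Dom_\epsilon(x)$ by $\epsilon^\nu|x|^{1-\nu}\ln^{\tilde\nu'}(\epsilon^{-1}|x|)$ (using boundedness of $a$, \eqref{Opti_Decay_Dir} rescaled, and \eqref{M:0005} rescaled), and multiply by $\|\nabla^2\bar v^{N,M}\|_{\LL^\infty(\Dom_\epsilon(x))}\lesssim |x|^{\rhobar_N-2}$; the product is $\epsilon^\nu\ln^{\tilde\nu'}(\epsilon^{-1}|x|)|x|^{\rhobar_N-1-\nu}$ as required. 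For $x\in\Dom_1\setminus\Dom_{4\epsilon}$, I use $|\nabla^2\bar v^{N,M}(y)|\lesssim|y|^{\rhobar_{N+1}-2}$ on $\Dom_\epsilon(x)$ (note $|y|\sim|x|\ge 4\epsilon$, so we stay on a scale where this pointwise bound holds and $\Dom_\epsilon(x)\subset\Dom_{1/2}$), and the same corrector bound $\epsilon^\nu|x|^{1-\nu}\ln^{\tilde\nu'}(\epsilon^{-1})$ — here $|x|\le1$ so $\ln(\epsilon^{-1}|x|)\le\ln(\epsilon^{-1})$ — giving $\epsilon^\nu\ln^{\tilde\nu'}(\epsilon^{-1})|x|^{\rhobar_{N+1}-1-\nu}$. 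For $x\in\Dom_{4\epsilon}$: on this scale $\chi-1\equiv 0$ (since $\chi\equiv 1$ on $\Dom_{1/2}$), so $\bar v^{N,M}=\ubar_{\reg}^{N,M}$, and the correctors on $\Dom_\epsilon(x)\subset\Dom_{5\epsilon}$ are of size $\epsilon$ (i.e.\ $\epsilon^\nu\cdot\epsilon^{1-\nu}$ up to logs, which I absorb since $\epsilon\le1/2$). I then bound $\|\nabla^2\ubar_{\reg}^{N,M}\|_{\LL^2(\Dom_{5\epsilon})}$: from \eqref{Num:001} plus interior/boundary elliptic estimates for $\ubar_{\reg}^{N,M}$ (which is $\Delta$-harmonic near $0$ with homogeneous Dirichlet data, but note $\ubar_{\reg}^{N,M}$ includes the $-\sum_{n=N+1}^{N+M}\bar\gamma_n\taubar_n\chi_\epsilon$ correction whose Laplacian is supported away from $\Dom_{4\epsilon}$), one gets $\fint_{\Dom_{5\epsilon}}|\nabla^2\ubar_{\reg}^{N,M}|^2\lesssim\epsilon^{2(\rhobar_{N+1}-2)}$; multiplying by $\epsilon$ yields $\epsilon\cdot\epsilon^{\rhobar_{N+1}-2}=\epsilon^{\rhobar_{N+1}-1}$, matching the third line.

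Then I would dispatch the remaining two sums. The middle sum $-\sum_{n\le N}\bar\gamma_n a_\epsilon\nabla((\chi-1)(\phiC_{\epsilon,n}-\phiD_{\epsilon,i}\partial_i\taubar_n))$ is supported in $\Dom\setminus\Dom_{1/2}$ (where $\chi-1\ne 0$), so it only contributes in the first regime; expanding the gradient by Leibniz, the $\nabla\chi$ terms give $(\phiC_{\epsilon,n}-\phiD_{\epsilon,i}\partial_i\taubar_n)$ of annealed size $\epsilon^\nu|x|^{\rhobar_n-\nu}\ln^{\tilde\nu'}$ times $|\nabla\chi|\lesssim 1$ on the annulus $|x|\sim1$, while the $(\chi-1)\nabla(\cdot)$ terms use rescaled \eqref{key1} (i.e.\ $\fint_{\Dom_\epsilon(x)}|\nabla\phiC_{\epsilon,n}|^2\lesssim\epsilon^{2\nu}|x|^{2(\rhobar_n-1-\nu)}\ln^{2\tilde\nu'}$ plus the $\phiD_{\epsilon,i}\nabla\partial_i\taubar_n$ piece bounded by $\epsilon^\nu|x|^{1-\nu}\cdot|x|^{\rhobar_n-2}$ up to logs) — all of which on the bounded annulus $\Dom_2\setminus\Dom_{1/2}$ are $\lesssim\epsilon^\nu\ln^{\tilde\nu'}(\epsilon^{-1})$, consistent with (and dominated by, since $|x|\sim1$) the first line of \eqref{M:0003}. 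The last sum $-\sum_{n=N+1}^{N+M}\bar\gamma_n(a_\epsilon-\abar)\nabla(\chi_\epsilon\taubar_n)$ is supported in $\Dom_\epsilon$ (since $\nabla\chi_\epsilon$ lives on $\Dom_\epsilon\setminus\Dom_{\epsilon/2}$ and $\chi_\epsilon\equiv 0$ outside $\Dom_\epsilon$), hence only contributes in the third regime; there $|\nabla(\chi_\epsilon\taubar_n)|\lesssim|x|^{\rhobar_n-1}+\epsilon^{-1}|x|^{\rhobar_n}\lesssim\epsilon^{\rhobar_n-1}$ (using $|x|\lesssim\epsilon$ and $\rhobar_n\ge\rhobar_{N+1}>\rhobar_{N+1}-1$ so the smallest exponent $\rhobar_{N+1}$ dominates) and $|a_\epsilon-\abar|\lesssim 1$, giving $\epsilon^{\rhobar_{N+1}-1}$. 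Throughout I rely on $|\bar\gamma_n|\lesssim\|f\|_{\WW^{2,q}}\lesssim 1$, which follows from \eqref{Formula :gamma3} applied to $\ubar$ together with elliptic estimates $\|\ubar\|_{\HH^1}\lesssim\|f\|_{\LL^2}$. The main obstacle I anticipate is the bookkeeping in the third regime $x\in\Dom_{4\epsilon}$: one must carefully verify that the regular part $\ubar_{\reg}^{N,M}$, \emph{after} subtracting the extra $\taubar_n\chi_\epsilon$ terms for $N<n\le N+M$ (which is precisely why $M$ with $\rhobar_{M+1}\ge 2$ is introduced — to make $\nabla^2(\taubar_n\chi_\epsilon)$ at worst of size $\epsilon^{\rhobar_{N+1}-2}$ rather than blowing up), genuinely satisfies the scaling $\fint_{\Dom_{5\epsilon}}|\nabla^2\ubar_{\reg}^{N,M}|^2\lesssim\epsilon^{2(\rhobar_{N+1}-2)}$, and that the interaction of $\nabla\chi_\epsilon$ with the correctors at scale $\epsilon$ does not produce an uncontrolled logarithm; handling the $\abar$-harmonicity of $\ubar$ near $0$ and the precise support properties of all the cut-offs is where the argument is most delicate.
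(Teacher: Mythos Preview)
Your overall strategy matches the paper's, but there is a genuine gap in your treatment of the middle term
\[
h^{N,M}_{\epsilon,2}=\sum_{n=1}^N \bar{\gamma}_n a_\epsilon \nabla \lt((\chi-1)(\phiC_{\epsilon,n} - \phiD_{\epsilon,i} \partial_i \taubar_n)\rt).
\]
First, this term is \emph{not} supported on a bounded annulus: since $\chi=\eta_{\Boule,1}$ vanishes outside $\Boule_1$, one has $\chi-1\equiv -1$ for all $|x|>1$, so $h^{N,M}_{\epsilon,2}$ contributes on the entire unbounded region $\Dom\setminus\Dom_{1/2}$, and you must verify the bound $\epsilon^\nu\ln^{\tilde\nu'}(\epsilon^{-1}|x|)|x|^{\rhobar_N-1-\nu}$ for arbitrarily large $|x|$. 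Second, your proposed bound $\fint_{\Dom_\epsilon(x)}|\nabla\phiC_{\epsilon,n}|^2\lesssim\epsilon^{2\nu}|x|^{2(\rhobar_n-1-\nu)}$ is false: \eqref{key1} concerns $\nabla\phitilde_n$, not $\nabla\phiC_n$, and by the ansatz \eqref{*ansatz} the gradient $\nabla\phiC_n$ contains a term of the form $(\nabla\phi_i)\partial_i\taubar_n$ which is generically only $O(|x|^{\rhobar_n-1})$. The same issue arises for the term $(\nabla\phiD_{\epsilon,i})\partial_i\taubar_n$, which you omit entirely.

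What makes the estimate work is a cancellation you have not exploited. By the ansätze \eqref{*ansatz} for $\phiC_n$ and for $\phiD_i$ (with $\taubar_n$ replaced by $x_i$), the leading parts built from $(\etaup\phiup_i+\etabulk\phi_i+\etadown\phidown_i)$ cancel \emph{exactly}, leaving
\[
\phiC_n-\phiD_i\partial_i\taubar_n=\phitilde_n-\phiDtilde_i\,\partial_i\taubar_n
\qquad\text{for }|x|>1.
\]
Only then can you invoke \eqref{key1} for $\nabla\phitilde_n$ and its analogue \eqref{analogue_2} for $\nabla\phiDtilde_i$ (together with the bound on $\phiDtilde_i$ itself for the lower-order piece $\phiDtilde_i\nabla\partial_i\taubar_n$) to obtain the paper's estimate \eqref{M:0006}, which after rescaling gives the required $\epsilon^\nu\ln^{\tilde\nu'}(\epsilon^{-1}|x|)|x|^{\rhobar_n-1-\nu}$. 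Without this cancellation the individual terms do not carry the factor $\epsilon^\nu$ and the argument fails.
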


\begin{proof}[Proof of Lemma \ref{Lem:Prelim}.]
	In this proof, the constant $\tilde{\nu}'\lesssim_{\tilde{\nu}} 1$ may change from line to line.
	
	In order to show \eqref{M:0003}, we split
	\begin{align*}
	h^{N, M}_\epsilon &~= 
	\underbrace{\lt(\sigmaD_{\epsilon,i} - a_\epsilon \phiD_{\epsilon,i} \rt) \partial_i \nabla \bar{v}^{N,M}}_{h^{N, M}_{\epsilon,1}}
	-\underbrace{\sum_{n=1}^N \bar{\gamma}_n a_\epsilon \nabla \lt((\chi-1)(\phiC_{\epsilon,n} - \phiD_{\epsilon,i} \partial_i \taubar_n)\rt)}_{h^{N, M}_{\epsilon,2}}
	-\underbrace{\sum_{n=N+1}^{N+M} \bar{\gamma}_n \lt( a_\epsilon- \abar \rt) \nabla \lt(\chi_\epsilon \taubar_n \rt)}_{h^{N, M}_{\epsilon,3}},
	\end{align*}
	and we estimate each term separately.
	Notice that $|\bar{\gamma}_n| \lesssim 1$ by \eqref{Formula :gamma3}, after applying the Cauchy-Schwarz and Poincaré inequalities, followed by the energy estimate for \eqref{Num:7001}.
	
	\paragraph{Step 1: Estimates on $\nabla^2 \bar{v}$}
	First, we establish that:
	\begin{align}
	|\nabla^2 \bar{v}^{N,M}(x)|
	 \lesssim
	\lt\{
	\begin{aligned}
	& |x|^{\rhobar_{N}-2},
	&& \pourtout x \in \Dom \backslash \Dom_1,
	\\
	&|x|^{\rhobar_{N+1}-2},
	&& \pourtout x \in \Dom_1 \backslash \Dom_\epsilon,
	\\
	& \epsilon^{\rhobar_{N+1}-2},
	&& \pourtout x \in \Dom_\epsilon.
	\end{aligned}
	\rt.
	\label{Num:7013b}
	\end{align}
	Notice that these three estimates coincide near the boundaries of their domains of validity.
	
	By the energy estimate for \eqref{Num:7001}, there holds
	\begin{equation}\label{Num:10002}
	\lt(\int_{\Dom}|\nabla \bar{u}|^2\rt)^{\frac{1}{2}} 
	\lesssim 1.
	\end{equation}
	We pick $r \geq 1$.
	Since $\ubar$ is harmonic in $\Dom \backslash \Dom_1$,
a duality argument in the spirit of Step 3 of the proof of Lemma \ref{Lem:Iter2} yields
	\begin{equation}\label{Num:10001}
	\lt(\int_{\Dom_{2r} \backslash \Dom_r}|\nabla \bar{u}|^2\rt)^{\frac{1}{2}} 
	\lesssim r^{-\rhobar_1}.
	\end{equation}
	Let $x \in \Dom \backslash \Dom_{1/4}$.
	The above estimates \eqref{Num:10002} and \eqref{Num:10001} combined with classical Schauder regularity theory applied
to the function $\ubar$ in $\Dom_{|x|/2}(x)$ implies that
	\begin{align}\label{Num:10003}
	|x|^{-1}\lt|\nabla \ubar_{\reg}^{N}(x)\rt| + \lt|\nabla^2 \ubar_{\reg}^{N}(x)\rt| \lesssim |x|^{-\rhobar_{1}-2}
	\end{align}
(note that $\ubar_{\reg}^N=\ubar$ in $\Dom\backslash \Dom_2$, \textit{cf}.~\eqref{Num:7003}).
	Recalling \eqref{Def:barv}, in which the third term on the \rhs vanishes in $\Dom \backslash \Dom_{\epsilon}$, this yields the first inequality of \eqref{Num:7013b} in form of
	\begin{align}\label{M:0004}
	|\nabla^2 \bar{v}^{N,M}(x)|
	\lesssim~& |x|^{-\rhobar_1-2} + \sum_{n=1}^{N} |\bar{\gamma}_n| |\nabla^2\taubar_n(x)|
	\overset{\eqref{taubar_Homog}}{\lesssim}
	|x|^{-\rhobar_1-2} + \sum_{n=1}^{N} |x|^{\rhobar_n -2}
	\lesssim |x|^{\rhobar_{{N}}-2}.
	\end{align}
	Next, we apply Theorem \ref{Th:homog} to $\ubar_{\reg}^N$, which is harmonic $\Dom_{1/2}$.
	Thus, for $x \in \Dom_{1/4}$, we obtain :
 	\begin{align*}
	\lt(\fint_{\Dom_{2|x|}} \lt|\ubar_{\reg}^{N}\rt|^2 \rt)^\frac{1}{2}\lesssim |x|^{\rhobar_{{N}+1}} \lt( \fint_{\Dom_1} |\ubar|^2 \rt)^{\frac{1}{2}}
	\lesssim |x|^{\rhobar_{{N}+1}}.
	\end{align*}
	Hence, the classical Schauder regularity theory applied to $\ubar_{\reg}^N$
in $\Dom_{|x|/2}(x)$ yields $\lt|\nabla^2 \ubar_{\reg}^{N}(x)\rt| \lesssim |x|^{\rhobar_{{N}+1}-2}$.
	Resorting to the triangle inequality as in \eqref{M:0004} combined with $\ubar_{\reg}^{N}=\ubar_{\reg}^{N,M}$ on $D\setminus D_\epsilon$,
this establishes the second estimate of \eqref{Num:7013b}.
	
	Last, inside $\Dom_{\epsilon}$, we have
	\begin{align*}
	\bar{v}^{{N},M} = \ubar_\reg^{{N}+M} + \sum_{n={N}+1}^M \bar{\gamma}_n (\chi_\epsilon - 1) \taubar_n.
	\end{align*}
	Hence, applying the same reasoning as above to $\ubar_\reg^{{N}+M}$, we get, for $x \in \Dom_\epsilon$, that $|\nabla^2 \ubar_\reg^{N+M}(x)| \lesssim |x|^{\rhobar_{{N}+M+1}-2}$, to the effect of
	\begin{align*}
	\lt|\nabla^2 \bar{v}^{{N},M}(x)\rt| 
	\lesssim~& |x|^{\rhobar_{{N}+M+1}-2} + \sum_{n={N}+1}^M |\bar{\gamma}_n| \lt| \nabla^2\lt( (\chi_\epsilon - 1) \taubar_n\rt)(x)\rt|
	\\
	\overset{\eqref{taubar_Homog}}{\lesssim}~& |x|^{\rhobar_{{N}+M+1}-2} + \sum_{n={N}+1}^M \epsilon^{-2+\rhobar_n} \mathds{1}(|x| > \epsilon / 2)
	\overset{\eqref{Def:M}}{\lesssim} \epsilon^{\rhobar_{{N}+1}-2}.
	\end{align*}
	This finally proves the last inequality in \eqref{Num:7013b}.
	
	\paragraph{Step 2: Estimate on $h^{N, M}_{\epsilon,1}$}
		Let $x \in \Dom$.
		By Theorem \ref{Th:OptiGR} and Lemma \ref{Lem:sig}, we get
		\begin{align*}
		\langl \lt( \fint_{\Dom_\epsilon(x)} |h^{N, M}_{\epsilon,1}|^2 \rt)^{\frac{p}{2}} \rangl^{\frac{1}{p}}
		&\lesssim
		\langl \lt( \fint_{\Dom_\epsilon(x)} \lt|\lt( \sigmaD_{\epsilon}, \phiD_{\epsilon} \rt)\rt|^2 \rt)^{\frac{p}{2}} \rangl^{\frac{1}{p}}
		\lt\|\nabla^2 \bar{v}^{N,M} \rt\|_{\LL^\infty(\Dom_\epsilon(x))}
		\\
		&\overset{\eqref{Opti_Decay_Dir}, \eqref{M:0005}}{\lesssim}
		\epsilon^{\nu}(\epsilon + |x|)^{1-\nu} \ln^{\tilde{\nu}'}\lt(\epsilon^{-1}|x| + 2\rt) \lt\|\nabla^2 \bar{v}^{N,M} \rt\|_{\LL^\infty(\Dom_\epsilon(x))}.
		\end{align*}
		Inserting \eqref{Num:7013b} entails that $h^{N, M}_{\epsilon,1}$ satisfies \eqref{M:0003}.
	
	\paragraph{Step 3: Estimate on $h^{N, M}_{\epsilon,2}$}
Since $\chi-1$ vanishes inside $\Dom_{1/2}$, we already have that $h^{N, M}_{\epsilon,2}$ satisfies third estimate in \eqref{M:0003}, and the second estimate in \eqref{M:0003} for $x \in \Dom_{1/2}$.
Hence, it remains to establish that the first estimate in \eqref{M:0003} is valid for  $x \in \Dom \backslash \Dom_{\frac{1}{2}}$.
		By \eqref{*ansatz}, using $\phiDtilde_i$, as defined in Proposition \ref{Prop:Quench}, and \eqref{taubar_Homog}, we have
		\begin{align*}
		|\nabla \lt(\phiC_{n} - \phiD_{i} \partial_i \taubar_n\rt)(x)|
		&\leq
		|\nabla\phitilde_n(x)| +
		\lt|\nabla\lt(\lt( \etaup \phiup_i  + \etabulk \phi_i + \etadown \phidown_i - \phiD_{i}  \rt) \partial_i \taubar_n\rt)(x)\rt|
		\\
		&\lesssim
		|\nabla\phitilde_n(x)|
		+
		|\nabla \phiDtilde(x)| |x|^{\rhobar_n - 1}
		+
		|\phiDtilde(x)| |x|^{\rhobar_n - 2},
		\end{align*}
		By scaling and the triangle inequality, appealing to \eqref{key1} and \eqref{analogue_2}, we get
		\begin{equation}\label{M:0006}
		\langl \lt( \fint_{\Dom_\epsilon(x)} |\nabla \lt(\phiC_{\epsilon,n} - \phiD_{\epsilon,i} \partial_i \taubar_n\rt)|^2 \rt)^{\frac{p}{2}} \rangl^{\frac{1}{p}}
		\lesssim \epsilon^\nu \ln^{\tilde{\nu}'}(\epsilon^{-1}|x|) |x|^{\rhobar_n - 1 - \nu}.
		\end{equation}
		As a consequence, this yields that $h^{N, M}_{\epsilon,2}$ satisfies the first estimate in \eqref{M:0003}.

	\paragraph{Step 4: Estimate on $h^{N, M}_{\epsilon,3}$}
		Since $\chi_\epsilon$ is supported in $\Boule_\epsilon$, $h^{N, M}_{\epsilon,3}$ satisfies the first two estimates of \eqref{M:0003}.
		Last, we use the triangle inequality to get for $x \in \Dom_\epsilon$
		\begin{align*}
			|h^{N, M}_{\epsilon,3}(x)| 
			&\lesssim \sum_{n=N+1}^{N+M} \lt(| \taubar_n(x)\nabla \chi_\epsilon(x)| +  |\chi_\epsilon(x) \nabla \taubar_n(x)| \rt)
			\overset{\eqref{taubar_Homog}}{\lesssim}
			\epsilon^{-1} |x|^{\rhobar_{{N}+1}} \mathds{1}(|x|> \epsilon/2)  + |x|^{\rhobar_{{N}+1}-1}
			\lesssim |x|^{\rhobar_{{N}+1}-1}.
		\end{align*}
		Since $\rhobar_{{N}+1}-1 \geq \rhobar_1 -1 > -1$, we have $h^{N, M}_{\epsilon,3} \in \LL^2(\Dom_{4\epsilon})$ and
		\begin{align*}
			\lt(\fint_{\Dom_{4\epsilon}}\lt|h^{N, M}_{\epsilon,3}\rt|^2 \rt)^{\frac{1}{2}}
			\lesssim
			\epsilon^{-1} \lt( \int_0^\epsilon t^{2 \rhobar_{N+1} - 1} \dd t \rt)^{\frac{1}{2}} \lesssim \epsilon^{\rhobar_{N+1}-1}.
		\end{align*}
		In other words, $h^{N, M}_{\epsilon,3}$ satisfies the third estimate in \eqref{M:0003}.
\end{proof}

\subsection{Quasi-optimal error estimate: Proof of Theorem \ref{Th:Error}}

\begin{proof}[Proof of Theorem \ref{Th:Error}]
	In this proof, the constant $\tilde{\nu}'\lesssim_{\tilde{\nu}} 1$ may change from line to line.
	The core of the proof relies on \eqref{E:1}.

	In Step 1 we establish 
	\begin{equation}\label{Num:7004}
		\langl\lt( \int_\Dom |\nabla \utieps^{N,M} - \nabla \ueps|^2 \rt)^{\frac{p}{2}}\rangl^{\frac{1}{p}}
		\lesssim_{\Xi,q,p, N, M}
		\epsilon^{\nu} \ln^{\tilde{\nu}'} (\epsilon^{-1}),
	\end{equation}
	where $\utieps^{N,M}$ is defined by \eqref{2-scale-New-bis}, for $N=N_0$ and $M \in \mathbb{N}$ satisfying \eqref{Def:M}.
	In Step 2, we generalize \eqref{Num:7004} for general $N>N_0$.
	In Step 3 we show that, for any $\epsilon < r \leq 2$, there holds
	\begin{equation}\label{M:0013}
	\langl \biggl(\fint_{\Dom_{r}}| \nabla \utieps^{N,M}  - \nabla \ueps |^{2}\biggr)^\frac{p}2 \rangl^{\frac{1}{p}}
	\lesssim_{\Xi, q, p, N, M}
	\epsilon^\nu \ln^{\tilde{\nu}'}(\epsilon^{-1})r^{\rhobar_1-1}.
	\end{equation}
	In Step 4, we localize \eqref{M:0013} near all points $x \in \Dom$
to the effect of \eqref{Num:7034}.

	\paragraph{Step 1: Proof of \eqref{Num:7004} for $N=N_0$.}
	By definition, we have  $\nabla \ueps \in \LL^2(\Dom)$.
	Moreover, we note that $\nabla \utieps^{N_0,M}\in\LL^2(\Dom)$,
\textit{cf.} \eqref{2-scale-New-bis}.
	Indeed, Theorem~\ref{Th:homog} and \eqref{Num:10003}, and $\rhobar_{N+M+1} \geq 2$ (by \eqref{Def:M}), imply that the gradient of the first term on the \rhs of \eqref{2-scale-New-bis} is in $\LL^2(\Dom)$.
Moreover, the remaining terms are in $\HH^1_{\rm{loc}}(\Dom)$ with compact support, so that $\nabla \utieps^{N_0,M}\in\LL^2(\Dom)$ follows.
	Hence, by the energy estimate applied to \eqref{E:1}, we have
	\begin{equation*}
		\langl\lt( \int_\Dom |\nabla \utieps^{N_0,M} - \nabla \ueps|^2 \rt)^{\frac{p}{2}}\rangl^{\frac{1}{p}}
		\lesssim
		\langl\lt( \int_\Dom |h^{N_0,M}_\epsilon|^2 \rt)^{\frac{p}{2}}\rangl^{\frac{1}{p}}.
	\end{equation*}
	Thus, \eqref{Num:7004} comes as a consequence of
	\begin{equation}\label{Num:7024}
	\langl\lt( \int_\Dom |h^{N_0,M}_\epsilon|^2 \rt)^{\frac{p}{2}}\rangl^{\frac{1}{p}}
	\lesssim \epsilon^{\nu} \ln^{\tilde{\nu}'} (\epsilon^{-1}).
	\end{equation}
	
	Here comes the argument for \eqref{Num:7024}.	
	W.~l.~o.~g. we may assume $p \geq 2$, so that we get from a covering argument and the Minkowski inequality	
	\begin{align}
	\label{last_label}
	\langl\lt( \int_\Dom |h^{N_0,M}_\epsilon|^2 \rt)^{\frac{p}{2}}\rangl^{\frac{1}{p}}
	\lesssim
	\lt(\int_\Dom \langl\lt( \fint_{\Dom_\epsilon(x)} |h^{N_0,M}_\epsilon|^2 \rt)^{\frac{p}{2}}\rangl^{\frac{2}{p}} \dd x\rt)^{\frac{1}{2}}.
	\end{align}
	Then, we decompose the outer integral onto three domains in order to employ \eqref{M:0003} :
	\begin{align*}
	\int_\Dom \langl\lt( \fint_{\Dom_\epsilon(x)} |h^{N_0,M}_\epsilon|^2 \rt)^{\frac{p}{2}}\rangl^{\frac{2}{p}} \dd x
	&=\lt( \int_{\Dom \backslash \Dom_1} + \int_{\Dom_1 \backslash \Dom_{4\epsilon}} + \int_{\Dom_{4\epsilon}}\rt) \lt( \langl\lt( \fint_{\Dom_\epsilon(x)} |h^{N_0,M}_\epsilon|^2 \rt)^{\frac{p}{2}}\rangl^{\frac{2}{p}} \rt) \dd x
	\\
	&\overset{\eqref{M:0003}}{\lesssim}
	\epsilon^{2\nu}
	\int_1^\infty \ln^{2\tilde{\nu}'}(\epsilon^{-1}t) t^{2 \rhobar_{N_0} - 2\nu -1} \dd t
	\\
	&\qquad + \epsilon^{2\nu} \ln^{2\tilde{\nu}'}(\epsilon^{-1})
	\int_\epsilon^1 t^{2\rhobar_{{N_0}+1}-2\nu -1} \dd t
	\\
	&\qquad + \int_0^\epsilon \epsilon^{2\rhobar_{{N_0}+1}-2} t\dd t.
	\end{align*}
	Yet, recall that $N_0$ satisfies \eqref{Num:7029}, so that $2 \rhobar_{N_0} - 2\nu -1 < -1$ and $2\rhobar_{{N_0}+1}-2\nu -1 \geq -1$.
	Hence
	\begin{align*}
	\int_\Dom \langl\lt( \fint_{\Dom_\epsilon(x)} |h^{N_0,M}_\epsilon|^2 \rt)^{\frac{p}{2}}\rangl^{\frac{2}{p}} \dd x
	&\lesssim
	\epsilon^{2\nu} \ln^{2\tilde{\nu}'}(\epsilon^{-1})
	+ \epsilon^{2\nu} \ln^{2\tilde{\nu}'+1}(\epsilon^{-1})
	+ \epsilon^{2\rhobar_{{N_0}+1}}
	\lesssim \epsilon^{2\nu} \ln^{2\tilde{\nu}'+1}(\epsilon^{-1}).
	\end{align*}
	Up to redefining $\tilde{\nu}'$, this proves \eqref{Num:7024}.
	
	\paragraph{Step 2 : Proof of \eqref{Num:7004} for general $N \geq N_0$.}
	
	For $M'=M+N-N_0 \geq M$, we estimate the difference
	\begin{align*}
		\utieps^{N,M} - \utieps^{N_0,M'} \overset{\eqref{2-scale-New-bis}}{=} \sum_{n=N_0+1}^N \bar{\gamma}_n \lt( \chi \lt(\phiC_{\epsilon,n} - \phiD_{\epsilon,i} \partial_i \taubar_n \rt)  + \phiD_{\epsilon,i}\partial_i \lt( \taubar_n\chi_\epsilon\rt)\rt).
	\end{align*}
	Towards this end, we decompose
	\begin{align*}
	\nabla \lt(\utieps^{N,M} - \utieps^{N_0,M'}\rt)
	=~&
	\sum_{n=N_0+1}^N \bar{\gamma}_n \lt( \lt(\phiC_{\epsilon,n} - \phiD_{\epsilon,i} \partial_i \taubar_n \rt) \nabla \chi
	+(\chi-\chi_\epsilon) \nabla \lt(\phiC_{\epsilon,n} - \phiD_{\epsilon,i} \partial_i \taubar_n \rt)
	\rt)
	\\
	&+
	\sum_{n=N_0+1}^N \bar{\gamma}_n
	\lt( 
	\chi_\epsilon \nabla \phiC_{\epsilon,n}
	+
	\phiD_{\epsilon,i}\partial_i \taubar_n \nabla \chi_\epsilon + \nabla \lt( \taubar_n \phiD_{\epsilon,i} \partial_i \chi_\epsilon\rt) 
	\rt).
	\end{align*}
	Thus, by the triangle inequality followed by the Poincaré inequality, we get
	\begin{align*}
		\lt(
		\int_{\Dom} \lt| \nabla \lt(\utieps^{N,M} - \utieps^{N_0,M'}\rt)\rt|^2
		\rt)^{\frac{1}{2}}
		\lesssim~&
		\sum_{n=N_0+1}^N
		\lt(
		\int_{\Dom_1 \backslash \Dom_{\epsilon/2}} \lt|\nabla \lt(\phiC_{\epsilon,n} - \phiD_{\epsilon,i} \partial_i \taubar_n \rt)\rt|^2
		\rt)^{\frac{1}{2}}
		\\
		&+\sum_{n=N_0+1}^N
		\lt( 
		\int_{\Dom_\epsilon} \lt|\lt(\epsilon^{-1}\phiD_{\epsilon}\nabla \taubar_n,\epsilon^{-1} \taubar_n\nabla\phiD_{\epsilon}, \epsilon^{-2} \phiD_{\epsilon}\taubar_n, \nabla \phiC_{\epsilon,n} \rt)\rt|^2
		\rt)^{\frac{1}{2}}.
	\end{align*}
	We estimate the summands of the first \rhs above by means of a covering argument combined with the Minkowski estimate (as in \eqref{last_label}), along with \eqref{M:0006}, to the effect of 	\begin{align*}
	\langl
	\lt(
	\int_{\Dom_1 \backslash \Dom_{\epsilon/2}} \lt|\nabla \lt(\phiC_{\epsilon,n} - \phiD_{\epsilon,i} \partial_i \taubar_n \rt)\rt|^2
	\rt)^{\frac{p}{2}}\rangl^{\frac{1}{p}}
	&\lesssim
	\lt(
	\int_{\Dom_1 \backslash \Dom_{\epsilon/2}} 
	\langl\lt(\fint_{\Dom_{\epsilon/4}}\lt|\nabla \lt(\phiC_{\epsilon,n} - \phiD_{\epsilon,i} \partial_i \taubar_n \rt)\rt|^2 \rt)^{\frac{p}{2}}\rangl^{\frac{1}{p}}
	\rt)^{\frac{1}{2}}
	\\
	&\overset{\eqref{M:0006}}{\lesssim} \epsilon^\nu \ln^{\tilde{\nu}'}(\epsilon^{-1})
	\lt(\int_{\epsilon}^1  t^{2\rhobar_n -  2\nu - 1} \dd t\rt)^{\frac{1}{2}}.
	\end{align*}
	For $n \geq N_0+1$, by definition \eqref{Num:7029}, the integral on the \rhs above is bounded by $\ln(\epsilon^{-1})$.
	Hence, also using \eqref{Opti_Decay} and \eqref{Opti_Decay_Dir}, we obtain
	\begin{align*}
	\langl \lt(
	\int_{\Dom} \lt| \nabla \lt(\utieps^{N,M} - \utieps^{N_0,M'}\rt)\rt|^2
	\rt)^{\frac{p}{2}}\rangl^{\frac{1}{p}}
	\lesssim~& \epsilon^{\nu} \ln^{\tilde{\nu}'}(\epsilon^{-1}) + \epsilon^{\rhobar_{{N_0}+1}},
	\end{align*}
	which, by \eqref{Num:7029}, yields \eqref{Num:7004}.

	\paragraph{Step 3: Averaged estimate \eqref{M:0013}.}
		W.~l.~o.~g.\ we may assume that $r=2^{-K}$ for $K \in \NN \backslash\{0\}$.
		By Corollary \ref{Cor:Annealed} applied to $\utieps^{N,M}  -\ueps$, which satisfies \eqref{E:1}, followed by \eqref{M:0003} and \eqref{Num:7004}, we have for $p'>p$ :
		\begin{align*}
		\langl \lt( \fint_{\Dom_r} |\nabla \utieps^{N,M}  - \nabla \ueps|^2 \rt)^{\frac{p}{2}} \rangl^{\frac{1}{p}}
		\lesssim~~~~&
		\langl \lt( \fint_{\Dom_{r}} |h^{N,M}_\epsilon|^2 \rt)^{\frac{p'}{2}} \rangl^{\frac{1}{p'}}
		+
		\sum_{k=1}^{K-1} 2^{k(1-\rhobar_1)} 
		\langl \lt( \fint_{\Dom_{2^{(k+1)}r} \backslash \Dom_{2^{k}r}} |h^{N,M}_\epsilon|^2 \rt)^{\frac{p'}{2}} \rangl^{\frac{1}{p'}}
		\\
		&+
		r^{\rhobar_1-1}
		\langl \lt( \fint_{\Dom_{1}} |\nabla \utieps^{N,M}  - \nabla \ueps|^2 \rt)^{\frac{p'}{2}} \rangl^{\frac{1}{p'}}
		\\
		\overset{\eqref{M:0003}, \eqref{Num:7004}}{\lesssim}&
		\epsilon^\nu \ln^{\tilde{\nu}'}(\epsilon^{-1}) 
		\lt(
		r^{\rhobar_1-1}
		+
		\sum_{k=0}^{K-1} 2^{k(1-\rhobar_1)} 
		2^{(k+1)(\rhobar_{N+1}-1-\nu)}
		r^{\rhobar_{N+1}-1-\nu}\rt)
		\\
		\lesssim~~~~&
		\epsilon^\nu \ln^{\tilde{\nu}'}(\epsilon^{-1}) 
		\lt(
		1+\sum_{k=0}^{K-1} (2^{(k+1)}r)^{\rhobar_{N}-\nu}
		\rt)r^{\rhobar_1-1}.
		\end{align*}
		Since we assume that $N > N_0$ and that $N_0$ satisfies \eqref{Num:7029}, and $\rhobar_N-\nu>0$, and since $r = 2^{-K}$,  then we get that the above geometrical sum is bounded, to the effect of \eqref{M:0013}.

	\paragraph{Step 4: Local estimate \eqref{Num:7034}.}
		We now consider $x \in \Dom_1 \backslash \Dom_{2\epsilon}$.
		Applying (a rescaled version of) Corollary \ref{C:LSL} to $\utieps^{N,M}  -\ueps$, which satisfies \eqref{E:1}, we have, for $p'>p$,
		\begin{align*}
		\langl \biggl(\fint_{\Dom_\epsilon(x)}| \nabla \utieps^{N,M}  - \nabla \ueps |^{2}\biggr)^\frac{p}2 \rangl^{\frac{1}{p}}
		\overset{\eqref{M:0002}}{\lesssim}~&
		\langl  \lt(\fint_{\Dom_{|x|/2}(x)} | \nabla \utieps^{N,M}  - \nabla \ueps|^2\rt)^{\frac{p'}{2}} \rangl^{\frac{1}{p'}}
		\\
		&+ \ln(\epsilon^{-1}|x|) \sup_{x' \in \Dom_{|x|/2}(x)} \langl\lt(\fint_{\Dom_\epsilon(x')}\lt|h^{N,M}_\epsilon\rt|^2\rt)^{\frac{p'}{2}} \rangl^{\frac{1}{p'}}
		\\
		\overset{\eqref{M:0013},\eqref{M:0003}}{\lesssim}&
		\epsilon^\nu \ln^{\tilde{\nu}'}(\epsilon^{-1})|x|^{\rhobar_1-1}
		+
		\epsilon^\nu \ln^{\tilde{\nu}'+1}(\epsilon^{-1})  |x|^{\rhobar_{N+1}-1-\nu}
		\lesssim \epsilon^\nu \ln^{\tilde{\nu}'}(\epsilon^{-1})|x|^{\rhobar_1-1}.
		\end{align*}
		Here, the last estimate comes from the fact that $N > N_0$, for $N_0$ satisfying \eqref{Num:7029}.
		Last, noticing that $\utieps^{N,M}$ differs from $\utieps^N$ only inside $\Dom_\epsilon$, we obtain \eqref{Num:7034}.
\end{proof}

\section*{Acknowledgement}

We thank Monique Dauge, with whom Marc Josien had an interesting discussion while invited at the Université de Rennes, and who gently pointed out accurate references for homogeneous elliptic operators in corners.
Also, we are indebted to Julian Fischer, who suggested to make use of an ansatz based on Dirichlet correctors instead of the one we first studied \eqref{ansatz}.
As discussed in Section \ref{Sec:Better_Ansatz}, this proved more efficient.

We wish to acknowledge the support of the Max Planck Institute \textit{Mathematik in den Naturwissenschaften} in Leipzig, at which Marc Josien and Mathias Sch\"affner held post-doc positions when this work begun, and which supported this project, in particular by funding visits of Claudia Raithel.
Claudia Raithel also gratefully acknowledges partial support from the Austrian Science Fund (FWF), grants P30000, P33010, W1245, and F65.

\def\cprime{$'$} \def\cprime{$'$} \def\cprime{$'$}

\appendix

\section{Extension of a divergence-free field}

\begin{lemma}
	\label{Lem:DivFree}
	Assume that $\omega \in (0,2\pi)$, and that $H \in \LL^1_\loc(\bar{\Dom})$ is a divergence-free vector field on $\Dom$, written in polar coordinates as 
	\begin{align*}
	H=(H_r,H_\theta) = H_r e_r + H_\theta e_\theta.
	\end{align*}
	Then it can be extended as a divergence-free vector field on the whole space $\R^2$ by
	\begin{align}
	\label{extension_2}
	\bar{H}(r,\theta)
	:=
	\lt\{
	\begin{aligned}
	&H(r,\theta) &&\pour \theta \in \lt[0,\omega \rt],
	\\
	&\tilde{H}(r,\theta) &&\pour \theta \in \lt( \omega, 2\pi \rt),
	\end{aligned}
	\rt.
	\text{ where }
	&\tilde{H}(r,\theta) := \lt( - \alpha H_r, H_\theta \rt)\lt( r, \alpha (2 \pi -\theta) \rt),
	\pour \alpha := \frac{\omega}{2 \pi - \omega}.
	\end{align}
\end{lemma}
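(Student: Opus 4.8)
The plan is to verify the three properties that make $\bar H$ divergence-free on $\R^2$: (i) $\tilde H$ is divergence-free in the interior of the complementary sector $V := \{r(\cos\theta,\sin\theta) : r>0,\ \theta\in(\omega,2\pi)\}$; (ii) the angular component $\bar H_\theta$ — which is the normal component along each of the two separating rays $\{\theta=\omega\}$ and $\{\theta=0\}\equiv\{\theta=2\pi\}$ — matches from both sides; and (iii) (i) and (ii) glue into $\div\bar H=0$ on all of $\R^2$ via the standard transmission argument. Throughout I would work in polar coordinates, using $\div G = \tfrac1r\partial_r(rG_r) + \tfrac1r\partial_\theta G_\theta$, and observe that \eqref{extension_2} is a reflection followed by the angular rescaling $\theta\mapsto\psi := \alpha(2\pi-\theta)$, which — precisely because $\alpha = \omega/(2\pi-\omega)$ and $\omega\in(0,2\pi)$ — is a well-defined orientation-reversing diffeomorphism of $(\omega,2\pi)$ onto $(0,\omega)$ with $\dd\psi/\dd\theta = -\alpha$. (One quick way to see where $\alpha$ comes from: if $u$ is a stream function for $H$ on the simply connected sector $\Dom$, i.e. $H=\nabla^\perp u$, then $\bar u(r,\theta) := u(r,\alpha(2\pi-\theta))$ is the rescaled extension that glues continuously at both rays, and $\nabla^\perp\bar u$ is exactly \eqref{extension_2}.)

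For (i): with $\tilde H_r(r,\theta) = -\alpha H_r(r,\psi)$ and $\tilde H_\theta(r,\theta) = H_\theta(r,\psi)$, $\psi = \alpha(2\pi-\theta)$, and since $\psi$ is independent of $r$, the chain rule gives $\partial_r(r\tilde H_r)(r,\theta) = -\alpha[\partial_r(rH_r)](r,\psi)$ and $\partial_\theta\tilde H_\theta(r,\theta) = -\alpha[\partial_\theta H_\theta](r,\psi)$, hence $\div\tilde H(r,\theta) = -\alpha\,(\div H)(r,\psi) = 0$ because $(r,\psi)\in\Dom$ where $H$ is divergence-free. The point is that the factor $-\alpha$ attached to the radial component in \eqref{extension_2} is exactly what cancels the Jacobian $-\alpha$ produced by differentiating in $\theta$, so that the two terms of the divergence recombine into $(\div H)(r,\psi)$.

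For (ii) and (iii): as $\theta\downarrow\omega$ inside $V$ one has $\psi\uparrow\alpha(2\pi-\omega)=\omega$, so $\tilde H_\theta(r,\theta)\to H_\theta(r,\omega)$, matching the trace of $H_\theta$ from $\Dom$; and as $\theta\uparrow2\pi$ inside $V$ one has $\psi\downarrow0$, so $\tilde H_\theta(r,\theta)\to H_\theta(r,0)$, again matching the trace from $\Dom$ along $\{\theta=0\}$. Then, testing against an arbitrary $\varphi\in\CC_\comp^\infty(\R^2)$, I would split $\int_{\R^2}\bar H\cdot\nabla\varphi = \int_\Dom H\cdot\nabla\varphi + \int_V\tilde H\cdot\nabla\varphi$ (the origin being negligible since $\bar H\in\LL^1_\loc$), integrate by parts in each sector using $\div H=0$ in $\Dom$ and $\div\tilde H=0$ in $V$, and observe that the only leftover terms are boundary integrals over the two separating rays, on each of which the two contributions carry opposite outward normals but equal $\bar H_\theta$ by (ii) and therefore cancel; hence $\div\bar H=0$ in $\R^2$. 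The one genuinely delicate point — which I expect to be the main obstacle — is making step (iii) rigorous at merely $\LL^1_\loc$ regularity, where pointwise traces on the rays are not a priori available: this is handled exactly as in the standard transmission lemma for divergence-free fields (such fields possess normal traces in $H^{-1/2}$ of the boundary, and the argument reduces to the fact that matching normal traces glue into a globally divergence-free field), following the adapted-vector-potential constructions of \cite{Fischer_Raithel_2017,JosienRaithel_2019}.
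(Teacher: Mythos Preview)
Your steps (i) and (ii) are correct and match the paper exactly: the chain-rule computation showing $\div\tilde H(r,\theta)=-\alpha\,(\div H)(r,\alpha(2\pi-\theta))=0$, and the matching of $\bar H_\theta$ across both rays, are precisely what the paper does first. The stream-function remark is a nice heuristic for the origin of the formula.

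The gap is in step (iii), at the origin. Your dismissal ``the origin being negligible since $\bar H\in\LL^1_\loc$'' is not justified, and the appeal to the $H^{-1/2}$ normal-trace theory does not apply here: that theory is for fields in $H(\div)$, i.e.\ $\LL^2$ fields with $\LL^2$ divergence, not merely $\LL^1_\loc$ fields. Concretely, take $H=e_r/r$ on $\Dom$: this is in $\LL^1_\loc(\bar\Dom)$, divergence-free in $\Dom$, but \emph{not} in $\LL^2_\loc$ near $0$, and the flux $\int_{\partial B_\epsilon\cap\Dom}H\cdot e_r$ equals $\omega$ independently of $\epsilon$, so excising a small ball and integrating by parts leaves boundary terms at $\partial B_\epsilon$ that do \emph{not} tend to zero. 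What saves the day is that the flux from the $V$ side equals $-\alpha(2\pi-\omega)=-\omega$, so the two cancel --- but this cancellation is an extra fact, not a consequence of $\LL^1_\loc$ or of transmission along the rays.

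The paper separates the two issues cleanly: your (i)--(ii) give $\div\bar H=0$ on $\R^2\setminus\{0\}$; then, since $\bar H\in\LL^1_\loc(\R^2)$, Schwartz's structure theorem forces $\div\bar H=C\delta_0$ for some constant $C$. Testing against a radial cutoff $\psi=\eta(r)$ gives $C=-\int\bar H\cdot\nabla\psi=-\int_1^2\eta'(r)\bigl(\int_0^{2\pi}\bar H_r(r,\theta)\,\dd\theta\bigr)\dd r$, and the inner angular integral vanishes identically because $\int_0^\omega H_r\,\dd\theta-\alpha\int_\omega^{2\pi}H_r(r,\alpha(2\pi-\theta))\,\dd\theta=0$ after the change of variables $\tilde\theta=\alpha(2\pi-\theta)$. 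This is where the factor $-\alpha$ on the radial component is used a second time, and it is the step your sketch is missing.
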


\begin{proof}
	First, we may compute 
	\begin{align*}
	\nabla \cdot \tilde{H}(r,\theta)
	&=
	\frac{1}{r} \partial_r (r \tilde{H}_r(r,\theta)) + \frac{1}{r}\partial_\theta \tilde{H}_\theta(r,\theta)
	\\
	&=
	\Big(-\alpha \frac{1}{r} \partial_r (r H_r)  - \alpha \frac{1}{r} \partial_\theta H_\theta \Big) \lt( r, \alpha (2\pi-\theta) \rt)
	\\
	&= - \alpha \lt(\nabla \cdot H\rt)\lt( r, \alpha (2\pi-\theta) \rt) = 0.
	\end{align*}
	Moreover, since
	\begin{align*}
	\tilde{H}(r,\omega) = \lt( - \alpha H_r, H_\theta \rt)(r,\omega) \quad \textrm{and} \quad \tilde{H}(r, 2 \pi) = \lt( - \alpha H_r, H_\theta \rt)(r,0),
	\end{align*}
	the normal component $\bar{H}_\theta$ of $\bar{H}$ is continuous across $\partial\Dom \backslash\{0\}$. 
	This proves that $\nabla \cdot \bar{H}=0$ in $\R^2 \backslash \{0\}$.\footnote{The argument of continuity may not apply strictly speaking to a function $H$ that is only $\LL^1_\loc(\bar{\Dom})$; however, we may use this argument in combination with a mollifying argument.}
	
	It remains to establish that $\nabla \cdot \bar{H} = 0$ in the sense of distributions in $\R^2$ (that is, $\nabla \cdot \bar{H}$ does not suffer from any singularity in $0$).
	By the theory of distributions \cite[Th.\ XXXV p.\ 100]{Schwartz}, since $\bar{H} \in \LL^1_\loc(\R^2)$, there holds
	\begin{align*}
	\nabla \cdot \bar{H} = \delta_0 C, \quad \pour C \in \R^2.
	\end{align*}
	It remains to prove that $C=0$.
	In this perspective, let us define the radial function $\psi= \eta(r)$ where $\eta$ is compactly supported in $[-2,2]$ with $\eta = 1 $ in $[-1,1]$.
	By the theory of distributions, we have (abusively using the symbol $\int$) :
	\begin{align*}
	\int_{\R^2} \bar{H} \cdot \nabla \psi = -	\int_{\R^2}  (\nabla\cdot\bar{H}) \psi =  -\psi(0)C = - C.
	\end{align*}
	On the other hand, (since $\nabla \psi$ is supported into $\Boule_2 \backslash \Boule_1$), by a change of coordinates, we also have
	\begin{align*}
	\int_{\R^2} \bar{H} \cdot \nabla \psi
	=
	\int_{1}^2 \eta'(r) \lt( \int_0^{2\pi} \bar{H}(r,\theta) \cdot e_r \dd \theta\rt) \dd r.
	\end{align*}
	By definition \eqref{extension_2} followed by a change of variables $\tilde{\theta} = \alpha (2\pi - \theta)$, the above inner integral vanishes :
	\begin{align*}
	\int_0^{2\pi} \bar{H}(r,\theta) \cdot e_r \dd \theta
	=
	\int_0^\omega H_r(r,\theta)\dd \theta -  \alpha \int_\omega^{2\pi} H_r(r,\alpha(2\pi - \theta)) \dd \theta
	=0.
	\end{align*}
	Hence, we obtain that $C=0$, which in turn implies that $\nabla \cdot \bar{H} = 0$ in $\R^2$.
\end{proof}

\end{document}